\newcommand{\Lie}{{\mathcal{L}}}
\newcommand{\rLie}{\mbox{${\mathcal{L}} \mkern-09mu /$\,}}
\newcommand{\der}{\nabla}
\newcommand{\rder}{\mbox{$\nabla \mkern-13mu /$\,}}
\newcommand{\les}{\lesssim}
\newcommand{\bea}{\begin{eqnarray}}
\newcommand{\eea}{\end{eqnarray}}
\newenvironment{proof}{\noindent {\bf Proof} }{\endprf\par}
\def\pa{\partial}
\def\e{\mbox{exp}}
\def \endprf{\hfill  {\vrule height6pt width6pt depth0pt}\medskip}
\def\beaa{\begin{eqnarray*}}
\def\eeaa{\end{eqnarray*}}
\def\pa{\partial}
\def\a{{\alpha}}
\def\b{{\beta}}
\def\ga{\gamma}
\def\Ga{\Gamma}
\def\de{\delta}
\def\eps{\epsilon}
\def\si{\sigma}
\def\om{\omega}
\def\Om{\Omega}
\def\th{\theta}
\def\S{{\bf S}}
\def\g{{\bf g}}
\def\pr{\partial}
\begin{document}
\theoremstyle{plain}
\newtheorem{theorem}{Theorem}
\newtheorem{proposition}[subsection]{Proposition}
\newtheorem{lemma}[subsection]{Lemma}

\theoremstyle{remark}
\newtheorem{remark}[subsection]{Remark}
\newtheorem{remarks}[subsection]{Remarks}

\theoremstyle{definition}
\newtheorem{definition}[subsection]{Definition}

\include{psfig}
\title[Maxwell on Black Holes]{On Uniform Decay of the Maxwell Fields \\ on Black Hole Space-Times}
\author{Sari Ghanem}
\address{Institut de Math\'ematiques de Jussieu, Universit\'e Paris Diderot - Paris VII, 75205 Paris Cedex 13, France}
\email{ ghanem@@math.jussieu.fr}
\maketitle

\begin{abstract} 
This is the second in a series of papers in which we take a systematic study of gauge field theories such as the Maxwell equations and the Yang-Mills equations, on curved space-times. In this paper, we study the Maxwell equations in the domain of outer-communication of the Schwarzschild black hole. We show that if we assume that the middle components of the non-stationary solutions of the Maxwell equations verify a Morawetz type estimate supported around the trapped surface, then we can prove uniform decay properties for components of the Maxwell fields in the entire exterior of the Schwarzschild black hole, including the event horizon, by making only use of Sobolev inequalities combined with energy estimates using the Maxwell equations directly. This proof is entirely gauge independent, and does not pass through the scalar wave equation on the Schwarzschild black hole, and does not need to separate the middle components for the Maxwell fields. However, proving a Morawetz estimate directly using the Maxwell equations, without refering to the scalar wave equation, seems to be out of reach of the mathematical community as of today; which I was not able to solve yet in this work. If one is able to prove the Morawetz estimate directly using the Maxwell equations, this combined with the present work would give full conceptual proof of decay of the Maxwell fields on the Schwarzschild black hole, and would then be in particular useful for the non-abelian case of the Yang-Mills equations where the separation of the middle components cannot occur. The whole manuscript is written in an expository way where we detail all the calculations.

\end{abstract}

\setcounter{page}{1}
\pagenumbering{arabic}

\section{Introduction}

In this paper, we study the Maxwell equations on the Schwarzschild black hole. In a recent paper, [DR1]-[DR2], Dafermos and Rodnianski proved decay for solutions of the free scalar wave equation $\Box_{\g} \phi = 0$ in the exterior of the Schwarzschild black hole, up to points on the event horizon. We do not know how to make these methods work for $\Box_{\g} \phi = \phi$ or for $\Box_{\g} \phi = \phi^{2}$. Thus, this rules out the possibility of using these methods for the Maxwell equations in a hyperbolic formulation where the source terms would be $(\Box_{\g} F )_{\mu\nu}$, where $F_{\mu\nu}$ is the Maxwell field. In a recent paper, [Bl], Blue proved decay for the Maxwell fields on the exterior of the Schwarzschild background. The proof of Blue required a study of a wave equation on the Schwarzschild space-time for the middle components, which can be separated from the other components in the abelian case of the Maxwell equations. This was later extended by Andersson and Blue to Kerr metrics, [AB]. However, in the non-linear case of the Yang-Mills equations, one cannot decouple the middle components from the others. Yet, it seems difficult to generalize the results of Dafermos and Rodnianski for the free scalar wave equation to the Maxwell equations using the Maxwell energy-momentum tensor directly, without referring to the scalar wave equation, combined with suitable Sobolev inequalities. A key step to achieve this would be to bound the conformal energy without separating the so-called middle components of the Maxwell fields. This would provide a new independent proof and improves the result of Blue, and would be in particular useful for the non abelian case of the Yang-Mills equations where such separation cannot occur. I tried to do this in the goal of proving uniform boundedness for Yang-Mills fields on the exterior of the Schwarzschild black hole and the Kerr metric. However, as Klainerman pointed out to me later, many people tried to get a more conceptual proof of decay for the Maxwell equations, without passing through the scalar wave equation, and achieving this would be very significant. While I was not able to solve this yet, in this paper I write a proof of decay for the Maxwell fields on the exterior of the Schwarzschild space-time, directly without separating the middle components or refering to the wave equation, assuming that we have a Morawetz estimate at the $0$-derivative level. However, as Andersson and Blue wrote in [AB], proving a Morawetz estimate for the Maxwell field directly would be an important advance in the field, which I did not achieve yet in the present work. To explain, let us recall (see Appendix) that in the exterior, the Schwarzschild metric can be written as,
\beaa
ds^{2} = -(1 - \frac{2m}{r})dt^{2} + \frac{1}{(1 - \frac{2m}{r})}dr^{2} + r^{2}(d\th^{2} + \sin^{2}(\th) d\phi^{2}) \label{eq:Schw}
\eeaa
If we define,
\beaa
r^{*} &=& r + 2m\log(r - 2m) \\
v &=& t + r^{*}  \\
w &=& t - r^{*} 
\eeaa
then, we have,
\beaa
\notag
ds^{2} &=&  - (1 - \frac{2m}{r})dv dw + r^{2}d\sigma^{2} \\
&=&  - \frac{(1 - \frac{2m}{r})}{2} dv\bigotimes dw - \frac{(1 - \frac{2m}{r})}{2} dw \bigotimes dv + r^{2}d\sigma^{2}
\eeaa

Let,
\bea
\hat{\frac{\pa}{\pa w}}  &=&  \frac{1}{(1-\frac{2m}{r})} \frac{\pa}{\pa w} \\
\hat{\frac{\pa}{\pa v}}  &=&   \frac{\pa}{\pa v} \\
\hat{\frac{\pa}{\pa \th}}  &=&  \frac{1}{r} \frac{\pa}{\pa \th} \\
\hat{\frac{\pa}{\pa \phi}}  &=& \frac{1}{r \sin\th }  \frac{\pa}{\pa \phi}
\eea

and at a point in the Schwarzschild space-time, let $e_{1}$, $e_{2}$ be a normalized basis of $S^{2}$, which verifies for all $A, B \in \{1, 2 \}$,
\bea
\g(e_{A}, \hat{\frac{\pa}{\pa w}} ) &=& \g (e_{A}, \hat{\frac{\pa}{\pa v}} ) =   0 \\
\g (e_{A}, e_{B} ) &=& \de_{AB}
\eea

In this context, by "assuming a Morawetz estimate", we mean exactly that for $r_{0}$, $R_{0}$ as in the proof of \eqref{KcontolledbytG}, and for all $t_{i}= (1.1)^{i} t_{0}$, where $t_{0}$ is any real positive number and $i$ is any natural number, if we define:
\bea
\notag
J_{ F }^{(G)}  (  t_{i} \leq t \leq t_{i+1} )  (r_{0} < r < R_{0} )  &=&   \int_{t = t_{i}}^{ t= t_{i+1}}  \int_{r^{*} =  r_{0}^{*} }^{r^{*}= R_{0}^{*} } \int_{\S^{2}}  [    | F_{\hat{v}\hat{w}}|^{2}  +  \frac{1}{4 } | F_{\hat{\phi}\hat{\th}}|^{2} ] .  dr^{*} d\sigma^{2} dt \\
\eea
then, we assume that the non-stationary solutions verify
\bea
&& J_{ F }^{(G)}  (  t_{i} \leq t \leq t_{i+1} ) (r_{0} < r < R_{0} ) \label{Assumption1} \\
\notag
 &\les&  | \hat{E}^{(\frac{\pa}{\pa t})}_{F} (t_{i}) | + | \hat{E}^{(\frac{\pa}{\pa t})}_{F} (t_{i+1})|  + \sum_{j=1}^{3} ( | \hat{E}^{(\frac{\pa}{\pa t})}_{\Lie_{\Om_{j} } F} (t_{i})| +  |\hat{E}^{(\frac{\pa}{\pa t})}_{\Lie_{\Om_{j} } F} (t_{i+1})|  ) 
\eea

where $\Om_{j}$, $j \in \{ 1, 2, 3 \}$, is a basis of angular momentum operators, and where
\bea
\notag
\hat{E}^{(\frac{\pa}{\pa t})}_{F} (t) &=&  \int_{r^{*} = - \infty}^{r^{*} =  \infty} \int_{\S^{2}}   [    \frac{1}{r^{2} (1-\frac{2m}{r}) }  |F_{t\th}|^{2} +  \frac{1}{r^{2}(1-\frac{2m}{r}) \sin^{2} \th}   |F_{t\phi}|^{2} \\
\notag
&&+  \frac{1}{r^{2}(1-\frac{2m}{r}) } |{F_{r^{*}\th}}^{} |^{2}    +  \frac{1}{r^{2} (1-\frac{2m}{r}) \sin^{2} \th}  | {F_{r^{*}\phi}}^{}|^{2}          ]   r^{2} (1-\frac{2m}{r}) d\sigma^{2} dr^{*} \\ \label{definitionofenergyhatieenergywithoutzerocomponents}
\eea
that is the negative of the energy without the middle components $F_{t r^{*}}$, $F_{\th\phi}$. The only stationary solutions of the Maxwell equations on the exterior of the Schwarzschild black hole, are the so-called Coulomb solutions. Hence, the assumption above is assumed for the non-Coulomb solutions. Our proof would still work with any product of Lie derivatives on the right hand side of \eqref{Assumption1}, with adjusting accordingly the quantities in the theorem that depend on the initial data.
 
We will prove that if the middle components of the non-stationary solutions verify a Morzwetz type estimate at the zero-derivative level, \eqref{Assumption1}, then we can prove uniform decay properties of solutions to the Maxwell equations in the domain of outer-communication of the Schwarzschild black hole space-time, including the event horizon, by making use of suitable Sobolev inequalities combined with energy estimates using the energy momentum tensor of the Maxwell fields. We do not make any use of decomposition into spherical harmonics. We start with a Cauchy hypersurface prescribed by $t = t_{0}$ where the initial data has to verify certain regularity conditions (there is no vanishing condition on the bifurcate sphere for $F$). Away from the horizon (in the region $ r \geq R > 2m$, for a fixed $R$), we will prove that
\beaa
 |F_{\hat{\mu}\hat{\nu}}|(w, v, \om) & \leq & \frac{ C }{ (1 + |v|) } \\
 |F_{\hat{\mu}\hat{\nu}}|(w, v, \om) & \leq & \frac{   C  }{ (1 + |w|) } 
\eeaa 
for all $\hat{\mu}, \hat{\nu} \in \{\hat{\frac{\pa}{\pa w}}, \hat{\frac{\pa}{\pa v}}, \hat{\frac{\pa}{\pa \th}}, \hat{\frac{\pa}{\pa \phi}} \}$, where $ F_{\mu \nu}$ is the Maxwell field. Near the horizon, and in the entire exterior region $r \geq 2m$, up to points on the event horizon, we will prove that
\beaa
 |F_{\hat{v} \hat{w}} (v, w, \om) | &\leq& \frac{ C  }{  \max\{1, v \}  }  , \quad \quad |F_{e_{1} e_{2}} (v, w, \om) | \leq  \frac{ C }{ \max\{1, v \} }\\
|F_{ \hat{v}e_{a} } (v, w, \om ) |  &\leq&  \frac{  C }{  \max\{1, v \} }, \quad \quad  | \sqrt{1-\frac{2m}{r}} F_{ \hat{w}e_{a} } (v, w, \om ) | \leq   \frac{ C }{ \max\{1, v \}  } 
\eeaa

To explain really thoroughly:

If one tries to generalize the proof of Dafermos and Rodnianski for the free scalar wave equation to the Maxwell equations, on the Schwarzschild space-time, using the Maxwell energy-momentum tensor instead, some of the difficulties that appear are:

\begin{enumerate}
\item
In order to bound the conformal energy of the Maxwell fields and their derivatives in the direction of Killing vector fields, one needs to control a space-time integral near the trapped surface $r = 3m$, that involves the so-called middle components of the Maxwell field  (see \eqref{zerocomponontsconformalenergy}). In fact, the terms in the space-time integral $J^{(K)}$, obtained by applying the divergence theorem on the Morawetz vector field $K = - w^2 \frac{\pa}{\pa w} - v^2 \frac{\pa}{\pa v} $ contracted with the energy momentum tensor of the Maxwell fields, are non-negative in a region $r_{0} \leq r \leq R_{0}$ that contains $r=3m$. It seems that this cannot be controlled using these methods due to the presence of the other components with the "wrong" sign in the space-time integral generated from a space type vector field of control (see \eqref{contracteddeformationforG}). Indeed, using the divergence theorem with the vector field $G =  f(r^{*})  \frac{\pa}{\pa r^{*}} $, where $f$ is a bounded function, we will obtain a space-time integral and boundary terms. However, the space-time integral obtained from $G$ has independent terms that do not appear in $J^{(K)}$ that enter with the wrong sign, and hence it cannot be made positive. We will prove that if we get past this, see assumption \eqref{Assumption1}, we can can then write a gauge independent proof of uniform decay of the Maxwell fields in the exterior of the Schwarzschild black hole up to points on the horizon, using the Maxwell equations directly. 

\item
One needs to construct a new field which verifies the Maxwell equations and the Bianchi identities, that coincides with the original field in some region and vanishes identically outside another specific region (see the proof of \eqref{EKovertsquare}). In the case of the wave equation $\Box_{\g} \phi = 0$, one can multiply the initial data in the Cauchy problem by a cut-off function, and consider the evolution of such data to obtain a solution that verifies the wave equation and the properties stated previously. In the case of the Maxwell and the Yang-Mills equations, if one multiplies the initial data by a cut-off function then the constraint equations would not be satisfied anymore. It seems at first sight that one cannot get a new field that verifies the needed properties. While this is true if one wanted to do this for all components, nevertheless, one can do this for all the components except to the $F_{rt}$ and $F_{\th\phi}$ components, where the multiplication should be at the level of the space derivative of the components. Hence, one can construct a new field that can be made to coincide with $F$ in a certain region, and vanish outside another except to these last components. The somewhat good news is that the calculations show that these "bad" terms do not appear in the boundary terms generated from the divergence theorem applied to a space-like vector field contracted with the energy-momentum tensor (see \eqref{definitionoftheboundarytermgeneratedfromG}). Hence, in our assumption \eqref{Assumption1}, we suppose that the space-time integral near the the trapped surface $r=3m$ of the middle components, can be bounded by the energy without the middle components, which would be the case if this estimate was obtained by controlling the space-time integral by boundary terms generated from space type vector fields multiplied by a bounded functionr, as shown in estimate \eqref{controlenergyforG}. This is crucial to establish \eqref{EKovertsquare} that is the main estimate to bound the conformal energy in \eqref{boundingEK}.

\item In order to prove decay for a generalized energy that would control the $L^{2}$ norm of the fields near the horizon, one is confronted to a situation where it seems crucial to control a space-time integral supported on a bounded region in space near the event horizon, that contains all the components. We overcome this by our assumption \eqref{Assumption1}; it can also be used to bound the space-time integral containing the other components as in \eqref{estimate2H}.

Also, in addition to the above:
 
\item As opposed to the case of the wave equation, the flux of a generalized energy that controls the $L^{2}$ norm of the fields near the horizon do not contain all of their components. On $v = constant$ hypersurfaces it contains only $F_{\hat{w}\hat{\th}}$, $F_{\hat{w}\hat{\phi}}$, $F_{\hat{v}\hat{w}}$ and $F_{\hat{\th}\hat{\phi}}$, and on $w = constant$ hypersurfaces, it contains only the components $F_{\hat{v}\hat{w}}$, $F_{\hat{\th}\hat{\phi}}$, $F_{\hat{v}\hat{\th}}$ and $F_{\hat{v}\hat{\phi}}$. In addition, while using Sobolev inequalities near the horizon, since $\frac{\pa }{\pa v}$ and $\frac{\pa}{\pa w}$ are not Killing this would add an additional difficulty, while in the case of the wave equation, the squares of these derivatives appear in the fluxes which are easily controlled. We get around these problems by using suitable Sobolev type inequalities for each component, combined with the Bianchi identities and the field equations, in a way that the derivatives in the direction of $\frac{\pa }{\pa v}$ and $\frac{\pa}{\pa w}$ can be controlled by Killing derivatives, $\pa_t$ and $\pa_{\Om_{j}}$, of the components that appear in the flux to which we would have proved decay.
 
\end{enumerate}

In the case of the Yang-Mills equations there is the additional impediment that is the equations are non-linear. If one gets an energy identity for the Yang-Mills fields, one cannot write directly the same energy identity for the derivatives of the field in the direction of Killing vector fields, as opposed to the Maxwell fields, due to the non-linearity of the equations.

More precisely, we will prove the following theorem,

\subsection{The statement}\

\begin{theorem} \label{thetheorem}

Let $(\overline{M}, \overline{\g} )$ be a maximally extended Schwarzschild space-time. We know by then that the exterior of the black hole space-time, $(\overline{M}, \overline{\g} )$, is isometric to $(M, \g)$ where,

\beaa
\g &=& -(1 - \frac{2m}{r})dt^{2} + \frac{1}{(1 - \frac{2m}{r})}dr^{2} + r^{2}(d\th^{2} + \sin^{2}(\th) d\phi^{2}) \\
&=&    - \frac{(1 - \frac{2m}{r})}{2} dv\bigotimes dw - \frac{(1 - \frac{2m}{r})}{2} dw \bigotimes dv + r^{2}d\sigma^{2} 
\eeaa 

Let, $\Sigma_{t=t_{0}} $ be a Cauchy hypersurface prescribed by $t=t_{0}$. In a system of coordinates, let $ F_{\mu \nu}(w, v, \om) $ be the components of the Maxwell field defined as the solution of the Cauchy problem of the Maxwell equations:
\bea
\der^{\a} F_{\a\b} =  0 \label{Maxwellequations1}
\eea
\bea
\der_{\a} F_{\b\ga} + \der_{\b} F_{\ga\a} + \der_{\ga} F_{\a\b} = 0 \label{Maxwellequations2}
\eea
where the initial data prescribed on the Cauchy hypersurface $\Sigma_{t=t_{0}}$ verifies the Maxwell constraint equations:
$$ {\der}^{\nu} F_{t\nu} (t_{0}, r, \om)  = {\der}^{\nu} F_{\mu\nu} (\frac{\pa}{\pa t})^{\mu} (t_{0}, r, \om)  = 0 $$
$$[ {\der}_{r} F_{\th\phi} + {\der}_{\th} F_{\phi r} + {\der}_{\phi} F_{r \th} ] (t_{0}, r, \om)  = 0 $$

We assume that for $r_{0}$ and $R_{0}$ as in the proof of \eqref{KcontolledbytG}, ($r_{0} \leq 3m \leq R_{0}$), and for all $t_{i}= (1.1)^{i} t_{0}$, where $t_{0}$ is any real positive number and $i$ is any natural number, the non-stationary solutions verify
\bea
&& \int_{t = t_{i}}^{ t= t_{i+1}}  \int_{r^{*} =  r_{0}^{*} }^{r^{*}= R_{0}^{*} } \int_{\S^{2}}  [    | F_{\hat{v}\hat{w}}|^{2}  +  \frac{1}{4 } | F_{\hat{\phi}\hat{\th}}|^{2} ] .  dr^{*} d\sigma^{2} dt  \label{Assumption1writtenusingJG} \\
\notag
&\les&  | \hat{E}^{(\frac{\pa}{\pa t})}_{F} (t_{i}) | + | \hat{E}^{(\frac{\pa}{\pa t})}_{F} (t_{i+1})|  + \sum_{j=1}^{3} ( | \hat{E}^{(\frac{\pa}{\pa t})}_{\Lie_{\Om_{j} } F} (t_{i})| +  |\hat{E}^{(\frac{\pa}{\pa t})}_{\Lie_{\Om_{j} } F} (t_{i+1})|  )  
\eea

where $\Om_{j}$, $j \in \{ 1, 2, 3 \}$, is a basis of angular momentum operators, and where

\bea
\notag
\hat{E}^{(\frac{\pa}{\pa t})}_{F} (t)  &=& \int_{r^{*} = - \infty }^{r^{*} = \infty }  \int_{\phi = 0}^{ 2\pi} \int_{\th= 0 }^{\pi }   2 [  (1- \frac{2m}{r} ) ^{2} (   | F_{\hat{w}\hat{\th}} |^{2} +  | F_{\hat{w}\hat{\phi}} |^{2} ) +   | F_{\hat{v}\hat{\th}} |^{2} +  |F_{\hat{v}\hat{\phi}} |^{2}      ] . r^{2}  \sin(\th) d\th d\phi dr^{*} 
\eea

that is the energy without the middle components $F_{\hat{v}\hat{w}}$ and $F_{\hat{\th}\hat{\phi}}$.

\begin{remark}
Our proof would work with any arbitrary product of Lie derivatives of $F$ on the right hand side of assumption \eqref{Assumption1writtenusingJG}, with an adjustment on the initial data accordingly.
\end{remark}
 
Then, we have,

\beaa
 |F_{\hat{v} \hat{w}} (v, w, \om) | &\leq& \frac{ C  }{  \max\{1, v \}  }  , \quad \quad |F_{e_{1} e_{2}} (v, w, \om) | \leq  \frac{ C }{ \max\{1, v \} }\\
|F_{ \hat{v}e_{a} } (v, w, \om ) |  &\leq&  \frac{  C }{  \max\{1, v \} }, \quad \quad | \sqrt{1-\frac{2m}{r}} F_{ \hat{w}e_{a} } (v, w, \om ) | \leq   \frac{ C }{ \max\{1, v \}  } 
\eeaa

in the entire exterior region, up to points on the horizon, under certain regularity conditions on the initial data prescribed in what follows, to which we also add the assumption that their limit goes to zero at spatial infinity on the initial slice $\Sigma_{t=t_{0}}$.

More precisely, away from the horizon (in the region $ r \geq R > 2m$), we have,

\beaa
 |F_{\hat{\mu}\hat{\nu}}|(w, v, \om) &\lesssim& \frac{  E_{F} }{ (1 + |v|) } 
\eeaa

and,

\beaa
 |F_{\hat{\mu}\hat{\nu}}|(w, v, \om)  &\lesssim& \frac{   E_{F}  }{ (1 + |w|) } 
\eeaa

for all normalized components $\hat{\mu}, \hat{\nu} \in \{\hat{\frac{\pa}{\pa w}}, \hat{\frac{\pa}{\pa v}}, \hat{\frac{\pa}{\pa \th}}, \hat{\frac{\pa}{\pa \phi}} \}$, and where $E_{F}$ is defined by,

\beaa
E_{F} &= & [ \sum_{i=0}^{1}  \sum_{j=0}^{5} E_{ r^{j} (\rLie)^{j} (\Lie_{t})^{i}  F }^{(\frac{\pa}{\pa t})} (t=t_{0})     +  E_{ r^{6} (\rLie)^{6}  F }^{(\frac{\pa}{\pa t})} (t=t_{0})   \\
&& + \sum_{i=0}^{1}  \sum_{j=0}^{4} E_{ r^{j} (\rLie)^{j} (\Lie_{t})^{i}  F  }^{(K)} (t=t_{0})   +  E_{ r^{5} (\rLie)^{5}  F  }^{(K)} (t=t_{0}) ]^{\frac{1}{2}} \\
\eeaa

where $\rLie$ is the Lie derivative restricted on the 2-spheres, and where,

\beaa
E^{(K)}_{F} (t_{i}) &=& \int_{r^{*} = - \infty}^{r^{*}= \infty}    \int_{\phi = 0}^{ 2\pi}  \int_{\th= 0 }^{\pi }   (   w^{2} (1-\frac{2m}{r})^{2} [ | F_{\hat{w}\hat{\th} } |^{2} +  | F_{\hat{w} \hat{\phi} } |^{2} ]  +  v^{2}  [  | F_{\hat{v} \hat{\th} } |^{2} +  | F_{\hat{v} \hat{\phi} } |^{2} ]  \\
&& \quad \quad \quad \quad \quad +  (\om^{2} + v^{2} )  (1- \frac{2m}{r} ) [  |F_{\hat{v} \hat{w} }|^{2}   +   | F_{\hat{\phi}\hat{\th} }|^{2}]  )   r^{2}  \sin(\th)d\th d\phi dr^{*} 
\eeaa

and where,

\beaa
 E_{F}^{  (\frac{\pa}{\pa t} )} ( t= t_{0} ) &=& \int_{r^{*} = - \infty }^{r^{*} = \infty }  \int_{\phi = 0}^{ 2\pi} \int_{\th= 0 }^{\pi }   2 [   (1- \frac{2m}{r} )^{2} ( | F_{\hat{w}\hat{\th}} |^{2} +  | F_{\hat{w}\hat{\phi}} |^{2} ) +   | F_{\hat{v}\hat{\th}} |^{2} +  | F_{\hat{v}\hat{\phi}} |^{2}  \\
&& +   (1- \frac{2m}{r} ) (  |F_{\hat{v}\hat{w}}|^{2}    +  | F_{\hat{\phi}\hat{\th}}|^{2}  )  ] . r^{2}  \sin(\th) d\th d\phi dr^{*} 
\eeaa

We expect the angular momentum derivatives, or any other Killing derivatives, in the assumption \eqref{Assumption1} to come out due to the presence of the trapped surface $r=3m$. Hence, we also assume - although we can make the proof without the following assumption by using only \eqref{Assumption1} with the price of losing more derivatives on the initial data - that the solutions we are looking at verify
\bea
\notag
\int_{t = t_{i}}^{ t= t_{i+1}}  \int_{r^{*} =  r_{0}^{*} }^{r^{*}= R_{0}^{*} } \int_{\S^{2}}  [    | F_{\hat{v}\hat{w}}|^{2}  +  \frac{1}{4 } | F_{\hat{\phi}\hat{\th}}|^{2} ] .|r^{*} - (3m)^{*}|.  dr^{*} d\sigma^{2} dt &\les&  | \hat{E}^{(\frac{\pa}{\pa t})}_{F} (t_{i}) | + | \hat{E}^{(\frac{\pa}{\pa t})}_{F} (t_{i+1})| \\ \label{Assumption2} 
\eea

Then, near the horizon (in the region $2m \leq r \leq R $), we have,

\beaa
 |F_{\hat{v} \hat{w}} (v, w, \om) | &\lesssim & \frac{ E_{1}  }{  \max\{1, v \}  }  , \quad \quad |F_{e_{1} e_{2}} (v, w, \om) | \lesssim  \frac{ E_{1}  }{ \max\{1, v \} }\\
|F_{ \hat{v}e_{a} } (v, w, \om ) |  &\les&  \frac{  E_{2} }{  \max\{1, v \} }, \quad \quad  | \sqrt{1-\frac{2m}{r}}  F_{ \hat{w}e_{a} } (v, w, \om ) | \lesssim   \frac{ E_{2}  }{ \max\{1, v \}  } \\
\eeaa
for $a \in \{1, 2 \}$, and where,

\beaa
E_{1} &= & [ \sum_{j = 0}^{6}    E_{   r^{j}(\rLie)^{j}   F }^{(\frac{\pa}{\pa t}) } (t=t_{0})  + \sum_{j = 0}^{5}    E_{   r^{j}(\rLie)^{j}   F }^{( K ) } (t=t_{0})  +  \sum_{j=1}^{3}   E_{r^{j}(\rLie)^{j} F }^{ \# (\frac{\pa}{\pa t} )} ( t= t_{0} ) ]^{\frac{1}{2}}  \\
\eeaa

where,

\beaa
 E_{F}^{ \# (\frac{\pa}{\pa t} )} ( t= t_{0} ) &=& \int_{r^{*} = - \infty }^{r^{*} = \infty } \int_{\S^{2}}    [  (1-\frac{2m}{r} ) (  | F_{\hat{w}\hat{\th}} |^{2} +  | F_{\hat{w}\hat{\phi}} |^{2}   ) +(  | F_{\hat{v}\hat{\th}} |^{2} +  | F_{\hat{v}\hat{\phi}} |^{2} ) \\
&& + (  |F_{\hat{v}\hat{w}}|^{2}   +   | F_{\hat{\phi}\hat{\th}}|^{2} )  ] . r^{2} d\sigma^{2} dr^{*} ( t = t_{0} )
\eeaa

and where,

\beaa
E_{2} &=& [ E_{F}^{2} + \sum_{i=0}^{1}  \sum_{j=1}^{2}    E_{ r^{j}(\rLie)^{j} (\Lie_{t})^{i} F }^{ \# (\frac{\pa}{\pa t} )} ( t= t_{0} )   +    E_{r^{3}(\rLie)^{3} F }^{ \# (\frac{\pa}{\pa t} )} ( t= t_{0} )  ]^{\frac{1}{2}} \\
&= & [ \sum_{i=0}^{1} \sum_{j = 0}^{5}  E_{   r^{j}(\rLie)^{j}  (\Lie_{t})^{i}  F }^{(\frac{\pa}{\pa t}) } (t=t_{0})    +  E_{  r^{6}(\rLie)^{6}   F}^{(\frac{\pa}{\pa t}) } (t=t_{0}) \\
&& + \sum_{i=0}^{1}  \sum_{j = 0}^{4}   E_{   r^{j}(\rLie)^{j}  (\Lie_{t})^{i}  F}^{( K ) } (t=t_{0})  +  E_{     r^{5}(\rLie)^{5} F }^{( K ) } (t=t_{0}) \\
&& + \sum_{i=0}^{1} \sum_{j=1}^{2}      E_{ r^{j}(\rLie)^{j} (\Lie_{t})^{i} F }^{ \# (\frac{\pa}{\pa t} )} ( t= t_{0} )   +    E_{r^{3}(\rLie)^{3} F }^{ \# (\frac{\pa}{\pa t} )} ( t= t_{0} )  ]^{\frac{1}{2}}\\
\eeaa

\end{theorem}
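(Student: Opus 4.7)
The plan is to mimic the Dafermos--Rodnianski approach for the scalar wave equation, but applied directly to the Maxwell energy-momentum tensor $\T_{\a\b}[F] = F_{\a\ga} F_\b{}^\ga - \frac{1}{4}\g_{\a\b} F_{\ga\de} F^{\ga\de}$, and to convert decay of weighted energies into pointwise decay by Sobolev embeddings on the $t={\rm const}$ slices and on the spheres $\S^2_{v,w}$, after commuting throughout with the Killing fields $\pa_t$ and $\Om_j$. First I would establish the basic boundedness: contracting $\T[F]$ with $\pa_t$ and applying the divergence theorem gives conservation of $E_F^{(\pa_t)}(t)$, which is non-negative in the exterior with the usual degeneracy at $r=2m$ for the outgoing-null components; since $\pa_t$ and $\Om_j$ are Killing, the same identity holds for $\Lie_{\Om_j} F$ and $\Lie_{\pa_t} F$ with no error terms. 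To obtain decay, I would perform the conformal energy estimate using the Morawetz-type vector field $K=-w^2 \pa_w - v^2 \pa_v$: its boundary term yields the $v^2$- and $w^2$-weighted energy $E_F^{(K)}(t)$ in the statement, while its deformation tensor produces a space-time integrand $J^{(K)}$ that is non-negative outside a compact interval $[r_0,R_0]\ni 3m$ and that, inside that range, is bounded pointwise by a multiple of the middle-component density $|F_{\hat v\hat w}|^2 + \frac{1}{4}|F_{\hat\phi\hat\th}|^2$---exactly the integrand on the left-hand side of assumption \eqref{Assumption1writtenusingJG}.

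Second, I would run this $K$-estimate on the dyadic intervals $[t_i,t_{i+1}]$ with $t_i=(1.1)^i t_0$, and invoke \eqref{Assumption1writtenusingJG} (together with its $\Om_j$-commuted version) to absorb the sign-indefinite bulk by $\hat E^{(\pa_t)}$ of $F$ and $\Lie_{\Om_j} F$, which themselves are bounded by the initial data by the first step. A pigeonhole argument along the dyadic sequence then yields $E_F^{(K)}(t) \les t^{-2} E_F^{(K)}(t_0)$---the estimate \eqref{EKovertsquare} and in turn \eqref{boundingEK}. From $t^{-2}$ decay of the conformal energy one reads off $v^{-2}$ (resp.\ $w^{-2}$) decay of the fluxes of $\T[F]\pa_t$ on outgoing (resp.\ ingoing) null hypersurfaces, after $\pa_t$- and $\Om_j$-commutation.

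Third, in the far region $r\geq R>2m$ the $\pa_t$-energy is coercive in all normalized components, so a Sobolev embedding on $\S^2_{v,w}$ using up to three angular derivatives $(r\rLie)^j F$, followed by a one-dimensional Sobolev embedding in the characteristic direction using one $\pa_t$-commutator and extra $\rLie$'s, applied to the decayed null fluxes, delivers the pointwise bounds $E_F/(1+|v|)$ and $E_F/(1+|w|)$; the exact derivative count matches the double sums in the definition of $E_F$. Near the horizon the $\pa_t$-flux degenerates on the outgoing components, so I would introduce a red-shift type vector field $N$ transverse to the horizon producing a non-degenerate energy $E_F^{\#(\pa_t)}$; its bulk again contains a sign-indefinite contribution supported in $[r_0,R_0]$ of the same structural form as $J^{(G)}$, so \eqref{Assumption1writtenusingJG} controls it too (cf.\ \eqref{estimate2H}), yielding $v^{-1}$ decay of $E^{\#(\pa_t)}$. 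Because $\pa_v$ and $\pa_w$ are not Killing near $r=2m$, Sobolev derivatives in those directions are not for free; I would use the Bianchi identity \eqref{Maxwellequations2} and the field equation \eqref{Maxwellequations1} to trade each $\pa_v F_{\hat w\hat\mu}$ or $\pa_w F_{\hat v\hat\mu}$ for a $\pa_t$-derivative plus tangential $\rLie$-derivatives of components that do appear in the flux, which explains the additional derivatives entering $E_1$ and $E_2$.

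The main obstacle is the simultaneous handling of the two structural defects of the Maxwell system on Schwarzschild: the middle-component integrand arising in both $J^{(K)}$ and the $N$-bulk cannot be absorbed without a Morawetz-type input (hence the assumption \eqref{Assumption1writtenusingJG}), and the horizon null fluxes fail to dominate every normalized component, forcing careful substitutions via Bianchi and the field equations to close the Sobolev step near $r=2m$. Making these two mechanisms cooperate consistently across the far, intermediate, and horizon regions, while keeping the Lie-derivative count aligned with the initial-data norms $E_F$, $E_1$, and $E_2$, is the technical heart of the argument.
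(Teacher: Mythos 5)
Your overall architecture matches the paper's: conservation of $E^{(\pa_t)}$ and commutation with the Killing fields, a conformal energy estimate with $K=-w^{2}\pa_w-v^{2}\pa_v$ whose bulk is absorbed on dyadic slabs $[t_i,t_{i+1}]$ by the assumed Morawetz input, Sobolev on the spheres plus one radial integration in the far region, and a modified, horizon-adapted vector field with component-wise Sobolev inequalities and Bianchi/field-equation substitutions near $r=2m$. But there is a genuine gap at the heart of the conformal energy step. The assumption bounds $J^{(G)}_F(t_i\leq t\leq t_{i+1})$ by $\hat E^{(\pa_t)}_F(t_i)+\hat E^{(\pa_t)}_F(t_{i+1})$ (plus $\Om_j$-commuted versions), and the $K$-identity gives $J^{(K)}\les t_{i+1}\,J^{(G)}$. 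If, as you propose, you then control $\hat E^{(\pa_t)}(t_i)$ merely by conservation of energy from the initial slice, each dyadic slab contributes $O(t_{i+1})\cdot E(t_0)$ to $E^{(K)}$, and summing gives linear growth rather than a uniform bound; no pigeonhole along the dyadic sequence rescues this. What closes the argument in the paper is the intermediate estimate $\hat E^{(\pa_t)}_F(t_i)\les E^{(K)}_F(t_i)/t_i^{2}$, obtained by constructing an auxiliary field $\hat F$ that still satisfies the Maxwell equations, coincides with $F$ for $|r^*|\leq t_i/2$, and vanishes for $|r^*|\geq 3t_i/4$ in the components entering $\hat E^{(\pa_t)}$ --- the Maxwell analogue of cutting off the initial data, which cannot be done naively because multiplying the data by a cut-off violates the constraints (the cut-off must be applied at the level of $\der_{r^*}F_{\hat t\hat{r^*}}$ and $\der_{r^*}F_{\hat\th\hat\phi}$). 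Only with this localization does $J^{(K)}\les t_{i+1}\cdot t_i^{-2}(E^{(K)}(t_i)+\dots)$ hold, so that the convergent series $\sum_i i/t_i$ yields the uniform bound $E^{(K)}(t)\les E^{M}_F$ of \eqref{boundingEK}. Your proposal omits this construction entirely, and your stated conclusion $E^{(K)}_F(t)\les t^{-2}E^{(K)}_F(t_0)$ is also not the correct output: the conformal energy is bounded, not decaying; the $t^{-2}$ decay belongs to the \emph{local, unweighted} energy on bounded $r^*$-regions, read off from the $v^{2},w^{2}$ weights inside $E^{(K)}$ as in \eqref{EKoverminvsquaredandwsquared}.

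A secondary, smaller imprecision: near the horizon the paper does not get $v^{-1}$ decay of a nondegenerate energy $E^{\#(\pa_t)}$ (which is an initial-data norm there); it proves $v_{+}^{-2}$ decay of the flux $-F^{(H)}$ of the vector field $H=-\frac{h(r^*)}{1-\mu}\pa_w-h(r^*)\pa_v$ on $v=\mathrm{const}$ hypersurfaces and on $w=\mathrm{const}$ segments of unit $v$-length, via an iteration that exploits the sign $-I^{(H)}\geq 0$ for $r\leq r_1$ and a mean-value selection of good slices $v_i^{\#}\in[v_i,v_{i+1}]$. Your red-shift heuristic points in the right direction, but the quantitative decay mechanism (estimates \eqref{estimate3H1}, \eqref{estimate4H}, \eqref{estimate6Hflux}) and the fact that the $H$-flux controls only some components --- forcing the component-by-component Sobolev step you correctly anticipate --- would need to be supplied to make that part of the argument complete.
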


\subsection{Strategy of the proof}\

We decompose our proof of decay for the Maxwell fields, on the Schwarzschild black hole, into two parts. The first proves decay away from the horizon (in the region $r \geq R$ where $R > 2m$, arbitrarily fixed), and the second part deals with the region near the horizon ($2m \leq r \leq R$).

\subsubsection{In the first part} All integrations will be done on spacelike hypersurfaces prescribed by $t = constant$, and space-time integrals will be understood as integrals on a region bounded by those hypersurfaces. The starting point of our proof is a suitable use of Sobolev inequalities. Sobolev inequalities permit one to bound the $L^{\infty}$ norm of the square of the Maxwell field (the square is taken with respect to the scalar product on the Lie algebra $<\,,\,>$), by the $L^{2}$ norm of the Maxwell field and its derivatives up to some order. And yet, the  $L^{2}$ norm of the Maxwell fields and their derivatives in the direction of Killing vector fields can be controlled away from the horizon by the energy $E^{(\frac{\pa}{\pa t})}$ of those (energy obtained from the vector field $\frac{\pa}{\pa t}$). We lose control on the $L^{2}$ norm of the components $F_{\hat{v}\hat{w}}$, $F_{\hat{\th}\hat{\phi}}$, $F_{\hat{w}\hat{\th}}$, $F_{\hat{\th}\hat{\phi}}$ of those fields near the horizon due to the presence of the $(1-\frac{2m}{r})$ term (that vanishes at the horizon) that appears in the expression of the energy. And so, since the covariant derivatives of the Maxwell fields in the direction of non-Killing vector fields can be transformed into covariant derivatives in the direction of Killing vector fields by using the field equations and the Bianchi identities, one can bound the $L^{2}$ norm of those fields by their energy which is conserved, because we have conservation of energy for the Maxwell equations and therefore for their Killing derivatives because the Maxwell equations are linear. This way, we can bound the Maxwell fields away from horizon.

However, if we prove decay of the local $L^{2}$ norms of the Maxwell fields and their derivatives in the direction of Killing vector fields, we can prove decay of the Maxwell fields. The key point is that the $L^{2}$ norms here can be taken to be space integrals on only a bounded region. This way, away from the horizon, they can be controlled by a piece of the energy integral, that is the energy as a space integral without integrating on the whole space, but only on the bounded region (this is because the terms that appear in the space integral of the energy are exactly the squares of the Maxwell fields, multiplied by the term $(1-\frac{2m}{r})$ for some components). These energies taken on a bounded region of space, can be bounded by the conformal energy (obtained by taking the Morawetz vector field $K = - \om^{2} \frac{\pa}{\pa \om} - v^{2} \frac{\pa}{\pa v}$) divided by the minimum on that region of $v^{2}$ and $w^{2}$ (see  \eqref{EKoverminvsquaredandwsquared}). Consequently, if we bound the conformal energy, we have shown so far how one can possibly obtain decay from this of solutions to the Maxwell equations away from the horizon.

To bound the conformal energy of the Maxwell fields and their derivatives in the direction of Killing vector fields, we proceed as follows:

\begin{enumerate}

\item We will use the space-time integral $J_{F}^{(G)}$ in our assumption \eqref{Assumption1}, of which the terms are positive, to control $J_{F}^{(K)}$ (the space-time integral obtained by using the divergence theorem on the Morawetz vector field $K$ contracted with the energy momentum tensor of the Maxwell fields) in the following sense, see estimate \eqref{KcontolledbytG}:

\bea
J_{F}^{(K)} ( t_i \leq t \leq t_{i+1} ) &\les& t_{i+1} J_{F}^{(G)} ( t_i \leq t \leq t_{i+1} ) (r_{0} < r < R_{0} )
\eea
and from our assumption \eqref{Assumption1}, we have
\beaa
&& J_{F}^{(G)} ( t_i \leq t \leq t_{i+1} ) \\
&\les&  | \hat{E}^{(\frac{\pa}{\pa t})}_{F} (t_{i}) | + | \hat{E}^{(\frac{\pa}{\pa t})}_{F} (t_{i+1})|  + \sum_{j=1}^{3} ( | \hat{E}^{(\frac{\pa}{\pa t})}_{\Lie_{\Om_{j} } F} (t_{i})| +  |\hat{E}^{(\frac{\pa}{\pa t})}_{\Lie_{\Om_{j} } F} (t_{i+1})|  ) 
\eeaa

where $\Om_{j}$, $j \in \{1, 2, 3 \}$, is a basis of angular momentum derivatives, and $\hat{E}^{(\frac{\pa}{\pa t})}_{F} (t)$ is the energy without the middle components, see \eqref{definitionofenergyhatieenergywithoutzerocomponents}.

\item We construct a new field, $\hat{F}$, such that it verifies the Maxwell equations and the Bianchi identities, and it coincides with the original field in some region and vanishes identically outside another specific region for the components which appear in the boundary terms $\hat{E}^{(\frac{\pa}{\pa t})}_{F}(t)$, so that we could write:

\bea
| \hat{E}^{(\frac{\pa}{\pa t})}_{\hat{F}}(t) | \les \frac{E^{(K)}_{F}(t) }{t^{2}}
\eea

and consequently (see \eqref{EKovertsquare}):

\bea
\notag
&& J_{F}^{(G)} ( t_i \leq t \leq t_{i+1} ) (r_{0} < r < R_{0} )  \\
&\lesssim&   \frac{1}{t_{i}^{2}} E_{F}^{(K)} (t=t_{i}) + \frac{1}{t_{i}^{2}} \sum_{j} E_{\Omega_{j} F}^{(K)} (t=t_{i}) \\
\notag
&& + \frac{1}{t_{i+1}^{2}} E_{F}^{(K)} (t=t_{i+1}) + \frac{1}{t_{i+1}^{2}} \sum_{j} E_{\Omega_{j} F}^{(K)} (t=t_{i+1})   
\eea

For $t_{i}$ such that $ t_{i+1} = (1.1) t_{i} $ (where $i$ is an integer), and $| r^{*}(r_{0}) | +| r^{*}(R) | \leq 0.4 t_{i}$, we can make use of the divergence theorem to properly commutate these inequalities, and use the fact that the series $\sum \frac{ i}{t_{i}}$ converges, to establish a uniform bound on the conformal energy that depends on the initial data and its Killing Lie derivatives derivatives. We obtain \eqref{boundingEK}:
\bea
\notag
&& E_{F}^{(K)} (t) \\
\notag
&\les&  E_{F, r \rLie F, r^{2} (\rLie)^{2} F, r^{3} (\rLie)^{3} F }^{(\frac{\pa}{\pa t})}   (t=t_{0})  +  E_{F, r \rLie  F, r^{2} (\rLie)^{2} F  }^{(K)} (t=t_{0})  = E_{F}^{M}  \\
\eea

\subsubsection{In the second part} To obtain decay near the horizon, we are going to integrate in rectangles in the Penrose diagram representing the exterior of the Schwarzschild black hole, of which one side is included in the horizon. We will apply the divergence theorem with the vector field $  H = - \frac{ h(r^{*})}{(1-\frac{2m}{r})} \frac{\pa}{\pa w}   -  h(r^{*}) \frac{\pa}{\pa v}  $ contracted with the energy momentum tensor of the Maxwell fields, where $h \geq 0$ is supported in the region  $ 2m \leq r \leq (1.2) r_{1} $ for $r_{1}$ chosen such that, $ 2m < r_{0} \leq r_{1} < (1.2) r_{1} < 3m $, and where $h$ is such that $ \lim_{r^{*} \to - \infty} h (r^{*}) =  1 $, and for $r \leq r_{1}$, we have $h > 0$, $h' \geq 0$, $ h' \leq \frac{2m}{r^{2}}  h$, $(1-\frac{2m}{r}) \frac{3}{r} h \leq h'$, and $ (1+\frac{6m}{r^{2}}) h \leq \frac{2m h'}{(r-2m)}$. By applying the divergence theorem in the rectangles described previously with $H$ defined as such, we get that the flux through the hypersurfaces prescribed by $v = constant$ is roughly speaking the $L^{2}$ norms of $\sqrt{1-\frac{2m}{r}} F_{\hat{w}\hat{\th}}$ and $\sqrt{1-\frac{2m}{r}} F_{\hat{w}\hat{\phi}}$, and the $L^{2}$ norms of $\sqrt{1-\frac{2m}{r}} F_{\hat{v}\hat{w}}$ and $\sqrt{1-\frac{2m}{r}} F_{\hat{\th}\hat{\phi}}$. In addition, we get a space-time integral supported near the event horizon $-I_{F}^{(H)}$, of which the terms are roughly speaking the squares of $F_{\hat{w}\hat{\th}}$, and $F_{\hat{w}\hat{\phi}}$, and roughly a factor that goes to zero when $r$ goes to $2m$ multiplied by the squares of $F_{\hat{v}\hat{w}}$, $F_{\hat{\th}\hat{\phi}}$, $F_{\hat{v}\hat{\th}}$, and $F_{\hat{v}\hat{\phi}}$.\\

\item In order to prove decay for the flux of $H$, that is a generalized energy that would control the $L^{2}$ norm of some components of the fields near the horizon, one is confronted to a situation where it seems crucial to control a space-time integral supported on a bounded region in space near the event horizon, that contains the non-middle components. This could be overcome by the assumed Morawetz estimate, \eqref{Assumption1}, that could be used to bound the space-time integral of the non-middle components as well. Just to simplify the calculations we assume that we have \eqref{Assumption2}, since we expect the derivatives in assumption \eqref{Assumption1} to come out due to the trapped surface $r=3m$, i.e.
\bea
\notag
&& \int_{t = t_{i}}^{ t= t_{i+1}}  \int_{r^{*} =  r_{0}^{*} }^{r^{*}= R_{0}^{*} } \int_{\S^{2}}  [    | F_{\hat{v}\hat{w}}|^{2}  +  \frac{1}{4 } | F_{\hat{\phi}\hat{\th}}|^{2} ] .|r^{*} - (3m)^{*}|.  dr^{*} d\sigma^{2} dt \\
&\les&  | \hat{E}^{(\frac{\pa}{\pa t})}_{F} (t_{i}) | + | \hat{E}^{(\frac{\pa}{\pa t})}_{F} (t_{i+1})| 
\eea

Hence, away from the horizon, in $r \geq r_{1}$, the space-time integral, $|I_{F}^{(H)} |$, can be bounded by the standard energy supported on a bounded region in space, to which one can prove decay due to the boundedness of the conformal energy that we would have already established in the first part.

Near the horizon, in $r \leq r_{1}$, the choices $h' \geq 0$, $ h' \leq \frac{2m}{r^{2}}  h$, $(1-\frac{2m}{r}) \frac{3}{r} h \leq h'$ were constructed on purpose to obtain $0 \leq - I_{F}^{(H)} (r \leq r_{1}) $. This last fact will lead to an inequality on $- I_{F}^{(H)} $, that involve the flux of the standard energy that one can bound, and that from the vector field $H$ on $v = constant$ hypersurface, \eqref{estimate3H1}, for $v_{i} = t_{i} + r_{1}^{*} $, and $w_{i} = t_{i} - r_{1}^{*} $, where $t_{i}$ is defined as in the first part, $t_{i} = (1.1)^{i} t_{0}$:

\bea
\notag
&& - I_{F}^{(H)} (  v_{i} \leq v \leq v_{i+1} ) ( w_{i} \leq w \leq \infty) ( r \leq r_{1} ) \\
\notag
&& - F_{F}^{(H)} ( v = v_{i+1} ) ( w_{i} \leq w \leq \infty )  - F_{F}^{(H)} ( w = \infty ) ( v_{i} \leq v \leq v_{i+1} )  \\
\notag
&\lesssim& F_{F}^{(\frac{\pa}{\pa t} )} ( w = w_{i} ) ( v_{i} \leq v \leq v_{i+1} )  - F_{F}^{(H)} ( v = v_{i} ) ( w_{i} \leq w \leq \infty ) \\
\notag
&&  +  | E^{(\frac{\pa}{\pa t})}_{F}  (  -(0.85)t_{i} \leq r^{*} \leq (0.85)t_{i}  ) (t= t_{i}) | | \\
\eea

Using the Cauchy stability one can bound the flux from $H$ by the initial data prescribed on the initial Cauchy hypersurface, and hence bound the space-time integral $- I_{F}^{(H)} ( r \leq r_{1} ) $. In addition, we can prove inequality \eqref{estimate4H}:
\bea
\notag
&& \inf_{ v_{i} \leq v \leq v_{i+1} }  - F_{F}^{(H)} ( v  ) ( w_{i} \leq w \leq \infty )  \\
\notag
&\lesssim& \frac{ - I_{F}^{(H)}(  v_{i} \leq v \leq v_{i+1} ) ( w_{i} \leq w \leq \infty) ( r \leq r_{1}) }{  (v_{i+1} - v_{i} ) } \\
&&+ \sup_{ v_{i} \leq v \leq v_{i+1} } F_{F}^{(\frac{\pa}{\pa t} )} ( v  ) ( w_{i} \leq w \leq \infty ) ( r \geq r_{1} ) 
\eea

To prove decay to the flux from $H$ on $v = constant$ hypersurfaces near the horizon and on $w = constant$ hypersurfaces on a segment  of fixed length in $v$, we apply the last two inequalities above in the rectangle prescribed by $[v_{i}, v_{i+1}].[w_{i}, \infty]$, and we commutate them properly. This will lead to decay in $v_{+} = \max\{1, v \}$ of the flux from $H$ on $v = constant$ near the horizon, and on $w = constant$ hypersurfaces with a fixed length in $v$, as shown in estimates \eqref{estimate6Hflux} and \eqref{estimate6Hwequalconstantflux}):

\bea
 - F_{F}^{(H)} ( v  )(2m \leq r \leq R) &\lesssim &  \frac{ [ |E_{F}^{  (\frac{\pa}{\pa t} )} | + E_{F}^{ \# (\frac{\pa}{\pa t} )} ( t= t_{0} ) + E_{F}^{M} ] }{  v_{+}^{2}  }
\eea
and,
\bea
 - F_{F}^{(H)} ( w ) (  v-1 \leq \overline{v} \leq v )  &\lesssim &  \frac{ [ |E_{F}^{  (\frac{\pa}{\pa t} )} | + E_{F}^{ \# (\frac{\pa}{\pa t} )} ( t= t_{0} ) + E_{F}^{M} ] }{  v_{+}^{2}  } 
\eea

Finally:
 
\item To prove decay for the normalized components $F_{\hat{v}\hat{w}}$ and $F_{e_{1}e_{2}}$ we make use of a Sobolev inequality restricted on $w= constant$ hypersurfaces with a fixed length in $v$. Using the field equations, the $L^{2}$ norms of derivatives in the direction of $\frac{\pa}{\pa v}$ can be controlled by the $L^{2}$ norms of $F_{\hat{v}e_{1}}$, $F_{\hat{v}e_{2}}$ and of their angular derivatives, and of $F_{\hat{v}\hat{w}}$ and $F_{e_{1}e_{2}}$. This leads to a bound by the flux obtained from $H$ on $w = constant$ hypersurfaces, of $F$ and its angular momentum derivatives. Since, those are Killing derivatives, and since the Maxwell equations are linear, and we proved decay of for the flux, this leads to the desired result.

We do the same to prove decay for the components $F_{\hat{v}e_{1}}$ and $F_{\hat{v}e_{2}}$ except that this time, the $L^{2}$ norms of the derivatives in the direction of $\frac{\pa}{\pa v}$ can be bounded by the $L^{2}$ norms of the time derivatives of $F_{\hat{v}e_{1}}$, $F_{\hat{v}e_{2}}$, and of the angular derivatives of $F_{\hat{v}\hat{w}}$, $F_{e_{1}e_{2}}$, and of $F_{\hat{v}e_{1}}$, $F_{\hat{v}e_{2}}$, using the field equations and the Bianchi identities.

The components $\sqrt{1-\frac{2m}{r}} F_{\hat{w}e_{1}}$ and $\sqrt{1-\frac{2m}{r}}  F_{\hat{w}e_{2}}$ can be controlled by using a Sobolev inequality where we integrate on the hypersurfaces $v= constant$. As a result of direct computation using the field equations and the Bianchi identities, the $L^{2}$ norms of derivatives in the direction of $\frac{\pa}{\pa w}$ can be controlled by the $L^{2}$ norms of time derivatives of $\sqrt{1-\frac{2m}{r}}  F_{\hat{w}e_{1}}$, $\sqrt{1-\frac{2m}{r}}  F_{\hat{w}e_{2}}$, of angular derivatives of $\sqrt{1-\frac{2m}{r}}  F_{\hat{v}\hat{w}}$, $\sqrt{1-\frac{2m}{r}}  F_{e_{1}e_{2}}$, and of $\sqrt{1-\frac{2m}{r}}  F_{\hat{w}e_{1}}$, $\sqrt{1-\frac{2m}{r}} F_{\hat{w}e_{2}}$. Making use of the decay of the flux from the vector field $H$ on $v =constant$, we obtain decay of these local $L^{2}$ norms, and hence we prove pointwise decay for $\sqrt{1-\frac{2m}{r}} F_{\hat{w}e_{a}}$, $a \in \{1, 2 \}$.\\

\end{enumerate}

In order to cover the whole exterior region in theorem \eqref{thetheorem}, we chose in the first part $R=r_{1}$, and we can choose $r_{1} = r_{0}$.
 
\begin{remark}
The whole manuscript is written in an expository way, where we detail all the calculations, and we show standard material in the Appendix.
\end{remark}

\textbf{Acknowledgments.} The author would like to thank his PhD thesis advisors, Fr\'ed\'eric H\'elein and Vincent Moncrief, for their advice and support, Sergiu Klainerman for suggesting the problem in part of a research proposal for the author's doctoral dissertation, and Pieter Blue for pointing out mistakes in an earlier version by making helpful remarks. This work was supported by a full tuition fellowship from Universit\'e Paris VII - Institut de Math\'ematiques de Jussieu, and from the Mathematics Department funds of Yale University. The author would like to thank the Mathematics Department of Yale University for their kindness and hospitality while writing this work in Spring 2012. The manuscript was edited by the author while receiving financial support from the Albert Einstein Institute, Max-Planck Institute for Gravitational Physics, under an invitation from Lars Andersson, and the author would like to thank him for his kind invitation and for his interest in this work, and the Albert Einstein Institute for their kindness and hospitality.\\

\section{Conservation Laws}

Let $\Psi_{\mu\nu}$ be a an anti-symmetric two tensor valued in the Lie algebra. Consider the energy-momentum tensor
\bea
T_{\mu\nu}^{}(\Psi) = <\Psi_{\mu\b}, {\Psi_{\nu}}^{\b}> - \frac{1}{4} \g_{\mu\nu} < \Psi_{\a\b}, \Psi^{\a\b}>
\eea
Considering the Schwarzschild time $t^{'}$ (see \eqref{Schwarzschildtime} in Appendix), and considering two spacelike hypersurfaces $\Sigma_{t^{'}_{1}}$, $\Sigma_{t^{'}_{2}}$, $t^{'}_{2} > t^{'}_{1}$. We consider the region $B = J^{+}(\Sigma_{t^{'}_{1}})\cap J^{-}(\Sigma_{t^{'}_{2}}) \cap D$, where $D$ is the closure of the exterior region of the black hole, known as the domain of outer-communication of the black hole.\ 

Considering a vector field $V^{\nu}$ we let $$J_{\mu}(V) = V^{\nu}T_{\mu\nu} = T_{\mu V}$$

We have,

\begin{eqnarray*}
\der^{\mu} J_{\mu}(V) &=& \pa^{\mu} T_{\mu V} -  T ( \der^{\mu} e_{\mu}, V)  = \der^{\mu} T_{\mu V} + T (e_{\mu}, \der^{\mu} V) 
\end{eqnarray*}

Hence,

\begin{eqnarray*}
\der^{\mu} J_{\mu}(V) &=& \der^{\mu} T_{\mu V}  + T (e_{\mu}, \der^{\mu} V) \\
&=&  V^{\nu} ( \der^{\mu}T_{\mu\nu} ) + (\der^{\mu}V^{\nu} ) T_{\mu\nu}   \\
&=& V^{\nu} ( \der^{\mu}T_{\mu\nu} ) + \pi^{\mu\nu}  T_{\mu\nu}  
\end{eqnarray*}

(by symmetry of $T$)

Applying the divergence theorem on $ J_{\mu}(V)$ in the region $B$ bounded to the past by $\Sigma_{t^{'}_{1}}$ and to the future by $\Sigma_{t^{'}_{2}}$, and by a null hypersurface $N$, we obtain: \
\bea
\notag
 \int_{B} V^{\nu} ( \der^{\mu}T_{\mu\nu} )   dV_{B} + \int_{B}  \pi^{\mu\nu}(V) T_{\mu\nu} dV_{B}  &=& \int_{\Sigma_{t^{'}_{1}}} J_{\mu}(V) n^{\mu} dV_{\Sigma_{t^{'}_{1}}} -  \int_{\Sigma_{t^{'}_{2}}} J_{\mu}(V) n^{\mu} dV_{\Sigma_{t^{'}_{2}}} \\
&& - \int_{N}J_{\mu}(V) n_{N}^{\mu} dV_{N} \label{conservationlawdivergncetheorem}
\eea
where $n^{\mu}$ are the unit normal to the hypersurfaces $\Sigma$, $n_{N}^{\mu}$ is any null generator of $N$, $dV_{\Sigma}$ are the induced volume forms and $dV_{N}$ is defined such that the divergence theorem applies. \

Considering the Maxwell field $F$, as it verifies \eqref{Maxwellequations1} and \eqref{Maxwellequations2}, we have
\bea
&&\der^{\nu} T_{\mu\nu} (F) =  0    
\eea
Also, considering the spherical symmetry of the Schwarzschild black hole, the angular momentum operators are Killing and therefore $\Lie_{\Om_{j}} F$ verifies the Maxwell equations\eqref{Maxwellequations1} and \eqref{Maxwellequations2}, and thus
\beaa
&&\der^{\nu} T_{\mu\nu} (\Lie_{\Om_{j}} F) =  0    
\eeaa
where $\Om_{j}$ is a basis of angular momentum operators, $j \in \{ 1 , 2, 3 \}$. \\

And given that $\frac{\pa}{\pa t }$ is Killing, we have
\beaa
&&\der^{\nu} T_{\mu\nu} (\Lie_{t} F) =  0
\eeaa
Taking $\Psi$ any product of $\Lie_{t}$, $\Lie_{\Om_{i}}$, and $F$, where $\Om_{i}$, $i \in \{1, 2, 3\}$, is a basis of angular momentum operators, we get that $\Psi$ satisfies the Maxwell equations \eqref{Maxwellequations1} and \eqref{Maxwellequations2} since $\frac{\pa}{\pa t}$ and $\Om_{j}$, $j \in \{ 1 , 2, 3 \}$ are Killing vector fields and therefore. Hence,
\bea
\der^{\nu} T_{\mu\nu} ( \Psi) =  0 \label{theenrgymomuntumtensorofpsiisdivergencefree}
\eea 

Let,
\bea
\mu = \frac{2m}{r}
\eea
Now, let's compute,
\beaa
&&  < \Psi_{\a\b}, \Psi^{\a\b}> \\
&=&   < \Psi_{w\a}, \Psi^{w\a}> + < \Psi_{v\a}, \Psi^{v\a}> + < \Psi_{\th\a}, \Psi^{\th\a}> + < \Psi_{\phi\a}, \Psi^{\phi\a}>   \\
&=&    \frac{-2}{(1-\mu)} < \Psi_{w\a}, {\Psi_{v}}^{\a}> +  \frac{-2}{(1-\mu)} < \Psi_{v\a}, {\Psi_{w}}^{\a}> +  \frac{1}{r^{2}}  < \Psi_{\th\a}, {\Psi_{\th}}^{\a}> \\
&& +  \frac{1}{r^{2}\sin^{2}\th} < \Psi_{\phi\a}, {\Psi_{\phi}}^{\a}>  \\
&=&      \frac{-2}{(1-\mu)} < \Psi_{w\th}, {\Psi_{v}}^{\th}>   -   \frac{2}{(1-\mu)} < \Psi_{w\phi}, {\Psi_{v}}^{\phi}>   - \frac{2}{(1-\mu)} < \Psi_{wv}, {\Psi_{v}}^{v}>  \\
&&   - \frac{2}{(1-\mu)} < \Psi_{v\th}, {\Psi_{w}}^{\th}> -  \frac{2}{(1-\mu)} < \Psi_{v\phi}, {\Psi_{w}}^{\phi}>  -  \frac{2}{(1-\mu)} < \Psi_{vw}, {\Psi_{w}}^{w}> \\
&& +  \frac{1}{r^{2}}  < \Psi_{\th v}, {\Psi_{\th}}^{v}> +  \frac{1}{r^{2}}  < \Psi_{\th w}, {\Psi_{\th}}^{w}> +  \frac{1}{r^{2}}  < \Psi_{\th\phi}, {\Psi_{\th}}^{\phi}>\\
&& +  \frac{1}{r^{2}\sin^{2}\th} < \Psi_{\phi v}, {\Psi_{\phi}}^{v}>   +  \frac{1}{r^{2}\sin^{2}\th} < \Psi_{\phi w}, {\Psi_{\phi}}^{w}>   +  \frac{1}{r^{2}\sin^{2}\th} < \Psi_{\phi\th}, {\Psi_{\phi}}^{\th}>  \\
&=&      \frac{-2}{r^{2} (1-\mu)} < \Psi_{w\th}, {\Psi_{v\th}}^{}>   -   \frac{2}{r^{2} \sin^{2} \th (1-\mu)} < \Psi_{w\phi}, {\Psi_{v\phi}}^{}>   +  \frac{4}{(1-\mu)^{2}}  < \Psi_{wv}, {\Psi_{vw}}^{}>  \\
&&   - \frac{2}{r^{2} (1-\mu)} < \Psi_{v\th}, {\Psi_{w\th}}> -  \frac{2}{r^{2} \sin^{2}\th (1-\mu)} < \Psi_{v\phi}, {\Psi_{w\phi}}^{}>  +  \frac{4}{(1-\mu)^{2}} < \Psi_{vw}, {\Psi_{w v}}^{}> \\
&& -  \frac{2}{r^{2}(1-\mu)}  < \Psi_{\th v}, \Psi_{\th v}>   -  \frac{2}{r^{2}(1-\mu)}  < \Psi_{\th w}, \Psi_{\th w}> +  \frac{1}{r^{4} \sin^{2} \th }  | \Psi_{\th\phi} |^{2} \\
&& -  \frac{2}{r^{2}\sin^{2}\th (1-\mu) } < \Psi_{\phi v}, {\Psi_{\phi w}}^{}>   -  \frac{2}{r^{2}\sin^{2}\th (1-\mu)} < \Psi_{\phi w}, {\Psi_{\phi v}}^{}>   +  \frac{1}{r^{4}\sin^{2}\th} | \Psi_{\phi\th}|^{2}  \\
&=&      \frac{-8}{r^{2} (1-\mu)} < \Psi_{w\th}, {\Psi_{v\th}}^{}>   -   \frac{8}{r^{2} \sin^{2} \th (1-\mu)} < \Psi_{w\phi}, {\Psi_{v\phi}}^{}>   -  \frac{8}{(1-\mu)^{2}}  | \Psi_{vw}|^{2}  +  \frac{2}{r^{4}\sin^{2}\th} | \Psi_{\phi\th}|^{2}  
\eeaa
Computing,
\beaa
T_{ww} &=&  <\Psi_{w\b}, {\Psi_{w}}^{\b}> - \frac{1}{2} \g_{w w} < \Psi_{\a\b}, \Psi^{\a\b}> \\
&=& < \Psi_{wv}, {\Psi_{w}}^{v} > +  < \Psi_{w\th}, {\Psi_{w}}^{\th} >  + < \Psi_{w\phi}, {\Psi_{w}}^{\phi} >  \\
&=& \frac{1}{r^{2}} | \Psi_{w\th} |^{2} + \frac{1}{r^{2}\sin^{2}\th} | \Psi_{w\phi} |^{2} 
\eeaa
\beaa
 T_{vv} &=&  <\Psi_{v\a} , {\Psi_{v}}^{\a} >  \\
&=&  <\Psi_{vw} , {\Psi_{v}}^{w} >  +  <\Psi_{v\th} , {\Psi_{v}}^{\th} >  + <\Psi_{v\phi} , {\Psi_{v}}^{\phi} >  \\
&=& \frac{1}{r^{2}} | \Psi_{v\th} |^{2} + \frac{1}{r^{2}\sin^{2}\th} | \Psi_{v\phi} |^{2} 
\eeaa
\beaa
&& T_{v w} \\
&=&  <\Psi_{v\a}, {\Psi_{w}}^{\a} > - \frac{1}{4} \g_{vw} < \Psi_{\a\b}, \Psi^{\a\b}> \\
&=& <\Psi_{vw}, {\Psi_{w}}^{w} > + <\Psi_{v\th}, {\Psi_{w}}^{\th} > + <\Psi_{v\phi}, {\Psi_{w}}^{\phi} >  - \frac{1}{4} \g_{vw}  < \Psi_{\a\b}, \Psi^{\a\b}>  \\
&=&   \frac{-2}{(1-\mu)} <\Psi_{vw}, \Psi_{wv} > + \frac{1}{r^{2}} <\Psi_{v\th}, \Psi_{w\th} > + \frac{1}{r^{2}\sin^{2}\th}  <\Psi_{v\phi}, \Psi_{w\phi} >\\
&& + \frac{ (1-\mu) }{8}  [  \frac{-8}{r^{2} (1-\mu)} < \Psi_{w\th}, {\Psi_{v\th}}^{}>   -   \frac{8}{r^{2} \sin^{2} \th (1-\mu)} < \Psi_{w\phi}, {\Psi_{v\phi}}^{}>  \\
&&  -  \frac{8}{(1-\mu)^{2}}  | \Psi_{vw}|^{2}  +  \frac{2}{r^{4}\sin^{2}\th} | \Psi_{\phi\th}|^{2}   ] \\
&=&   \frac{2}{(1-\mu)} |\Psi_{vw}|^{2}  + \frac{1}{r^{2}} <\Psi_{v\th}, \Psi_{w\th} > + \frac{1}{r^{2}\sin^{2}\th}  <\Psi_{v\phi}, \Psi_{w\phi} >\\
&&   - \frac{1}{r^{2} } < \Psi_{w\th}, {\Psi_{v\th}}^{}>   -   \frac{1}{r^{2} \sin^{2} \th } < \Psi_{w\phi}, {\Psi_{v\phi}}^{}>  \\
&&  -  \frac{1}{(1-\mu)}  | \Psi_{vw}|^{2}  +  \frac{(1-\mu)}{4r^{4}\sin^{2}\th} | \Psi_{\phi\th}|^{2}    \\ 
&=&     \frac{1}{(1-\mu)} |\Psi_{vw}|^{2}   +  \frac{(1-\mu)}{4r^{4}\sin^{2}\th} | \Psi_{\phi\th}|^{2}  
\eeaa
\beaa
&& T_{\th\th} \\
&=&  < \Psi_{\th\a},  {\Psi_{\th}}^{\a}> - \frac{1}{4} \g_{\th\th}  < \Psi_{\a\b}, \Psi^{\a\b}> \\
&=& < \Psi_{\th v},  {\Psi_{\th}}^{v}> + < \Psi_{\th w},  {\Psi_{\th}}^{w}>  +< \Psi_{\th\phi},  {\Psi_{\th}}^{\phi}> \\
&& - \frac{r^{2}}{4}  [    \frac{-8}{r^{2} (1-\mu)} < \Psi_{w\th}, {\Psi_{v\th}}^{}>   -   \frac{8}{r^{2} \sin^{2} \th (1-\mu)} < \Psi_{w\phi}, {\Psi_{v\phi}}^{}>  \\
&& -  \frac{8}{(1-\mu)^{2}}  | \Psi_{vw}|^{2}  +  \frac{2}{r^{4}\sin^{2}\th} | \Psi_{\phi\th}|^{2} ] \\
&=&   \frac{-2}{(1-\mu)}  < \Psi_{\th v},  {\Psi_{\th w}}^{}> - \frac{2}{(1-\mu)}  < \Psi_{\th w},  {\Psi_{\th v}}^{}>  + \frac{1}{r^{2}\sin^{2}\th}  |  \Psi_{\th\phi} |^{2} \\
&&   +   \frac{2}{ (1-\mu)} < \Psi_{w\th}, {\Psi_{v\th}}^{}>   +   \frac{2}{ \sin^{2} \th (1-\mu)} < \Psi_{w\phi}, {\Psi_{v\phi}}^{}>  \\
&& + \frac{2r^{2}}{(1-\mu)^{2}}  | \Psi_{vw}|^{2}  -  \frac{1}{2r^{2}\sin^{2}\th} | \Psi_{\phi\th}|^{2} \\
&=&   \frac{-2}{(1-\mu)}  < \Psi_{\th v},  {\Psi_{\th w}}^{}>  +   \frac{2}{ \sin^{2} \th (1-\mu)} < \Psi_{w\phi}, {\Psi_{v\phi}}^{}>  \\
&& + \frac{2r^{2}}{(1-\mu)^{2}}  | \Psi_{vw}|^{2}  +  \frac{1}{2r^{2}\sin^{2}\th} | \Psi_{\phi\th}|^{2} 
\eeaa
\beaa
&& T_{\phi\phi} \\
&=&  < \Psi_{\phi\a},  {\Psi_{\phi}}^{\a}> - \frac{1}{4} \g_{\phi\phi}  < \Psi_{\a\b}, \Psi^{\a\b}> \\
&=& < \Psi_{\phi v},  {\Psi_{\phi}}^{v}> + < \Psi_{\phi w},  {\Psi_{\phi}}^{w}>  +< \Psi_{\phi\th},  {\Psi_{\phi}}^{\th}> \\
&& - \frac{r^{2}\sin^{2}\th }{4}  [    \frac{-8}{r^{2} (1-\mu)} < \Psi_{w\th}, {\Psi_{v\th}}^{}>   -   \frac{8}{r^{2} \sin^{2} \th (1-\mu)} < \Psi_{w\phi}, {\Psi_{v\phi}}^{}>  \\
&& -  \frac{8}{(1-\mu)^{2}}  | \Psi_{vw}|^{2}  +  \frac{2}{r^{4}\sin^{2}\th} | \Psi_{\phi\th}|^{2} ] \\
&=&   \frac{-2}{(1-\mu)}  < \Psi_{\phi v},  {\Psi_{\phi w}}^{}> - \frac{2}{(1-\mu)}  < \Psi_{\phi w},  {\Psi_{\phi v}}^{}>  + \frac{1}{r^{2}}  |  \Psi_{\th\phi} |^{2} \\
&&   +   \frac{2\sin^{2}\th}{ (1-\mu)} < \Psi_{w\th}, {\Psi_{v\th}}^{}>   +   \frac{2}{  (1-\mu)} < \Psi_{w\phi}, {\Psi_{v\phi}}^{}>  \\
&& + \frac{2r^{2}\sin^{2}\th}{(1-\mu)^{2}}  | \Psi_{vw}|^{2}  -  \frac{1}{2r^{2}} | \Psi_{\phi\th}|^{2} \\
&=&   \frac{-2}{(1-\mu)}  < \Psi_{\phi v},  {\Psi_{\phi w}}^{}>  +   \frac{2 \sin^{2} \th}{ (1-\mu)} < \Psi_{w\th}, {\Psi_{v\th}}^{}>  \\
&& + \frac{2r^{2}\sin^{2}\th}{(1-\mu)^{2}}  | \Psi_{vw}|^{2}  +  \frac{1}{2r^{2}} | \Psi_{\phi\th}|^{2} 
\eeaa
This gives,
\beaa
&& \pi^{\a\b}T_{\a\b} \\
&=& T_{\om\om}\pi^{\om\om} +  T_{vv}\pi^{vv} +  2T_{v\om}\pi^{v\om} +  T_{\th\th}\pi^{\th\th} + T_{\phi\phi}\pi^{\phi\phi} \\
&=& [\frac{1}{r^{2}} | \Psi_{w\th} |^{2} + \frac{1}{r^{2}\sin^{2}\th} | \Psi_{w\phi} |^{2}]( \frac{-2}{(1 - \mu)} \pa_{v}V^{w}) \\
&& + [ \frac{1}{r^{2}} | \Psi_{v\th} |^{2} + \frac{1}{r^{2}\sin^{2}\th} | \Psi_{v\phi} |^{2} ] (\frac{-2}{(1 - \mu)} \pa_{w}V^{v})\\
&& + 2 [ \frac{1}{(1-\mu)} |\Psi_{vw}|^{2}   +  \frac{(1-\mu)}{4r^{4}\sin^{2}\th} | \Psi_{\phi\th}|^{2}  ] ( \frac{-1}{(1-\mu)} [    \pa_{v} V^{v}  + \pa_{w} V^{w}   +      \frac{m}{r^{2}} ( V^{v}  -   V^{w} )  ] ) \\
&&+ [  \frac{-2}{(1-\mu)}  < \Psi_{\th v},  {\Psi_{\th w}}^{}>  +   \frac{2}{ \sin^{2} \th (1-\mu)} < \Psi_{w\phi}, {\Psi_{v\phi}}^{}>  \\
&& + \frac{2r^{2}}{(1-\mu)^{2}}  | \Psi_{vw}|^{2}  +  \frac{1}{2r^{2}\sin^{2}\th} | \Psi_{\phi\th}|^{2}  ] ( \frac{(1 - \mu)}{2r^{3}}(V^{v} - V^{w}) ) \\
&&+ [   \frac{-2}{(1-\mu)}  < \Psi_{\phi v},  {\Psi_{\phi w}}^{}>  +   \frac{2 \sin^{2} \th}{ (1-\mu)} < \Psi_{w\th}, {\Psi_{v\th}}^{}>  \\
&& + \frac{2r^{2}\sin^{2}\th}{(1-\mu)^{2}}  | \Psi_{vw}|^{2}  +  \frac{1}{2r^{2}} | \Psi_{\phi\th}|^{2} ] (\frac{(1 - \mu)}{2r^{3}\sin^{2}\th } (V^{v} - V^{w}) )
\eeaa
\beaa
&=& [\frac{1}{r^{2}} | \Psi_{w\th} |^{2} + \frac{1}{r^{2}\sin^{2}\th} | \Psi_{w\phi} |^{2}]( \frac{-2}{(1 - \mu)} \pa_{v}V^{w}) \\
&& + [ \frac{1}{r^{2}} | \Psi_{v\th} |^{2} + \frac{1}{r^{2}\sin^{2}\th} | \Psi_{v\phi} |^{2} ] (\frac{-2}{(1 - \mu)} \pa_{w}V^{v})\\
&& + [ \frac{1}{(1-\mu)} |\Psi_{vw}|^{2}   +  \frac{(1-\mu)}{4r^{4}\sin^{2}\th} | \Psi_{\phi\th}|^{2}  ] ( \frac{-2}{(1-\mu)} [    \pa_{v} V^{v}  + \pa_{w} V^{w}   +      \frac{m}{r^{2}} ( V^{v}  -   V^{w} ) ]  ) \\
&&+ [  \frac{4r^{2} }{(1-\mu)^{2}}  | \Psi_{vw}|^{2}  +  \frac{1}{r^{2}\sin^{2}\th } | \Psi_{\phi\th}|^{2} ] (\frac{(1 - \mu)}{2r^{3}}(V^{v} - V^{w}) )
\eeaa
Thus,
\bea
\notag
&& \pi^{\a\b}(V)T_{\a\b}(\Psi) \\
&=& [\frac{1}{r^{2}} | \Psi_{w\th} |^{2} + \frac{1}{r^{2}\sin^{2}\th} | \Psi_{w\phi} |^{2}]( \frac{-2}{(1 - \mu)} \pa_{v}V^{w}) \\
\notag
&& + [ \frac{1}{r^{2}} | \Psi_{v\th} |^{2} + \frac{1}{r^{2}\sin^{2}\th} | \Psi_{v\phi} |^{2} ] (\frac{-2}{(1 - \mu)} \pa_{w}V^{v})\\
\notag
&& + [ \frac{1}{(1-\mu)^{2}} |\Psi_{vw}|^{2}   +  \frac{1}{4r^{4}\sin^{2}\th} | \Psi_{\phi\th}|^{2}  ] ( -2 [    \pa_{v} V^{v}  + \pa_{w} V^{w}   +      \frac{(3\mu-2)}{2r} ( V^{v}  -   V^{w} ) ]  ) 
\eea

\subsection{The vector field $\frac{\pa}{\pa t}$}\

Let,
\beaa
t^{\ga} = (\frac{\pa}{\pa t})^{\ga}
\eeaa
We have,

\bea
\notag
\frac{\pa}{\pa v} &=& \frac{\pa t}{\pa v} \frac{\pa }{\pa t} + \frac{\pa r^{*}}{\pa v} \frac{\pa }{\pa r^{*}} \\
&=& \frac{1}{2} \frac{\pa }{\pa t} + \frac{1}{2} \frac{\pa }{\pa r^{*}} \label{vtr*}\\
\notag
\frac{\pa}{\pa w} &=& \frac{\pa t}{\pa w} \frac{\pa }{\pa t} + \frac{\pa r^{*}}{\pa w} \frac{\pa }{\pa r^{*}} \\
&=& \frac{1}{2} \frac{\pa }{\pa t} - \frac{1}{2} \frac{\pa }{\pa r^{*}}  \label{wtr*}
\eea
Hence,
\beaa
\frac{\pa}{\pa t} &=& \frac{\pa}{\pa v} + \frac{\pa}{\pa w} \\
&=&  t^{v} \frac{\pa}{\pa v} + t^{w} \frac{\pa}{\pa w}
\eeaa

where $t^{v} = 1$ and $t^{w} = 1$. Thus,

\beaa
&& \pi^{\a\b}(\frac{\pa}{\pa t})T_{\a\b}(\Psi) \\
&=& [\frac{1}{r^{2}} | \Psi_{w\th} |^{2} + \frac{1}{r^{2}\sin^{2}\th} | \Psi_{w\phi} |^{2}]( \frac{-2}{(1 - \mu)} \pa_{v}t^{w}) \\
&& + [ \frac{1}{r^{2}} | \Psi_{v\th} |^{2} + \frac{1}{r^{2}\sin^{2}\th} | \Psi_{v\phi} |^{2} ] (\frac{-2}{(1 - \mu)} \pa_{w}t^{v})\\
&& + [ \frac{1}{(1-\mu)^{2}} |\Psi_{vw}|^{2}   +  \frac{1}{4r^{4}\sin^{2}\th} | \Psi_{\phi\th}|^{2}  ] ( -2 [    \pa_{v} t^{v}  + \pa_{w} t^{w}   +      \frac{(3\mu-2)}{2r} ( t^{v}  -   t^{w} ) ]  ) \\
&=& 0
\eeaa
In other words, since $\frac{\pa}{\pa t}$ is Killing, it's deformation tensor vanishes, i.e. $\pi^{\a\b}(\frac{\pa}{\pa t}) = 0$, therefore
\bea
 \pi^{\a\b}(\frac{\pa}{\pa t})T_{\a\b}(\Psi) = 0
\eea

Let,
\bea
\hat{\frac{\pa}{\pa t}}  &=&  \frac{1}{\sqrt{(1-\mu)}} \frac{\pa}{\pa t}\\
\hat{\frac{\pa}{\pa r^{*}}}  &=&  \frac{1}{\sqrt{(1-\mu)}}  \frac{\pa}{\pa r^{*}}
\eea

We also assume that,
\bea
\lim_{r \to \infty} \Psi_{\hat{\mu}\hat{\nu}} (t=t_{0})= 0 \label{limwhenrgoestoinfinityofthefieldcomponentsattequaltzero}
\eea
From a local existence result that ensures that a certain regularity will be conserved, one can prove that the condition above will be satisfied for all time, i.e.
\bea
\lim_{r \to \infty} \Psi_{\hat{\mu}\hat{\nu}} (t) = 0 \label{limwhenrgoestoinfinityofthefieldcomponentsatalltime}
\eea
 Thus, we will have no integrals on spatial infinity.\\

Applying the divergence theorem to the vector $ t^{\mu} T_{\mu\nu}$ in the region $B$ bounded by two hypersurfaces $\Sigma_{t_{1}}$ and $\Sigma_{t_{2}}$  defined by $t= constant$, where $t_{2} \geq t_{1}$, we get,

\beaa
&& \int_{B} \pi^{\a\b}(\frac{\pa}{\pa t})T_{\a\b}(\Psi)dV_{B}  +   \int_{B} t^{\nu} ( \der^{\mu}T_{\mu\nu}(\Psi) )   dV_{B} \\
&=& \int_{B} t^{\nu} ( \der^{\mu}T_{\mu\nu}(\Psi) )   dV_{B} \\
&=& \int_{\Sigma_{t_{1}}} J_{\mu}(\frac{\pa}{\pa t}) (\frac{\pa}{\pa \hat{t} })^{\mu} dV_{\Sigma_{t_{1}}} -  \int_{\Sigma_{t_{2}}} J_{\mu}(\frac{\pa}{\pa t}) (\frac{\pa}{\pa \hat{t} })^{\mu} dV_{\Sigma_{t_{2}}} \\
&=& \int_{\Sigma_{t_{1}}} J_{\hat{t}}(\frac{\pa}{\pa t})  dV_{\Sigma_{t_{1}}} -  \int_{\Sigma_{t_{2}}} J_{\hat{t}}(\frac{\pa}{\pa t})  dV_{\Sigma_{t_{2}}} \\
&=& \int_{\Sigma_{t_{1}}} T_{\hat{t} t}(\Psi)  dV_{\Sigma_{t_{1}}} -  \int_{\Sigma_{t_{2}}} T_{\hat{t} t}(\Psi)  dV_{\Sigma_{t_{2}}} \\
&=& \int_{\Sigma_{t_{1}}} T_{\hat{t} \hat{t}}(\Psi)  (\sqrt{1-\mu}) dV_{\Sigma_{t_{1}}} -  \int_{\Sigma_{t_{2}}} T_{\hat{t} \hat{t}}(\Psi)  (\sqrt{1-\mu}) dV_{\Sigma_{t_{2}}} \\
\eeaa

We have,
\beaa
dV_{\Sigma_{t}} = (\sqrt{(1-\mu) r^{4}\sin^{2}(\th) } d{r^{*}} d\th d\phi  = (\sqrt{1-\mu}) r^{2} d{r^{*}} \sin(\th) d\th d\phi 
\eeaa
(we have $\sin(\th) \geq 0$ because $ 0 \leq \th \leq \pi$). Thus,
\beaa
dV_{\Sigma_{t}}   = (\sqrt{1-\mu}) r^{2} d{r^{*}} d\si^{2} 
\eeaa

On the other hand,
\beaa
&& T_{\hat{t}\hat{t}}^{}(\Psi) \\
&=& <\Psi_{\hat{t}\b}, {\Psi_{\hat{t}}}^{\b}> - \frac{1}{4} \g_{\hat{t}\hat{t}} < \Psi_{\a\b}, \Psi^{\a\b}> \\
&=&   <\Psi_{\hat{t}\b}, {\Psi_{\hat{t}}}^{\b}> + \frac{1}{4} [   \frac{-8}{r^{2} (1-\mu)} < \Psi_{w\th}, {\Psi_{v\th}}^{}>   +   \frac{-8}{r^{2} \sin^{2} \th (1-\mu)} < \Psi_{w\phi}, {\Psi_{v\phi}}^{}>  \\
&& +  \frac{-8}{(1-\mu)^{2}}  | \Psi_{vw}|^{2}  +  \frac{2}{ r^{4}\sin^{2}\th} | \Psi_{\phi\th}|^{2} ] \\
&=&  \g^{\hat{\b}\hat{\ga}} <\Psi_{\hat{t}\b}, \Psi_{\hat{t} \hat{\ga}}> + [   -2 < \Psi_{\hat{w}\hat{\th}}, {\Psi_{\hat{v}\hat{\th}} }^{}>   -   2 < \Psi_{\hat{w}\hat{\phi}}, {\Psi_{\hat{v}\hat{\phi}}}^{}>   -  2  | \Psi_{\hat{v}\hat{w}}|^{2}  +  \frac{1}{2 } | \Psi_{\hat{\phi}\hat{\th}}|^{2} ] \\
\eeaa

\beaa
 <\Psi_{\hat{t}\b}, {\Psi_{\hat{t}}}^{\b}>   &=& -  <\Psi_{\hat{t}\hat{t} }, \Psi_{\hat{t}\hat{t} }> + <\Psi_{\hat{t}\hat{r^{*}} }, \Psi_{\hat{t}\hat{r^{*}} }> + <\Psi_{\hat{t}\hat{\th} }, \Psi_{\hat{t}\hat{\th} }> + <\Psi_{\hat{t}\hat{\phi} }, \Psi_{\hat{t}\hat{\phi} }> \\
&=& |\Psi_{\hat{t}\hat{r^{*}} }|^{2}  + |\Psi_{\hat{t}\hat{\th} }|^{2}  + |\Psi_{\hat{t}\hat{\phi} }|^{2} 
\eeaa

From \eqref{vtr*} and \eqref{wtr*}, we get,

\bea
\notag
\Psi_{v\th} &=& \Psi_{\mu\nu}(\frac{\pa}{\pa v})^{\mu} (\frac{\pa}{\pa \th})^{\nu} = \Psi_{\mu\nu}(\frac{1}{2} \frac{\pa }{\pa t} + \frac{1}{2} \frac{\pa }{\pa r^{*}} )^{\mu} ( \frac{\pa }{\pa \th} )^{\nu} \\
\notag
&=& \frac{1}{2} \Psi_{\mu\nu}( \frac{\pa }{\pa t}  )^{\mu} (   \frac{\pa }{\pa \th} )^{\nu} + \frac{1}{2} \Psi_{\mu\nu}(  \frac{\pa }{\pa r^{*}} )^{\mu} (  \frac{\pa }{\pa \th } )^{\nu}  \\
&=& \frac{1}{2} \Psi_{t\th} + \frac{1}{2} \Psi_{r^{*} \th}  \label{v th}
\eea

\bea
\notag
\Psi_{w\th} &=& \Psi_{\mu\nu}(\frac{\pa}{\pa w})^{\mu} (\frac{\pa}{\pa \th})^{\nu} = \Psi_{\mu\nu}(\frac{1}{2} \frac{\pa }{\pa t} - \frac{1}{2} \frac{\pa }{\pa r^{*}} )^{\mu} ( \frac{\pa }{\pa \th} )^{\nu} \\
\notag
&=& \frac{1}{2} \Psi_{\mu\nu}( \frac{\pa }{\pa t}  )^{\mu} (   \frac{\pa }{\pa \th} )^{\nu} - \frac{1}{2} \Psi_{\mu\nu}(  \frac{\pa }{\pa r^{*}} )^{\mu} (  \frac{\pa }{\pa \th } )^{\nu}  \\
&=& \frac{1}{2} \Psi_{t\th} - \frac{1}{2} \Psi_{r^{*} \th}  \label{w th}
\eea

and similarly,

\bea
\Psi_{v\phi} &=& \frac{1}{2} \Psi_{t\phi} + \frac{1}{2} \Psi_{r^{*} \phi}  \label{v phi}
\eea

\bea
\Psi_{w\phi} &=& \frac{1}{2} \Psi_{t\phi} - \frac{1}{2} \Psi_{r^{*} \phi}  \label{w phi}
\eea

Thus,

\beaa
 < \Psi_{\hat{w}\hat{\th} }, \Psi_{\hat{v}\hat{\th} } > &=&  < \frac{\Psi_{\hat{t}\hat{\th} } - \Psi_{\hat{r^{*}}\hat{\th} }}{2}, \frac{\Psi_{\hat{t}\hat{\th} }  + \Psi_{\hat{r^{*}}\hat{\th} } }{2} > \\
&=&  \frac{1}{4} [ |\Psi_{\hat{t}\hat{\th} }|^{2} - | \Psi_{\hat{r^{*}}\hat{\th} }|^{2} ]
\eeaa
and,
\beaa
 < \Psi_{\hat{w}\hat{\phi} }, \Psi_{\hat{v}\hat{\phi} } > &=&  \frac{1}{4} [ |\Psi_{\hat{t}\hat{\phi} }|^{2} - | \Psi_{\hat{r^{*}}\hat{\phi} }|^{2} ]
\eeaa

We have,

\beaa
&& \Psi_{vw} \\
&=& \Psi_{\mu\nu}(\frac{\pa}{\pa v})^{\mu} (\frac{\pa}{\pa w})^{\nu} = \Psi_{\mu\nu}(\frac{1}{2} \frac{\pa }{\pa t} + \frac{1}{2} \frac{\pa }{\pa r^{*}} )^{\mu} (\frac{1}{2} \frac{\pa }{\pa t} - \frac{1}{2} \frac{\pa }{\pa r^{*}} )^{\nu} \\
&=& \frac{1}{4} \Psi_{\mu\nu}( \frac{\pa }{\pa t}  )^{\mu} ( \frac{\pa }{\pa t} -  \frac{\pa }{\pa r^{*}} )^{\nu} + \frac{1}{4} \Psi_{\mu\nu}(  \frac{\pa }{\pa r^{*}} )^{\mu} ( \frac{\pa }{\pa t} -  \frac{\pa }{\pa r^{*}} )^{\nu}  \\
&=& \frac{1}{4} \Psi_{\mu\nu}( \frac{\pa }{\pa t}  )^{\mu} ( \frac{\pa }{\pa t}  )^{\nu} - \frac{1}{4} \Psi_{\mu\nu}(  \frac{\pa }{\pa t} )^{\mu} (   \frac{\pa }{\pa r^{*}} )^{\nu}  + \frac{1}{4} \Psi_{\mu\nu}( \frac{\pa }{\pa r^{*}}  )^{\mu} ( \frac{\pa }{\pa t} )^{\nu} - \frac{1}{4} \Psi_{\mu\nu}(  \frac{\pa }{\pa r^{*}} )^{\mu} (   \frac{\pa }{\pa r^{*}} )^{\nu}  \\
\eeaa

Since $\Psi_{\mu\nu}$ is anti-symmetric two tensor, we get,

\bea
\notag
\Psi_{vw} &=&   \frac{1}{2} \Psi_{\mu\nu}( \frac{\pa }{\pa r^{*}}  )^{\mu} ( \frac{\pa }{\pa t} )^{\nu}   \\
&=& \frac{1}{2} \Psi_{r^{*} t} \label{v w}
\eea

Therefore,

\beaa
&& T_{\hat{t}\hat{t}}^{}(\Psi) \\
&=& |\Psi_{\hat{t}\hat{r^{*}} }|^{2}  + |\Psi_{\hat{t}\hat{\th} }|^{2}  + |\Psi_{\hat{t}\hat{\phi} }|^{2} +  \frac{1}{2} [  | \Psi_{\hat{r^{*}}\hat{\th} }|^{2}  - |\Psi_{\hat{t}\hat{\th} }|^{2} ] +  \frac{1}{2} [ | \Psi_{\hat{r^{*}}\hat{\phi} } - |\Psi_{\hat{t}\hat{\phi} }|^{2} |^{2} ] - \frac{1}{2} | \Psi_{\hat{r^{*}} \hat{t} } |^{2} +  \frac{1}{2 } | \Psi_{\hat{\phi}\hat{\th}}|^{2} \\
&=& \frac{1}{2} [ |\Psi_{\hat{t}\hat{r^{*}} }|^{2}  + |\Psi_{\hat{t}\hat{\th} }|^{2}  + |\Psi_{\hat{t}\hat{\phi} }|^{2} + |\Psi_{\hat{r^{*}}\hat{\th}}|^{2} + | \Psi_{\hat{r^{*}}\hat{\phi}}|^{2} + | \Psi_{\hat{\phi}\hat{\th}}|^{2} ]
\eeaa

Thus,

\bea
\notag
&& \int_{B} T^{\nu} ( \der^{\mu}T_{\mu\nu}(\Psi) )   dV_{B}  \\
\notag
&=& \int_{\Sigma_{t_{1}}} T_{\hat{t} \hat{t}}(T)  (\sqrt{1-\mu}) dV_{\Sigma_{t_{1}}} -  \int_{\Sigma_{t_{2}}} J_{\hat{t} \hat{t}}(T)  (\sqrt{1-\mu}) dV_{\Sigma_{t_{2}}} \\
\notag
&=& \int_{\Sigma_{t_{1}}}  \frac{1}{2} [ |\Psi_{\hat{t}\hat{r^{*}} }|^{2}  + |\Psi_{\hat{t}\hat{\th} }|^{2}  + |\Psi_{\hat{t}\hat{\phi} }|^{2} + |\Psi_{\hat{r^{*}}\hat{\th}}|^{2} + | \Psi_{\hat{r^{*}}\hat{\phi}}|^{2} + | \Psi_{\hat{\phi}\hat{\th}}|^{2} ] (1-\mu) r^{2} d{r^{*}} d\si^{2} \\
\notag
&&- \int_{\Sigma_{t_{2}}}  \frac{1}{2} [ |\Psi_{\hat{t}\hat{r^{*}} }|^{2}  + |\Psi_{\hat{t}\hat{\th} }|^{2}  + |\Psi_{\hat{t}\hat{\phi} }|^{2} + |\Psi_{\hat{r^{*}}\hat{\th}}|^{2} + | \Psi_{\hat{r^{*}}\hat{\phi}}|^{2} + | \Psi_{\hat{\phi}\hat{\th}}|^{2} ] (1-\mu) r^{2} d{r^{*}} d\si^{2} \\
&=& E_{\Psi}^{(\frac{\pa}{\pa t})} (t=t_{2}) - E_{\Psi}^{(\frac{\pa}{\pa t})} (t=t_{1})
\eea

where,
\bea
&& E_{\Psi}^{(\frac{\pa}{\pa t})} (t=t_{i}) \\
\notag
&=& \int_{\Sigma_{t_{i}}}  - \frac{1}{2} [ |\Psi_{\hat{t}\hat{r^{*}} }|^{2}  + |\Psi_{\hat{t}\hat{\th} }|^{2}  + |\Psi_{\hat{t}\hat{\phi} }|^{2} + |\Psi_{\hat{r^{*}}\hat{\th}}|^{2} + | \Psi_{\hat{r^{*}}\hat{\phi}}|^{2} + | \Psi_{\hat{\phi}\hat{\th}}|^{2} ] (1-\mu) r^{2} d{r^{*}} d\si^{2}
\eea

Taking $\Psi$ any product of $\Lie_{t}$, $\Lie_{\Om_{i}}$, and $F$, where $\Om_{i}$, $i \in \{1, 2, 3\}$, is a basis of angular momentum operators, since $
\int_{B} T^{\nu} ( \der^{\mu}T_{\mu\nu}(\Psi) )   dV_{B}  = 0 $, we have conservation of the energy generated from the vector field $\frac{\pa}{\pa t}$.\\

\subsection{The vector field $K$}\

Let
\bea
K &=& - \om^{2} \frac{\pa}{\pa \om} - v^{2} \frac{\pa}{\pa v}\\
\notag
&=& K^{\om}  \frac{\pa}{\pa \om} +    K^{v}  \frac{\pa}{\pa v}
\eea
We have,
\beaa
\pa_{v} K^{\om} &=& - \pa_{v} \om^{2}\\
&=& 0\\
\pa_{\om} K^{v} &=& - \pa_{\om} v^{2}\\
&=& 0
\eeaa
Computing,

\beaa
&& \pi^{\a\b}(K)T_{\a\b}(\Psi)\\
 &=& [\frac{1}{r^{2}} | \Psi_{w\th} |^{2} + \frac{1}{r^{2}\sin^{2}\th} | \Psi_{w\phi} |^{2}]( \frac{-2}{(1 - \mu)} \pa_{v}K^{w}) \\
&& + [ \frac{1}{r^{2}} | \Psi_{v\th} |^{2} + \frac{1}{r^{2}\sin^{2}\th} | \Psi_{v\phi} |^{2} ] (\frac{-2}{(1 - \mu)} \pa_{w}K^{v})\\
&& + [ \frac{1}{(1-\mu)^{2}} |\Psi_{vw}|^{2}   +  \frac{1}{4r^{4}\sin^{2}\th} | \Psi_{\phi\th}|^{2}  ] ( -2 [    \pa_{v} K^{v}  + \pa_{w} K^{w}   +      \frac{(3\mu-2)}{2r} ( K^{v}  -   K^{w} ) ]  ) \\
&=&   [ \frac{1}{(1-\mu)^{2}} |\Psi_{vw}|^{2}   +  \frac{1}{4r^{4}\sin^{2}\th} | \Psi_{\phi\th}|^{2}  ]  ( -2 [  -2v -2w +      \frac{(3\mu-2)}{2r}  ( -v^{2}  +w^{2} ) ]  ) \\
&=&  [ \frac{1}{(1-\mu)^{2}} |\Psi_{vw}|^{2}   +  \frac{1}{4r^{4}\sin^{2}\th} | \Psi_{\phi\th}|^{2}  ]    [  4(v + w) +     \frac{(3\mu-2)}{r} ( w^{2}  - v^{2} ) ]  
\eeaa
Thus,
\beaa
\pi^{\a\b}(K)T_{\a\b}(\Psi) &=&    (v+w) [ 4 + \frac{ ( 3\mu - 2 )}{r} ( v - w ) ] .[  \frac{1 }{(1-\mu)^{2}}  | \Psi_{vw}|^{2}  +  \frac{1}{4r^{4}\sin^{2}\th } | \Psi_{\phi\th}|^{2} ]
\eeaa
Recall that $v$ and $w$ are defined as in \eqref{v} and \eqref{w}, thus,
\beaa
 v + \om &=&  2 t \\
v - \om &=& 2 r^{*}
\eeaa
Therefore, we also have,
\bea
\notag
\pi^{\a\b}(K)T_{\a\b}(\Psi) &=&   4t  [ 2 + \frac{ ( 3\mu - 2 )r^{*}}{r} ] .[  \frac{1 }{(1-\mu)^{2}}  | \Psi_{vw}|^{2}  +  \frac{1}{4r^{4}\sin^{2}\th } | \Psi_{\phi\th}|^{2} ]  \\ \label{zerocomponontsconformalenergy}
\eea
We define,
\bea
J_{\Psi}^{(K)} ( t_{i} \leq t \leq t_{i+1} )  = \int_{t = t_{i} }^{ t = t_{i+1} }  \int_{r^{*} = - \infty}^{r^{*}= \infty} \int_{\S^{2}} \pi^{\a\b}(K)T_{\a\b}(\Psi) dVol
\eea

Computing,
\bea
&& E^{(K)}(t_{i}) =  \int_{r^{*} = - \infty}^{r^{*}= \infty} \int_{\S^{2}} J_{\a}(K) n^{\a} dVol_{t=t_{i} } (t=t_{i} )
\eea
where $$n^{\a} = - \frac {\frac{\pa}{\pa t} }{\sqrt{(1-\mu)}}$$ and $$dVol_{t=t_{i}} = r^{2} \sqrt{(1-\mu)} d\sigma^{2} dr^{*}$$

\bea
\notag
&&E_{\Psi}^{(K)}(t_{i}) \\
\notag
&=& \int_{r^{*} = - \infty}^{r^{*}= \infty} \int_{\S^{2}} -  \frac {1}{\sqrt{(1-\mu)}} [ (\frac{\pa}{\pa v})^{\a}  + ({\frac{\pa}{\pa \om}})^{\a} ] J_{\a}(K)  r^{2} \sqrt{(1-\mu)} d\sigma^{2} dr^{*}  \\
\notag
&=& \int_{r^{*} = - \infty}^{r^{*}= \infty} \int_{\S^{2}}  -\frac{1}{\sqrt{(1-\mu)}} [ - v^{2} T_{vv} -\om^{2}T_{v\om}   - v^{2}  T_{\om v} - \om^{2} T_{\om\om} ]  r^{2} \sqrt{(1-\mu)} d\sigma^{2} dr^{*} \\
\notag
&=& \int_{r^{*} = - \infty}^{r^{*}= \infty} \int_{\S^{2}}   (   w^{2} [\frac{1}{r^{2}(1-\mu)} | \Psi_{w\th} |^{2} + \frac{1}{r^{2}\sin^{2}\th(1-\mu)} | \Psi_{w\phi} |^{2} ]  \\
\notag
&& +  v^{2} [ \frac{1}{r^{2}(1-\mu)} | \Psi_{v\th} |^{2} + \frac{1}{r^{2}\sin^{2}\th(1-\mu)} | \Psi_{v\phi} |^{2} ]  \\
\notag
&&+  (\om^{2} + v^{2} ) [ \frac{1}{(1-\mu)^{2} } |\Psi_{vw}|^{2}   +  \frac{1}{4r^{4}\sin^{2}\th} | \Psi_{\phi\th}|^{2}]  )   r^{2} (1-\mu) d\sigma^{2} dr^{*} \\
\eea

\subsection{The vector field $G$}\

Let
\bea
G &=& -f(r^{*}) \frac{\pa}{\pa \om} + f(r^{*}) \frac{\pa}{\pa v}\\
\notag
&=& G^{\om} \frac{\pa}{\pa \om} +     G^{v} \frac{\pa}{\pa v}
\eea
where $f(r^{*})$ depends only on $r^{*}$.\\ 
Computing,
\beaa
\frac{\pa}{\pa \om} G^{\om} &=& \frac{\pa r^{*}}{\pa \om} \frac{\pa}{\pa r^{*}} G^{\om}   +    \frac{\pa t}{\pa \om} \frac{\pa}{\pa t} G^{\om} \\
&=& -\frac{1}{2} \frac{\pa}{\pa r^{*}} G^{\om}   + 0 \\
&=& \frac{1}{2} f^{'}
\eeaa
where $f^{'} = \frac{\pa}{\pa r^{*}} f$\\

Similarly,
\beaa
\frac{\pa}{\pa v} G^{v} &=& \frac{\pa r^{*}}{\pa v} \frac{\pa}{\pa r^{*}} G^{v}   +    \frac{\pa t}{\pa v} \frac{\pa}{\pa t} G^{v} \\
&=& \frac{1}{2} \frac{\pa}{\pa r^{*}} G^{v}   + 0 \\
&=& \frac{1}{2} f^{'} \\
\frac{\pa}{\pa v} G^{\om} &=& \frac{\pa r^{*}}{\pa v} \frac{\pa}{\pa r^{*}} G^{\om}   +    \frac{\pa t}{\pa v} \frac{\pa}{\pa t} G^{\om} \\
&=& \frac{1}{2} \frac{\pa}{\pa r^{*}} G^{\om}   + 0 \\
&=&  - \frac{1}{2} f^{'} \\
\frac{\pa}{\pa \om} G^{v} &=& \frac{\pa r^{*}}{\pa \om} \frac{\pa}{\pa r^{*}} G^{v}   +    \frac{\pa t}{\pa \om} \frac{\pa}{\pa t} G^{v} \\
&=& - \frac{1}{2} \frac{\pa}{\pa r^{*}} G^{v}   + 0 \\
&=& - \frac{1}{2} f^{'}
\eeaa

\beaa
&& \pi^{\a\b}(G)T_{\a\b}(\Psi) \\
&=& [\frac{1}{r^{2}} | \Psi_{w\th} |^{2} + \frac{1}{r^{2}\sin^{2}\th} | \Psi_{w\phi} |^{2}]( \frac{-2}{(1 - \mu)} \pa_{v}G^{w}) \\
&& + [ \frac{1}{r^{2}} | \Psi_{v\th} |^{2} + \frac{1}{r^{2}\sin^{2}\th} | \Psi_{v\phi} |^{2} ] (\frac{-2}{(1 - \mu)} \pa_{w}G^{v})\\
&& + [ \frac{1}{(1-\mu)^{2}} |\Psi_{vw}|^{2}   +  \frac{1}{4r^{4}\sin^{2}\th} | \Psi_{\phi\th}|^{2}  ] ( -2 [    \pa_{v} G^{v}  + \pa_{w} G^{w}   +      \frac{(3\mu-2)}{2r} ( G^{v}  -   G^{w} ) ]  ) \\
&=& [\frac{1}{r^{2}} | \Psi_{w\th} |^{2} + \frac{1}{r^{2}\sin^{2}\th} | \Psi_{w\phi} |^{2}]( \frac{-2}{(1 - \mu)} (-\frac{1}{2} f') ) \\
&& + [ \frac{1}{r^{2}} | \Psi_{v\th} |^{2} + \frac{1}{r^{2}\sin^{2}\th} | \Psi_{v\phi} |^{2} ] (\frac{-2}{(1 - \mu)} (-\frac{1}{2} f') )\\
&& +[ \frac{1}{(1-\mu)^{2}} |\Psi_{vw}|^{2}   +  \frac{1}{4r^{4}\sin^{2}\th} | \Psi_{\phi\th}|^{2}  ] ( -2 [  \frac{1}{2} f'  +  \frac{1}{2} f'    +       \frac{(3\mu-2)}{2r} ( f + f)  ]  ) \\
 &=& [\frac{1}{r^{2}} | \Psi_{w\th} |^{2} + \frac{1}{r^{2}\sin^{2}\th} | \Psi_{w\phi} |^{2}]( \frac{f'}{(1 - \mu)} )\\
&& + [ \frac{1}{r^{2}} | \Psi_{v\th} |^{2} + \frac{1}{r^{2}\sin^{2}\th} | \Psi_{v\phi} |^{2} ] (\frac{f'}{(1 - \mu)}  )\\
&& + [  \frac{1 }{(1-\mu)^{2}}  | \Psi_{vw}|^{2}  +  \frac{1}{4r^{4}\sin^{2}\th } | \Psi_{\phi\th}|^{2} ] . ( -2 [  f' +   \frac{(3\mu-2)}{r} f  ] )
\eeaa
Finally, we obtain,
\bea
\notag
&& T^{\a\b}(\Psi_{\mu\nu})\pi_{\a\b}(G) \\
\notag
 &=&  [\frac{1}{r^{2}} | \Psi_{w\th} |^{2} + \frac{1}{r^{2}\sin^{2}\th} | \Psi_{w\phi} |^{2} + \frac{1}{r^{2}} | \Psi_{v\th} |^{2} + \frac{1}{r^{2}\sin^{2}\th} | \Psi_{v\phi} |^{2} ] \frac{f'}{(1 - \mu)}  \\
&&-2 [  \frac{1 }{(1-\mu)^{2}}  | \Psi_{vw}|^{2}  +  \frac{1}{4r^{4}\sin^{2}\th } | \Psi_{\phi\th}|^{2} ] (  f' +  \frac{f}{r}(3\mu -2)  )  \label{contracteddeformationforG}
\eea

Computing,

\bea
&& E^{(G)}_{\Psi} (t_{i}) =  \int_{r^{*} = - \infty}^{r^{*}= \infty} \int_{\S^{2}} J_{\a}(G)(\Psi_{\mu\nu}) n^{\a} dVol_{t=t_{i} } (t=t_{i} )
\eea
where $n^{\a} = - \frac {\frac{\pa}{\pa t} }{\sqrt{(1-\mu)}}$ and $dVol_{t=t_{i}} = r^{2} \sqrt{(1-\mu)} d\sigma^{2} dr^{*}$. Thus,
\beaa
E^{(G)}_{\Psi}(t_{i}) = \int_{r^{*} = - \infty}^{r^{*}= \infty} \int_{\S^{2}} -  \frac {1}{\sqrt{(1-\mu)}}  (\frac{\pa}{\pa t})^{\a}     J_{\a}(G)(\Psi_{\mu\nu})  r^{2} \sqrt{(1-\mu)} d\sigma^{2} dr^{*}  
\eeaa

Recall \eqref{vtr*} and \eqref{wtr*}, thus,
\bea
G = f(r^{*}) \frac{\pa}{\pa r^{*}}  \label{thevectorfieldGintermsoffanddrstar}
\eea

Therefore,

\beaa
E^{(G)}_{\Psi} (t_{i}) &=& \int_{r^{*} = - \infty}^{r^{*}= \infty} \int_{\S^{2}} - f    T_{t r^{*} } (\Psi_{\mu\nu})  r^{2}  d\sigma^{2} dr^{*}  \\
\eeaa
\beaa
 T_{t r^{*} } &=& <\Psi_{t\a}, {\Psi_{r^{*}}}^{\a}>\\
&=& <\Psi_{t r^{*}}, {\Psi_{r^{*}}}^{r^{*}}> +  <\Psi_{t\th}, {\Psi_{r^{*}}}^{\th}>+  <\Psi_{t\phi}, {\Psi_{r^{*}}}^{\phi}> \\
&=& \frac{1}{r^{2}} <\Psi_{t\th}, {\Psi_{r^{*}\th}}^{}> +  \frac{1}{r^{2} \sin^{2} \th}  <\Psi_{t\phi}, {\Psi_{r^{*}\phi}}^{}> 
\eeaa

Thus,
\bea
&&  E^{(G)}_{\Psi} (t_{i}) \label{definitionoftheboundarytermgeneratedfromG} \\
\notag
&=& \int_{r^{*} = - \infty}^{r^{*}= \infty} \int_{\S^{2}} - f   [    \frac{1}{r^{2}} <\Psi_{t\th}, {\Psi_{r^{*}\th}}^{}> +  \frac{1}{r^{2} \sin^{2} \th}  <\Psi_{t\phi}, {\Psi_{r^{*}\phi}}^{}>          ]   r^{2}  d\sigma^{2} dr^{*}  \\
\notag
\eea

\section{Bounding the Conformal Energy on $t= constant$ Hypersurfaces}

Let $\Psi$ be any product of $\Lie_{t}$, $\Lie_{\Om_{i}}$, and $F$, where $\Om_{i}$, $i \in \{1, 2, 3\}$, is a basis of angular momentum operators. We know by then that we have \eqref{theenrgymomuntumtensorofpsiisdivergencefree}.\\

\subsection{Estimate for $E_{\Psi}^{(G)}$}\

Let $f$ in \eqref{thevectorfieldGintermsoffanddrstar} be a bounded function of $r^{*}$. Then, we have
\bea
 | E_{\Psi}^{(G)} (t=t_{i})  | \lesssim | \hat{E}_{\Psi}^{(\frac{\pa}{\pa t})} (t=t_{i}) | \lesssim | E_{\Psi}^{(\frac{\pa}{\pa t})} (t=t_{i})|  \label{controlenergyforG}
\eea

where,

\bea
\notag
\hat{E}^{(\frac{\pa}{\pa t})}_{\Psi} (t) &=&  \int_{r^{*} = - \infty}^{r^{*} =  \infty} \int_{\S^{2}}   [    \frac{1}{r^{2} (1-\mu) }  |\Psi_{t\th}|^{2} +  \frac{1}{r^{2}(1-\mu) \sin^{2} \th}   |\Psi_{t\phi}|^{2} +  \frac{1}{r^{2}(1-\mu) } |{\Psi_{r^{*}\th}}^{} |^{2}   \\
&& +  \frac{1}{r^{2} (1-\mu) \sin^{2} \th}  | {\Psi_{r^{*}\phi}}^{}|^{2}          ]   r^{2} (1-\mu) d\sigma^{2} dr^{*} 
\eea

\begin{proof}\

We have,

\beaa
&& |  E^{(G)}_{\Psi} (t_{i}) | \\
&=& | \int_{r^{*}= -\infty}^{r^{*} = \infty} \int_{\S^{2}}   - f [    \frac{1}{r^{2}} <\Psi_{t\th}, {\Psi_{r^{*}\th}}^{}> +  \frac{1}{r^{2} \sin^{2} \th}  <\Psi_{t\phi}, {\Psi_{r^{*}\phi}}^{}>          ]   r^{2}  d\sigma^{2} dr^{*}  |\\
& \lesssim & \int_{r^{*} = - \infty}^{r^{*}= \infty} \int_{\S^{2}}   | f| [    \frac{1}{r^{2}}  |\Psi_{t\th}|^{2} +  \frac{1}{r^{2}} |{\Psi_{r^{*}\th}}^{} |^{2}  +  \frac{1}{r^{2} \sin^{2} \th}   |\Psi_{t\phi}|^{2} +  \frac{1}{r^{2} \sin^{2} \th}  | {\Psi_{r^{*}\phi}}^{}|^{2}          ]   r^{2}  d\sigma^{2} dr^{*} \\
&& \text{(by using $a.b \les a^{2} + b^{2}$)} \\
& \lesssim & \int_{r^{*} = - \infty}^{r^{*}= \infty} \int_{\S^{2}}   | f| [    \frac{1}{r^{2} (1-\mu) }  |\Psi_{t\th}|^{2} +  \frac{1}{r^{2}(1-\mu) } |{\Psi_{r^{*}\th}}^{} |^{2}  +  \frac{1}{r^{2}(1-\mu) \sin^{2} \th}   |\Psi_{t\phi}|^{2} \\
&& +  \frac{1}{r^{2} (1-\mu) \sin^{2} \th}  | {\Psi_{r^{*}\phi}}^{}|^{2}          ]   r^{2} (1-\mu) d\sigma^{2} dr^{*} \\
& \lesssim & | \hat{E}^{(\frac{\pa}{\pa t})}_{\Psi} (t=t_{i}) |
\eeaa
(because $f$ is bounded).\\

Then, we have,

\beaa
&& E^{(\frac{\pa}{\pa t})}_{\Psi} (t=t_{i}) \\
&& =  - \int_{r^{*}= -\infty}^{r^{*} = \infty} \int_{\S^{2}}  (\frac{1}{2}   | \Psi_{\hat{t} \hat{r^{*}} }|^{2} + \frac{1}{2} |\Psi_{\hat{t} \hat{\th}}|^{2} + \frac{1}{2} | \Psi_{\hat{t} \hat{\phi} }|^{2} + \frac{1}{2} |\Psi_{\hat{r^{*}} \hat{\th} }|^{2} +  \frac{1}{2} |\Psi_{ \hat{r^{*}} \hat{\phi} }|^{2} + \frac{1}{2} | \Psi_{ \hat{\th} \hat{\phi} }|^{2}   ) .r^{2} (1-\mu) d\sigma^{2} dr^{*}  \\
\eeaa
and therefore,
\beaa
| \hat{E}^{(\frac{\pa}{\pa t})}_{\Psi} (t=t_{i}) | &=&  \int_{r^{*} = - \infty}^{r^{*} =  \infty} \int_{\S^{2}}   [   |\Psi_{\hat{t}\hat{\th}}|^{2} +     |\Psi_{\hat{t}\hat{\phi}}|^{2} +   |{\Psi_{\hat{r^{*}}\hat{\th}}}^{} |^{2}    +    | {\Psi_{\hat{r^{*}}\hat{\phi}}}^{}|^{2}          ]   r^{2} (1-\mu) d\sigma^{2} dr^{*} \\ 
&\les& | E^{(\frac{\pa}{\pa t})}_{\Psi} (t=t_{i}) |
\eeaa

\end{proof}

\subsection{Controlling $J_{\Psi}^{(K)}$ in terms of $J_{\Psi}^{(G)}$}\

\bea
J_{\Psi}^{(K)}( t_{i} \leq t \leq t_{i+1})    &\lesssim&  t_{i+1}     J_{\Psi}^{(G)} ( t_i \leq t \leq t_{i+1} ) (r_{0} \leq r \leq  R_{0} )   \label{KcontolledbytG}
\eea

where $r_{0} \leq 3m \leq R_{0}$. And we have,

\bea
 E_{\Psi}^{(K)} (t=t_{i+1})  &\leq& J_{\Psi}^{(K)}( t_{i} \leq t \leq t_{i+1})  + E_{\Psi}^{(K)} (t=t_{i}) \label{EKcontolledbyJK}
\eea

\begin{proof}\

We have,

\beaa
&& J_{\Psi}^{(K)}( t_{i} \leq t \leq t_{i+1}) \\
&=&   \int_{t=t_{i}}^{t_{i+1}} \int_{r^{*}=-\infty}^{r^{*}=\infty} \int_{\S^{2}}  4t  [ 2 + \frac{ ( 3\mu - 2 )r^{*}}{r} ] .[  \frac{1 }{(1-\mu)^{2}}  | \Psi_{vw}|^{2}  +  \frac{1}{4r^{4}\sin^{2}\th } | \Psi_{\phi\th}|^{2} ]  r^{2}(1-\mu)d\sigma^{2}dr^{*}dt \\
\eeaa

We remark that $[ 2 + \frac{ ( 3\mu - 2 )r^{*}}{r} ] $ is positive only in a bounded interval $ [r_{0}, R_{0} ] $ where $r_{0} > 2m $ and $ R_{0} > 3m $. To see this, notice that,

$$ \lim_{r^{*} \to - \infty } [ 2 + \frac{ ( 3\mu - 2 )r^{*}}{r} ] = - \infty $$

and,

$$ \lim_{r^{*} \to  \infty } [ 2 + \frac{ ( 3\mu - 2 )r^{*}}{r} ] = 2 - 2.(1) =  0 $$

and $$ \lim_{r \to 3m  } [ 2 + \frac{ ( 3\mu - 2 )r^{*}}{r} ] = 2 + 0 = 2 > 0 $$

More precisely, let's look for the region where $J_{\Psi}^{(K)}( t_{i} \leq t \leq t_{i+1})$ is negative: 

\beaa
 2 +    \frac{r^{*}}{r}(-2 +3\mu)  \leq 0
\eeaa
when 
\beaa
r^{*} (3\mu-2)  \leq  - 2 r
\eeaa
Choosing $r_{0}$ small enough such that $(3\mu_{0}-2) \geq 0$, then we need $r_{0}$ such that for $r \leq r_{0}$ ,
\beaa
r^{*}   \leq  - \frac{2r}{(3\mu-2)} 
\eeaa
so choose $r_{0}$ such that
\beaa
r_{0}^{*}   \leq  - \frac{2 r_{0}}{(3\mu_{0} -2)} < 0
\eeaa
or choose $\hat{R_{0}}$ large such that $(3\mu(\hat{R_{0}})-2) \leq 0$, and such that for $r \geq \hat{R_{0}}$, we get,
\beaa
r^{*}   \geq  - \frac{2r}{(3\mu -2)}  > 0
\eeaa
then choose $\hat{R}$ such that 
\beaa
\hat{R_{0}}^{*}   \geq  - \frac{2\hat{R_{0}}}{(3\mu(\hat{R_{0}}) -2)}  > 0
\eeaa

In conclusion choose $r_{0}$ such that $$ r_{0} <   - \frac{2r_{0}}{(3\mu_{0} -2)}  < 0$$

and choose $R_{0}$ as the infimum of all $\hat{R_{0}}$ such that
\bea
\hat{R_{0}}^{*}   \geq  - \frac{2\hat{R_{0}}}{(3\mu(\hat{R_{0}}) -2)}  > 0 \label{definitionofRoastheinfimum}
\eea

Then in the region $ r \leq r_{0}$ or $r \geq R_{0}$ we know that the integrand in $J_{F}^{(K)}( t_{i} \leq t \leq t_{i+1}) $ is negative.\\

Thus,

\beaa
&& J_{\Psi}^{(K)}( t_{i} \leq t \leq t_{i+1}) \\
&\leq&  \int_{t=t_{i}}^{t_{i+1}} \int_{r^{*}= r^{*}_{0} }^{r^{*}= R_{0}^{*} } \int_{\S^{2}} 4t  [ 2 + \frac{ ( 3\mu - 2 )r^{*}}{r} ] .[  \frac{1 }{(1-\mu)^{2}}  | \Psi_{vw}|^{2}  +  \frac{1}{4r^{4}\sin^{2}\th } | \Psi_{\phi\th}|^{2} ]  r^{2}(1-\mu)d\sigma^{2}dr^{*}dt \\
&\leq&  t_{i+1} \int_{t=t_{i}}^{t_{i+1}} \int_{r^{*}= r^{*}_{0} }^{ r^{*} = R_{0}^{*} } \int_{\S^{2}} 4  [ 2 + \frac{ ( 3\mu - 2 )r^{*}}{r} ] .[  \frac{1 }{(1-\mu)^{2}}  | \Psi_{vw}|^{2}  +  \frac{1}{4r^{4}\sin^{2}\th } | \Psi_{\phi\th}|^{2} ]  r^{2}(1-\mu)d\sigma^{2}dr^{*}dt \\
&\lesssim& t_{i+1}  \int_{t=t_{i}}^{t_{i+1}} \int_{r^{*}= r^{*}_{0} }^{ r^{*} = R_{0}^{*} } \int_{\S^{2}} [  \frac{1 }{(1-\mu)^{2}}  | \Psi_{vw}|^{2}  +  \frac{1}{4r^{4}\sin^{2}\th } | \Psi_{\phi\th}|^{2} ]  d\sigma^{2}dr^{*}dt \\
&\les&  t_{i+1}  \int_{t = t_{i}}^{ t= t_{i+1}}  \int_{r^{*}= r^{*}_{0} }^{ r^{*} = R_{0}^{*} } \int_{\S^{2}} (\int_{r^{*}}^{\infty}  \chi_{[r_{0}^{*},  (R_{0}+1)^{*}]} d\overline{r^{*}} )  [    | \Psi_{\hat{v}\hat{w}}|^{2}  +  \frac{1}{4 } | \Psi_{\hat{\phi}\hat{\th}}|^{2} ]  .   6 \mu r (1-\mu)^{2} dr^{*} d\sigma^{2} dt \\
&\les&  t_{i+1} J_{\Psi}^{(G)}( t_{i} \leq t \leq t_{i+1}) (  r^{*}_{0} \le r^{*} \le R_{0}^{*} ) \\
\eeaa

\end{proof}

\subsection{Estimate for $ J_{\Psi}^{(K)} $ in terms of $E_{\Psi}^{(K)}$}\

For,
$$ t_{i+1} \leq t_{i} + (0.1) t_{i} $$ and, $$| r^{*}(r_{0}) | +| r^{*}(R) | \leq 0.4 t_{i}$$

We have:

\bea
\notag
 J_{\Psi}^{(K)} ( t_i \leq t \leq t_{i+1} ) &\les& t_{i+1} J_{\Psi}^{(G)} ( t_i \leq t \leq t_{i+1} ) (r_{0} < r < R_{0} ) \\
&\lesssim& t_{i+1} [  \frac{1}{t_{i}^{2}} E_{\Psi}^{(K)} (t=t_{i}) + \frac{1}{t_{i}^{2}} \sum_{j=1}^{3} E_{\Lie_{\Omega_{j}} \Psi}^{(K)} (t=t_{i}) \label{EKovertsquare}  \\
\notag
&& + \frac{1}{t_{i+1}^{2}} E_{\Psi}^{(K)} (t=t_{i+1}) + \frac{1}{t_{i+1}^{2}} \sum_{j=1}^{3} E_{\Lie_{\Omega_{j}} \Psi}^{(K)} (t=t_{i+1})    ]     
\eea

\begin{proof}\

We have, from assumption \eqref{Assumption1},

\beaa 
&& J_{\Psi}^{(G)} ( t_i \leq t \leq t_{i+1} )  \les  | \hat{E}_{\Psi}^{(\frac{\pa}{\pa t} ) } (t_{i+1})| +  |\hat{E}_{\Psi}^{(\frac{\pa}{\pa t} ) } (t_{i})| + | \hat{E}_{\Lie_{\Omega_{j}} \Psi }^{(\frac{\pa}{\pa t} ) }  (t_{i+1})|  + | \hat{E}_{\Lie_{\Omega_{j}} \Psi }^{(\frac{\pa}{\pa t} ) }  (t_{i})|  
\eeaa

Let $\hat{\Psi}$ be an anti-symmetric 2-tensor, defined by the following,
\bea
\hat{\Psi}_{\hat{t}\hat{\th} } (t= t_{i}, r^{*}, \th, \phi) = \hat{\chi} (\frac{2 r^{*}}{t_{i}})  \Psi_{\hat{t}\hat{\th} } (t=t_{i}, r^{*}, \th, \phi)
\eea
\bea
\hat{\Psi}_{\hat{t}\hat{\phi} } (t= t_{i}, r^{*}, \th, \phi) = \hat{\chi} (\frac{2 r^{*}}{t_{i}})  \Psi_{\hat{t}\hat{\phi} } (t=t_{i}, r^{*}, \th, \phi)
\eea
\bea
\hat{\Psi}_{\hat{r^{*}}\hat{\th} } (t= t_{i}, r^{*}, \th, \phi) = \hat{\chi} (\frac{2 r^{*}}{t_{i}})  \Psi_{\hat{r^{*}}\hat{\th} } (t=t_{i}, r^{*}, \th, \phi)
\eea
\bea
\hat{\Psi}_{\hat{r^{*}}\hat{\phi} } (t= t_{i}, r^{*}, \th, \phi) = \hat{\chi} (\frac{2 r^{*}}{t_{i}})  \Psi_{\hat{r^{*}}\hat{\phi} } (t=t_{i}, r^{*}, \th, \phi)
\eea
\bea
\der_{r^{*} } \hat{\Psi}_{\hat{t}\hat{r^{*} } } (t= t_{i}, r^{*}, \th, \phi) =  \hat{\chi} (\frac{2 r^{*}}{t_{i}})  \der_{r^{*}} \Psi_{\hat{t}\hat{r^{*}} } (t=t_{i}, r^{*}, \th, \phi)
\eea   
\bea
\der_{r^{*} } \hat{\Psi}_{\hat{\th}\hat{\phi} } (t= t_{i}, r^{*}, \th, \phi) = \hat{\chi} (\frac{2 r^{*}}{t_{i}})  \der_{r^{*} } \Psi_{\hat{\th}\hat{\phi} } (t=t_{i}, r^{*}, \th, \phi)
\eea
where $\hat{\chi}$ is a smooth cut-off function equal to one on $[-1,1]$ and zero outside $[-\frac{3}{2}, \frac{3}{2}]$. 

And 
for,
\bea
-\frac{t_{i}}{2} \leq &r^{*}& \leq \frac{t_{i}}{2} \\
\hat{\Psi}_{\hat{r^{*}}\hat{t} } (t= t_{i}, r^{*}, \th, \phi) &=&  \Psi_{\hat{r^{*}}\hat{t} } (t=t_{i}, r^{*}, \th, \phi) \\
\hat{\Psi}_{\hat{\th}\hat{\phi} } (t= t_{i}, r^{*}, \th, \phi) &=&  \Psi_{\hat{\th}\hat{\phi} } (t=t_{i}, r^{*}, \th, \phi) 
\eea

And for,
\bea
t_{i} \leq t \leq t_{i+1} \\
{\der}^{\mu} \hat{\Psi}_{\mu\nu} = 0 \\
\der_{\a} \hat{\Psi}_{\mu\nu} + \der_{\mu} \hat{\Psi}_{\nu\a} + \der_{\nu} \hat{\Psi}_{\a\mu} = 0
\eea

Then, we have that for,

 $$-\frac{t_{i}}{2} \leq r^{*} \leq \frac{t_{i}}{2}$$
$$\hat{\Psi}_{\mu\nu} (t= t_{i}, r^{*}, \th, \phi) =  \Psi_{\mu\nu} (t=t_{i}, r^{*}, \th, \phi)$$ 

And for

$$r^{*} \leq -\frac{3t_{i}}{4} \mbox{        and ,        } r^{*} \geq \frac{3t_{i}}{4}$$

for, $$  (k, l) \in \{  (r^{*}, \th), (r^{*}, \phi), (t, \th), (t, \phi) \} $$

we have,

$$\hat{\Psi}_{kl} (t= t_{i}, r^{*}, \th, \phi) = 0  $$ 

Now, considering the region $$t_{i} \leq t \leq t_{i+1}  \mbox{     ,   and     } r_{0} \leq r \leq R_{0}$$  where $$ t_{i+1} \leq t_{i} + (0.1) t_{i}  \mbox{, and          } | r^{*}(r_{0}) | +| r^{*}(R_{0}) | \leq 0.4 t_{i}$$

clearly, on $t = t_{i}$, we have $\hat{\Psi}_{\mu\nu}(t=t_i) = \Psi_{\mu\nu}(t=t_i) $, in the specified region. However, since the information from the initial data propagates no faster than the speed of light, i.e. along the null cones $t= r^{*}$, and $t= - r^{*}$, then in the region $t_{i} \leq t \leq t_{i+1}$ we have $\hat{\Psi}_{\mu\nu} = \Psi_{\mu\nu} $ if $r^{*}_{0} \geq - \frac{t_{i}}{2} + (0.1) t_{i} = -(0.4)t_{i}$ and $ R_{0}^{*} \leq  \frac{t_{i}}{2} - (0.1) t_{i}= (0.4) t_{i} $ which is satisfied in the specified region because of the condition that  $| r^{*}(r_{0}) | +| r^{*}(R_{0}) | \leq 0.4 t_{i}$. Thus, $$\Psi_{\mu\nu} = \hat{\Psi}_{\mu\nu}$$ in the specified region. Therefore, we have,
\bea
  J_{\Psi}^{(G)} ( t_i \leq t \leq t_{i+1} ) (r_{0} \leq r \leq R_{0}) &\lesssim& J_{\hat{\Psi}}^{(G)} ( t_i \leq t \leq t_{i+1} ) 
\eea

And for the same reason, on $t = t_{i+1}$, we have $\hat{\Psi}_{\mu\nu} (t = t_{i+1})= 0 $ if $r^{*} \geq - \frac{3t_{i}}{4} - (0.1) t_{i} = -(0.85)t_{i}$ and $ r^{*} \leq  \frac{3t_{i}}{4} + (0.1) t_{i}= (0.85) t_{i} $.

\beaa
&& \hat{E}_{ \hat{\Psi}}^{ ( \frac{\pa}{\pa t} ) } (t=t_{i+1}) \\
&=&   \int_{r^{*}= -\infty}^{r^{*} = \infty} \int_{\S^{2}}  (  |  \hat{\Psi}_{\hat{t} \hat{\th}}|^{2} +  |   \hat{\Psi}_{\hat{t} \hat{\phi} }|^{2}  +  |  \hat{\Psi}_{\hat{r^{*}} \hat{\th} }|^{2} +   |  \hat{\Psi}_{ \hat{r^{*}} \hat{\phi} }|^{2}   ) .r^{2} (1-\mu) d\sigma^{2} dr^{*} \\
&\lesssim&   \int_{r^{*}= - (0.85) t_{i}  }^{r^{*} = (0.85) t_{i}  }  \int_{\S^{2}}  (  |  \Psi_{\hat{t} \hat{\th}}|^{2} +  |   \Psi_{\hat{t} \hat{\phi} }|^{2}  +  |  \Psi_{\hat{r^{*}} \hat{\th} }|^{2} +   |  \Psi_{ \hat{r^{*}} \hat{\phi} }|^{2}   ) .r^{2} (1-\mu) d\sigma^{2} dr^{*} \\
&& \text{(where we used the boundedness of $\chi$)}\\
&\lesssim&   \int_{r^{*}= - (0.85) t_{i}  }^{r^{*} = (0.85) t_{i}  }  \int_{\S^{2}}  (  |  \Psi_{\hat{v} \hat{\th}}|^{2} +  |   \Psi_{\hat{v} \hat{\phi} }|^{2}  +  |  \Psi_{\hat{w} \hat{\th} }|^{2} +   |  \Psi_{ \hat{w} \hat{\phi} }|^{2}   ) .r^{2} (1-\mu) d\sigma^{2} dr^{*} 
\eeaa

We also have,

\beaa
&& \hat{E}_{ \hat{\Psi}}^{ ( \frac{\pa}{\pa t} ) } (t=t_{i+1}) \\
&=&   \int_{r^{*}= -\infty}^{r^{*} = \infty} \int_{\S^{2}}  (  |  \hat{\Psi}_{\hat{t} \hat{\th}}|^{2} +  |   \hat{\Psi}_{\hat{t} \hat{\phi} }|^{2}  +  |  \hat{\Psi}_{\hat{r^{*}} \hat{\th} }|^{2} +   |  \hat{\Psi}_{ \hat{r^{*}} \hat{\phi} }|^{2}   ) .r^{2} (1-\mu) d\sigma^{2} dr^{*} \\
&\lesssim&   \int_{r^{*}= - \frac{3t_{i}}{4} }^{r^{*} = \frac{3t_{i}}{4}  }   \int_{\S^{2}}  (  |  \Psi_{\hat{t} \hat{\th}}|^{2} +  |   \Psi_{\hat{t} \hat{\phi} }|^{2}  +  |  \Psi_{\hat{r^{*}} \hat{\th} }|^{2} +   |  \Psi_{ \hat{r^{*}} \hat{\phi} }|^{2}   ) .r^{2} (1-\mu) d\sigma^{2} dr^{*} \\
&& \text{(using the boundedness of $\chi$)}\\
&\lesssim&   \int_{r^{*}= - (0.85) t_{i}  }^{r^{*} = (0.85) t_{i}  }   \int_{\S^{2}}  (  |  \Psi_{\hat{v} \hat{\th}}|^{2} +  |   \Psi_{\hat{v} \hat{\phi} }|^{2}  +  |  \Psi_{\hat{w} \hat{\th} }|^{2} +   |  \Psi_{ \hat{w} \hat{\phi} }|^{2}   ) .r^{2} (1-\mu) d\sigma^{2} dr^{*} \\
\eeaa

\begin{lemma}
\bea
\notag
&&\int_{r^{*}= r_{1}^{*}   }^{r^{*} = r_{2}^{*}   } \int_{\S^{2}} ( \frac{1}{r^{2}(1-\mu)} | \Psi_{w\th} |^{2} + \frac{1}{r^{2}\sin^{2}\th(1-\mu)} | \Psi_{w\phi} |^{2}   +   \frac{1}{r^{2}(1-\mu)} | \Psi_{v\th} |^{2} \\
\notag
&&+ \frac{1}{r^{2}\sin^{2}\th(1-\mu)} | \Psi_{v\phi} |^{2}  +  \frac{1}{(1-\mu)^{2}} |\Psi_{vw}|^{2}   +  \frac{1}{4r^{4}\sin^{2}\th} | \Psi_{\phi\th}|^{2} ). (1-\mu) r^{2}   d\sigma^{2} dr^{*} (t) \\
& \lesssim & \frac{E_{\Psi}^{(K)}(t)}{\min_{w \in \{t\}\cap \{  r_{1}^{*} \leq r^{*} \leq r_{2}^{*} \}  } w^{2}}  + \frac{E_{\Psi}^{(K)}(t)}{\min_{v \in \{t\}\cap  \{  r_{1}^{*} \leq r^{*} \leq r_{2}^{*} \}  } v^{2}} \label{EKoverminvsquaredandwsquared} 
\eea
\end{lemma}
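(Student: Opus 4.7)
The plan is to establish the inequality by a termwise pointwise comparison of the integrands of the two sides, followed by extension of the integration region on the right hand side to all of $r^{*} \in \mathbb{R}$. The key observation is that $E_{\Psi}^{(K)}(t)$ is a sum of nonnegative contributions in which each squared component on the left hand side of \eqref{EKoverminvsquaredandwsquared} already appears, but weighted by an extra factor of $w^{2}$, $v^{2}$, or $w^{2}+v^{2}$; consequently dividing by the infimum of these weights over the integration interval $[r_{1}^{*}, r_{2}^{*}]$ recovers the coefficients on the left hand side.

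More precisely, I would first fix $t$ and restrict attention to $r^{*}\in[r_{1}^{*},r_{2}^{*}]$, where $v=t+r^{*}$ and $w=t-r^{*}$, so that $w^{2}\geq \min\{w^{2}:r^{*}\in[r_{1}^{*},r_{2}^{*}]\}$ and similarly for $v^{2}$. For the $|\Psi_{w\theta}|^{2}$ and $|\Psi_{w\phi}|^{2}$ contributions to the left hand side, I would multiply and divide by $w^{2}$ and use the pointwise bound $1\leq w^{2}/\min w^{2}$ on the interval; this produces exactly the $w^{2}$-weighted summand appearing in the integrand of $E_{\Psi}^{(K)}$. The analogous step with $v^{2}$ handles $|\Psi_{v\theta}|^{2}$ and $|\Psi_{v\phi}|^{2}$, and contributes the $E_{\Psi}^{(K)}(t)/\min v^{2}$ piece.

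For the middle components $|\Psi_{vw}|^{2}$ and $|\Psi_{\phi\theta}|^{2}$, the $E_{\Psi}^{(K)}$ integrand carries the combined weight $w^{2}+v^{2}$. Here I would split the left hand side integrand as the sum of two equal halves and bound one half using $1\leq (w^{2}+v^{2})/\min w^{2}$ and the other using $1\leq (w^{2}+v^{2})/\min v^{2}$, thereby distributing the middle components between the two terms on the right hand side of \eqref{EKoverminvsquaredandwsquared}. Adding these estimates gives a pointwise bound of the integrand on the left by the integrand of $E_{\Psi}^{(K)}$ times $(1/\min w^{2} + 1/\min v^{2})$.

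Finally, I would integrate over $r^{*}\in[r_{1}^{*},r_{2}^{*}]$ and over $\S^{2}$, and then extend the integration on the right to $r^{*}\in(-\infty,\infty)$. This last step is legitimate because every summand in the integrand of $E_{\Psi}^{(K)}$ is manifestly nonnegative (each coefficient $w^{2}$, $v^{2}$, $w^{2}+v^{2}$, and $r^{2}(1-\mu)$ is nonnegative on the exterior region), so extending the integration range can only enlarge the integral. This produces $E_{\Psi}^{(K)}(t)$ on the right hand side and yields \eqref{EKoverminvsquaredandwsquared}. There is no real obstacle in this argument beyond bookkeeping; the only mild subtlety is the treatment of the middle components, where one must be careful to distribute the combined weight $w^{2}+v^{2}$ so that both terms of the right hand side genuinely receive a contribution, which is handled by the splitting described above.
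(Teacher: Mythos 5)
Your proof is correct and is essentially the argument the paper gives: both rest on the pointwise domination of the standard-energy integrand by the conformal-energy integrand divided by $\min w^{2}$ (respectively $\min v^{2}$), together with the nonnegativity of every summand in $E_{\Psi}^{(K)}$ to justify dropping the unused terms and extending the integration to all of $r^{*}$. Your splitting of the middle components into two halves is only a cosmetic variant of the paper's use of $w^{2}\leq w^{2}+v^{2}$ and $v^{2}\leq w^{2}+v^{2}$ in the two blocks separately.
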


\begin{proof}\

We proved that,
\beaa
&& E_{\Psi}^{(K)}(t_{i})\\
&=& \int_{r^{*} = - \infty}^{r^{*}= \infty} \int_{\S^{2}}   (  w^{2} [\frac{1}{r^{2}} | \Psi_{w\th} |^{2} + \frac{1}{r^{2}\sin^{2}\th} | \Psi_{w\phi} |^{2} ]  + v^{2} [ \frac{1}{r^{2}} | \Psi_{v\th} |^{2} + \frac{1}{r^{2}\sin^{2}\th} | \Psi_{v\phi} |^{2} ]  \\
&&+ (1-\mu) (\om^{2} + v^{2} ) [ \frac{1}{(1-\mu)^{2}} |\Psi_{vw}|^{2}   +  \frac{1}{4r^{4}\sin^{2}\th} | \Psi_{\phi\th}|^{2}]  )  r^{2}  d\sigma^{2} dr^{*} 
\eeaa

Because of this we have,
\beaa
&&\int_{r^{*}= - \infty }^{r^{*} = \infty  } \int_{\S^{2}} ( (1- \mu) w^{2} [\frac{1}{r^{2}(1- \mu)} | \Psi_{w\th} |^{2} + \frac{1}{r^{2} \sin^{2}\th (1- \mu)} | \Psi_{w\phi} |^{2} ] \\
&& + (1- \mu) w^{2} [ \frac{1}{(1-\mu)^{2}} |\Psi_{vw}|^{2}   +  \frac{1}{4r^{4}\sin^{2}\th} | \Psi_{\phi\th}|^{2}]  ). r^{2}   d\sigma^{2} dr^{*} (t_{i}) \\
&\lesssim& E_{\Psi}^{(K)}(t_{i}) 
\eeaa
and,
\beaa
&&\int_{r^{*}= - \infty }^{r^{*} = \infty  } \int_{\S^{2}} ( (1- \mu) v^{2} [ \frac{1}{r^{2}(1- \mu) } | \Psi_{v\th} |^{2} + \frac{1}{r^{2}\sin^{2}\th (1- \mu) } | \Psi_{v\phi} |^{2} ]  \\
&& + (1- \mu)  v^{2} [ \frac{1}{(1-\mu)^{2}} |\Psi_{vw}|^{2}   +  \frac{1}{4r^{4}\sin^{2}\th} | \Psi_{\phi\th}|^{2}] ). r^{2}   d\sigma^{2} dr^{*} (t_{i}) \\
&\lesssim& E_{\Psi}^{(K)}(t_{i}) 
\eeaa
and thus,
\beaa
&&\int_{r^{*}= r_{1}^{*}   }^{r^{*} = r_{2}^{*}   } \int_{\S^{2}} (  (1- \mu)  [\frac{1}{r^{2}(1- \mu)} | \Psi_{w\th} |^{2} + \frac{1}{r^{2}\sin^{2}\th (1- \mu) } | \Psi_{w\phi} |^{2} ] \\
&& + (1- \mu)  [ \frac{1}{(1-\mu)^{2}} |\Psi_{vw}|^{2}   +  \frac{1}{4r^{4}\sin^{2}\th} | \Psi_{\phi\th}|^{2}] ). r^{2}   d\sigma^{2} dr^{*} (t) \\
& \lesssim & \frac{E_{\Psi}^{(K)}(t)}{\min_{w \in \{t\}\cap \{  r_{1}^{*} \leq r^{*} \leq r_{2}^{*} \}  } w^{2}}
\eeaa
and,
\beaa
&&\int_{r^{*}= r_{1}^{*}   }^{r^{*} = r_{2}^{*}   } \int_{\S^{2}} ( (1- \mu)   [ \frac{1}{r^{2}(1- \mu) } | \Psi_{v\th} |^{2} + \frac{1}{r^{2}\sin^{2}\th (1- \mu) } | \Psi_{v\phi} |^{2} ]  \\
&& + (1- \mu)  [ \frac{1}{(1-\mu)^{2}} |\Psi_{vw}|^{2}   +  \frac{1}{4r^{4}\sin^{2}\th} | \Psi_{\phi\th}|^{2}] ). r^{2}   d\sigma^{2} dr^{*} (t) \\
&\lesssim& \frac{E_{\Psi}^{(K)}(t)}{\min_{v \in \{t\}\cap \{  r_{1}^{*} \leq r^{*} \leq r_{2}^{*} \}  } v^{2}}
\eeaa
Summing, we obtain,
\beaa
\notag
&&\int_{r^{*}= r_{1}^{*}   }^{r^{*} = r_{2}^{*}   } \int_{\S^{2}} ( [\frac{1}{r^{2}(1-\mu)} | \Psi_{w\th} |^{2} + \frac{1}{r^{2}\sin^{2}\th(1-\mu)} | \Psi_{w\phi} |^{2} ]  +  [ \frac{1}{r^{2}(1-\mu)} | \Psi_{v\th} |^{2}  \\
\notag
&&+ \frac{1}{r^{2}\sin^{2}\th(1-\mu)} | \Psi_{v\phi} |^{2} ]  +  [ \frac{1}{(1-\mu)^{2}} |\Psi_{vw}|^{2}   +  \frac{1}{4r^{4}\sin^{2}\th} | \Psi_{\phi\th}|^{2}]  ). (1-\mu) r^{2}   d\sigma^{2} dr^{*} (t) \\
& \lesssim & \frac{E_{\Psi}^{(K)}(t)}{\min_{w \in \{t\}\cap \{  r_{1}^{*} \leq r^{*} \leq r_{2}^{*} \}  } w^{2}}  + \frac{E_{\Psi}^{(K)}(t)}{\min_{v \in \{t\}\cap  \{  r_{1}^{*} \leq r^{*} \leq r_{2}^{*} \}  } v^{2}} 
\eeaa

\end{proof}

Inequality \eqref{EKoverminvsquaredandwsquared} gives,

\bea
\notag
&&  \int_{r^{*}= - (0.85) t_{i}  }^{r^{*} = (0.85) t_{i}  }  \int_{\S^{2}} ( \frac{1}{r^{2}(1-\mu)} | \Psi_{w\th} |^{2} + \frac{1}{r^{2}\sin^{2}\th(1-\mu)} | \Psi_{w\phi} |^{2}   +   \frac{1}{r^{2}(1-\mu)} | \Psi_{v\th} |^{2} \\
\notag
&& + \frac{1}{r^{2}\sin^{2}\th(1-\mu)} | \Psi_{v\phi} |^{2}   +   \frac{1}{(1-\mu)^{2}} |\Psi_{vw}|^{2}   +  \frac{1}{4r^{4}\sin^{2}\th} | \Psi_{\phi\th}|^{2}  ). (1-\mu) r^{2}   d\sigma^{2} dr^{*} (t) \\
\notag
& \lesssim & \frac{E_{\Psi}^{(K)}(t)}{\min_{w \in \{t\}\cap\{ - (0.85)t_{i} \leq r^{*} \leq (0.85)t_{i} \} } w^{2}}  + \frac{E_{\Psi}^{(K)}(t)}{\min_{v \in \{t\}\cap\{ - (0.85)t_{i} \leq r^{*} \leq (0.85)t_{i} \} } v^{2}} \\
&\lesssim & \frac{E_{\Psi}^{(K)}(t)}{t_{i}^{2} }
\eea

Examining now the term,

\beaa
| \hat{E}^{(\frac{\pa}{\pa t})}_{ \Lie_{ \Om_{i} }  \hat{\Psi} } (t=t_{i}) |  &\les&  \int_{r^{*}= -\infty}^{r^{*} = \infty} \int_{\S^{2}} (  | \Lie_{ \Om_{i} } \hat{\Psi}_{\hat{w}\hat{\th}} |^{2} +  | \Lie_{ \Om_{i} } \hat{\Psi}_{\hat{w}\hat{\phi}} |^{2}   +   | \Lie_{ \Om_{i} } \hat{\Psi}_{\hat{v}\hat{\th}} |^{2} \\
&&+  | \Lie_{ \Om_{i} } \hat{\Psi}_{\hat{v}\hat{\phi}} |^{2}    ). (1-\mu) r^{2}   d\sigma^{2} dr^{*} (t= t_{i} ) \\
\eeaa

For, $$  (k, l) \in \{  (r^{*}, \th), (r^{*}, \phi), (t, \th), (t, \phi) \} $$

\beaa
\Lie_{ \Om_{i} }  \hat{\Psi}_{kl} (t=t_{i}, r^{*}, \th, \phi)  &=& \Lie_{ \Om_{i} } \chi(\frac{2 r^{*}}{t_{i}})  \Psi_{kl} (t=t_{i}, r^{*}, \th, \phi)  \\
&=& \chi(\frac{2 r^{*}}{t_{i}}) \Lie_{ \Om_{i} }  \Psi_{kl} (t=t_{i}, r^{*}, \th, \phi)
\eeaa

Thus,
\beaa
| \hat{E}^{(\frac{\pa}{\pa t})}_{\Lie_{ \Om_{i} } \hat{\Psi} } (t=t_{i}) | \les 
E^{ (\frac{\pa}{\pa t}) }_{\Lie_{ \Om_{i} } \Psi } ( - \frac{3 t_{i}}{4} \leq r^{*} \leq \frac{3 t_{i}}{4}) (t=t_{i})
\eeaa

\end{proof}

\subsection{Estimate for $E_{\Psi}^{(K)}$}\

\bea
\notag
E_{\Psi}^{(K)} (t) &\les& \sum_{k=1}^{3} \sum_{l=1}^{3} \sum_{j=1}^{3}   E_{\Psi, \Lie_{\Om_{k}} \Psi, \Lie_{\Om_{k}} \Lie_{\Om_{l}} \Lie_{\Om_{j}} \Psi,  \Lie_{\Om_{k}} \Lie_{\Om_{l}} \Lie_{\Om_{j}} \Psi }^{(\frac{\pa}{\pa t})}   (t=t_{0})  +  \sum_{l=1}^{3} \sum_{j=1}^{3}   E_{\Psi, \Lie_{\Om_{l}} \Psi, \Lie_{\Om_{l}} \Lie_{\Om_{j}} \Psi  }^{(K)} (t=t_{0})  \\
&\les& E^{M}_{F} \label{boundingEK}
\eea

where,

\bea
\notag
E^{M}_{\Psi} &=& \sum_{k=1}^{3} \sum_{l=1}^{3} \sum_{j=1}^{3}   E_{\Psi, \Lie_{\Om_{k}} \Psi, \Lie_{\Om_{k}} \Lie_{\Om_{l}} \Lie_{\Om_{j}} \Psi,  \Lie_{\Om_{k}} \Lie_{\Om_{l}} \Lie_{\Om_{j}} \Psi }^{(\frac{\pa}{\pa t})}   (t=t_{0})  +  \sum_{l=1}^{3} \sum_{j=1}^{3}   E_{\Psi, \Lie_{\Om_{l}} \Psi, \Lie_{\Om_{l}} \Lie_{\Om_{j}} \Psi  }^{(K)} (t=t_{0}) \\
&=& \sum_{i=0}^{3} E_{ r^{j} (\rLie)^{j} \Psi  }^{(\frac{\pa}{\pa t})} (t=t_{0})    + \sum_{i=0}^{2}  E_{ r^{j} (\rLie)^{j} \Psi  }^{(K)} (t=t_{0}) 
\eea

\begin{proof}\

Let,

\bea
t_{i+1} = t_{i} + (0.1) t_{i} = (1.1) t_{i} \label{defti}
\eea

For $t_{0}$ big enough, we will have 
$$| r^{*}(r_{0}) | +| r^{*}(R_{0}) | \leq 0.4 t_{0}$$

and therefore will be able to apply \eqref{EKovertsquare}.\\

In view of \eqref{conservationlawdivergncetheorem} and \eqref{theenrgymomuntumtensorofpsiisdivergencefree} applied to the vector field $K$ in the region $t \in [t_{i}, t_{i+1}]$, we get

\beaa
 E_{\Psi}^{(K)} (t=t_{i+1})  &\leq& J_{\Psi}^{(K)}( t_{0} \leq t \leq t_{i+1})   + E_{\Psi}^{(K)} (t=t_{0}) \\
&\leq&  t_{i+1}    J_{\Psi}^{(G)}( t_0 \leq t \leq t_{i+1} ) (r_{0} \leq r \leq R_{0})     + E_{\Psi}^{(K)} (t=t_{0}) \\
\eeaa
(from \eqref{KcontolledbytG})\\

and,
\beaa
&& E_{\Lie_{\Omega_{j}} \Psi }^{(K)} (t=t_{i+1}) \\
&\leq&  t_{i+1}  J_{\Lie_{\Om_{j}}\Psi}^{(G)} ( t_0 \leq t \leq t_{i+1} ) (r_{0} \leq r \leq R_{0})  + E_{\Lie_{\Omega_{j}} \Psi}^{(K)} (t=t_{0}) 
\eeaa
(from \eqref{KcontolledbytG})\\

Since $\hat{\Psi}$ verifies the Maxwell equations, we have 

\bea
&&  \der^{\a} T_{\a\b} (\hat{\Psi}) = 0
\eea

and thus, we can estimate,

\beaa
&& J_{\Psi}^{(G)} ( t_{i} \leq t \leq t_{i+1} ) (r_{0} \leq r \leq R_{0})  \\
\les &&   \frac{1}{t_{i}^{2}} E_{\Psi}^{(K)} (t=t_{i}) + \frac{1}{t_{i}^{2}} \sum_{j=1}^{3} E_{\Lie_{\Omega_{j}} \Psi}^{(K)} (t=t_{i})  + \frac{1}{t_{i+1}^{2}} E_{\Psi}^{(K)} (t=t_{i+1}) + \frac{1}{t_{i+1}^{2}} \sum_{j=1}^{3} E_{\Lie_{\Omega_{j}} \Psi}^{(K)} (t=t_{i+1})         \\
\eeaa

(from \eqref{EKovertsquare} )\\

\beaa
&& J_{\Psi}^{(G)} ( t_{i} \leq t \leq t_{i+1} ) (r_{0} \leq r \leq R_{0}) \\
& \lesssim & \frac{t_{i}}{t_{i}^{2}}     J_{\Psi}^{(G)} ( t_0 \leq t \leq t_{i} ) (r_{0} \leq r \leq R_{0})    +  \frac{1}{t_{i}^{2}} E_{\Psi}^{(K)} (t=t_{0}) \\
&& + \frac{t_{i}}{t_{i}^{2}}  \sum_{j=1}^{3}   J_{ \Lie_{\Om_{j}}  \Psi}^{(G)} (X)( t_0 \leq t \leq t_{i} ) (r_{0} \leq r \leq R_{0})   +  \frac{1}{t_{i}^{2}}  \sum_{j=1}^{3} E_{\Lie_{\Om_{j}} \Psi}^{(K)} (t=t_{0})   \\
&& + \frac{t_{i+1}}{t_{i+1}^{2}}     J_{\Psi}^{(G)} ( t_0 \leq t \leq t_{i+1} ) (r_{0} \leq r \leq R_{0})    +  \frac{1}{t_{i+1}^{2}} E_{\Psi}^{(K)} (t=t_{0}) \\
&& + \frac{t_{i+1}}{t_{i+1}^{2}}   \sum_{j=1}^{3} J_{ \Lie_{\Om_{j}}  \Psi }^{(G)} (X)( t_0 \leq t \leq t_{i+1} ) (r_{0} \leq r \leq R_{0})    +  \frac{1}{t_{i+1}^{2}} \sum_{j=1}^{3} E_{\Lie_{\Om_{j}} \Psi}^{(K)} (t=t_{0})  \\
\eeaa

We will use the notation $J_{\Psi, \Lie_{\Om_{j}} \Psi}^{(G)} = J_{\Psi}^{(G)} + J_{\Lie_{\Om_{j}} \Psi}^{(G)}$, for all letters such as $J$, and for different summations, so as to lighten the notation and be able to write:

\beaa
&& J_{\Psi}^{(G)} ( t_{i} \leq t \leq t_{i+1} ) (r_{0} \leq r \leq R_{0})  \\
 \lesssim && \frac{1}{t_{i}}   \sum_{j=1}^{3}   J_{\Psi,  \Lie_{\Om_{j}} \Psi }^{(G)} ( t_0 \leq t \leq t_{i} ) (r_{0} \leq r \leq R_{0})    +  \frac{1}{t_{i}^{2}}  \sum_{j=1}^{3} E_{\Psi,  \Lie_{\Om_{j}}  \Psi}^{(K)} (t=t_{0}) \\
&& + \frac{1}{t_{i+1}}    \sum_{j=1}^{3}  J_{\Psi,  \Lie_{\Om_{j}} \Psi }^{(G)} ( t_0 \leq t \leq t_{i+1} ) (r_{0} \leq r \leq R_{0})    +  \frac{1}{t_{i+1}^{2}}  \sum_{j=1}^{3} E_{\Psi,  \Lie_{\Om_{j}}  \Psi}^{(K)} (t=t_{0}) 
\eeaa

From this we can deduce the following,

\beaa
&& J_{\Psi}^{(K)}( t_{i} \leq t \leq t_{i+1})  \\
& \les&    t_{i+1}   J_{\Psi}^{(G)} ( t_i \leq t \leq t_{i+1} ) (r_{0} \leq r \leq R_{0})     \\
&& \text{(from \eqref{KcontolledbytG})}\\
& \les &    \sum_{j=1}^{3}    J_{\Psi, \Lie_{\Om_{j}} \Psi}^{(G)} ( t_0 \leq t \leq t_{i+1} ) (r_{0} \leq r \leq R_{0})    + \frac{1}{t_{i+1}}  \sum_{j=1}^{3} E_{\Psi, \Lie_{\Om_{j}} \Psi}^{(K)} (t=t_{0}) 
\eeaa

(from the estimate above, and using the positivity of $J_{\Psi,  \Lie_{\Om_{j}} \Psi}^{(G)} $ ) \\

Since,
$$ t_{i+1}  = (1.1) t_{i} $$

we have,
$$ t_{i+1}  = (1.1)^{i+1} t_{0} $$

and thus,
\beaa
\sum_{i} \frac{1}{t_{i+1}} = \sum_{i} \frac{1}{(1.1)^{i+1} t_{0}} \les 1
\eeaa

Therefore, 

\beaa
&& J_{\Psi}^{(K)}( t_{0} \leq t \leq t_{i+1})\\
 &=& \sum_{i=0}^{i+1} J_{\Psi}^{(K)}( t_{i} \leq t \leq t_{i+1})\\
& \lesssim &  (i+1)   \sum_{j=1}^{3} J_{\Psi,  \Lie_{\Om_{j}} \Psi }^{(G)} ( t_0 \leq t \leq t_{i+1} ) (r_{0} \leq r \leq R_{0})     +   \sum_{j=1}^{3}     E_{\Psi, \Lie_{\Om_{j}} \Psi }^{(K)} (t=t_{0}) \\
&& \text{(from the above)}
\eeaa

and thus,

\beaa
&& E_{\Psi}^{(K)} (t=t_{i+1}) \\
& \lesssim& J_{\Psi}^{(K)}( t_{0} \leq t \leq t_{i+1})   +    E_{\Psi}^{(K)} (t=t_{0}) \\
 & \lesssim &  (i+1)  \sum_{j=1}^{3}   J_{\Psi,  \Lie_{\Om_{j}} \Psi }^{(G)} ( t_0 \leq t \leq t_{i+1} ) (r_{0} \leq r \leq R_{0})     +        \sum_{j=1}^{3}  E_{\Psi, \Lie_{\Om_{j}} \Psi }^{(K)} (t=t_{0}) 
\eeaa

In the same manner, this leads to,

\beaa
&& E_{\Lie_{\Om_{l}} \Psi}^{(K)} (t=t_{i+1})\\
& \lesssim& J_{\Lie_{\Om_{l}}\Psi}^{(K)}( t_{0} \leq t \leq t_{i+1})   +    E_{\Lie_{\Om_{l}} \Psi}^{(K)} (t=t_{0}) \\
 & \lesssim &  (i+1)     \sum_{j=1}^{3}   J_{\Lie_{\Om_{l}}\Psi, \Lie_{\Om_{l}} \Lie_{\Om_{j}} \Psi }^{(G)} ( t_0 \leq t \leq t_{i+1} ) (r_{0} \leq r \leq R_{0})     +  \sum_{j=1}^{3}       E_{\Lie_{\Om_{l}} \Psi, \Lie_{\Om_{l}} \Lie_{\Om_{j}} \Psi }^{(K)} (t=t_{0}) 
\eeaa

(since $\Om_{l}$, $l \in \{ 1, 2, 3 \}$ are Killing, and therefore $\Lie_{\Om_{l}} \Psi$ verifies the Maxwell equations).\\

Repeating the procedure again, we get,

\beaa
&&  J_{\Psi}^{(G)} ( t_i \leq t \leq t_{i+1} ) (r_{0} \leq r \leq R_{0})     \\
& \les &   \frac{1}{t_{i}^{2}} E_{\Psi}^{(K)} (t=t_{i}) + \frac{1}{t_{i}^{2}}  \sum_{l=1}^{3}   E_{\Lie_{\Omega_{l}} \Psi}^{(K)} (t=t_{i})  + \frac{1}{t_{i+1}^{2}} E_{\Psi}^{(K)} (t=t_{i+1}) + \frac{1}{t_{i+1}^{2}} \sum_{l=1}^{3}   E_{\Lie_{\Omega_{l}} \Psi}^{(K)} (t=t_{i+1})         \\
&& \text{(from \eqref{EKovertsquare})} \\
& \lesssim &  \frac{i}{{t_{i}}^{2}}  \sum_{l=1}^{3} \sum_{j=1}^{3}   J_{\Psi, \Lie_{\Om_{l}} \Psi, \Lie_{\Om_{l}} \Lie_{\Om_{j}} \Psi }^{(G)} ( t_0 \leq t \leq t_{i} ) (r_{0} \leq r \leq R_{0})      +  \frac{1}{{t_{i}}^{2}}  \sum_{l=1}^{3} \sum_{j=1}^{3}  E_{\Psi, \Lie_{\Om_{l}} \Psi, \Lie_{\Om_{l}} \Lie_{\Om_{j}} \Psi  }^{(K)} (t=t_{0}) \\
&& +  \frac{(i+1)}{{t_{i+1}}^{2}}  \sum_{l=1}^{3} \sum_{j=1}^{3}  J_{\Psi, \Lie_{\Om_{l}} \Psi, \Lie_{\Om_{l}} \Lie_{\Om_{j}} \Psi }^{(G)} ( t_0 \leq t \leq t_{i+1} ) (r_{0} \leq r \leq R_{0})    \\
&&  +  \frac{1}{{t_{i+1}}^{2}} \sum_{l=1}^{3} \sum_{j=1}^{3}   E_{\Psi, \Lie_{\Om_{l}} \Psi, \Lie_{\Om_{l}} \Lie_{\Om_{j}} \Psi  }^{(K)} (t=t_{0}) \\
&\les&   \frac{(i+1)}{{t_{i+1}}^{2}}  \sum_{l=1}^{3} \sum_{j=1}^{3}  J_{\Psi, \Lie_{\Om_{l}} \Psi, \Lie_{\Om_{l}} \Lie_{\Om_{j}} \Psi }^{(G)} ( t_0 \leq t \leq t_{i+1} ) (r_{0} \leq r \leq R_{0})    \\
&&  +  \frac{1}{{t_{i+1}}^{2}} \sum_{l=1}^{3} \sum_{j=1}^{3}   E_{\Psi, \Lie_{\Om_{l}} \Psi, \Lie_{\Om_{l}} \Lie_{\Om_{j}} \Psi  }^{(K)} (t=t_{0}) 
\eeaa
Thus,
\beaa 
&& J_{\Psi}^{(K)} ( t_{i} \leq t \leq t_{i+1}) ( -\infty \leq r^{*} \leq \infty)    \\
&\les& t_{i+1}  J_{\Psi}^{(G)} ( t_i \leq t \leq t_{i+1} ) (r_{0} \leq r \leq R_{0})     \\
&& \text{(from \eqref{KcontolledbytG})} \\
&\les&   \frac{(i+1)}{t_{i+1}}  \sum_{l=1}^{3} \sum_{j=1}^{3}    J_{\Psi, \Lie_{\Om_{l}} \Psi, \Lie_{\Om_{l}} \Lie_{\Om_{j}}  \Psi }^{(G)} ( t_0 \leq t \leq t_{i+1} ) (r_{0} \leq r \leq R_{0})     \\
&& +  \frac{1}{t_{i+1}}   \sum_{l=1}^{3} \sum_{j=1}^{3}  E_{\Psi, \Lie_{\Om_{j}}\Psi, \Lie_{\Om_{l}} \Lie_{\Om_{j}} \Psi  }^{(K)} (t=t_{0}) 
\eeaa
(using the above).\\

We have,

\beaa
\frac{(i+1)}{t_{i+1}} &=& \frac{(i+1)}{({t_{i+1})}^{\frac{1}{2}}} . \frac{1}{({t_{i+1})}^{\frac{1}{2}}} \le C . \frac{1}{({t_{i+1})}^{\frac{1}{2}}} \\
&& \text{(where we used the fact that $\frac{(i+1)}{({t_{i+1})}^{\frac{1}{2}}} \le C$ )}\\
&\les&  \frac{1}{{(1.1)}^{\frac{i+1}{2}}} \\
&\les&  (\sqrt{ \frac{1}{{1.1}} } )^{i} \\
\eeaa

$ \sum_{i} (\sqrt{ \frac{1}{{1.1}} } )^{i}$ is a geometric series with $ (\sqrt{ \frac{1}{{1.1}} } ) < 1 $, and therefore,
$$  \sum_{i=0}^{\infty}  (\sqrt{ \frac{1}{{1.1}} } )^{i} \les 1 $$

Finally, we have,

\beaa 
&& J_{\Psi}^{(K)} ( t_{0} \leq t \leq t_{i+1}) ( -\infty \leq r^{*} \leq \infty)    \\
&=& \sum_{i=0}^{i+1} J_{\Psi}^{(K)} ( t_{i} \leq t \leq t_{i+1}) ( -\infty \leq r^{*} \leq \infty)    \\
&\les& \sum_{i=0}^{i+1}  (\sqrt{ \frac{1}{{1.1}} } )^{i}  \sum_{l=1}^{3} \sum_{j=1}^{3}  J_{\Psi, \Lie_{\Om_{l}} \Psi, \Lie_{\Om_{l}} \Lie_{\Om_{j}}  \Psi }^{(G)} ( t_0 \leq t \leq t_{i+1} ) (r_{0} \leq r \leq R_{0})    \\
&& + \sum_{i=0}^{i+1} \frac{1}{t_{i+1}}  \sum_{l=1}^{3} \sum_{j=1}^{3}   E_{\Psi, \Lie_{\Om_{l}} \Psi, \Lie_{\Om_{l}} \Lie_{\Om_{j}}  \Psi }^{(K)} (t=t_{0}) \\
&\les&  \sum_{l=1}^{3} \sum_{j=1}^{3}   [ [ J_{\Psi, \Lie_{\Om_{l}} \Psi, \Lie_{\Om_{l}} \Lie_{\Om_{j}}  \Psi }^{(G)} ( t_0 \leq t \leq t_{i+1} ) (r_{0} \leq r \leq R_{0})      +  E_{\Psi, \Lie_{\Om_{l}} \Psi, \Lie_{\Om_{l}} \Lie_{\Om_{j}}  \Psi }^{(K)} (t=t_{0}) ] 
\eeaa
(from the above)\\

which gives,

\beaa
&&  E_{\Psi}^{(K)} (t=t_{i+1}) \\ 
&\les&  J_{\Psi}^{(K)} ( t_{0} \leq t \leq t_{i+1}) ( -\infty \leq r^{*} \leq \infty)  +   E_{ \Psi }^{(K)} (t=t_{0}) \\
&\les&  \sum_{l=1}^{3} \sum_{j=1}^{3}   [  J_{\Psi, \Lie_{\Om_{l}} \Psi, \Lie_{\Om_{l}} \Lie_{\Om_{j}}  \Psi }^{(G)} ( t_0 \leq t \leq t_{i+1} ) (r_{0} \leq r \leq R_{0})      +  E_{\Psi, \Lie_{\Om_{l}} \Psi, \Lie_{\Om_{l}} \Lie_{\Om_{j}}  \Psi }^{(K)} (t=t_{0}) ]  \\
\eeaa

From assumption \eqref{Assumption1},

\beaa
 J_{\Psi }^{(G)} ( t_0 \leq t \leq t_{i+1} ) (r_{0} \leq r \leq R)      &\les&   \sum_{j=1}^{3}   E_{\Psi, \Lie_{\Om_{j}} \Psi   }^{(\frac{\pa}{\pa t})}   \\
\eeaa

Thus,

\beaa
&& E_{\Psi}^{(K)} (t=t_{i+1}) \\
&\les&   \sum_{k=1}^{3} \sum_{l=1}^{3} \sum_{j=1}^{3}   E_{\Psi, \Lie_{\Om_{k}} \Psi, \Lie_{\Om_{k}} \Lie_{\Om_{l}} \Lie_{\Om_{j}} \Psi,  \Lie_{\Om_{k}} \Lie_{\Om_{l}} \Lie_{\Om_{j}} \Psi }^{(\frac{\pa}{\pa t})}   (t=t_{0})  +  \sum_{l=1}^{3} \sum_{j=1}^{3}   E_{\Psi, \Lie_{\Om_{l}} \Psi, \Lie_{\Om_{l}} \Lie_{\Om_{j}} \Psi  }^{(K)} (t=t_{0})   \\
&\les& E^{M}_{\Psi}
\eeaa

because,

\bea
\sum_{j = 1 }^{3} | \Lie_{\Om_{j}  } \Psi |^{2} = r^{2} |\rLie \Psi |^{2} =  | r \rLie \Psi |^{2} \\ \notag
\eea

We have $t_{i+1} = (1.1)^{i+1} t_{0}$, however, our proofs work with any $a$ such that $1 <a <2$, and $t_{i+1} = a^{i} t_{0}$. Since for all $t> t_{0}$ there exist $i$ and $a$ such that $t = a^{i} t_{0}$, $t_{0} > 1$, we get that,

\beaa
E_{\Psi}^{(K)} (t) &\les& E^{M}_{\Psi} \\
\eeaa

for all $t > t_{0}$, and similarly for all $t < - t_{0}$. And since the region $-t_{0} \leq t \leq t_{0}$ is a bounded region, the above inequality holds in this region. Thus, for all $t$, we have,

\beaa
E_{\Psi}^{(K)} (t) &\les& E^{M}_{\Psi}\\
\eeaa

\end{proof}

\section{The Proof of Decay Away from the Horizon}\

We will prove that for all $\hat{\mu}, \hat{\nu} \in \{\hat{\frac{\pa}{\pa w}}, \hat{\frac{\pa}{\pa v}}, \hat{\frac{\pa}{\pa \th}}, \hat{\frac{\pa}{\pa \phi}} \}$, all the components of the Maxwell fields satisfy
\beaa
 |F_{\hat{\mu}\hat{\nu}}|(w, v, \om) &\lesssim& \frac{  E_{F} }{ (1 + |v|) } 
\eeaa
and,
\beaa
 |F_{\hat{\mu}\hat{\nu}}|(w, v, \om)  &\lesssim& \frac{   E_{F}  }{ (1 + |w|) } 
\eeaa

 in $ r \geq R > 2m$, for an arbitrarily fixed $R$, and where $E_{F}$ is defined by

\beaa
E_{F} &= & [ \sum_{i=0}^{1}  \sum_{j=0}^{5} E_{ r^{j} (\rLie)^{j} (\Lie_{t})^{i}  F }^{(\frac{\pa}{\pa t})} (t=t_{0})    + \sum_{i=0}^{1}  \sum_{j=0}^{4} E_{ r^{j} (\rLie)^{j} (\Lie_{t})^{i}  F  }^{(K)} (t=t_{0}) \\
&& +  E_{ r^{6} (\rLie)^{6}  F }^{(\frac{\pa}{\pa t})} (t=t_{0})    +  E_{ r^{5} (\rLie)^{5}  F  }^{(K)} (t=t_{0}) ]^{\frac{1}{2}} \\
\eeaa

\begin{proof}\

We will prove that,
\beaa
 |F_{\hat{\mu}\hat{\nu}}|(w, v, \om) &\lesssim& \frac{ [  E_{  F, \Lie_{t}  F, r \rLie F, \Lie_{t} r \rLie F, r^{2} (\rLie)^{2} F, \Lie_{t} r^{2} (\rLie)^{2} F, r^{3} (\rLie)^{3} F}^{(K)} (t) ]^{\frac{1}{2}} }{ (1 + |v|) } \\
\eeaa
and,
\beaa
 |F_{\hat{\mu}\hat{\nu}}|(w, v, \om) &\lesssim& \frac{ [  E_{  F, \Lie_{t}  F, r \rLie F, \Lie_{t} r \rLie F, r^{2} (\rLie)^{2} F, \Lie_{t} r^{2} (\rLie)^{2} F, r^{3} (\rLie)^{3} F}^{(K)} ]^{\frac{1}{2}}  }{ (1 + |w|) } \\
\eeaa
Recall that,
\beaa
E_{\Psi}^{(K)}  &\les& \sum_{j=0}^{3} E_{ r^{j} (\rLie)^{j} \Psi  }^{(\frac{\pa}{\pa t})} (t=t_{0})    + \sum_{j=0}^{2}  E_{ r^{j} (\rLie)^{j} \Psi  }^{(K)} (t=t_{0}) \\
\eeaa
Thus,
\beaa
&& E_{  F, \Lie_{t}  F, r \rLie F, \Lie_{t} r \rLie F, r^{2} (\rLie)^{2} F, \Lie_{t} r^{2} (\rLie)^{2} F, r^{3} (\rLie)^{3} F}^{(K)}\\
&\les& \sum_{i=0}^{1}   \sum_{j=0}^{5} E_{ r^{j} (\rLie)^{j} (\Lie_{t})^{i}  F }^{(\frac{\pa}{\pa t})} (t=t_{0})    + \sum_{i=0}^{1}   \sum_{j=0}^{4} E_{ r^{j} (\rLie)^{j} (\Lie_{t})^{i}  F  }^{(K)} (t=t_{0}) \\
&& + E_{ r^{6} (\rLie)^{6}  F }^{(\frac{\pa}{\pa t})} (t=t_{0})    +   E_{ r^{5} (\rLie)^{5}  F  }^{(K)} (t=t_{0}) \\
&\les& E_{F}^{2}\\
\eeaa

\begin{definition}
We define positive definite Riemannian metric in the following manner:
\bea
h(e_{\a}, e_{\b}) = \g(e_{\a}, e_{\b}) + 2 \g(e_{\a},  \hat{ \frac{\pr}{\pr t}} ) . \g(e_{\b}, \hat{ \frac{\pr}{\pr t}} ) \label{positiveriemannianmetrich}
\eea

where
\bea
\hat{ \frac{\pr}{\pr t}} = (- \g( \frac{\pr}{\pr t} , \frac{\pr}{\pr t}  ) )^{-\frac{1}{2}} \frac{\pr}{\pr t} \\ \notag
\eea
\end{definition}

\begin{definition}
For any ${\cal G}$-valued 2-tensor $K$, we let
\bea
|K|^{2} =  h_{\a\mu} h_{\b\nu} |K^{\mu\nu}|. |K^{\a\b}| \\ \notag
\eea
\end{definition}

\begin{lemma}
We have,
\bea
\der_{\si} h(e_{\a}, e_{\b}) &=& 2     \g(e_{\a}, \der_{\si} \hat{ \frac{\pr}{\pr t}} ) . \g(e_{\b},\hat{ \frac{\pr}{\pr t}} )  + 2  \g( e_{\a},  \hat{ \frac{\pr}{\pr t}} ) . \g(e_{\b}, \der_{\si} \hat{ \frac{\pr}{\pr t}} )   \label{derivativeofthemetrich}
\eea
\end{lemma}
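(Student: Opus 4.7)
The plan is a direct computation using only the Leibniz rule and the metric compatibility of the Levi-Civita connection, $\der_\si \g = 0$. I read the left-hand side as the covariant derivative of $h$ viewed as a $(0,2)$-tensor, evaluated on the vectors $e_\a, e_\b$; that is, $\der_\si h(e_\a, e_\b) := (\der_\si h)(e_\a, e_\b)$. With this convention, one has the standard identity
\beaa
(\der_\si h)(e_\a, e_\b) = \der_\si \bigl[ h(e_\a, e_\b) \bigr] - h(\der_\si e_\a, e_\b) - h(e_\a, \der_\si e_\b),
\eeaa
and everything else is algebraic.

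First I would expand $h(e_\a, e_\b)$ via the definition \eqref{positiveriemannianmetrich} and differentiate the scalar $h(e_\a,e_\b)$ using $\der_\si \g = 0$:
\beaa
\der_\si \bigl[h(e_\a,e_\b)\bigr] &=& \g(\der_\si e_\a, e_\b) + \g(e_\a, \der_\si e_\b) \\
&& + 2\bigl[\g(\der_\si e_\a, \hat{\tfrac{\pa}{\pa t}}) + \g(e_\a, \der_\si \hat{\tfrac{\pa}{\pa t}})\bigr] \g(e_\b, \hat{\tfrac{\pa}{\pa t}}) \\
&& + 2\g(e_\a, \hat{\tfrac{\pa}{\pa t}}) \bigl[\g(\der_\si e_\b, \hat{\tfrac{\pa}{\pa t}}) + \g(e_\b, \der_\si \hat{\tfrac{\pa}{\pa t}})\bigr].
\eeaa

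Second, I would compute the two correction terms by applying the definition of $h$ directly:
\beaa
h(\der_\si e_\a, e_\b) &=& \g(\der_\si e_\a, e_\b) + 2\g(\der_\si e_\a, \hat{\tfrac{\pa}{\pa t}})\,\g(e_\b, \hat{\tfrac{\pa}{\pa t}}), \\
h(e_\a, \der_\si e_\b) &=& \g(e_\a, \der_\si e_\b) + 2\g(e_\a, \hat{\tfrac{\pa}{\pa t}})\,\g(\der_\si e_\b, \hat{\tfrac{\pa}{\pa t}}).
\eeaa

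Subtracting these from the previous display causes the four terms involving $\der_\si e_\a$ and $\der_\si e_\b$ to cancel pairwise, leaving exactly
\beaa
(\der_\si h)(e_\a, e_\b) = 2\,\g(e_\a, \der_\si \hat{\tfrac{\pa}{\pa t}})\,\g(e_\b, \hat{\tfrac{\pa}{\pa t}}) + 2\,\g(e_\a, \hat{\tfrac{\pa}{\pa t}})\,\g(e_\b, \der_\si \hat{\tfrac{\pa}{\pa t}}),
\eeaa
which is \eqref{derivativeofthemetrich}. There is no real obstacle here: the entire content of the lemma is that metric compatibility of $\der$ applied to $\g$ in the definition of $h$ produces only the two expected terms involving $\der_\si \hat{\pa/\pa t}$, with all $e_\a$ and $e_\b$ derivative terms cancelling between the scalar Leibniz expansion and the tensorial correction.
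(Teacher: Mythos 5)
Your proof is correct and follows essentially the same route as the paper: both interpret $\der_{\si} h(e_{\a},e_{\b})$ as the covariant derivative of $h$ as a $(0,2)$-tensor, apply the standard formula $(\der_{\si}h)(e_{\a},e_{\b}) = \pa_{\si}[h(e_{\a},e_{\b})] - h(\der_{\si}e_{\a},e_{\b}) - h(e_{\a},\der_{\si}e_{\b})$, and then use metric compatibility $\der\g = 0$ to cancel all terms involving $\der_{\si}e_{\a}$ and $\der_{\si}e_{\b}$, leaving only the two terms with $\der_{\si}\hat{\frac{\pr}{\pr t}}$. The only cosmetic difference is that you expand the scalar Leibniz terms via $\der\g=0$ at the outset, while the paper keeps $\pa_{\si}$ of the scalar products and regroups at the end before invoking $\der\g=0$.
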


\begin{proof}

\beaa
\der_{\si} h(e_{\a}, e_{\b}) &=& \pa_{\si} h(e_{\a}, e_{\b})  -   h(\der_{\si}e_{\a}, e_{\b}) -  h(e_{\a}, \der_{\si}e_{\b})  \\
&=& \der_{\si} \g(e_{\a}, e_{\b}) + 2 \der_{\si} [\g(e_{\a}, \hat{ \frac{\pr}{\pr t}} ) . \g(e_{\b},\hat{ \frac{\pr}{\pr t}} ) ] \\
&=& 2 \pa_{\si} [\g(e_{\a}, \hat{ \frac{\pr}{\pr t}} ) . \g(e_{\b},\hat{ \frac{\pr}{\pr t}} ) ] -  2  \g(\der_{\si}e_{\a},  \hat{ \frac{\pr}{\pr t}} ) . \g(e_{\b}, \hat{ \frac{\pr}{\pr t}} )  \\
&& - 2 \g( e_{\a},  \hat{ \frac{\pr}{\pr t}} ) . \g( \der_{\si} e_{\b}, \hat{ \frac{\pr}{\pr t}} ) 
\eeaa
(since the metric $\g$ is Killing)
\beaa
&=& 2  \pa_{\si} \g(e_{\a}, \hat{ \frac{\pr}{\pr t}} ) . \g(e_{\b}, \hat{ \frac{\pr}{\pr t}} )  +  2 \g(e_{\a},  \hat{ \frac{\pr}{\pr t}} ) . \pa_{\si}  \g(e_{\b}, \frac{\pr}{\pr \hat{t}} )  \\
&& -  2  \g(\der_{\si}e_{\a},  \hat{ \frac{\pr}{\pr t}} ) . \g(e_{\b}, \hat{ \frac{\pr}{\pr t}} )   - 2  \g(e_{\a},  \hat{ \frac{\pr}{\pr t}} ) . \g( \der_{\si}  e_{\b}, \hat{ \frac{\pr}{\pr t}} )  \\
&=& 2  [ \pa_{\si} \g(e_{\a},  \hat{ \frac{\pr}{\pr t}} )   -    \g(\der_{\si}e_{\a}, \hat{ \frac{\pr}{\pr t}} )] . \g(e_{\b},\hat{ \frac{\pr}{\pr t}} ) \\
&& + 2 [  \pa_{\si} \g(e_{\b},  \hat{ \frac{\pr}{\pr t}} )    -  \g(\der_{\si} e_{\b}, \hat{ \frac{\pr}{\pr t}} ) ]. \g(e_{\a},\hat{ \frac{\pr}{\pr t}} )  
\eeaa

Using the fact that $\der g = 0$, we get,

\beaa
\der_{\si} h(e_{\a}, e_{\b}) &=& 2     \g(e_{\a}, \der_{\si} \hat{ \frac{\pr}{\pr t}} ) . \g(e_{\b}, \hat{ \frac{\pr}{\pr t}} )  + 2  \g( e_{\a},  \hat{ \frac{\pr}{\pr t}} ) . \g(e_{\b}, \der_{\si} \hat{ \frac{\pr}{\pr t}} )  
\eeaa

\end{proof}

Let,
\beaa
\hat{t}_{\a} = ( \hat{ \frac{\pr}{\pr t}} )_{\a} = \g_{\mu\a}  ( \hat{ \frac{\pr}{\pr t}} )^{\mu}
\eeaa
Hence, we can write \eqref{positiveriemannianmetrich} as,
\bea
h_{\a\b} = \g_{\a\b} + 2 (\hat{ \frac{\pr}{\pr t}})_{\a} (\hat{ \frac{\pr}{\pr t}} )_{\b}
\eea
and \eqref{derivativeofthemetrich} as,
\bea
\der_{\si} h_{\a\b} &=& 2  [ \der_{\si} \hat{t}_{\a} . \hat{t}_{\b} +    \hat{t}_{\a}  .  \der_{\si} \hat{t}_{\b} ] \\ \notag
\eea

\subsection{The region $\om \geq 1$, $r \geq R$}\

We consider the region $w \geq 1$, $r \geq R$, where $R$ is fixed.\\

Let $\Psi$ be a tensor, and $|\Psi_{\mu\nu}| = <\Psi_{\mu\nu}, \Psi_{\mu\nu}> ^{\frac{1}{2}} $. We can compute

\beaa
| \der |\Psi_{\mu\nu} | | &=& | \frac{2 <\Lie \Psi_{\mu\nu}, \Psi_{\mu\nu}>  }{ 2<\Psi_{\mu\nu}, \Psi_{\mu\nu}> ^{\frac{1}{2}}} | \leq \frac{|\Lie \Psi_{\mu\nu}| |\Psi_{\mu\nu}|}{| \Psi_{\mu\nu}|} \\
&\leq& |\Lie \Psi_{\mu\nu} | 
\eeaa

We have the Sobolev inequality,
\beaa
r^{2} |F|^{2}  \lesssim   \int_{\S^{2}} r^{2} |F|^{2} d\sigma^{2} +  \int_{\S^{2}} r^{2}  | \rder |F| |^{2} d\sigma^{2}  +  \int_{\S^{2}} r^{2} | \rder\rder |F||^{2} d\sigma^{2} 
\eeaa
where $\rder$ is the covariant derivative restricted on the 2-spheres. We have

\beaa
| \rLie h_{\a\b}|^{2} &=& \frac{1}{r^{2}} \sum_{j=1}^{3} | \Om_{j} h_{\a\b}|^{2} = \frac{1}{r^{2}} \sum_{i=1}^{3}  | \Om_{j} \g(e_{\a}, e_{\b}) + 2 \Om_{j} \g(e_{\a},  \hat{ \frac{\pr}{\pr t}} ) . \g(e_{\b}, \hat{ \frac{\pr}{\pr t}} ) | \\
&=& 0
\eeaa

 Since $\rLie h_{\mu\nu} = 0$, we have
\beaa
r^{2} |F|^{2}  \lesssim \int_{\S^{2}} r^{2} |F|^{2} d\sigma^{2} +  \int_{\S^{2}}  r^{2} |  \rLie F |^{2} d\sigma^{2}  +  \int_{\S^{2}} r^{2} | \rLie \rLie F|^{2} d\sigma^{2} 
\eeaa

Let, $r_{F}$ be a value of $r$ such that $ R \leq r_{F} \leq R+1 $, and to be determined later. we have, 
\beaa
\int_{\S^{2}} r^{2} |F|^{2}(t, r,  \om)  d\sigma^{2} &\lesssim&  \int_{\S^{2}} r^{2} |F|^{2}(t, r_{F}, \om)  d\sigma^{2}   +   \int_{\S^{2}} \int_{\overline{r}^{*} = r^{*}_{F}}^{\overline{r}^{*} = r^{*} } \der_{r^{*}} [r^{2} |F|^{2}] (t, r, \om)  d\overline{r}^{*} d\sigma^{2} \\
&\lesssim&  \int_{\S^{2}} r_{F}^{2} |F|^{2}(t, r_{F}, \om)  d\sigma^{2}   +   \int_{\S^{2}} \int_{\overline{r}^{*} = r^{*}_{F}}^{\overline{r}^{*} = r^{*} } 2r |F|^{2} (t, r, \om) (1-\mu)  d\overline{r}^{*} d\sigma^{2}  \\
&&+  2   \int_{\S^{2}} \int_{\overline{r}^{*} = r^{*}_{F}}^{\overline{r}^{*} = r^{*} } r^{2}  \der_{r^{*}} |F|^{2} (t, r, \om)  d\overline{r}^{*} d\sigma^{2}  \\
\eeaa
By the same,
\beaa
&&\int_{\S^{2}} r^{2} |\rLie F |^{2}(t, r, \om)  d\sigma^{2} \\
&& \lesssim  \int_{\S^{2}} r_{\rLie F}^{2} | \rLie F |^{2}(t, r_{\rLie F}, \om)  d\sigma^{2}   +   \int_{\S^{2}} \int_{\overline{r}^{*} = r^{*}_{\rLie F}}^{\overline{r}^{*} = r^{*} } 2r |\rLie F |^{2} (t, r, \om) (1-\mu)  d\overline{r}^{*} d\sigma^{2}  \\
&&+  2   \int_{\S^{2}} \int_{\overline{r}^{*} = r^{*}_{\rLie F}}^{\overline{r}^{*} = r^{*} } r^{2}  \der_{r^{*}} | \rLie F |^{2} (t, r, \om)  d\overline{r}^{*} d\sigma^{2}  \\
\eeaa
and,
\beaa
 &&\int_{\S^{2}} r^{2} | \rLie \rLie F |^{2}(t, r, \om)  d\sigma^{2}\\
&& \lesssim  \int_{\S^{2}} r_{\rLie \rLie F}^{2} | \rLie \rLie F |^{2}(t, r_{\rLie \rLie F}, \om)  d\sigma^{2}   +   \int_{\S^{2}} \int_{\overline{r}^{*} = r^{*}_{\rLie \rLie F}}^{\overline{r}^{*} = r^{*} } 2r | \rLie \rLie F |^{2} (t, r, \om) (1-\mu)  d\overline{r}^{*} d\sigma^{2}  \\
&&+  2   \int_{\S^{2}} \int_{\overline{r}^{*} = r^{*}_{\rLie \rLie F}}^{\overline{r}^{*} = r^{*} } r^{2}  \der_{r^{*}} | \rLie  \rLie F |^{2} (t, r, \om)  d\overline{r}^{*} d\sigma^{2}  \\
\eeaa

We showed the following estimate,

\beaa
&&\int_{r^{*}= r_{1}^{*}   }^{r^{*} = r_{2}^{*}   } \int_{\S^{2}} (  | \Psi_{\hat{w}\hat{\th}} |^{2} +  | \Psi_{\hat{w}\hat{\phi}} |^{2}   +   | \Psi_{\hat{v}\hat{\th}} |^{2} +  | \Psi_{\hat{v}\hat{\phi}} |^{2}  +  |\Psi_{\hat{v}\hat{w}}|^{2}   +   | \Psi_{\hat{\phi}\hat{\th}}|^{2} ). (1-\mu) r^{2}   d\sigma^{2} dr^{*} (t) \\
& \lesssim & \frac{E_{\Psi}^{(K)}(t)}{\min_{w \in \{t\}\cap \{  r_{1}^{*} \leq r^{*} \leq r_{2}^{*} \}  } w^{2}}  + \frac{E_{\Psi}^{(K)}(t)}{\min_{v \in \{t\}\cap  \{  r_{1}^{*} \leq r^{*} \leq r_{2}^{*} \}  } v^{2}}
\eeaa

Thus,

$$ \int_{r^{*}= r_{1}^{*}   }^{r^{*} = r_{2}^{*}   }    \int_{\S^{2}}  |\Psi |^{2} (t, \overline{r}, \om) (1-\mu) r^{2} d\sigma^{2}  d\overline{r}^{*}  \lesssim \frac{E_{\Psi}^{(K)}(t)}{\min_{w \in \{t\}\cap \{  r_{1}^{*} \leq r^{*} \leq r_{2}^{*} \}  } w^{2}}  + \frac{E_{\Psi}^{(K)}(t)}{\min_{v \in \{t\}\cap  \{  r_{1}^{*} \leq r^{*} \leq r_{2}^{*} \}  } v^{2}} $$

Therefore,

$$ \int_{\overline{r}^{*} = R^{*} }^{\overline{r}^{*} = (R+1)^{*} }    \int_{\S^{2}}  |\Psi |^{2} (t, \overline{r}, \om) (1-\mu) r^{2} d\sigma^{2}  d\overline{r}^{*}  \lesssim \frac{  E_{\Psi}^{(K)} (t)}{t^{2}} $$
or,
$$ \int_{\overline{r} = R }^{\overline{r} = R+1 }    \int_{\S^{2}}  |\Psi |^{2} (t, \overline{r}, \om) r^{2} d\sigma^{2}  d\overline{r}  \lesssim \frac{  E_{\Psi}^{(K)} (t)}{t^{2}} $$

There exists $r_{\Psi}$, such that $R \leq r_{\Psi} \leq R+1$ and,

\beaa
 \int_{\S^{2}}  r_{F}^{2} |F |^{2} (t, r_{F}, \om) d\sigma^{2}   &\lesssim& \frac{  E_{F}^{(K)} (t)}{t^{2}(R+1-R)} 
\eeaa
which gives,
\beaa
\int_{\S^{2}}  r_{F}^{2} |F |^{2} (t, r_{F}, \om) d\sigma^{2} &\lesssim& \frac{ E_{F}^{(K)} (t)}{t^{2}} 
\eeaa

By same,

\beaa
\int_{\S^{2}}  r_{\rLie  F}^{2} | \rLie  F |^{2} (t, r_{\rLie  F}, \om) d\sigma^{2} &=&  \sum_{j=1}^{3} \frac{1}{r_{\rLie  F}^{2} } \int_{\S^{2}}  r_{\rLie  F}^{2}  | \Lie_{\Om_{j} } F |^{2} (t, r_{\rLie  F}, \om) d\sigma^{2} \\
&\les& \sum_{j=1}^{3} \frac{1}{R^{2} } \int_{\S^{2}}  r_{\rLie  F}^{2}  | \Lie_{\Om_{j} } F |^{2} (t, r_{\rLie  F}, \om) d\sigma^{2} \\
&\lesssim& \sum_{j=1}^{3} \frac{ E_{\Lie_{\Om_{j} } F}^{(K)} (t)}{t^{2}} 
\eeaa
and,
\beaa
\int_{\S^{2}}  r_{\rLie \rLie F}^{2} |\rLie \rLie F |^{2} (t, r_{0}, \om) d\sigma^{2} &\lesssim& \sum_{i=1}^{3} \sum_{j=1}^{3} \frac{ E_{\Lie_{\Om_{i} } \Lie_{\Om_{j} } F}^{(K)} (t)}{t^{2}} 
\eeaa

On the other hand,

\beaa
 \int_{\overline{r}^{*} = r_{F}^{*} }^{\overline{r}^{*} = r^{*} }  \int_{\S^{2}}  \overline{r}  |F |^{2} (1-\mu) (t, \overline{r}, \om) d\sigma^{2}  d\overline{r}^{*}  &\lesssim&
\int_{\overline{r}^{*} = r_{F}^{*} }^{\overline{r}^{*} = r^{*} }  \frac{\overline{r}}{R}   \int_{\S^{2}}  \overline{r} |F |^{2} (1-\mu) (t, \overline{r}, \om) d\sigma^{2}  d\overline{r}^{*} \\
& \lesssim& \frac{  E_{F}^{(K)} (t)}{t^{2}}  + \frac{  E_{F}^{(K)} (t)}{w^{2}} 
\eeaa

Thus,
\beaa
\int_{\overline{r}^{*} = r_{F}^{*} }^{\overline{r}^{*} = r^{*} }  \int_{\S^{2}}  \overline{r} |F |^{2} (1-\mu) (t, \overline{r}, \om) d\sigma^{2}  d\overline{r}^{*}  &\lesssim& \frac{  E_{F}^{(K)} (t)}{t^{2}}  + \frac{  E_{F}^{(K)} (t)}{w^{2}} 
\eeaa

and,

\beaa
&& \int_{\overline{r}^{*} = r_{\rLie F}^{*} }^{\overline{r}^{*} = r^{*} }  \int_{\S^{2}}  \overline{r} |\rLie F |^{2} (1-\mu) (t, \overline{r}, \om) d\sigma^{2}  d\overline{r}^{*}  \\
&=& \sum_{j=1}^{3} \int_{\overline{r}^{*} = r_{\rLie F}^{*} }^{\overline{r}^{*} = r^{*} }  \int_{\S^{2}}  \overline{r} \frac{1}{ \overline{r}^{2} } | \Lie_{\Om_{j}} F |^{2} (1-\mu) (t, \overline{r}, \om) d\sigma^{2}  d\overline{r}^{*} \\
&\leq& \sum_{j=1}^{3}  \frac{1}{ {R }^{2} } \int_{\overline{r}^{*} = r_{\rLie F}^{*} }^{\overline{r}^{*} = r^{*} }  \int_{\S^{2}}  \overline{r}  | \Lie_{\Om_{j}} F |^{2} (1-\mu) (t, \overline{r}, \om) d\sigma^{2}  d\overline{r}^{*} \\
&\lesssim& \sum_{j=1}^{3} \frac{ E_{\Lie_{\Om_{j}} F}^{(K)} (t)}{t^{2}} + \frac{ E_{\Lie_{\Om_{j}}  F}^{(K)} (t)}{w^{2}} 
\eeaa

By same,

\beaa
\int_{\overline{r}^{*} = r_{0}^{*} }^{\overline{r}^{*} = r^{*} }  \int_{\S^{2}}  \overline{r} |\rLie \rLie F |^{2} (1-\mu) (t, \overline{r}, \om) d\sigma^{2}  d\overline{r}^{*}  &\lesssim& \sum_{i=1}^{3} \sum_{j=1}^{3} [ \frac{  E_{ \Lie_{\Om_{i}} \Lie_{\Om_{j}} F}^{(K)} (t)}{t^{2}} +  \frac{  E_{\Lie_{\Om_{i}} \Lie_{\Om_{j}}  F}^{(K)} (t)}{w^{2}}  ]
\eeaa

Now, we want to estimate the term,

\beaa
&& \int_{\S^{2}} \int_{\overline{r}^{*} = r^{*}_{F}}^{\overline{r}^{*} = r^{*} } r^{2}  \der_{r^{*}} |F|^{2} (t, r, \om)  d\overline{r}^{*} d\sigma^{2} \\
&=& \int_{\S^{2}} \int_{\overline{r}^{*} = r^{*}_{F}}^{\overline{r}^{*} = r^{*} } r^{2}  \der_{\hat{r^{*}}} |F|^{2} (t, r, \om)  \sqrt{(1-\mu)} d\overline{r}^{*} d\sigma^{2} \\
&=& \int_{\S^{2}} \int_{\overline{r}^{*} = r^{*}_{F}}^{\overline{r}^{*} = r^{*} } r^{2} [ \der_{\hat{r^{*}}} ( h^{\mu\a} h^{\nu\b} )  |F_{\mu\nu}| |F_{\a\b}| + h^{\mu\a} h^{\nu\b} \der_{\hat{r^{*}}} ( |F_{\mu\nu}| |F_{\a\b}| ) ] (t, r, \om)  \sqrt{(1-\mu)} d\overline{r}^{*} d\sigma^{2} 
\eeaa

We have,
\beaa
\der_{\hat{r^{*}}} h(e_{\a}, e_{\b}) &=& 2     \g(e_{\a}, \der_{\si} \frac{\pr}{\pr \hat{t}} ) . \g(e_{\b}, \frac{\pr}{\pr \hat{t}} )  + 2  \g( e_{\a},  \frac{\pr}{\pr \hat{t}} ) . \g(e_{\b}, \der_{\si} \frac{\pr}{\pr \hat{t}} )  
\eeaa
and
\beaa
\notag
\der_{\hat{r^{*}}} \hat{\frac{\pa  }{\pa t}}  &=&   0 
\eeaa

Hence,
\beaa
\der_{\hat{r^{*}}} h^{\a\b}  &=& \der_{\hat{r^{*}}} h(e^{\a}, e^{\b}) = 2     \g(e^{\a}, \der_{\hat{r^{*}}} \frac{\pr}{\pr \hat{t}} ) . \g(e^{\b}, \frac{\pr}{\pr \hat{t}} )  + 2  \g( e^{\a},  \frac{\pr}{\pr \hat{t}} ) . \g(e^{\b}, \der_{\hat{r^{*}}} \frac{\pr}{\pr \hat{t}} )  \\
&=& 0
\eeaa

Therefore,
\beaa
&& \int_{\S^{2}} \int_{\overline{r}^{*} = r^{*}_{0}}^{\overline{r}^{*} = r^{*} } r^{2}  \der_{r^{*}} |F|^{2} (t, r, \om)  d\overline{r}^{*} d\sigma^{2} \\
&=& \int_{\S^{2}} \int_{\overline{r}^{*} = r^{*}_{0}}^{\overline{r}^{*} = r^{*} } r^{2}  h^{\mu\a} h^{\nu\b} \der_{\hat{r^{*}}} ( |F_{\mu\nu}| |F_{\a\b}| )  (t, r, \om)  \sqrt{(1-\mu)} d\overline{r}^{*} d\sigma^{2} \\
&\les& \int_{\S^{2}} \int_{\overline{r}^{*} = r^{*}_{0}}^{\overline{r}^{*} = r^{*} } r^{2} [ | h^{\mu\a} h^{\nu\b}|  | \der_{\hat{r^{*}}} |F_{\mu\nu}| |.  |F_{\a\b}| +  | h^{\mu\a} h^{\nu\b}|   |F_{\mu\nu}| . | \der_{\hat{r^{*}}}  |F_{\a\b}| |  ]  (t, r, \om)  \sqrt{(1-\mu)} d\overline{r}^{*} d\sigma^{2} \\
&\les& \int_{\S^{2}} \int_{\overline{r}^{*} = r^{*}_{0}}^{\overline{r}^{*} = r^{*} } r^{2} [ | h^{\mu\a} h^{\nu\b}|  | \Lie_{\hat{r^{*}}}  F_{\mu\nu}| |.  |F_{\a\b}| +  | h^{\mu\a} h^{\nu\b}|   |F_{\mu\nu}| . | \Lie_{\hat{r^{*}}}  F_{\a\b}  |  ]  (t, r, \om)  \sqrt{(1-\mu)} d\overline{r}^{*} d\sigma^{2} \\
&\les& \int_{\S^{2}} \int_{\overline{r}^{*} = r^{*}_{0}}^{\overline{r}^{*} = r^{*} } r^{2} [  h^{\hat{\mu}\hat{\a}} h^{\hat{\nu}\hat{\b}}  | \Lie_{\hat{r^{*}}}  F_{\hat{\mu}\hat{\nu}}| |.  |F_{\hat{\a}\hat{\b}}| +   h^{\hat{\mu}\hat{\a}} h^{\hat{\nu}\hat{\b}}  |F_{\hat{\mu}\hat{\nu}}| . | \Lie_{\hat{r^{*}}}  F_{\hat{\a}\hat{\b}}  |  ]  (t, r, \om)  \sqrt{(1-\mu)} d\overline{r}^{*} d\sigma^{2} 
\eeaa

where $\hat{\mu}, \hat{\nu}, \hat{\a}, \hat{\b} \in \{ \hat{\frac{\pa}{\pa t }}, \hat{\frac{\pa}{\pa r^{*}}}, \hat{\frac{\pa}{\pa \th}},\hat{\frac{\pa}{\pa \phi }} \}$.\\

As we have (see \eqref{The compatible symmetric connection} in the Appendix),

\beaa
\notag
\der_{\hat{r^{*}}} \hat{\frac{\pa  }{\pa t}}  &=&   0 \\
\notag
\der_{\hat{r^{*}}} \hat{\frac{\pa}{\pa r^{*}}}  &=&   0 \\
\notag
\der_{\hat{r^{*}}} \frac{\pa  }{\pa \hat{\th} }  &=&   0 \\
\notag
\der_{\hat{r^{*}}} \frac{\pa  }{\pa \hat{\phi} }  &=&   0 
\eeaa

we get,
\beaa
\Lie_{\hat{r^{*}}}  F_{\hat{\mu}\hat{\nu}}  &=& \der_{\hat{r^{*}}}  F_{\hat{\mu}\hat{\nu}}
\eeaa

Consequently, using Cauchy-Schwarz, we obtain

\beaa
&& \int_{\S^{2}} \int_{\overline{r}^{*} = r^{*}_{0}}^{\overline{r}^{*} = r^{*} } r^{2}  \der_{r^{*}} |F|^{2} (t, r, \om)  d\overline{r}^{*} d\sigma^{2} \\
&\lesssim&  [\int_{\overline{r}^{*} = r^{*}_{0}}^{\overline{r}^{*} = r^{*} }  \int_{\S^{2}}  \overline{r}^{2}  | \der_{\hat{r^{*}}}F |^{2} (1-\mu)  d\sigma^{2}  d\overline{r}^{*} ]^{\frac{1}{2}}  .  [\int_{\overline{r}^{*} = r^{*}_{0}}^{\overline{r}^{*} = r^{*} }   \int_{\S^{2}}  \overline{r}^{2}  | F |^{2} d\sigma^{2}  d\overline{r}^{*} ]^{\frac{1}{2}}
\eeaa

We have,
\beaa
 \int_{\overline{r}^{*} = r_{0}^{*} }^{\overline{r}^{*} = r^{*} }  \int_{\S^{2}}  \overline{r}^{2} |F|^{2} (1-\mu) (t, \overline{r}, \om) d\sigma^{2}  d\overline{r}^{*}  & \lesssim& \frac{  E_{F}^{(K)} (t)}{t^{2}}  +  \frac{  E_{F}^{(K)} (t)}{w^{2}} 
\eeaa

Thus,
\beaa
[\int_{\overline{r}^{*} = r^{*}_{0}}^{\overline{r}^{*} = r^{*} }   \int_{\S^{2}}  \overline{r}^{2}  | F |^{2} d\sigma^{2}  d\overline{r}^{*} ]^{\frac{1}{2}} \lesssim \frac{  \sqrt{E_{F}^{(K)} (t)}}{t}  + \frac{  \sqrt{E_{F}^{(K)} (t)}}{w} 
\eeaa

Now, considering the case where  $\hat{\mu} = \hat{r^{*}}$, (or similarly if $\hat{\nu} = \hat{r^{*}}$), we can compute,
\beaa
{\der}^{\hat{r^{*}}} F_{\hat{r^{*}}\hat{\nu}} = - {\der}^{\hat{t}} F_{\hat{t}\hat{\nu}} - {\der}^{\hat{\th}} F_{\hat{\th}\hat{\nu}}  - {\der}^{\hat{\phi}} F_{\hat{\phi}\hat{\nu}}  
\eeaa
(since the Maxwell fields are divergence free)\\

Therefore,
\beaa
{\der}_{\hat{r^{*}}} F_{\hat{r^{*}}\hat{\nu}} =  {\der}_{\hat{t}} F_{\hat{t}\hat{\nu}} - {\der}_{\hat{\th}} F_{\hat{\th}\hat{\nu}}  - {\der}_{\hat{\phi}} F_{\hat{\phi}\hat{\nu}}  
\eeaa

And if both $\mu \ne \hat{r}$, and $\nu \ne \hat{r}$, we can compute,
\beaa
{\der}^{\hat{r^{*}}} F_{\hat{\mu}\hat{\nu}} = - {\der}_{\hat{\mu}} F_{\hat{\nu} \hat{r^{*}}} - {\der}_{\hat{\nu}} F_{\hat{r^{*}}\hat{\mu}}  
\eeaa
(by using the Bianchi identities)

and therefore,
\beaa
{\der}_{\hat{r^{*}}} F_{\hat{\mu}\hat{\nu}} = - {\der}_{\hat{\mu}} F_{\hat{\nu} \hat{r^{*}}} - {\der}_{\hat{\nu}} F_{\hat{r^{*}}\hat{\mu}}  
\eeaa

where $\hat{\mu}, \hat{\nu} \in \{ \hat{\frac{\pa}{\pa t }}, \hat{\frac{\pa}{\pa \th}},\hat{\frac{\pa}{\pa \phi }} \}$.\\

In all cases, we get,
\beaa
 |{\der}_{\hat{r^{*}}} F_{\hat{\mu}\hat{\nu}}|^{2} (1-\mu) &\les& [ |{\der}_{\hat{t}} F |^{2} + |\rder F|^{2}  ](1-\mu) \\
\eeaa
(using the triangular inequality and $a.b \les a^{2} + b^{2}$ )\\

Hence,
\beaa
 |{\der}_{\hat{r^{*}}} F |^{2} (1-\mu) &\les& [ |{\der}_{\hat{t}} F |^{2} + \frac{1}{r^{2}} \sum_{j=1}^{3} |\der_{\Om_{j}} F|^{2}  ](1-\mu)  \\
&\les&  |{\der}_{t} F |^{2} +  \sum_{j=1}^{3} |\der_{\Om_{j}}  F|^{2}  \\
&\les&  h^{\mu\a} h^{\nu\b} [ |{\der}_{t} F_{\mu\nu} |.|{\der}_{t} F_{\a\b} | +  \sum_{j=1}^{3} h^{\mu\a} h^{\nu\b} [ |{\der}_{\Om_{j}} F_{\mu\nu} |.|{\der}_{\Om_{j}} F_{\a\b} |
\eeaa
We have,
\beaa
&& |{\der}_{t} F |^{2} \\
&=& h^{\mu\a} h^{\nu\b} [ |{\der}_{t} F_{\mu\nu} |.|{\der}_{t} F_{\a\b} | \\
&=&  h^{\mu\a} h^{\nu\b} [ |{\Lie}_{t} F_{\mu\nu} - F(\der_{t} \frac{\pa}{\pa \mu}, \frac{\pa}{\pa \nu}) - F(\frac{\pa}{\pa \mu}, \der_{t} \frac{\pa}{\pa \nu})  |.|{\Lie}_{t} F_{\a\b} - F(\der_{t} \frac{\pa}{\pa \a}, \frac{\pa}{\pa \b}) - F(\frac{\pa}{\pa \a}, \der_{t} \frac{\pa}{\pa \b})  |  \\
&\les& |\Lie_{t} F |^{2} + | h^{\mu\a} h^{\nu\b}| | F(\der_{t} \frac{\pa}{\pa \mu}, \frac{\pa}{\pa \nu}) |  .| F(\der_{t} \frac{\pa}{\pa \a}, \frac{\pa}{\pa \b})   |  + | h^{\mu\a} h^{\nu\b}  | | F(\frac{\pa}{\pa \mu}, \der_{t} \frac{\pa}{\pa \nu})  | . | F(\frac{\pa}{\pa \a}, \der_{t} \frac{\pa}{\pa \b})  |  \\
\eeaa
(using Cauchy-Schwarz inequality and $a.b \les a^{2} + b^{2}$ )\\

Since, $ \frac{\pa}{\pa t} $ is a smooth vector field away from the horizon, choosing a system of coordinates to compute the contractions above, we get
\beaa
 |{\der}_{t} F |^{2} (1-\mu) &\les&  |\Lie_{t} F |^{2} +  | F|^{2}  
\eeaa

Similarly, we have,
\beaa
&& |{\der}_{\Om_{j}} F |^{2} \\
&\les& |\Lie_{\Om_{j}} F |^{2} + | h^{\mu\a} h^{\nu\b}| | F(\der_{\Om_{j}} \frac{\pa}{\pa \mu}, \frac{\pa}{\pa \nu}) |  .| F(\der_{\Om_{j}} \frac{\pa}{\pa \a}, \frac{\pa}{\pa \b})   |  \\
&& + | h^{\mu\a} h^{\nu\b}  | | F(\frac{\pa}{\pa \mu}, \der_{\Om_{j}} \frac{\pa}{\pa \nu})  | . | F(\frac{\pa}{\pa \a}, \der_{\Om_{j}} \frac{\pa}{\pa \b})  |  \\
\eeaa

Since $\Om_{j}$, $j \in \{1, 2, 3 \}$ are smooth vector fields, computing the contractions above in a system of coordinates, we obtain
\beaa
\sum_{j=1}^{3} |{\der}_{\Om_{j}} F_{\hat{\mu}\hat{\nu}}|^{2}  &\les&   \sum_{j=1}^{3} |\Lie_{\Om_{j}}  F|^{2}  +  | F|^{2}
\eeaa

Finally, we have
\bea
 |{\der}_{\hat{r^{*}}} F |^{2} (1-\mu) &\les&  | F|^{2}  + |\Lie_{t} F |^{2} +   \sum_{j=1}^{3} |\Lie_{\Om_{j}}  F|^{2} 
\eea
Thus,
\beaa
&& \int_{\overline{r}^{*} = r^{*}_{0}}^{\overline{r}^{*} = r^{*} }   \int_{\S^{2}} \overline{r}^{2} | \Lie_{r^{*}}F |^{2} d\sigma^{2}  d\overline{r}^{*} \\
&\les& \int_{\overline{r}^{*} = r^{*}_{0}}^{\overline{r}^{*} = r^{*} }   \int_{\S^{2}} \overline{r}^{2} [  |{\Lie}_{t} F |^{2} + |\rLie F|^{2} + |F|^{2} ]  d\sigma^{2}  d\overline{r}^{*} \\
 &\les& \int_{\overline{r}^{*} = r^{*}_{0}}^{\overline{r}^{*} = r^{*} }   \int_{\S^{2}} \overline{r}^{2}  [ |F|^{2} + |{\Lie}_{t} F |^{2}  ] d\sigma^{2}  d\overline{r}^{*} +  \sum_{j=1}^{3} \int_{\overline{r}^{*} = r^{*}_{0}}^{\overline{r}^{*} = r^{*} }   \int_{\S^{2}}   |\Lie_{\Om_{j}} F|^{2}    d\sigma^{2}  d\overline{r}^{*} \\
&\les& \int_{\overline{r}^{*} = r^{*}_{0}}^{\overline{r}^{*} = r^{*} }   \int_{\S^{2}} \overline{r}^{2}  [ |F|^{2} + |{\Lie}_{t} F |^{2}   ] d\sigma^{2}  d\overline{r}^{*} +  \sum_{j=1}^{3} \int_{\overline{r}^{*} = r^{*}_{0}}^{\overline{r}^{*} = r^{*} }   \int_{\S^{2}}  \frac{\overline{r}^{2}}{R^{2}} |\Lie_{\Om_{j}} F|^{2}    d\sigma^{2}  d\overline{r}^{*} \\
&\lesssim& \frac{E_{ F}^{(K)} (t) + E_{ \Lie_{t} F}^{(K)} (t)  + \sum_{j=1}^{3} E_{ \Lie_{\Om_{j}} F}^{(K)} (t) }{t^{2}} + \frac{E_{ F}^{(K)} (t) + E_{ \Lie_{t} F}^{(K)} (t)  + \sum_{j=1}^{3} E_{ \Lie_{\Om_{j}}  F}^{(K)} (t) }{w^{2}} 
\eeaa

and therefore,
\beaa
&& [\int_{\overline{r}^{*} = r^{*}_{0}}^{\overline{r}^{*} = r^{*} }  \int_{\S^{2}}  \overline{r}^{2}  | \Lie_{r^{*}}F |^{2} d\sigma^{2}  d\overline{r}^{*} ]^{\frac{1}{2}}  \\
&\lesssim& \frac{\sqrt{E_{ F}^{(K)} (t) + E_{ \Lie_{t} F}^{(K)} (t) + \sum_{j=1}^{3} E_{ \Lie_{\Om_{j}} F}^{(K)} (t) }}{|t|} + \frac{\sqrt{E_{ F}^{(K)} (t) + E_{ \Lie_{t} F}^{(K)} (t) + \sum_{j=1}^{3} E_{ \Lie_{\Om_{j}} F}^{(K)} (t) }}{|w|}
\eeaa

Thus,
\beaa
&& \int_{\overline{r}^{*} = r^{*}_{0}}^{\overline{r}^{*} = r^{*} } \int_{\S^{2}}  \overline{r}^{2}  <\Lie_{r^{*}}F, F>_{h} (t, r, \om) d\sigma^{2}  d\overline{r}^{*}  \\
&\lesssim& ( \frac{\sqrt{E_{  F}^{(K)} (t) + E_{ \Lie_{t} F}^{(K)} (t) + \sum_{j=1}^{3} E_{ \Lie_{\Om_{j}} F}^{(K)} (t) }}{|t|} + \frac{\sqrt{E_{ F}^{(K)} (t) + E_{ \Lie_{t} F}^{(K)} (t) + \sum_{j=1}^{3} E_{ \Lie_{\Om_{j}} F}^{(K)} (t) }}{|w|} ) \\
&& . ( \frac{  \sqrt{E_{F}^{(K)} (t)}}{t}  + \frac{  \sqrt{E_{F}^{(K)} (t)}}{w} ) \\
&\lesssim& \frac{E_{  F}^{(K)} (t) + E_{ \Lie_{t} F}^{(K)} (t) + \sum_{j=1}^{3} E_{ \Lie_{\Om_{j}} F}^{(K)} (t) }{t^{2}}  + \frac{E_{  F}^{(K)} (t)  + E_{ \Lie_{t} F}^{(K)} (t) +  \sum_{j=1}^{3} E_{ \Lie_{\Om_{j}} F}^{(K)} (t) }{w^{2}}
\eeaa

By same,

\beaa
&& \int_{\overline{r}^{*} = r^{*}_{\rLie F}}^{\overline{r}^{*} = r^{*} } \int_{\S^{2}}  \overline{r}^{2}  <\Lie_{r^{*}}\rLie F, \rLie F>_{h}  (t, r, \om) d\sigma^{2}  d\overline{r}^{*}  \\ &\lesssim&  \sum_{i=1}^{3} \sum_{j=1}^{3} [\frac{  E_{ \Lie_{\Om_{j}} F, \Lie_{t}  \Lie_{\Om_{j}} F,  \Lie_{\Om_{i}} \Lie_{\Om_{j}} F }^{(K)} (t)}{ t^{2} }  + \frac{  E_{ \Lie_{\Om_{j}} F, \Lie_{t}  \Lie_{\Om_{j}} F,  \Lie_{\Om_{i}} \Lie_{\Om_{j}} F }^{(K)} (t)}{ w^{2} } ]
\eeaa 

and,
\beaa
&& \int_{\overline{r}^{*} = r^{*}_{\rLie \rLie F}}^{\overline{r}^{*} = r^{*} } \int_{\S^{2}}  \overline{r}^{2}  <\Lie_{r^{*}}\rLie \rLie F, \rLie \rLie F >_{h}  (t, r, \om) d\sigma^{2}  d\overline{r}^{*} \\
 &\lesssim&  \sum_{l=1}^{3} \sum_{i=1}^{3} \sum_{j=1}^{3} [ \frac{  E_{ \Lie_{\Om_{i}} \Lie_{\Om_{j}}  F, \Lie_{t} \Lie_{\Om_{i}} \Lie_{\Om_{j}}  F, \Lie_{\Om_{l}} \Lie_{\Om_{i}} \Lie_{\Om_{j}}  F }^{(K)} (t)}{t^{2} }  +  \frac{  E_{ \Lie_{\Om_{i}} \Lie_{\Om_{j}}  F, \Lie_{t} \Lie_{\Om_{i}} \Lie_{\Om_{j}}  F, \Lie_{\Om_{l}} \Lie_{\Om_{i}} \Lie_{\Om_{j}}  F }^{(K)} (t)}{w^{2} } ]
\eeaa

Using the fact that,

\beaa
\sum_{j = 1 }^{3} | \Lie_{\Om_{j}  } \Psi |^{2} = r^{2} |\rLie \Psi |^{2} =  | r \rLie \Psi |^{2}
\eeaa

We have,

\beaa
| r \rLie r \rLie \Psi |^{2} &=& r^{2} [ r^{2}| \rLie  \rLie \Psi  |^{2} ] \\
&=& r^{2} [ \sum_{j=1}^{3} | \Lie_{\Om_{j}}  \rLie \Psi  |^{2} ] \\
&=&   \sum_{j=1}^{3} r^{2}  | \rLie   \Lie_{\Om_{j}} \Psi  |^{2}  \\
&=&    \sum_{i=1}^{3} \sum_{j=1}^{3}   | \Lie_{\Om_{i}}   \Lie_{\Om_{j}} \Psi  |^{2}  \\
\eeaa

Finally, we obtain,

\beaa
&& r^{2} |F |^{2} (t, r, \om)  \\
&\lesssim& \frac{  E_{ F, \Lie_{t}  F, r \rLie F }^{(K)} (t)}{ t^{2} }  + \frac{  E_{ F, \Lie_{t}  F, r \rLie F }^{(K)} (t)}{ w^{2} } \\
&&+ \frac{  E_{r \rLie F, \Lie_{t} r \rLie F, r^{2} (\rLie)^{2} F }^{(K)} (t)}{ t^{2} }  + \frac{  E_{r{\rLie} F, \Lie_{t} r{\rLie} F, r^{2} (\rLie)^{2} F }^{(K)} (t)}{ w^{2} }  \\
&&+ \frac{  E_{ r^{2} (\rLie)^{2} F, \Lie_{t} r^{2} (\rLie)^{2} F, r^{3} (\rLie)^{3} F }^{(K)} (t)}{t^{2} }  +  \frac{  E_{ r^{2} (\rLie)^{2} F, \Lie_{t} r^{2}(\rLie)^{2} F, r^{3} (\rLie)^{3} F }^{(K)} (t)}{w^{2} }
\eeaa

Since what is on the left hand side of the previous inequality is a contraction, it can be computed in the basis  $\{\hat{\frac{\pa}{\pa w}}, \hat{\frac{\pa}{\pa v}}, \hat{\frac{\pa}{\pa \th}}, \hat{\frac{\pa}{\pa \phi}} \}$. Thus,

\beaa
&& \sum_{ \hat{\mu}, \hat{\nu} \in \{\hat{\frac{\pa}{\pa w}}, \hat{\frac{\pa}{\pa v}}, \hat{\frac{\pa}{\pa \th}}, \hat{\frac{\pa}{\pa \phi}} \} } |F_{\hat{\mu}\hat{\nu}}|(w, v, \om)  \\
&\lesssim& \frac{ [ E_{  F, \Lie_{t}  F, r{\rLie} F, \Lie_{t} r{\rLie} F, r^{2} (\rLie)^{2} F, \Lie_{t} r^{2}(\rLie)^{2} F, r^{3}(\rLie)^{3} F}^{(K)} (t) ]^{\frac{1}{2}} }{ r t } \\
&& + \frac{  [ E_{  F, \Lie_{t}  F, r{\rLie} F, \Lie_{t} r{\rLie} F, r^{2}(\rLie)^{2} F, \Lie_{t} r^{2}(\rLie)^{2} F, r^{3}(\rLie)^{3} F}^{(K)} (t) ]^{\frac{1}{2}}  }{ r w } \\
\eeaa

For $R$ fixed, consider first the region where $t \geq 1 $, and thus we have $r + t \lesssim rt $. Consequently, $v = r^{*} + t \lesssim 2r +t \lesssim 2r + 2t \lesssim rt $, and $v \lesssim t + r \lesssim r + t - r^{*} \lesssim r + w \lesssim rw$ (since $w \geq 1$).\\

For $t \leq 1$, the region,  $\om \geq 1$, $r \geq R$, $t \leq 1 $ is a bounded compact region, and therefore, in this region
\beaa
| F_{\hat{\mu}\hat{\nu}}|(w, v, \om) &\les& E_{ F }^{(\frac{\pa}{\pa t})} (t=t_{0}) + E_{ \der F }^{(\frac{\pa}{\pa t})} (t=t_{0}) \\
&& \text{(see [G])} \\
&\les&  E_{ F, \Lie_{t} F, r \rLie F }^{(\frac{\pa}{\pa t})} (t=t_{0})
\eeaa

Thus, we have,

\beaa
 |F_{\hat{\mu}\hat{\nu}}|(w, v, \om) &\lesssim& \frac{  [ E_{  F, \Lie_{t}  F, r{\rLie} F, \Lie_{t} r{\rLie} F, r^{2}(\rLie)^{2} F, \Lie_{t} r^{2}(\rLie)^{2} F, r^{3}(\rLie)^{3} F}^{(K)} (t) ]^{\frac{1}{2}}  }{ (1 + |v|) } \\
&\lesssim& \frac{  E_{F} }{ (1 + |v|) } \\
\eeaa

\subsection{The region $w \leq -1$, $r \geq R$, $|t| \geq 1$ }\

Again, we have the Sobolev inequality

\beaa
r^{2} |F_{\hat{\mu}\hat{\nu}}|^{2}  \lesssim \int_{\S^{2}} r^{2} |F|^{2} d\sigma^{2} +  \int_{\S^{2}} r^{2} | \rLie F |^{2} d\sigma^{2}  +  \int_{\S^{2}} r^{2} |\rLie \rLie F|^{2} d\sigma^{2} 
\eeaa

We have, 
\beaa
\int_{\S^{2}} r^{2} |F|^{2}(t, r,  \om)  d\sigma^{2} &\lesssim&  \int_{\S^{2}} r^{2} |F|^{2}(t, \infty, \om)  d\sigma^{2}   +   \int_{\S^{2}} \int_{\overline{r}^{*} = r^{*}}^{\overline{r}^{*} = \infty } \der_{r^{*}} [r^{2} |F |^{2}] (t, r, \om)  d\overline{r}^{*} d\sigma^{2} \\
&\lesssim&    \int_{\S^{2}} \int_{\overline{r}^{*} }^{\overline{r}^{*} = \infty } 2r |F |^{2} (t, r, \om) (1-\mu)  d\overline{r}^{*} d\sigma^{2}  \\
&&+  2   \int_{\S^{2}} \int_{\overline{r}^{*} = r^{*} }^{\overline{r}^{*} = \infty } r^{2}  <\der_{r^{*}}F, F>_{h} (t, r, \om)  d\overline{r}^{*} d\sigma^{2}  \\
\eeaa
By the same,
\beaa
&&\int_{\S^{2}} r^{2} |\rLie F |^{2}(t, r, \om)  d\sigma^{2} \\
& \lesssim&     \int_{\S^{2}} \int_{\overline{r}^{*} = r^{*} }^{\overline{r}^{*} = \infty } 2r |\rLie F|^{2} (t, r, \om) (1-\mu)  d\overline{r}^{*} d\sigma^{2}  \\
&& +  2   \int_{\S^{2}} \int_{\overline{r}^{*} = r^{*} }^{\overline{r}^{*} = \infty } r^{2}  <\der_{r^{*}} \rLie F, \rLie F >_{h} (t, r, \om)  d\overline{r}^{*} d\sigma^{2}  \\
\eeaa
and,
\beaa
 &&\int_{\S^{2}} r^{2} | \rLie \rLie F |^{2}(t, r, \om)  d\sigma^{2}\\
& \lesssim &  \int_{\S^{2}} \int_{\overline{r}^{*} = r^{*}  }^{\overline{r}^{*} = \infty } 2r | \rLie \rLie F|^{2} (t, r, \om) (1-\mu)  d\overline{r}^{*} d\sigma^{2}  \\
&&+  2   \int_{\S^{2}} \int_{\overline{r}^{*} = r^{*} }^{\overline{r}^{*} = \infty } r^{2}  <\der_{r^{*}} \rLie  \rLie F,  \rLie \rLie F >_{h} (t, r, \om)  d\overline{r}^{*} d\sigma^{2}  \\
\eeaa

We have shown,

\beaa
&&\int_{r^{*}= r_{1}^{*}   }^{r^{*} = r_{2}^{*}   } \int_{\S^{2}} (  | \Psi_{\hat{w}\hat{\th}} |^{2} +  | \Psi_{\hat{w}\hat{\phi}} |^{2}   +   | \Psi_{\hat{v}\hat{\th}} |^{2} +  | \Psi_{\hat{v}\hat{\phi}} |^{2}  +  |\Psi_{\hat{v}\hat{w}}|^{2}   +   | \Psi_{\hat{\phi}\hat{\th}}|^{2} ). (1-\mu) r^{2}   d\sigma^{2} dr^{*} (t) \\
& \lesssim & \frac{E_{\Psi}^{(K)}(t)}{\min_{w \in \{t\}\cap \{  r_{1}^{*} \leq r^{*} \leq r_{2}^{*} \}  } w^{2}}  + \frac{E_{\Psi}^{(K)}(t)}{\min_{v \in \{t\}\cap  \{  r_{1}^{*} \leq r^{*} \leq r_{2}^{*} \}  } v^{2}}
\eeaa

Thus,

\beaa
 \int_{r^{*}= r_{1}^{*}   }^{r^{*} = r_{2}^{*}   }    \int_{\S^{2}}  |\Psi |^{2} (t, \overline{r}, \om) (1-\mu) r^{2} d\sigma^{2}  d\overline{r}^{*}  &\lesssim& \frac{E_{\Psi}^{(K)}(t)}{\min_{w \in \{t\}\cap \{  r_{1}^{*} \leq r^{*} \leq r_{2}^{*} \}  } w^{2}}  + \frac{E_{\Psi}^{(K)}(t)}{\min_{v \in \{t\}\cap  \{  r_{1}^{*} \leq r^{*} \leq r_{2}^{*} \}  } v^{2}} 
\eeaa

We have,

\beaa
 \int_{\overline{r}^{*} = r^{*} }^{\overline{r}^{*} = \infty }  \int_{\S^{2}}  \overline{r}  |F |^{2} (1-\mu) (t, \overline{r}, \om) d\sigma^{2}  d\overline{r}^{*}  &\lesssim&
\int_{\overline{r}^{*} = r^{*} }^{\overline{r}^{*} = \infty }  \frac{\overline{r}}{R}   \int_{\S^{2}}  \overline{r} |F |^{2} (1-\mu) (t, \overline{r}, \om) d\sigma^{2}  d\overline{r}^{*} \\
& \lesssim& \frac{  E_{F}^{(K)} (t)}{t^{2}}  + \frac{  E_{F}^{(K)} (t)}{w^{2}} 
\eeaa

Thus,
\beaa
\int_{\overline{r}^{*} = r^{*} }^{\overline{r}^{*} = \infty }  \int_{\S^{2}}  \overline{r} |F |^{2} (1-\mu) (t, \overline{r}, \om) d\sigma^{2}  d\overline{r}^{*}  &\lesssim& \frac{  E_{F}^{(K)} (t)}{t^{2}}  + \frac{  E_{F}^{(K)} (t)}{w^{2}} 
\eeaa

and,

\beaa
 \int_{\overline{r}^{*} = r^{*} }^{\overline{r}^{*} = \infty }  \int_{\S^{2}}  \overline{r} |\rLie F |^{2} (1-\mu) (t, \overline{r}, \om) d\sigma^{2}  d\overline{r}^{*} &=&  \sum_{j=1}^{3} \int_{\overline{r}^{*} = r^{*} }^{\overline{r}^{*} = \infty }  \int_{\S^{2}}  \overline{r} \frac{1}{\overline{r}^{2}} | \Lie_{\Om_{j}} F |^{2} (1-\mu) (t, \overline{r}, \om) d\sigma^{2}  d\overline{r}^{*} \\
&\les&  \sum_{j=1}^{3} \frac{1}{R^{2} } \int_{\overline{r}^{*} = r^{*} }^{\overline{r}^{*} = \infty }  \int_{\S^{2}}  \overline{r}  | \Lie_{\Om_{j}} F |^{2} (1-\mu) (t, \overline{r}, \om) d\sigma^{2}  d\overline{r}^{*} \\
&\lesssim& \sum_{j=1}^{3} [ \frac{ E_{\Lie_{\Om_{j}} F}^{(K)} (t)}{t^{2}} + \frac{ E_{\Lie_{\Om_{j}} F}^{(K)} (t)}{w^{2}} ]
\eeaa

By same,

\beaa
\int_{\overline{r}^{*} = r^{*} }^{\overline{r}^{*} = \infty }  \int_{\S^{2}}  \overline{r} |\rLie \rLie F |^{2} (1-\mu) (t, \overline{r}, \om) d\sigma^{2}  d\overline{r}^{*}  &\lesssim& \sum_{i=1}^{3} \sum_{j=1}^{3} [ \frac{  E_{\Lie_{\Om_{i}} \Lie_{\Om_{j}} F}^{(K)} (t)}{t^{2}} + \frac{  E_{\Lie_{\Om_{i}} \Lie_{\Om_{j}} F}^{(K)} (t)}{w^{2}} ]
\eeaa

Now, we can estimate the term,

\beaa
&&\int_{\overline{r}^{*} = r^{*} }^{\overline{r}^{*} = \infty } \int_{\S^{2}}  \overline{r}^{2}  <\der_{r^{*}}F, F >_{h} (t, r, \om) d\sigma^{2}  d\overline{r}^{*}  \\
&&\lesssim  [\int_{\overline{r}^{*} = r^{*} }^{\overline{r}^{*} = \infty }  \int_{\S^{2}}  \overline{r}^{2}  | \der_{r^{*}}F  |^{2} d\sigma^{2}  d\overline{r}^{*} ]^{\frac{1}{2}}  .  [\int_{\overline{r}^{*} = r^{*} }^{\overline{r}^{*} = \infty }   \int_{\S^{2}}  \overline{r}^{2}  | F  |^{2} d\sigma^{2}  d\overline{r}^{*} ]^{\frac{1}{2}}
\eeaa

We have,
\beaa
 \int_{\overline{r}^{*} = r^{*} }^{\overline{r}^{*} = \infty }  \int_{\S^{2}}  \overline{r}^{2} |F |^{2} (1-\mu) (t, \overline{r}, \om) d\sigma^{2}  d\overline{r}^{*}  & \lesssim& \frac{  E_{F}^{(K)} (t)}{t^{2}}  +  \frac{  E_{F}^{(K)} (t)}{w^{2}} 
\eeaa

Thus,
\beaa
[\int_{\overline{r}^{*} = r^{*} }^{\overline{r}^{*} = \infty }   \int_{\S^{2}}  \overline{r}^{2}  | F |^{2} d\sigma^{2}  d\overline{r}^{*} ]^{\frac{1}{2}} \lesssim \frac{  \sqrt{E_{F}^{(K)} (t)}}{t}  + \frac{  \sqrt{E_{F}^{(K)} (t)}}{w} 
\eeaa

We also have the estimate:

\beaa
&& \int_{\overline{r}^{*} = r^{*} }^{\overline{r}^{*} = \infty }   \int_{\S^{2}} \overline{r}^{2} | \der_{r^{*}}F |^{2} d\sigma^{2}  d\overline{r}^{*} \\
&\lesssim& \int_{\overline{r}^{*} = r^{*} }^{\overline{r}^{*} = \infty }   \int_{\S^{2}} \overline{r}^{2} ( |F |^{2} + |\Lie_{t} F |^{2} + |\rLie F|^{2} ) d\sigma^{2}  d\overline{r}^{*} \\
&\lesssim& \int_{\overline{r}^{*} = r^{*} }^{\overline{r}^{*} = \infty }   \int_{\S^{2}} \overline{r}^{2}  [|F |^{2} + |{\Lie}_{t} F |^{2} ] d\sigma^{2}  d\overline{r}^{*} +  \sum_{j=1}^{3} \int_{\overline{r}^{*} = r^{*} }^{\overline{r}^{*} = \infty }   \int_{\S^{2}} \overline{r}^{2} \frac{1}{\overline{r}^{2}  } |\Lie_{\Om_{j}} F|^{2}  d\sigma^{2}  d\overline{r}^{*} \\
&\lesssim& \int_{\overline{r}^{*} = r^{*} }^{\overline{r}^{*} = \infty }   \int_{\S^{2}} \overline{r}^{2} [|F |^{2} +  |{\Lie}_{t} F |^{2}  ]  d\sigma^{2}  d\overline{r}^{*} +   \sum_{j=1}^{3} \frac{1}{R^{2} } \int_{\overline{r}^{*} = r^{*} }^{\overline{r}^{*} = \infty }   \int_{\S^{2}} \overline{r}^{2} |\Lie_{\Om_{j}} F|^{2}  d\sigma^{2}  d\overline{r}^{*} \\
&\lesssim&   \frac{E_{ F}^{(K)} (t) + E_{ \Lie_{t} F}^{(K)} (t) +  \sum_{j=1}^{3} E_{ \Lie_{\Om_{j}} F}^{(K)} (t) }{t^{2}} + \frac{E_{F}^{(K)} (t) + E_{ \Lie_{t} F}^{(K)} (t) +  \sum_{j=1}^{3} E_{ \Lie_{\Om_{j}} F}^{(K)} (t) }{w^{2}} 
\eeaa

and therefore,
\beaa
&& [\int_{\overline{r}^{*} = r^{*} }^{\overline{r}^{*} = \infty }  \int_{\S^{2}}  \overline{r}^{2}  | \der_{r^{*}}F  |^{2} d\sigma^{2}  d\overline{r}^{*} ]^{\frac{1}{2}}  \\
&\lesssim& \frac{\sqrt{E_{ F}^{(K)} (t)  + E_{ \Lie_{t} F}^{(K)} (t) + \sum_{j=1}^{3}  E_{ \Lie_{\Om_{j}} F}^{(K)} (t) }}{|t|} + \frac{\sqrt{E_{ F}^{(K)} (t)  + E_{ \Lie_{t} F}^{(K)} (t) + \sum_{j=1}^{3}  E_{ \Lie_{\Om_{j}} F}^{(K)} (t) }}{|w|} 
\eeaa

Thus,
\beaa
&& \int_{\overline{r}^{*} = r^{*}  }^{\overline{r}^{*} = \infty } \int_{\S^{2}}  \overline{r}^{2}  <\der_{r^{*}}F, F >_{h} (t, r, \om) d\sigma^{2}  d\overline{r}^{*} \\  
&\lesssim&   ( \frac{\sqrt{E_{ F}^{(K)} (t)  + E_{ \Lie_{t} F}^{(K)} (t) + \sum_{j=1}^{3}   E_{ \Lie_{\Om_{j}} F}^{(K)} (t) }}{|t|} + \frac{\sqrt{E_{ F}^{(K)} (t)  + E_{ \Lie_{t} F}^{(K)} (t) + \sum_{j=1}^{3}  E_{ \Lie_{\Om_{j}} F}^{(K)} (t) }}{|w|} )  \\
&& . ( \frac{  \sqrt{E_{F}^{(K)} (t)}}{t}  + \frac{  \sqrt{E_{F}^{(K)} (t)}}{w} ) \\
&\lesssim&   \frac{E_{  F}^{(K)} (t) + E_{ \Lie_{t} F}^{(K)} (t) + \sum_{j=1}^{3}  E_{ \Lie_{\Om_{j}} F}^{(K)} (t) }{t^{2}} + \frac{E_{  F}^{(K)} (t)  + E_{ \Lie_{t} F}^{(K)} (t) + \sum_{j=1}^{3}  E_{ \Lie_{\Om_{j}} F}^{(K)} (t) }{w^{2}}   \\
&\lesssim& \frac{E_{  F}^{(K)} (t) + E_{ \Lie_{t} F}^{(K)} (t) + E_{ r{\rLie} F}^{(K)} (t) }{t^{2}} + \frac{E_{  F}^{(K)} (t)  + E_{ \Lie_{t} F}^{(K)} (t) + E_{ r{\rLie} F}^{(K)} (t) }{w^{2}}
\eeaa

By same,

\beaa
&& \int_{\overline{r}^{*} = r^{*} }^{\overline{r}^{*} = \infty } \int_{\S^{2}}  \overline{r}^{2}  <\der_{r^{*}}\rLie F, \rLie F >_{h} (t, r, \om) d\sigma^{2}  d\overline{r}^{*}  \\
&\lesssim&  \frac{  E_{r{\rLie} F, \Lie_{t} r{\rLie} F, r^{2}(\rLie)^{2} F }^{(K)} (t)}{ t^{2} }  + \frac{  E_{r{\rLie} F, \Lie_{t} r{\rLie} F, r^{2}(\rLie)^{2} F }^{(K)} (t)}{ w^{2} }
\eeaa 

and,
\beaa
&& \int_{\overline{r}^{*} = r^{*} }^{\overline{r}^{*} = \infty } \int_{\S^{2}}  \overline{r}^{2}  <\der_{r^{*}}\rLie\rLie F, \rLie \rLie F >_{h} (t, r, \om) d\sigma^{2}  d\overline{r}^{*} \\
 &\lesssim&  \frac{  E_{ r^{2}(\rLie)^{2} F, \Lie_{t} r^{2}(\rLie)^{2} F, r^{3}(\rLie)^{3} F }^{(K)} (t)}{t^{2} }  +  \frac{  E_{ r^{2}(\rLie)^{2} F, \Lie_{t} r^{2}(\rLie)^{2} F, r^{3}(\rLie)^{3} F }^{(K)} (t)}{w^{2} }
\eeaa

Finally, we obtain,

\beaa
&& r^{2} |F |^{2} (t, r, \om)  \\
&\lesssim& \frac{  E_{ F, \Lie_{t}  F, r{\rLie} F }^{(K)} (t)}{ t^{2} }  + \frac{  E_{ F, \Lie_{t}  F, r{\rLie} F }^{(K)} (t)}{ w^{2} } \\
&&+ \frac{  E_{r{\rLie} F, \Lie_{t} r{\rLie} F, r^{2}(\rLie)^{2} F }^{(K)} (t)}{ t^{2} }  + \frac{  E_{r{\rLie} F, \Lie_{t} r{\rLie} F, r^{2}(\rLie)^{2} F }^{(K)} (t)}{ w^{2} }  \\
&&+ \frac{  E_{ r^{2}(\rLie)^{2} F, \Lie_{t} r^{2}(\rLie)^{2} F, r^{3}(\rLie)^{3} F }^{(K)} (t)}{t^{2} }  +  \frac{  E_{ r^{2}(\rLie)^{2} F, \Lie_{t} r^{2}(\rLie)^{2} F, r^{3} (\rLie)^{3} F }^{(K)} (t)}{w^{2} }
\eeaa

Thus,

\beaa
 |F|(w, v, \om)  &\lesssim& \frac{  [ E_{  F, \Lie_{t}  F, r{\rLie} F, \Lie_{t} r{\rLie} F, r^{2}(\rLie)^{2} F, \Lie_{t} r^{2}(\rLie)^{2} F, r^{3}(\rLie)^{3} F}^{(K)} (t) ]^{\frac{1}{2}}  }{ r |t| } \\
&& + \frac{  [ E_{  F, \Lie_{t}  F, r{\rLie} F, \Lie_{t} r{\rLie} F, r^{2}(\rLie)^{2} F, \Lie_{t} r^{2}(\rLie)^{2} F, r^{3}(\rLie)^{3} F}^{(K)} (t)  ]^{\frac{1}{2}}  }{ r |w| } \\
\eeaa

We have, $$r^{*} = \frac{v - w}{2}$$ Thus, for $w \leq -1$, we have, $r^{*} \geq v$, and hence $r \geq v$. Therefore, $$\frac{1}{r} \lesssim \frac{1}{|v|}$$

Since in this region we have $|w| \geq 1$, and $|t| \geq 1$, we get, 

\beaa
 |F_{\hat{\mu}\hat{\nu}}|(w, v, \om) &\lesssim& \frac{  [ E_{  F, \Lie_{t}  F, r{\rLie} F, \Lie_{t} r{\rLie} F, r^{2}(\rLie)^{2} F, \Lie_{t} r^{2}(\rLie)^{2} F, r^{3}(\rLie)^{3} F}^{(K)} (t)  ]^{\frac{1}{2}}  }{ (1 + |v|) } \\
&\lesssim& \frac{  E_{F}  }{ (1 + |v|) } \\
\eeaa

We also have for fixed $R$, and $|t| \geq 1 $, $|w| = |t - r^{*} | \lesssim |r^{*} | + |t| \lesssim 2r +t \lesssim 2r + 2|t| \lesssim r |t| $. Thus, in this region, we also have,

\beaa
 |F_{\hat{\mu}\hat{\nu}}|(w, v, \om) &\lesssim& \frac{ [  E_{  F, \Lie_{t}  F, r{\rLie} F, \Lie_{t} r{\rLie} F, r^{2}(\rLie)^{2} F, \Lie_{t} r^{2}(\rLie)^{2} F, r^{3}(\rLie)^{3} F}^{(K)} (t) ]^{\frac{1}{2}}  }{ (1 + |w|) } \\
&\lesssim& \frac{   E_{F}  }{ (1 + |w|) } \\
\eeaa

\subsection{The region $w \leq -1$, $r \geq R$, $-1 \leq t \leq 1$ }\

Let, $$t^{\#} = t - 2$$

When $$-1 \leq t \leq 1$$ we have $$-3 \leq t^{\#} \leq -1$$

Let, $$w^{\#} = t^{\#} - r^{*} = t - r^{*} - 2$$

When $$w \leq -1 $$

we have $$w^{\#} \leq -3 $$

Thus, the region $w \leq -1$, $r \geq R$, $-1 \leq t \leq 1$, is in the new system of coordinates included in the region $w^{\#} \leq -1$, $r \geq R$, $t^{\#} \leq -1$.\\

$\frac{\pa}{\pa t}$ is a Killing vector field, therefore, the time translation will keep the metric invariant, i.e. in the new system of coordinates $ \{t^{\#}, r, \th, \phi \}$ the metric is written exactly as in the former system of coordinates $ \{t, r, \th, \phi \}$. Consequently, we will have the same results proven previously, i.e., in the region $w^{\#} \leq -1$, $r \geq R$, $t^{\#} \leq -1$, we have:
\beaa
 |F_{\hat{\mu}\hat{\nu}}|(w^{\#}, v^{\#}, \om) &\lesssim& \frac{ [  E_{ F, \Lie_{t}  F, r{\rLie} F, \Lie_{t} r{\rLie} F, r^{2}(\rLie)^{2} F, \Lie_{t} r^{2}(\rLie)^{2} F, r^{3}(\rLie)^{3} F}^{M} ]^{\frac{1}{2}}  }{ (1 + |v^{\#}|) } \\
&\lesssim& \frac{ [  E_{  F, \Lie_{t}  F, r{\rLie} F, \Lie_{t} r{\rLie} F, r^{2}(\rLie)^{2} F, \Lie_{t} r^{2}(\rLie)^{2} F, r^{3}(\rLie)^{3} F}^{M} ]^{\frac{1}{2}}  }{ (1 + |v|) } 
\eeaa
and,
\beaa
 |F_{\hat{\mu}\hat{\nu}}|(w^{\#}, v^{\#}, \om) &\lesssim& \frac{ [ E_{  F, \Lie_{t}  F, r{\rLie} F, \Lie_{t} r{\rLie} F, r^{2}(\rLie)^{2} F, \Lie_{t} r^{2}(\rLie)^{2} F, r^{3}(\rLie)^{3} F}^{M} ]^{\frac{1}{2}}  }{ (1 + |w^{\#}|) } \\
&\lesssim& \frac{ [   E_{  F, \Lie_{t}  F, r{\rLie} F, \Lie_{t} r{\rLie} F, r^{2}(\rLie)^{2} F, \Lie_{t} r^{2}(\rLie)^{2} F, r^{3}(\rLie)^{3} F}^{M} ]^{\frac{1}{2}} }{ (1 + |w|) } \\
\eeaa
which gives,
\beaa
 |F_{\hat{\mu}\hat{\nu}}|(w, v, \om) &\lesssim& \frac{ [  E_{  F, \Lie_{t}  F, r{\rLie} F, \Lie_{t} r{\rLie} F, r^{2}(\rLie)^{2} F, \Lie_{t} r^{2}(\rLie)^{2} F, r^{3}(\rLie)^{3} F}^{M} ]^{\frac{1}{2}}  }{ (1 + |v|) } 
\eeaa
and, 
\beaa
 |F_{\hat{\mu}\hat{\nu}}|(w, v, \om) &\lesssim& \frac{  [  E_{  F, \Lie_{t}  F, r{\rLie} F, \Lie_{t} r{\rLie} F, r^{2}(\rLie)^{2} F, \Lie_{t} r^{2}(\rLie)^{2} F, r^{3}(\rLie)^{3} F}^{M} ]^{\frac{1}{2}}  }{ (1 + |w|) } 
\eeaa
in the region $w \leq -1$, $r \geq R$, $-1 \leq t \leq 1$.

\subsection{The region $-1 \leq w \leq 1$, $r \geq R$}\

Let, $$r^{*\vartriangleright} = r^{*} + 2$$

Then, when $$-1 \leq  w \leq 1 $$

we have, 
$$-3 \leq  w^{\#} \leq -1 $$

and, when $$r^{*} \geq R^{*}$$

then, $$r^{*\vartriangleright} \geq R^{*} + 2 \geq R^{*} $$

Thus, the region $-1 \leq w \leq 1$, $r \geq R$ is included, in the new system of coordinates, in the region $ w^{\vartriangleright} \leq -1$, $r^{*\vartriangleright} \geq R^{*}$.

Notice that $ r^{*} $ is defined up to a constant. With the new definition of $r^{*}$ everything we have proven with $r^{*}$ works with $r^{*\vartriangleright}$ by replacing in \eqref{Assumption1}, $J_{ F }^{(G)}  (  t_{i} \leq t \leq t_{i+1} ) (r_{0} < r < R_{0} )$ by $J_{ F }^{(G) \vartriangleright}  (  t_{i} \leq t \leq t_{i+1} ) (r_{0}^{\vartriangleright} < r < R_{0}^{\vartriangleright} )$ defined by
\beaa
\notag
&& J_{\Psi}^{(G) \vartriangleright}  (r_{0}^{\vartriangleright} < r < R_{0}^{\vartriangleright} ) (  t_{i} \leq t \leq t_{i+1} )   \\
\notag
&=&   \int_{t = t_{i}}^{ t= t_{i+1}}  \int_{r^{*} =  r_{0}^{*\vartriangleright} }^{r^{*}= R_{0}^{*\vartriangleright} } \int_{\S^{2}}  [    | \Psi_{\hat{v}\hat{w}}|^{2}  +  \frac{1}{4 } | \Psi_{\hat{\phi}\hat{\th}}|^{2} ] .  dr^{*\vartriangleright} d\sigma^{2} dt \\
&=&  \int_{t = t_{i}}^{ t= t_{i+1}}  \int_{r^{*} =  r_{0}^{*}+2 }^{r^{*}= R_{0}^{*}+2 } \int_{\S^{2}}  [    | \Psi_{\hat{v}\hat{w}}|^{2}  +  \frac{1}{4 } | \Psi_{\hat{\phi}\hat{\th}}|^{2} ] .  dr^{*} d\sigma^{2} dt  
\eeaa
However, since the length of the interval $w \in [-1, 1]$ was arbitrary; we only wanted in the previous subsections to avoid $w = 0$, what is actually only needed is
\bea
\notag
&& J_{\Psi}^{(G)}  (r_{0} < r < R_{0} + \eps ) (  t_{i} \leq t \leq t_{i+1} ) \\
&=&  \int_{t = t_{i}}^{ t= t_{i+1}}  \int_{r^{*} =  r_{0}^{*} }^{r^{*}= R_{0}^{*}+ \eps } \int_{\S^{2}}  [    | \Psi_{\hat{v}\hat{w}}|^{2}  +  \frac{1}{4 } | \Psi_{\hat{\phi}\hat{\th}}|^{2} ] .  dr^{*} d\sigma^{2} dt  
\eea
in assumption \eqref{Assumption1} with $\eps$ arbitrary small. Since we assume \eqref{Assumption1}, we would obtain the above, or take $R_{0}$ as being the infimum of all $\hat{R_{0}}$ in \eqref{definitionofRoastheinfimum} plus $\eps$ fixed. Therefore, in the region $ w^{\vartriangleright} \leq -1$, $r^{*\vartriangleright} \geq R^{*}$. we have,
\beaa
 |F_{\hat{\mu}\hat{\nu}}|(w^{\vartriangleright}, v^{\#}, \om) &\lesssim& \frac{ [   E_{  F, \Lie_{t}  F, r{\rLie} F, \Lie_{t} r{\rLie} F, r^{2}(\rLie)^{2} F, \Lie_{t} r^{2}(\rLie)^{2} F, r^{3}(\rLie)^{3} F}^{M} ] }{ (1 + |v^{\vartriangleright}|) } \\
&\lesssim& \frac{ [  E_{  F, \Lie_{t}  F, r{\rLie} F, \Lie_{t} r{\rLie} F, r^{2}(\rLie)^{2} F, \Lie_{t} r^{2}(\rLie)^{2} F, r^{3}(\rLie)^{3} F}^{M} ] }{ (1 + |v|) } \\
\eeaa

which gives,

\beaa
 |F |(w, v, \om) &\lesssim& \frac{ E_{F}  }{ (1 + |v|) } \\
\eeaa

in the region $-1 \leq w \leq 1$, $r \geq R$.

\end{proof}

\section{Decay of the Energy to Observers Traveling to the Black Hole on $v = constant$ Hypersurfaces Near the Horizon}\

\subsection{The vector field $H$}\

Let
\bea
H &=& - \frac{ h(r^{*})}{(1-\mu)} \frac{\pa}{\pa w}  - h(r^{*}) \frac{\pa}{\pa v}  \\
\notag
&=&  H^{w} \frac{\pa}{\pa w}  + H^{v} \frac{\pa}{\pa v}  
\eea

Computing,
\beaa
\frac{\pa}{\pa w} h^{w} &=& \frac{\pa r^{*}}{\pa w} \frac{\pa}{\pa r^{*}} H^{w}   +    \frac{\pa t}{\pa w} \frac{\pa}{\pa t} H^{w} \\
&=& -\frac{1}{2} \frac{\pa}{\pa r^{*}} H^{w}   + 0 \\
&=&  \frac{1}{2(1-\mu)} [ h^{'} - \frac{\mu}{r} h ]
\eeaa
where $h^{'} = \frac{\pa}{\pa r^{*}} h$\\
And,
\beaa
\frac{\pa}{\pa v} H^{w} &=& \frac{\pa r^{*}}{\pa v} \frac{\pa}{\pa r^{*}} H^{w}   +    \frac{\pa t}{\pa v} \frac{\pa}{\pa t} H^{w} \\
&=& \frac{1}{2} \frac{\pa}{\pa r^{*}} H^{w}   + 0 \\
&=&  \frac{-1}{2(1-\mu)} [ h^{'} - \frac{\mu}{r} h ] 
\eeaa

Similarly,
\beaa
\frac{\pa}{\pa v} H^{v} &=& \frac{\pa r^{*}}{\pa v} \frac{\pa}{\pa r^{*}} H^{v}   +    \frac{\pa t}{\pa v} \frac{\pa}{\pa t} H^{v} \\
&=& \frac{1}{2} \frac{\pa}{\pa r^{*}} H^{v}   + 0 \\
&=& -\frac{1}{2}  h^{'} \\
\frac{\pa}{\pa w} H^{v} &=& \frac{\pa r^{*}}{\pa w} \frac{\pa}{\pa r^{*}} H^{v}   +    \frac{\pa t}{\pa w} \frac{\pa}{\pa t} H^{v} \\
&=& - \frac{1}{2} \frac{\pa}{\pa r^{*}} H^{v}   + 0 \\
&=&  \frac{1}{2} h^{'} 
\eeaa

\beaa
&& \pi^{\a\b}(H)T_{\a\b}(\Psi) \\
&=& [\frac{1}{r^{2}} | \Psi_{w\th} |^{2} + \frac{1}{r^{2}\sin^{2}\th} | \Psi_{w\phi} |^{2}]( \frac{-2}{(1 - \mu)} \pa_{v} H^{w}) \\
&& + [ \frac{1}{r^{2}} | \Psi_{v\th} |^{2} + \frac{1}{r^{2}\sin^{2}\th} | \Psi_{v\phi} |^{2} ] (\frac{-2}{(1 - \mu)} \pa_{w} H^{v})\\
&& + [ \frac{1}{(1-\mu)^{2}} |\Psi_{vw}|^{2}   +  \frac{1}{4r^{4}\sin^{2}\th} | \Psi_{\phi\th}|^{2}  ] ( -2 [    \pa_{v} H^{v}  + \pa_{w} H^{w}   +      \frac{(3\mu-2)}{2r} ( H^{v}  -   H^{w} ) ]  ) \\
\eeaa

Thus,

\bea
\notag
&& \pi^{\a\b}(H)T_{\a\b}(\Psi) \\
\notag
&=& [\frac{1}{r^{2}(1-\mu)} | \Psi_{w\th} |^{2} + \frac{1}{r^{2}\sin^{2}\th (1-\mu)} | \Psi_{w\phi} |^{2}]( \frac{1}{(1-\mu)} [ h^{'} - \frac{\mu}{r} h ] ) \\
\notag
&& + [ \frac{1}{r^{2}} | \Psi_{v\th} |^{2} + \frac{1}{r^{2}\sin^{2}\th} | \Psi_{v\phi} |^{2} ] ( \frac{-1}{(1-\mu)}  h^{'}  )\\
\notag
&& + [ \frac{1}{(1-\mu)^{2}} |\Psi_{vw}|^{2}   +  \frac{1}{4r^{4}\sin^{2}\th} | \Psi_{\phi\th}|^{2}  ]  \\
\notag
&& .  [ -2 (      \frac{1}{2(1-\mu)} [ (h^{'} - (1-\mu) h^{'})  - \frac{\mu}{r} h  ])    +      \frac{(2-3\mu)}{(1-\mu)r}   (h - (1-\mu) h)  ] \\
\eea

We have,
\bea
F_{\Psi}^{(H)}  ( w = w_{i} ) ( v_{i} \leq v \leq v_{i+1} ) = \int_{v = v_{i}}^{v = v_{i+1}} \int_{\S^{2}} J_{\a}(H) n^{\a} dVol_{ w = w_{i}} ( w = w_{i} )
\eea

where 
\beaa
n^{\a} &=& \g(\frac{\pa}{\pa v}, \frac{\pa}{\pa t} )^{-1} (\frac{\pa}{\pa v})^{\a} \\
&=& \frac{-2}{(1-\mu)}  (\frac{\pa}{\pa v})^{\a}
\eeaa
and
\beaa
dVol_{ w = w_{i}} &=& \g(\frac{\pa}{\pa t} , \frac{\pa}{\pa t} ) r^{2} d\sigma^{2} dv \\
&=&  - (1-\mu)  r^{2} d\sigma^{2} dv
\eeaa

We get
\bea
\notag
&& F_{\Psi}^{(H)}  ( w = w_{i} ) ( v_{i} \leq v \leq v_{i+1} ) \\
\notag
&=& \int_{v = v_{i}}^{v = v_{i+1}} \int_{\S^{2}}  - 2 [ \frac{ h(r^{*})}{(1-\mu)} T_{w v}  +   h(r^{*}) T_{v v}  ] r^{2} d\sigma^{2} d v \\
\notag
&=& \int_{v = v_{i}}^{v = v_{i+1}} \int_{\S^{2}}  - 2 [ \frac{ h(r^{*})}{(1-\mu)} (  \frac{1}{(1-\mu)} |\Psi_{vw}|^{2}   +  \frac{(1-\mu)}{4r^{4}\sin^{2}\th} | \Psi_{\phi\th}|^{2} ) \\
\notag
&& +  h(r^{*})   (  \frac{1}{r^{2}} | \Psi_{v\th} |^{2} + \frac{1}{r^{2}\sin^{2}\th} | \Psi_{v\phi} |^{2}  )    ] r^{2} d\sigma^{2} d v \\
\notag
&=& \int_{v = v_{i}}^{v = v_{i+1}} \int_{\S^{2}}  - 2 [  h(r^{*}) (  \frac{1}{(1-\mu)^{2}} |\Psi_{vw}|^{2}   +  \frac{1}{4r^{4}\sin^{2}\th} | \Psi_{\phi\th}|^{2} )   \\
&& +  h(r^{*})   (  \frac{1}{ r^{2}} | \Psi_{v\th} |^{2} + \frac{1}{ r^{2}\sin^{2}\th} | \Psi_{v\phi} |^{2}  )    ] r^{2} d\sigma^{2} d v  \label{firstexpressionforthefluxofHlonwequalconstant}
\eea

and,
\bea
F_{\Psi}^{(H)} ( v = v_{i} ) ( w_{i} \leq w \leq w_{i+1} ) = \int_{w = w_{i}}^{w = w_{i+1}} \int_{\S^{2}} J_{\a}(H) n^{\a} dVol_{ v = v_{i}} ( v = v_{i} )
\eea

where 
\beaa
n^{\a} &=& \g(\frac{\pa}{\pa w}, \frac{\pa}{\pa t} )^{-1} (\frac{\pa}{\pa w})^{\a} \\
&=& \frac{-2}{(1-\mu)}  (\frac{\pa}{\pa w})^{\a}
\eeaa
and
\beaa
dVol_{ v = v_{i}} &=& \g(\frac{\pa}{\pa t} , \frac{\pa}{\pa t} ) r^{2} d\sigma^{2} dw \\
&=&  - (1-\mu)  r^{2} d\sigma^{2} d w
\eeaa

Thus,

\beaa
\notag
&& F_{\Psi}^{(H)} ( v = v_{i} ) ( w_{i} \leq w \leq w_{i+1} ) \\
&=& \int_{w = w_{i}}^{w = w_{i+1}} \int_{\S^{2}}  - 2 [ \frac{ h(r^{*})}{(1-\mu)} T_{w w}  + h(r^{*})  T_{v w}  ] r^{2} d\sigma^{2} d w \\
\notag
&=& \int_{w = w_{i}}^{w = w_{i+1}} \int_{\S^{2}}  - 2 [ \frac{ h(r^{*})}{(1-\mu)} (  \frac{1}{r^{2}} | \Psi_{w\th} |^{2} + \frac{1}{r^{2}\sin^{2}\th} | \Psi_{w\phi} |^{2} )  \\ 
\notag
&& +  h (r^{*})  (  \frac{1}{(1-\mu)} |\Psi_{vw}|^{2}   +  \frac{(1-\mu)}{4r^{4}\sin^{2}\th} | \Psi_{\phi\th}|^{2} ) 
] r^{2} d\sigma^{2} d w 
\eeaa
We get

\bea
\notag
&& F_{\Psi}^{(H)} ( v = v_{i} ) ( w_{i} \leq w \leq w_{i+1} ) \\ 
\notag
&=& \int_{w = w_{i}}^{w = w_{i+1}} \int_{\S^{2}}  - 2 [ (1-\mu) h (r^{*})  (  \frac{1}{r^{2} (1-\mu)^{2} } | \Psi_{w\th} |^{2} + \frac{1}{r^{2}\sin^{2}\th (1-\mu)^{2} } | \Psi_{w\phi} |^{2} )  \\
&& + (1-\mu) h (r^{*})  (  \frac{1}{(1-\mu)^{2}} |\Psi_{vw}|^{2}   +  \frac{1}{4r^{4}\sin^{2}\th} | \Psi_{\phi\th}|^{2} )   ] r^{2} d\sigma^{2} d w 
\eea

Applying the divergence theorem for $\Psi_{\mu\nu} $ in a rectangle in the Penrose diagram representing the exterior of the Schwarzschild space-time of which one side contains the horizon, say in the region $ [w_{i}, \infty ] . [ v_{i}, v_{i+1} ] $ :

\beaa
&& \int_{v = v_{i}}^{v = v_{i+1}} \int_{w = w_{i}}^{w = \infty } \int_{\S^{2}}  ( [\frac{1}{r^{2} (1-\mu) } | \Psi_{w\th} |^{2} + \frac{1}{r^{2}\sin^{2}\th (1-\mu) } | \Psi_{w\phi} |^{2}]( \frac{1}{(1-\mu)} [ h^{'} - \frac{\mu}{r} h ] )\\
&& + ( [\frac{1}{r^{2} (1-\mu) } | \Psi_{v\th} |^{2} + \frac{1}{r^{2}\sin^{2}\th (1-\mu) } | \Psi_{v\phi} |^{2}]( - h^{'}  )\\
&& + [ \frac{1}{(1-\mu)^{2}} |\Psi_{vw}|^{2}   +  \frac{1}{4r^{4}\sin^{2}\th} | \Psi_{\phi\th}|^{2}  ]\\
&& \quad . [    \frac{-1}{(1-\mu)} [ ( h^{'} - (1-\mu) h^{'} ) - \frac{\mu}{r}  h  ]      +      \frac{(2-3\mu)}{(1-\mu)r}  ( h - (1-\mu) h )  ] )   . r^{2} d\sigma^{2} (1-\mu) dw dv \\
&=& - F_{\Psi}^{(H)} ( w = \infty ) ( v_{i} \leq v \leq v_{i+1} )  +  F_{\Psi}^{(H)} ( w = w_{i} ) ( v_{i} \leq v \leq v_{i+1} ) \\
&& - F_{\Psi}^{(H)} ( v = v_{i+1} ) ( w_{i} \leq w \leq \infty )  +  F_{\Psi}^{(H)} ( v = v_{i} ) ( w_{i} \leq w \leq \infty ) \\
\eeaa

We are going to choose $h$ such that

$$ h(r^{*} = - \infty ) =  1 $$

and for all $r > 2m$ : 
$$ h \geq 0$$

Furthermore, we let $h$ be supported in the region $ 2m \leq r \leq (1.2) r_{1} $ for $r_{1}$ chosen such that, $ 2m < r_{0} \leq r_{1} < (1.2) r_{1} < 3m $. We choose $h$ such that, for all $  r \leq r_{1}$, we have

\bea
\frac{\mu}{r} h - h^{'} &\geq& 0 \\
h  &>& 0  \\
h'  &\geq& 0  \\
\frac{-1}{(1-\mu)}  h^{'}  + \frac{3}{r}   h   &\leq&  0 \\
\mu [\frac{-1}{(1-\mu)}  h^{'}  + \frac{3}{r}   h]   &\leq&  - h
\eea

Computing
\bea
\notag
&& \int_{v = v_{i}}^{v = v_{i+1}} \int_{w = w_{i}}^{w = \infty } \int_{\S^{2}} [ \frac{1}{(1-\mu)^{2}} |\Psi_{vw}|^{2}   +  \frac{1}{4r^{4}\sin^{2}\th} | \Psi_{\phi\th}|^{2}  ]\\
\notag
&& . [ \frac{-1}{(1-\mu)} [ (h^{'} - (1-\mu) h^{'}) - \frac{\mu}{r} h  ] + \frac{(2-3\mu)}{(1-\mu)r} ( h - (1-\mu) h )  ]   . r^{2} d\sigma^{2} (1-\mu) dw dv \\
\notag
&=& \int_{v = v_{i}}^{v = v_{i+1}} \int_{w = w_{i}}^{w = \infty } \int_{\S^{2}} [ \frac{1}{(1-\mu)^{2}} |\Psi_{vw}|^{2}   +  \frac{1}{4r^{4}\sin^{2}\th} | \Psi_{\phi\th}|^{2}  ]\\
\notag
&& . [ \frac{-1}{(1-\mu)} [ \mu h^{'} - \frac{\mu}{r} h  ] + \frac{(2-3\mu)}{(1-\mu)r} ( \mu h )  ]   . r^{2} d\sigma^{2} (1-\mu) dw dv \\
\notag
&=& \int_{v = v_{i}}^{v = v_{i+1}} \int_{w = w_{i}}^{w = \infty } \int_{\S^{2}} [ \frac{1}{(1-\mu)^{2}} |\Psi_{vw}|^{2}   +  \frac{1}{4r^{4}\sin^{2}\th} | \Psi_{\phi\th}|^{2}  ]\\
\notag
&& . \frac{\mu}{(1-\mu)} [  ( \frac{h}{r}  -  h^{'} ) + \frac{(2-3\mu)}{r}   h   ]   . r^{2} d\sigma^{2} (1-\mu) dw dv \\
\notag
&=& \int_{v = v_{i}}^{v = v_{i+1}} \int_{w = w_{i}}^{w = \infty } \int_{\S^{2}} [ \frac{1}{(1-\mu)^{2}} |\Psi_{vw}|^{2}   +  \frac{1}{4r^{4}\sin^{2}\th} | \Psi_{\phi\th}|^{2}  ]\\
&& . \mu [   \frac{-1}{(1-\mu)}  h^{'}  + \frac{3}{r}   h   ]   . r^{2} d\sigma^{2} (1-\mu) dw dv 
\eea

Let,

\bea
\notag
&& I^{(H)}_{ \Psi} (  v_{i} \leq v \leq v_{i+1} ) ( w_{i} \leq w \leq \infty) \\
\notag
&=& \int_{v = v_{i}}^{v = v_{i+1}} \int_{w = w_{i}}^{w = \infty } \int_{\S^{2}} ( [\frac{1}{r^{2} (1-\mu)^{2} } | \Psi_{w\th} |^{2} + \frac{1}{r^{2}\sin^{2}\th (1-\mu)^{2} } | \Psi_{w\phi} |^{2}](    h^{'} - \frac{\mu}{r} h  )  \\
\notag
&& + ( [\frac{1}{r^{2} } | \Psi_{v\th} |^{2} + \frac{1}{r^{2}\sin^{2}\th } | \Psi_{v\phi} |^{2}](  \frac{- h^{'}}{(1-\mu)}  ) \\
\notag
&& + [ \frac{1}{(1-\mu)^{2}} |\Psi_{vw}|^{2}   +  \frac{1}{4r^{4}\sin^{2}\th} | \Psi_{\phi\th}|^{2}  ]  . \mu [   \frac{-1}{(1-\mu)}  h^{'}  + \frac{3}{r}   h   ]     ). r^{2} d\sigma^{2} (1-\mu) dw dv \\
\eea

Then, we have,

\bea
\notag
&& - F_{\Psi}^{(H)} ( w = w_{i} ) ( v_{i} \leq v \leq v_{i+1} ) -   F_{\Psi}^{(H)} ( v = v_{i} ) ( w_{i} \leq w \leq \infty ) \\
\notag
&=&- I_{\Psi}^{(H)} (  v_{i} \leq v \leq v_{i+1} ) ( w_{i} \leq w \leq \infty)   \\
\notag
&& - F_{\Psi}^{(H)} ( w = \infty ) ( v_{i} \leq v \leq v_{i+1} )   - F_{\Psi}^{(H)} ( v = v_{i+1} ) ( w_{i} \leq w \leq \infty )  \\
\eea

We choose $r_{1}$ small enough such that $(1.2)r_{1}  < 3m $ .

\subsection{Estimate 1}\

For $(w_{i}, v_{i})$ such that $r(w_{i}, v_{i}) = r_{1}$, where $r_{1}$ is as determined in the construction of the vector field $H$, and for $v_{i+1} \geq v_{i} $, we have

\bea
- F_{\Psi}^{(H)} ( w = w_{i} ) ( v_{i} \leq v \leq v_{i+1} )  \lesssim  F_{\Psi}^{(\frac{\pa}{\pa t})}  ( w = w_{i} ) ( v_{i} \leq v \leq v_{i+1} )  \label{estimate1H}
\eea

\begin{proof}\

We have,
\beaa
F_{\Psi}^{(\frac{\pa}{\pa t})}  ( w = w_{i} ) ( v_{i} \leq v \leq v_{i+1} ) =  \int_{v = v_{i}}^{v = v_{i+1}} \int_{\S^{2}} J_{\a}(  \frac{\pa }{\pa t} ) n^{\a} dVol_{ w = w_{i}} ( w = w_{i} )
\eeaa

where 
\beaa
n^{\a} &=& \g(\frac{\pa}{\pa v}, \frac{\pa}{\pa t} )^{-1} (\frac{\pa}{\pa v})^{\a} \\
&=& \frac{-2}{(1-\mu)}  (\frac{\pa}{\pa v})^{\a}
\eeaa
and
\beaa
dVol_{ w = w_{i}} &=& \g(\frac{\pa}{\pa t} , \frac{\pa}{\pa t} ) r^{2} d\sigma^{2} dv \\
&=&  - (1-\mu)  r^{2} d\sigma^{2} dv
\eeaa

We get
\beaa
 F_{\Psi}^{(\frac{\pa}{\pa t})}  ( w = w_{i} ) ( v_{i} \leq v \leq v_{i+1} ) &=& \int_{v = v_{i}}^{v = v_{i+1}} \int_{\S^{2}}   2 [  T_{v v}  +  T_{w v} ] r^{2} d\sigma^{2} d v \\
&=& \int_{v = v_{i}}^{v = v_{i+1}} \int_{\S^{2}}   2 [    \frac{1}{(1-\mu)} |\Psi_{vw}|^{2}   +  \frac{(1-\mu)}{4r^{4}\sin^{2}\th} | \Psi_{\phi\th}|^{2}   \\
&& +     \frac{1}{r^{2}} | \Psi_{v\th} |^{2} + \frac{1}{r^{2}\sin^{2}\th} | \Psi_{v\phi} |^{2}    ] r^{2} d\sigma^{2} d v \\
&=& \int_{v = v_{i}}^{v = v_{i+1}} \int_{\S^{2}}   2 [    \frac{1}{(1-\mu)^{2}} |\Psi_{vw}|^{2}   +  \frac{1}{4r^{4}\sin^{2}\th} | \Psi_{\phi\th}|^{2}   \\
&& +     \frac{1}{r^{2} (1-\mu)} | \Psi_{v\th} |^{2} + \frac{1}{r^{2}\sin^{2}\th (1-\mu) } | \Psi_{v\phi} |^{2}    ] r^{2} (1-\mu) d\sigma^{2} d v \\
\eeaa

We showed that,
\beaa
&& F_{\Psi}^{(H)}  ( w = w_{i} ) ( v_{i} \leq v \leq v_{i+1} ) \\
&=& \int_{v = v_{i}}^{v = v_{i+1}} \int_{\S^{2}}  - 2 [  h(r^{*}) (  \frac{1}{(1-\mu)^{2}} |\Psi_{vw}|^{2}   +  \frac{1}{4r^{4}\sin^{2}\th} | \Psi_{\phi\th}|^{2} ) \\
&& + h(r^{*}) (  \frac{1}{ r^{2}} |\Psi_{v\th}|^{2}   +  \frac{1}{r^{2}\sin^{2}\th} | \Psi_{v\phi}|^{2} )  ] r^{2} d\sigma^{2} d v
\eeaa

The region $w = w_{i}$ and $v_{i} \leq v \leq v_{i+1} $ is in the region $r \geq r_{1}$ as $ r(w_{i}, v_{i}) = r_{1}$, and $v_{i+1} \geq v_{i} $. Thus, in this region 
$$\frac{h(r^{*})}{(1-\mu)}  \lesssim 1 $$

which gives immediately \eqref{estimate1H}.

\end{proof}

\subsection{Estimate 2}\

Let $$t_{i+1} = (1.1) t_{i}$$

Define,
\bea
\notag
J_{ \Psi }^{(C)}  (  t_{i} \leq t \leq t_{i+1} ) ( r_{0} \leq r \leq R_{0}  )  &=& \int_{t = t_{i}}^{ t= t_{i+1}}  \int_{r^{*} =  r_{0}^{*} }^{r^{*}= R_{0}^{*} } \int_{\S^{2}}  [    | \Psi_{\hat{v}\hat{w}}|^{2}  +  \frac{1}{4 } | \Psi_{\hat{\phi}\hat{\th}}|^{2} ] .|r^{*} - (3m)^{*}|.  dr^{*} d\sigma^{2} dt \\
\eea
From \eqref{Assumption2}, we have
\bea
\notag
J_{ \Psi }^{(C)}  (  t_{i} \leq t \leq t_{i+1} ) ( r_{0} \leq r \leq R_{0}  ) &\les&  | \hat{E}^{(\frac{\pa}{\pa t})}_{\Psi} (t_{i}) | + | \hat{E}^{(\frac{\pa}{\pa t})}_{\Psi} (t_{i+1})| \\
\eea

Recall that $$  2m < r_{0}  \leq r_{1}  < (1.2)r_{1}  < 3m $$

We have,

\bea
\notag
&& \int_{t = t_{i}}^{ t= t_{i+1}}  \int_{r^{*} = - \infty}^{r^{*}= \infty} \int_{\S^{2}}    [| \Psi_{\hat{r^{*}} \hat{\th}} |^{2} +  | \Psi_{\hat{r^{*}}\hat{\phi}} |^{2} + | \Psi_{\hat{t}\hat{\th}} |^{2} +  | \Psi_{\hat{t}\hat{\phi}} |^{2} ] .\chi_{[ r^{*}_{1}, (1.2) r^{*}_{1} ]}  r^{2} (1 -\mu)  d\sigma^{2} dr^{*} dt \\
\notag
&\les&   | E^{(\frac{\pa}{\pa t})}_{\Psi}  (  - (0.85) t_{i} \leq r^{*} \leq (0.85) t_{i}  ) (t= t_{i}) |  \label{estimate2H} \\
\eea

\begin{proof}\

Let, 
\bea
f (r^{*}) &=&  \int_{r^{*} = - \infty}^{r^{*}}   \chi_{[ r^{*}_{1}, (1.2) r^{*}_{1}]} (r^{*})   dr^{*}
\eea  

where $\chi$ is the sharp cut-off function, such that,
 
$$ f (r^{*}) =  1, \qquad \mbox{for} \qquad  r_{1}^{*} < r^{*} < (1.2)r_{1}^{*} $$
and
$$ f (r) =   0, \qquad \mbox{for} \qquad  r^{*} \in ]-\infty, r_{1}^{*}] \cup   [(1.2)r_{1}^{*}, \infty [$$

We get from \eqref{contracteddeformationforG} applied to $f$,

\beaa
\notag
&& T^{\a\b}(\Psi_{\mu\nu})\pi_{\a\b}( G ) \\
&=&  [\frac{1}{r^{2}} | \Psi_{w\th} |^{2} + \frac{1}{r^{2}\sin^{2}\th} | \Psi_{w\phi} |^{2} + \frac{1}{r^{2}} | \Psi_{v\th} |^{2} + \frac{1}{r^{2}\sin^{2}\th} | \Psi_{v\phi} |^{2} ] \frac{\chi_{[ r^{*}_{1}, (1.2) r^{*}_{1}]}}{(1 - \mu)}  \\
&&-2 [  \frac{1 }{(1-\mu)^{2}}  | \Psi_{vw}|^{2}  +  \frac{1}{4r^{4}\sin^{2}\th } | \Psi_{\phi\th}|^{2} ] (  \chi_{[ r^{*}_{1}, (1.2) r^{*}_{1}]} +   \frac{(3\mu -2)  }{r}  \int_{r^{*} = - \infty}^{r^{*}}   \chi_{[ r^{*}_{1}, (1.2) r^{*}_{1}]} dr^{*} )
\eeaa

Applying the divergence theorem between the two hypersurfaces  $\{ t=t_{i} \}$ and $\{ t =t_{i+1} \}$, we obtain
\beaa
&& \int_{t = t_{i}}^{ t= t_{i+1}}  \int_{r^{*} = -\infty }^{r^{*}= \infty } \int_{\S^{2}}   [\frac{1}{r^{2}} | \Psi_{w\th} |^{2} + \frac{1}{r^{2}\sin^{2}\th} | \Psi_{w\phi} |^{2} + \frac{1}{r^{2}} | \Psi_{v\th} |^{2} + \frac{1}{r^{2}\sin^{2}\th} | \Psi_{v\phi} |^{2} ] \frac{\chi_{[ r^{*}_{1}, (1.2) r^{*}_{1}]}}{(1 - \mu)} \\
&& . r^{2} (1-\mu) dr^{*} d\sigma^{2} dt  \\
&=&  \int_{t = t_{i}}^{ t= t_{i+1}}  \int_{r^{*} = -\infty }^{r^{*}= \infty } \int_{\S^{2}} 2 [  \frac{1 }{(1-\mu)^{2}}  | \Psi_{vw}|^{2}  +  \frac{1}{4r^{4}\sin^{2}\th } | \Psi_{\phi\th}|^{2} ] (  \chi_{[ r^{*}_{1}, (1.2) r^{*}_{1}]} \\
&&+   \frac{(3\mu -2)  }{r}  \int_{r^{*} = - \infty}^{r^{*}}   \chi_{[ r^{*}_{1}, (1.2) r^{*}_{1}]} dr^{*} )  .  r^{2} (1-\mu) dr^{*} d\sigma^{2} dt \\
&&+ E^{(G)}_{\Psi} (t_{i+1}) - E^{(G)}_{\Psi} (t_{i}) \\
&\leq&  \int_{t = t_{i}}^{ t= t_{i+1}}  \int_{r^{*} = r_{1}^{*} }^{r^{*}= (1.2) r_{1}^{*} } \int_{\S^{2}} 2 [  \frac{1 }{(1-\mu)^{2}}  | \Psi_{vw}|^{2}  +  \frac{1}{4r^{4}\sin^{2}\th } | \Psi_{\phi\th}|^{2} ]    . r^{2} (1-\mu) dr^{*} d\sigma^{2} dt \\
&& + \int_{t = t_{i}}^{ t= t_{i+1}}  \int_{r^{*} = r_{0}^{*} }^{r^{*}= (3m)^{*} } \int_{\S^{2}} 2 [  \frac{1 }{(1-\mu)^{2}}  | \Psi_{vw}|^{2}  +  \frac{1}{4r^{4}\sin^{2}\th } | \Psi_{\phi\th}|^{2} ] (   \frac{1}{r} [ (3m)^{*} - r_{0}^{*}] ) .  r^{2} (1-\mu) dr^{*} d\sigma^{2} dt \\
&&+ E^{(G)}_{\Psi} (t_{i+1}) - E^{(G)}_{\Psi} (t_{i}) \\
&\les& J_{ \Psi }^{(C)}  (  t_{i} \leq t \leq t_{i+1} ) ( r_{0} \leq r \leq R_{0}  )  + E^{(G)}_{\Psi} (t_{i+1}) - E^{(G)}_{\Psi} (t_{i}) 
\eeaa

Instead of $\Psi$, take $\hat{\Psi}$ as in the proof of \eqref{EKovertsquare}, we get,

\beaa
&& \int_{t = t_{i}}^{ t= t_{i+1}}  \int_{r^{*} = -\infty }^{r^{*}= \infty } \int_{\S^{2}}   [\frac{1}{r^{2}} | \Psi_{w\th} |^{2} + \frac{1}{r^{2}\sin^{2}\th} | \Psi_{w\phi} |^{2} + \frac{1}{r^{2}} | \Psi_{v\th} |^{2} + \frac{1}{r^{2}\sin^{2}\th} | \Psi_{v\phi} |^{2} ] \frac{\chi_{[ r^{*}_{1}, (1.2) r^{*}_{1}]}}{(1 - \mu)} \\
&& . r^{2} (1-\mu) dr^{*} d\sigma^{2} dt  \\
&\leq& \int_{t = t_{i}}^{ t= t_{i+1}}  \int_{r^{*} = -\infty }^{r^{*}= \infty } \int_{\S^{2}}   [\frac{1}{r^{2}} | \hat{\Psi}_{w\th} |^{2} + \frac{1}{r^{2}\sin^{2}\th} | \hat{\Psi}_{w\phi} |^{2} + \frac{1}{r^{2}} | \hat{\Psi}_{v\th} |^{2} + \frac{1}{r^{2}\sin^{2}\th} | \hat{\Psi}_{v\phi} |^{2} ] \frac{\chi_{[ r^{*}_{1}, (1.2) r^{*}_{1}]}}{(1 - \mu)} \\
&& . r^{2} (1-\mu) dr^{*} d\sigma^{2} dt  \\
&\les& J_{ \hat{\Psi} }^{(C)}  (  t_{i} \leq t \leq t_{i+1} ) ( r_{0} \leq r \leq R_{0}  )  + E^{(G)}_{\hat{\Psi}} (t_{i+1}) - E^{(G)}_{\hat{\Psi}} (t_{i}) 
\eeaa
Recall that we have,
\beaa
| E^{(G)}_{\hat{\Psi}}  (  -\infty \leq r^{*} \leq \infty  ) (t_{i}) |  \les   | E^{(\frac{\pa}{\pa t})}_{\Psi}  (  -(0.85)t_{i} \leq r^{*} \leq (0.85)t_{i}  ) (t= t_{i}) | 
\eeaa
Thus,
\beaa
&&  \int_{t = t_{i}}^{ t= t_{i+1}}  \int_{r^{*} = - \infty}^{r^{*}= \infty} \int_{\S^{2}}  \frac{1}{2} [  | \hat{\Psi}_{\hat{r^{*}} \hat{t}}|^{2}  +    |\hat{\Psi}_{\hat{\th} \hat{\phi}}|^{2}  ]  \chi_{[ r^{*}_{1}, 1.2 r^{*}_{1} ]}  r^{2}(1-\mu)  dr^{*} d\sigma^{2} dt\\
&\les& J_{ \hat{\Psi} }^{(C)} (  t_{i} \leq t \leq t_{i+1} ) ( - \infty \leq r^{*} \leq \infty  ) \\
&\les&  | E^{(G)}_{\hat{\Psi}}  (  -\infty \leq r^{*} \leq \infty  )  |  \\
&\les&   | E^{(\frac{\pa}{\pa t})}_{\Psi}  (  -(0.85)t_{i} \leq r^{*} \leq (0.85)t_{i}  ) (t= t_{i}) |   
\eeaa

\end{proof}

\subsection{Estimate 3}\

For
\beaa
w_{i} &=& t_{i} - r_{1}^{*} \\
v_{i} &=& t_{i} + r_{1}^{*} 
\eeaa
where $r_{1}$ is as determined in the construction of the vector field $H$, we have

\bea
\notag
&& - I_{\Psi}^{(H)} (  v_{i} \leq v \leq v_{i+1} ) ( w_{i} \leq w \leq \infty) \\
\notag
&& - F_{\Psi}^{(H)} ( w = \infty ) ( v_{i} \leq v \leq v_{i+1} )   - F_{\Psi}^{(H)} ( v = v_{i+1} ) ( w_{i} \leq w \leq \infty )  \\
&\leq& C F_{\Psi}^{(\frac{\pa}{\pa t} )} ( w = w_{i} ) ( v_{i} \leq v \leq v_{i+1} ) -   F_{\Psi}^{(H)} ( v = v_{i} ) ( w_{i} \leq w \leq \infty )   \label{pseudoestimate3H1}
\eea
(where $C$ is a constant)

And,

\bea
\notag
&& - I_{\Psi}^{(H)}  (  v_{i} \leq v \leq v_{i+1} ) ( w_{i} \leq w \leq \infty) ( r \leq r_{1} )   \\
\notag
&& - F_{\Psi}^{(H)} ( v = v_{i+1} ) ( w_{i} \leq w \leq \infty )  - F_{\Psi}^{(H)} ( w = \infty ) ( v_{i} \leq v \leq v_{i+1} )  \\
\notag
&\lesssim& F_{\Psi}^{(\frac{\pa}{\pa t} )} ( w = w_{i} ) ( v_{i} \leq v \leq v_{i+1} )  - F_{\Psi}^{(H)} ( v = v_{i} ) ( w_{i} \leq w \leq \infty ) \\
&&  +  | E^{(\frac{\pa}{\pa t})}_{\Psi}  (  -(0.85)t_{i} \leq r^{*} \leq (0.85)t_{i}  ) (t= t_{i}) | |  \label{estimate3H1}
\eea

\begin{proof}\

We showed that,

\beaa
&& - I_{\Psi}^{(H)} (  v_{i} \leq v \leq v_{i+1} ) ( w_{i} \leq w \leq \infty) \\
&& - F_{\Psi}^{(H)} ( w = \infty ) ( v_{i} \leq v \leq v_{i+1} )   - F_{\Psi}^{(H)} ( v = v_{i+1} ) ( w_{i} \leq w \leq \infty )  \\
&=& - F_{\Psi}^{(H)} ( w = w_{i} ) ( v_{i} \leq v \leq v_{i+1} ) -   F_{\Psi}^{(H)} ( v = v_{i} ) ( w_{i} \leq w \leq \infty ) \\
\eeaa

From \eqref{estimate1H}, we have,
\beaa
- F_{\Psi}^{(H)} ( w = w_{i} ) ( v_{i} \leq v \leq v_{i+1} )  \lesssim  F_{\Psi}^{(\frac{\pa}{\pa t} )} ( w = w_{i} ) ( v_{i} \leq v \leq v_{i+1} )
\eeaa

This proves \eqref{pseudoestimate3H1}. On the other hand, for all $ r \leq r_{1}$, we have,

\bea
\frac{-1}{(1-\mu)}  h^{'}  + \frac{3}{r}   h   &\leq&  0
\eea

Thus,

\bea
\notag
&& - \int_{v = v_{i}, r \leq r_{1} }^{v = v_{i+1}} \int_{w = w_{i} }^{w = \infty } \int_{\S^{2}} [ \frac{1}{(1-\mu)^{2}} |\Psi_{vw}|^{2}   +  \frac{1}{4r^{4}\sin^{2}\th} | \Psi_{\phi\th}|^{2}  ] . \mu [   \frac{-1}{(1-\mu)}  h^{'}  + \frac{3}{r}   h   ]  \\
\notag
&&  . r^{2} d\sigma^{2} (1-\mu) dw dv \\
&\geq & 0
\eea

whereas,

\bea
\notag
&& | \int_{v = v_{i}, r \geq r_{1}}^{v = v_{i+1}} \int_{w = w_{i}}^{w = \infty } \int_{\S^{2}} [ \frac{1}{(1-\mu)^{2}} |\Psi_{vw}|^{2}   +  \frac{1}{4r^{4}\sin^{2}\th} | \Psi_{\phi\th}|^{2}  ] .  \mu [   \frac{-1}{(1-\mu)}  h^{'}  + \frac{3}{r}   h   ]\\
\notag
&& . r^{2} (1-\mu) d\sigma^{2} dw dv | \\
\notag
& \lesssim & J_{\Psi}^{(C)} (  v_{i} \leq v \leq v_{i+1} ) ( w_{i} \leq w \leq \infty) ( r_{1} \leq r \leq (1.2) r_{1}  ) \\
&\les&  | E^{(\frac{\pa}{\pa t})}_{\Psi}  (  -(0.85)t_{i} \leq r^{*} \leq (0.85)t_{i}  )  (t= t_{i}) |  
\eea

\beaa
&& J_{\Psi}^{(C)} (  v_{i} \leq v \leq v_{i+1} ) ( w_{i} \leq w \leq \infty) ( r_{1} \leq r \leq 1.2 r_{1}  ) \\
&\les&  | E^{(\frac{\pa}{\pa t})}_{\Psi}  (  -(0.85)t_{i} \leq r^{*} \leq (0.85)t_{i}  ) (t= t_{i})  | 
\eeaa

and,

\beaa
&& | \int_{v = v_{i}, r \geq r_{1}}^{v = v_{i+1}} \int_{w = w_{i}}^{w = \infty } \int_{\S^{2}}  ( [\frac{1}{r^{2} (1-\mu)} | \Psi_{w\th} |^{2} + \frac{1}{r^{2}\sin^{2}\th (1-\mu)} | \Psi_{w\phi} |^{2}]( \frac{1}{(1-\mu)} | h^{'}  - \frac{\mu}{r} h  | ) \\
&& + ( [\frac{1}{r^{2} } | \Psi_{v\th} |^{2} + \frac{1}{r^{2}\sin^{2}\th )} | \Psi_{v\phi} |^{2}]( \frac{-h^{'}}{(1-\mu)}  )  ). r^{2} d\sigma^{2} (1-\mu) dw dv |\\
& \lesssim &  \int_{v = v_{i}, r \geq r_{1}}^{v = v_{i+1}} \int_{w = w_{i}}^{w = \infty } \int_{\S^{2}}  [\frac{1}{r^{2} (1-\mu) } | \Psi_{w\th} |^{2} + \frac{1}{r^{2}\sin^{2}\th (1-\mu)} | \Psi_{w\phi} |^{2} \\
&& +  \frac{1}{r^{2} } | \Psi_{v\th} |^{2} + \frac{1}{r^{2}\sin^{2}\th} | \Psi_{v\phi} |^{2} ] .  \chi_{[ r^{*}_{1}, 1.2 r^{*}_{1} ]}   . r^{2} d\sigma^{2}  dw dv  \\
\eeaa

where $ \chi_{[ r_{1}, 1.2 r_{1} ]} $ is a smooth positive cut-off function supported on $[ r^{*}_{1}, 1.2 r^{*}_{1} ]$. \\

From \eqref{estimate2H},
\beaa
&& \int_{v = v_{i}, r \geq r_{1}}^{v = v_{i+1}} \int_{w = w_{i}}^{w = \infty } \int_{\S^{2}}  [\frac{1}{r^{2} (1-\mu) } | \Psi_{w\th} |^{2} + \frac{1}{r^{2}\sin^{2}\th (1-\mu) } | \Psi_{w\phi} |^{2} \\
&& +  \frac{1}{r^{2} } | \Psi_{v\th} |^{2}  + \frac{1}{r^{2}\sin^{2}\th } | \Psi_{v\phi} |^{2} ] .  \chi_{[ r^{*}_{1}, 1.2 r^{*}_{1} ]}   . r^{2} d\sigma^{2}  dw dv \\
& \lesssim &  | E^{(\frac{\pa}{\pa t})}_{\Psi}  (  -(0.85)t_{i} \leq r^{*} \leq (0.85)t_{i}  ) (t= t_{i}) |
\eeaa

We get,

\beaa
| I_{\Psi}^{(H)} | (  v_{i} \leq v \leq v_{i+1} ) ( w_{i} \leq w \leq \infty) ( r \geq r_{1} ) &\les&  | E^{(\frac{\pa}{\pa t})}_{\Psi}  (  -(0.85)t_{i} \leq r^{*} \leq (0.85)t_{i}  )(t= t_{i})  | 
\eeaa

This proves \eqref{estimate3H1}. 

\end{proof}

\subsection{Estimate  4 }\

Let $v_{i+1} \geq v_{i}$. We have

\bea
&& \inf_{ v_{i} \leq v \leq v_{i+1} }  - F_{\Psi}^{(H)} ( v  ) ( w_{i} \leq w \leq \infty )   \label{estimate4H}  \\
\notag
&\lesssim& \frac{- I_{\Psi}^{(H)} (  v_{i} \leq v \leq v_{i+1} ) ( w_{i} \leq w \leq \infty) ( r \leq r_{1})  }{  (v_{i+1} - v_{i} ) } + \sup_{ v_{i} \leq v \leq v_{i+1} } F_{\Psi}^{(\frac{\pa}{\pa t} )} ( v  ) ( w_{i} \leq w \leq \infty ) ( r \geq r_{1} )
\eea

\begin{proof}\

We have,\

\beaa
- F_{\Psi}^{(H)} ( v   ) ( w_{i} \leq w \leq \infty )  &=& \int_{w = w_{1}}^{w = \infty } \int_{\S^{2}}   2 [  h(r^{*}) (  \frac{1}{r^{2}(1-\mu)} | \Psi_{w\th} |^{2} + \frac{1}{r^{2}\sin^{2}\th (1-\mu)} | \Psi_{w\phi} |^{2} )  \\
&& + (1-\mu)  h(r^{*}) (  \frac{1}{(1-\mu)^{2}} | \Psi_{vw} |^{2} + \frac{1}{4r^{4}\sin^{2}\th (1-\mu)} | \Psi_{\th\phi} |^{2} ) ] r^{2} d\sigma^{2} d w
\eeaa

and we have

\beaa
&&- I_{\Psi}^{(H)} (  v_{i} \leq v \leq v_{i+1} ) ( w_{i} \leq w \leq \infty) \\
&=& \int_{v = v_{i}}^{v = v_{i+1}} \int_{w = w_{i}}^{w = \infty } \int_{\S^{2}}  ( [\frac{1}{r^{2} (1-\mu) } | \Psi_{w\th} |^{2} + \frac{1}{r^{2}\sin^{2}\th (1-\mu) } | \Psi_{w\phi} |^{2}]( \frac{1}{(1-\mu) } [  \frac{\mu}{r} h - h^{'} ] ) \\
\notag
&& + ( [\frac{1}{r^{2}  } | \Psi_{v\th} |^{2} + \frac{1}{r^{2}\sin^{2}\th } | \Psi_{v\phi} |^{2}]( \frac{ h^{'}}{(1-\mu)}  ) \\
\notag
&& + [ \frac{1}{(1-\mu)^{2}} |\Psi_{vw}|^{2}   +  \frac{1}{4r^{4}\sin^{2}\th} | \Psi_{\phi\th}|^{2}  ]  . \mu [   \frac{1}{(1-\mu)}  h^{'}  - \frac{3}{r}   h   ]     ). r^{2} d\sigma^{2} (1-\mu) dw dv 
\eeaa

Given the expression of $h$, $h'$, in the region $ r \leq r_{1} $, we get for $v \geq v_{i}$,
\beaa
&& - F_{\Psi}^{(H)} ( v   ) ( w_{i} \leq w \leq \infty ) ( r \leq r_{1} )  \\
& \lesssim&   \int_{w = w_{i}, r \leq r_{1} }^{w = \infty } \int_{\S^{2}} ( [\frac{1}{r^{2} (1-\mu) } | \Psi_{w\th} |^{2} + \frac{1}{r^{2}\sin^{2}\th (1-\mu) } | \Psi_{w\phi} |^{2}]( \frac{1}{(1-\mu) } [  \frac{\mu}{r} h - h^{'} ] ) \\
\notag
&& + ( [\frac{1}{r^{2}  } | \Psi_{v\th} |^{2} + \frac{1}{r^{2}\sin^{2}\th } | \Psi_{v\phi} |^{2}]( \frac{ h^{'}}{(1-\mu)}  ) \\
\notag
&& + [ \frac{1}{(1-\mu)^{2}} |\Psi_{vw}|^{2}   +  \frac{1}{4r^{4}\sin^{2}\th} | \Psi_{\phi\th}|^{2}  ]  . \mu [   \frac{1}{(1-\mu)}  h^{'}  - \frac{3}{r}   h   ]     ). r^{2} d\sigma^{2} (1-\mu) dw 
\eeaa

On the other hand, we have,
\beaa
&& F_{\Psi}^{(\frac{\pa}{\pa t})}  ( v = v_{i} ) ( w_{i} \leq w \leq w_{i+1} ) \\
 &=& \int_{w = w_{i}}^{w = w_{i+1}} \int_{\S^{2}}   2 [  T_{w w}  +  T_{w v} ] r^{2} d\sigma^{2} d w \\
&=& \int_{w = w_{i}}^{w = w_{i+1}} \int_{\S^{2}}   2 [    \frac{1}{(1-\mu)} |\Psi_{vw}|^{2}   +  \frac{(1-\mu)}{4r^{4}\sin^{2}\th} | \Psi_{\phi\th}|^{2}   \\
&& +     \frac{1}{r^{2}} | \Psi_{w\th} |^{2} + \frac{1}{r^{2}\sin^{2}\th} | \Psi_{w \phi} |^{2}    ] r^{2} d\sigma^{2} d v \\
&=& \int_{w = w_{i}}^{w = w_{i+1}} \int_{\S^{2}}   2 [    \frac{1}{(1-\mu)^{2} } |\Psi_{vw}|^{2}   +  \frac{1}{4r^{4}\sin^{2}\th} | \Psi_{\phi\th}|^{2}   \\
&& +     \frac{1}{r^{2} (1-\mu) } | \Psi_{w\th} |^{2} + \frac{1}{r^{2}\sin^{2}\th (1-\mu) } | \Psi_{w\phi} |^{2}    ] r^{2} (1-\mu) d\sigma^{2} d w \\
\eeaa

Thus, from the boundedness of $h$, $h'$, we have in $ r \geq r_{1} $,

\beaa
&& - F_{\Psi}^{(H)} ( v   ) ( w_{i} \leq w \leq \infty ) ( r \geq r_{1} ) \\
& \lesssim&   \int_{w = w_{i}, r \geq r_{1} }^{w = \infty } \int_{\S^{2}} (  \frac{1}{r^{2}(1-\mu)} | \Psi_{w\th} |^{2} + \frac{1}{r^{2}\sin^{2}\th(1-\mu)} | \Psi_{w\phi} |^{2}  \\
&& +   \frac{1}{(1-\mu)^{2}} |\Psi_{vw}|^{2}   +  \frac{1}{4r^{4}\sin^{2}\th} | \Psi_{\phi\th}|^{2}  ) . r^{2} d\sigma^{2} (1-\mu) dw \\
& \lesssim & F_{\Psi}^{(\frac{\pa}{\pa t} )} ( v   ) ( w_{i} \leq w \leq \infty ) ( r \geq r_{1} )
\eeaa

Thus,
\beaa
&& - F_{\Psi}^{(H)} ( v   ) ( w_{i} \leq w \leq \infty ) \\
& \lesssim&   \int_{w = w_{i}, r \leq r_{1} }^{w = \infty } \int_{\S^{2}}  ( [\frac{1}{r^{2} (1-\mu) } | \Psi_{w\th} |^{2} + \frac{1}{r^{2}\sin^{2}\th (1-\mu) } | \Psi_{w\phi} |^{2}]( \frac{1}{(1-\mu) } [  \frac{\mu}{r} h - h^{'} ] ) \\
\notag
&& + ( [\frac{1}{r^{2}  } | \Psi_{v\th} |^{2} + \frac{1}{r^{2}\sin^{2}\th } | \Psi_{v\phi} |^{2}]( \frac{ h^{'}}{(1-\mu)}  ) \\
\notag
&& + [ \frac{1}{(1-\mu)^{2}} |\Psi_{vw}|^{2}   +  \frac{1}{4r^{4}\sin^{2}\th} | \Psi_{\phi\th}|^{2}  ]  . \mu [   \frac{1}{(1-\mu)}  h^{'}  - \frac{3}{r}   h   ]     ). r^{2} d\sigma^{2} (1-\mu) dw \\
\notag
&& + F_{\Psi}^{(\frac{\pa}{\pa t} )} ( v   ) ( w_{i} \leq w \leq \infty )( r \geq r_{1} ) 
\eeaa

We have,
\beaa
&& (v_{i+1} - v_{i} ) \inf_{ v_{i} \leq v \leq v_{i+1} } - F_{\Psi}^{(H)} ( v   ) ( w_{i} \leq w \leq \infty ) \\
& \leq&  \int_{v = v_{i}}^{v = v_{i+1}} - F_{\Psi}^{(H)} ( v   ) ( w_{i} \leq w \leq \infty ) dv \\
& \lesssim&   \int_{v = v_{i}, r \leq r_{1} }^{v = v_{i+1}} \int_{w = w_{i} }^{w = \infty } \int_{\S^{2}}  ( [\frac{1}{r^{2} (1-\mu) } | \Psi_{w\th} |^{2} + \frac{1}{r^{2}\sin^{2}\th (1-\mu) } | \Psi_{w\phi} |^{2}]( \frac{1}{(1-\mu) } [  \frac{\mu}{r} h - h^{'} ] ) \\
\notag
&& + ( [\frac{1}{r^{2}  } | \Psi_{v\th} |^{2} + \frac{1}{r^{2}\sin^{2}\th } | \Psi_{v\phi} |^{2}]( \frac{ h^{'}}{(1-\mu)}  ) \\
\notag
&& + [ \frac{1}{(1-\mu)^{2}} |\Psi_{vw}|^{2}   +  \frac{1}{4r^{4}\sin^{2}\th} | \Psi_{\phi\th}|^{2}  ]  . \mu [   \frac{1}{(1-\mu)}  h^{'}  - \frac{3}{r}   h   ]     ). r^{2} d\sigma^{2} (1-\mu) dw dv  \\
&& + \int_{v = v_{i}}^{v = v_{i+1}} F_{\Psi}^{(\frac{\pa}{\pa t} )} ( v   ) ( w_{i} \leq w \leq \infty ) ( r \geq r_{1} )  dv  \\
&\lesssim & - I_{\Psi}^{(H)} (  v_{i} \leq v \leq v_{i+1} ) ( w_{i} \leq w \leq \infty) ( r \leq r_{1} ) \\
&&  + (v_{i+1} - v_{i} ) \sup_{ v_{i} \leq v \leq v_{i+1} } F_{\Psi}^{(\frac{\pa}{\pa t} )} ( v  ) ( w_{i} \leq w \leq \infty ) ( r \geq r_{1} )
\eeaa

\end{proof}

\subsection{ Estimate  5 }\

For
\beaa
w_{i} &=& t_{i} + r_{1} \\
v_{i} &=& t_{i} + r_{1} 
\eeaa
where $r_{1}$ is as determined in the construction of the vector field $H$, we have

\bea
\notag
0 \leq - I_{\Psi}^{(H)}  (  v_{i} \leq v \leq v_{i+1} ) ( w_{i} \leq w \leq \infty)  ( r \leq r_{1} ) &\lesssim&  |E_{\Psi}^{  (\frac{\pa}{\pa t} )} | + E_{\Psi}^{ \# (\frac{\pa}{\pa t} )} ( t= t_{0} )  \label{firstestimateuniformboundonI1I2foreachH1andH2} \\
\eea

where,

\bea
\notag
&& E_{\Psi}^{ \# (\frac{\pa}{\pa t} )} ( t= t_{0} ) \\
\notag
&=& \int_{r^{*} = - \infty }^{r^{*} = \infty } \int_{\S^{2}}    [    \frac{1}{r^{2}(1-\mu)} | \Psi_{w\th} |^{2} + \frac{1}{r^{2}\sin^{2}\th  (1-\mu)} | \Psi_{w\phi} |^{2}  +   \frac{1}{r^{2}} | \Psi_{v\th} |^{2} + \frac{1}{r^{2}\sin^{2}\th } | \Psi_{v\phi} |^{2}  \\
\notag
&& +   \frac{1}{(1-\mu)^{2}} |\Psi_{vw}|^{2}   +  \frac{ 1 }{4r^{4}\sin^{2}\th} | \Psi_{\phi\th}|^{2}   ] . r^{2}   d\sigma^{2} dr^{*} ( t = t_{0} ) \label{defmodifiedenergy} \\
\eea

\begin{proof}\
 
Computing,
\bea
&& E^{(H)}_{\Psi} (t) =   \int_{r^{*}  = - \infty}^{r^{*} = \infty} \int_{\S^{2}} J_{\a} (H) (\Psi_{\mu\nu}) n^{\a} dVol_{t } 
\eea
where $n^{\a} = - \frac { (\frac{\pa}{\pa t})^{\a} }{\sqrt{(1-\mu) } }$ and $dVol_{t=t_{i}} = r^{2} \sqrt{(1-\mu) } d\sigma^{2} dr^{*}$, we get,
\beaa
E^{(H) }_{\Psi}(t ) = \int_{r^{*}  = - \infty }^{r^{*} = \infty} \int_{\S^{2}}   - \frac {1}{\sqrt{(1-\mu) } }  (\frac{\pa}{\pa t})^{\a}     J_{\a}(H) (\Psi_{\mu\nu})  r^{2} \sqrt{(1-\mu) } d\sigma^{2} dr^{*}  
\eeaa

Therefore,

\beaa
E^{(H)}_{\Psi} (t) &=& \int_{r^{*}  = -\infty }^{r^{*} = \infty} \int_{\S^{2}} - (H) ^{\a}   T_{t \a } (\Psi_{\mu\nu})  r^{2}  d\sigma^{2} dr^{*}   \\
&=& \int_{r^{*}  = -\infty}^{r^{*} = \infty} \int_{\S^{2}}   [  \frac{h }{(1-\mu)}  T_{t w } +   h   T_{t v  } ] r^{2}  d\sigma^{2} dr^{*}  \\
&=& \int_{r^{*}  = -\infty}^{r^{*} = \infty} \int_{\S^{2}}   [  \frac{h }{(1-\mu)} T_{w w  } + \frac{h }{(1-\mu)}  T_{v w  } +  h  T_{w v  } + h  T_{v v }  ] r^{2}  d\sigma^{2}  dr^{*}  \\
&=&  \int_{r^{*}  = -\infty }^{r^{*} = \infty} \int_{\S^{2}}   [ \frac{h }{(1-\mu)} (\frac{1}{r^{2}} | \Psi_{w\th} |^{2} + \frac{1}{r^{2}\sin^{2}\th} | \Psi_{w\phi} |^{2} ) \\
&& + \frac{h}{(1-\mu)} (   \frac{1}{(1-\mu)} |\Psi_{vw}|^{2}   +  \frac{(1-\mu)}{4r^{4}\sin^{2}\th} | \Psi_{\phi\th}|^{2} ) \\
&& +  h(  \frac{1}{(1-\mu)} |\Psi_{vw}|^{2}   +  \frac{(1-\mu)}{4r^{4}\sin^{2}\th} | \Psi_{\phi\th}|^{2} ) \\
&&  +  h ( \frac{1}{r^{2}} | \Psi_{v\th} |^{2} + \frac{1}{r^{2}\sin^{2}\th} | \Psi_{v\phi} |^{2})  ]  r^{2}  d\sigma^{2} dr^{*}  
\eeaa

Hence,
\beaa
E^{(H)}_{\Psi} (t)  &=& \int_{r^{*}  = -\infty }^{r^{*} = \infty} \int_{\S^{2}}   [ h (1-\mu) (  | \Psi_{\hat{w}\hat{\th}} |^{2} + \frac{1}{r^{2}\sin^{2}\th} | \Psi_{\hat{w}\hat{\phi}} |^{2} ) \\
&& + h (    |\Psi_{\hat{v}\hat{w}}|^{2}   +  \frac{1}{4 } | \Psi_{\hat{\phi}\hat{\th}}|^{2} ) +  h (1-\mu) (  |\Psi_{\hat{v} \hat{w}}|^{2}   +  \frac{1}{4} | \Psi_{\hat{\phi}\hat{\th}}|^{2} ) \\
&&  +  h (  | \Psi_{\hat{v}\hat{\th}} |^{2} +  | \Psi_{\hat{v}\hat{\phi}} |^{2})  ]   r^{2}  d\sigma^{2}  dr^{*}   
\eeaa

By using the divergence theorem in the region $(   v \leq v_{0} ) ( t_{0} \leq t \leq \infty) ( r \leq r_{1} )$, we

\bea
\notag
&& - I_{\Psi}^{(H)}  (   v \leq v_{0} ) ( t_{0} \leq t \leq \infty) ( r \leq r_{1} )   \\
\notag
&&   - F_{\Psi}^{(H)} ( v = v_{0} ) ( w_{0} \leq w \leq \infty )   - F_{\Psi}^{(H)} ( w = \infty ) ( - \infty \leq v \leq v_{0} )  \\
\notag
&=& E^{(H)}_{\Psi} (t_{0})  \\ 
\eea
Due to the positivity of the terms on the left hand side, we get
\bea
\notag
  - F_{\Psi}^{(H)} ( v = v_{0} ) ( w_{0} \leq w \leq \infty ) &\les& E^{(H)}_{\Psi} (t_{0})  \\
&\les&  E_{\Psi}^{ \# (\frac{\pa}{\pa t} )} ( t= t_{0} )  \label{firstboundenergyfromHforfirstflux}
\eea
where,
\beaa
&& E_{\Psi}^{ \# (\frac{\pa}{\pa t} )} ( t= t_{0} ) \\
&=& \int_{r^{*} = - \infty }^{r^{*} = \infty } \int_{\S^{2}}    [    \frac{1}{r^{2}(1-\mu)} | \Psi_{w\th} |^{2} + \frac{1}{r^{2}\sin^{2}\th (1-\mu) } | \Psi_{w\phi} |^{2}   +  \frac{1}{r^{2}} | \Psi_{v\th} |^{2} + \frac{1}{r^{2}\sin^{2}\th  } | \Psi_{v\phi} |^{2} \\
&& +     \frac{1}{(1-\mu)^{2}} |\Psi_{vw}|^{2}   +  \frac{ 1 }{4r^{4}\sin^{2}\th} | \Psi_{\phi\th}|^{2}   ] . r^{2}  d\sigma^{2} dr^{*} ( t = t_{0} )\\
\eeaa

From the divergence theorem and the fact that $\frac{\pa}{\pa t} $ is Killing, it is easy to see that by integrating in a suitable region and using the positivity of the energy we get,

\bea
\notag
F_{\Psi}^{(\frac{\pa}{\pa t} )} ( w = w_{i} ) ( v_{i} \leq v \leq v_{i+1} ) &=& F_{\Psi}^{(\frac{\pa}{\pa t} )} ( w = w_{i} ) ( v_{i} \leq v \leq v_{i+1} ) ( r \geq r_{1}) \\
\notag
&\lesssim&  | E^{(\frac{\pa}{\pa t})}_{\Psi}  (  -(0.85)t_{i} \leq r^{*} \leq (0.85)t_{i}  ) (t= t_{i}) |  \\ \label{forrecurrence1below}
\eea

From \eqref{estimate3H1}, we get,
\beaa
\notag
&& - I_{\Psi}^{(H)}  (  v_{0} \leq v \leq v_{1} ) ( w_{0} \leq w \leq \infty) ( r \leq r_{1} )  \\
\notag
&&- F_{\Psi}^{(H)} ( v = v_{1} ) ( w_{0} \leq w \leq \infty ) - F_{\Psi}^{(H)}  ( w = \infty ) ( v_{0} \leq v \leq v_{1} ) \\
\notag
&\lesssim& F_{\Psi}^{(\frac{\pa}{\pa t} )} ( w = w_{0} ) ( v_{0} \leq v \leq v_{1} ) -   F_{\Psi}^{(H)} ( v = v_{0} ) ( w_{0} \leq w \leq \infty ) \\
\notag
&&  +  | E^{(\frac{\pa}{\pa t})}_{\Psi}  (  -(0.85)t_{0} \leq r^{*} \leq (0.85)t_{0}  ) (t= t_{0}) | | \\ 
&\lesssim& | E^{(\frac{\pa}{\pa t})}_{\Psi}  (  -(0.85)t_{0} \leq r^{*} \leq (0.85)t_{0}  ) (t= t_{0}) |  + E_{\Psi}^{ \# (\frac{\pa}{\pa t} )} ( t= t_{0} )
\eeaa

(from \eqref{firstboundenergyfromHforfirstflux} and \eqref{forrecurrence1below}).\\

By recurrence from inequality \eqref{estimate3H1}, and using \eqref{forrecurrence1below}, we obtain for all $i$ integer 
\beaa
\notag
&& - I_{\Psi}^{(H)}  (  v_{i} \leq v \leq v_{i+1} ) ( w_{i} \leq w \leq \infty) ( r \leq r_{1} )  \\
\notag
&&- F_{\Psi}^{(H)} ( v = v_{i+1} ) ( w_{i} \leq w \leq \infty ) - F_{\Psi}^{(H)}  ( w = \infty ) ( v_{i} \leq v \leq v_{i+1} ) \\
\notag
&\lesssim& F_{\Psi}^{(\frac{\pa}{\pa t} )} ( w = w_{i} ) ( v_{i} \leq v \leq v_{i+1} ) -   F_{\Psi}^{(H)} ( v = v_{i} ) ( w_{i} \leq w \leq \infty ) \\
\notag
&&  +  | E^{(\frac{\pa}{\pa t})}_{\Psi}  (  -(0.85)t_{i} \leq r^{*} \leq (0.85)t_{i}  ) (t= t_{i}) | | \\ 
&\lesssim& | E^{(\frac{\pa}{\pa t})}_{\Psi}  (  -(0.85)t_{i} \leq r^{*} \leq (0.85)t_{i}  ) (t= t_{i}) |  + E_{\Psi}^{ \# (\frac{\pa}{\pa t} )} ( t= t_{0} )\\ 
\eeaa

Due to sign of $h$, and the definition of $h$, we have that the terms in each of the integrands on the left hand side are positive, hence, we obtain \eqref{firstestimateuniformboundonI1I2foreachH1andH2}.

\end{proof}

\subsection{ Estimate  6 }\

For all $v$, let $$w_{0}(v) = v - 2 r_{1}^{*} $$

Let
\bea
v_{+} = \max\{1, v \} \label{definitionvplus}
\eea

We have,

\bea
 - F_{\Psi}^{(H)} ( v  ) ( w_{0}(v) \leq w \leq \infty )  &\lesssim &  \frac{ [ |E_{\Psi}^{  (\frac{\pa}{\pa t} )} | + E_{\Psi}^{ \# (\frac{\pa}{\pa t} )} ( t= t_{0} ) + E_{\Psi}^{M} ] }{  v_{+}^{2}  }  \label{estimate6Hflux}
\eea
and,
\bea
 - F_{\Psi}^{(H)} ( w ) (  v-1 \leq \overline{v} \leq v )  &\lesssim &  \frac{ [ |E_{\Psi}^{  (\frac{\pa}{\pa t} )} | + E_{\Psi}^{ \# (\frac{\pa}{\pa t} )} ( t= t_{0} ) + E_{\Psi}^{M} ] }{  v_{+}^{2}  }  \label{estimate6Hwequalconstantflux}
\eea

\begin{proof}\

Let,
\bea
v_{i} &=& t_{i} + r_{1}^{*} \\
w_{i} &=& t_{i} - r_{1}^{*} 
\eea
where $t_{i}$ is defined as in \eqref{defti}: $$t_{i} = (1.1)^{i} t_{0}$$

We have shown that,
\beaa
&& \inf_{ v_{i} \leq v \leq v_{i+1} } - F_{\Psi}^{(H)} ( v  ) ( w_{i} \leq w \leq \infty )  \\
&\lesssim& \frac{ - I_{\Psi}^{(H)} (  v_{i} \leq v \leq v_{i+1} ) ( w_{i} \leq w \leq \infty)  ( r \leq r_{1} )  }{  (v_{i+1} - v_{i} ) } + \sup_{ v_{i} \leq v \leq v_{i+1} } F_{\Psi}^{(\frac{\pa}{\pa t} )} ( v  ) ( w_{i} \leq w \leq \infty ) ( r \geq r_{1} ) \\
\eeaa

\begin{lemma}

We have,
\bea
 \sup_{ v_{i} \leq v \leq v_{i+1} } F_{\Psi}^{(\frac{\pa}{\pa t} )} ( v  ) ( w_{i} \leq w \leq \infty ) ( r \geq r_{1} ) & \lesssim & \frac{E_{\Psi}^{(K)}(t_{i})}{t_{i}^{2} }  
\eea

\end{lemma}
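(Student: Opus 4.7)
My plan is to apply the divergence theorem for the Killing vector field $\pa/\pa t$ in the region $\Omega = \{r \geq r_1\} \cap \{\bar t \geq t_i\} \cap \{\bar v \leq v\}$ bounded by the null hypersurface $\bar v = v$, the Cauchy slice $\bar t = t_i$, and the timelike hypersurface $\bar r = r_1$. Since $\pi^{\a\b}(\pa/\pa t) = 0$, the bulk contribution vanishes, and after fixing orientations of the outward normals the conservation identity will read
$$F_{\Psi}^{(\pa/\pa t)}(v)(t \geq t_i)(r \geq r_1) + F_{\Psi}^{(\pa/\pa t)}(r = r_1)(t_i \leq t \leq v - r_1^{*}) = E_{\Psi}^{(\pa/\pa t)}(t_i)\bigl|_{r^{*} \in [r_1^{*},\, v - t_i]}.$$
A direct calculation using $v_i = t_i + r_1^{*}$ shows that on the null surface $\bar v = v$, the constraint $w \geq w_i$ (i.e.\ $r^{*} \leq r_1^{*} + (v - v_i)/2$) is more restrictive than $t \geq t_i$ (i.e.\ $r^{*} \leq v - t_i$) since the difference $(v - t_i) - (r_1^{*} + (v - v_i)/2) = (v - v_i)/2 \geq 0$; thus the flux appearing in the lemma is bounded above by the left-hand side of this identity.

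The boundary flux through the timelike hypersurface $r = r_1$ does not have an a priori sign, but I plan to control it by exploiting the explicit expression $T_{t r^{*}} = r^{-2}\langle \Psi_{t\th}, \Psi_{r^{*}\th}\rangle + r^{-2}\sin^{-2}\th\,\langle \Psi_{t\phi}, \Psi_{r^{*}\phi}\rangle$ and applying Cauchy--Schwarz to dominate its absolute value by a spacetime integral of $|\Psi_{\hat t \hat \th}|^{2} + |\Psi_{\hat t \hat \phi}|^{2} + |\Psi_{\hat{r^{*}} \hat \th}|^{2} + |\Psi_{\hat{r^{*}} \hat \phi}|^{2}$ over a thin $r^{*}$-shell around $r = r_1$. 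This is precisely the quantity that appears on the left-hand side of \eqref{estimate2H}, which is itself a consequence of the Morawetz-type assumption \eqref{Assumption2}; consequently the boundary term is dominated by the restricted energy $|E_{\Psi}^{(\pa/\pa t)}(-(0.85)t_i \leq r^{*} \leq (0.85)t_i)(t=t_i)|$, reducing the problem to bounding restricted $\pa_t$-energies on the slice $t = t_i$.

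The concluding step will be to apply the earlier estimate \eqref{EKoverminvsquaredandwsquared} to these restricted energies. On the segment $r^{*} \in [r_1^{*},\, v - t_i]$ of the slice $t = t_i$ one has $v_{\mathrm{coord}} = t_i + r^{*} \geq v_i = t_i + r_1^{*}$ and $w_{\mathrm{coord}} = t_i - r^{*} \geq 2t_i - v \geq 2t_i - v_{i+1} = 0.9\, t_i - r_1^{*}$, so for $t_i$ sufficiently large both quantities are comparable to $t_i$; the same is true on the wider range $r^{*} \in [-(0.85)t_i, (0.85)t_i]$. Substituting into \eqref{EKoverminvsquaredandwsquared} gives
$$E_{\Psi}^{(\pa/\pa t)}(t_i)\bigl|_{r^{*} \in [r_1^{*},\, v - t_i]} + |E_{\Psi}^{(\pa/\pa t)}(-(0.85)t_i \leq r^{*} \leq (0.85)t_i)(t=t_i)| \lesssim \frac{E_{\Psi}^{(K)}(t_i)}{t_i^{2}},$$
uniformly in $v \in [v_i, v_{i+1}]$, and taking the supremum finishes the argument. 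The hardest part will be the treatment of the indefinite-sign boundary flux through $r = r_1$, which the plan addresses by absorbing it into the Morawetz-type spacetime estimate \eqref{estimate2H} and then into a restricted $\pa_t$-energy to which \eqref{EKoverminvsquaredandwsquared} applies.
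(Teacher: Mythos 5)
Your overall strategy (divergence theorem for $\frac{\pa}{\pa t}$ with vanishing bulk term, reduction to restricted energies on the slice $t = t_{i}$, then an application of \eqref{EKoverminvsquaredandwsquared} using $|w|, |v| \gtrsim t_{i}$ there) is the right one, and your containment computation on the null cone $\bar{v} = v$ and your concluding step are both correct. The gap is in the choice of region and the resulting boundary term on the timelike hypersurface $r = r_{1}$. You propose to bound the indefinite flux $\int_{t_{i}}^{v - r_{1}^{*}} \int_{\S^{2}} T_{t r^{*}}\, r^{2}\, d\sigma^{2}\, dt$ through $\{ r = r_{1} \}$ by Cauchy--Schwarz against the spacetime integral on the left-hand side of \eqref{estimate2H}. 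That step does not go through: the flux through $\{ r = r_{1}\}$ is a codimension-one integral (it carries no $dr^{*}$), whereas \eqref{estimate2H} controls a spacetime integral over the shell $r^{*} \in [r_{1}^{*}, (1.2) r_{1}^{*}]$; an $L^{2}$ quantity on a fixed $r^{*}$-slice of a shell is not dominated by the same quantity integrated over the shell without an additional trace or averaging argument, and averaging in $r^{*}$ to select a good slice would force you to move $r_{1}$, which is pinned down by the construction of the vector field $H$. So, as written, the indefinite lateral boundary term is not controlled.

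The repair --- and what the paper actually does --- is to choose a region whose lateral boundaries are null rather than timelike. Since on $\bar{v} = v$ the constraint $r \geq r_{1}$ is equivalent to $\bar{w} \leq v - 2 r_{1}^{*}$, one integrates $\der^{\mu} ( T_{\mu\nu} t^{\nu} )$ over $\{ \bar{t} \geq t_{i} \} \cap \{ \bar{v} \leq v \} \cap \{ \bar{w} \leq v - 2 r_{1}^{*} \}$. Its boundary consists of the desired outgoing null segment, an ingoing null segment $\{ \bar{w} = v - 2 r_{1}^{*} \}$, and a bounded portion of $\{ t = t_{i} \}$; the $\frac{\pa}{\pa t}$-flux through each null piece is non-negative (the relevant integrands $T_{vv}$, $T_{vw}$, $T_{ww}$ are sums of squares, as computed in the paper), so the unwanted null flux can simply be discarded and the desired flux is bounded by the restricted energy on $t = t_{i}$ over an $r^{*}$-interval on which both $|w|$ and $|v|$ are comparable to $t_{i}$. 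Then \eqref{EKoverminvsquaredandwsquared} finishes exactly as in your final paragraph, with no appeal to \eqref{estimate2H} or \eqref{Assumption2} at all.
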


\begin{proof}

By integrating in a well chosen region and using the divergence theorem we get that,

\bea
\notag
&&  \sup_{ v_{i} \leq v \leq v_{i+1} } F_{\Psi}^{(\frac{\pa}{\pa t} )} ( v  ) ( w_{i} \leq w \leq \infty ) ( r \geq r_{1} ) \\
\notag
& \lesssim & \int_{r^{*}= c r_{1}^{*}   }^{r^{*} = C r_{1}^{*}   } \int_{\S^{2}} ( [\frac{1}{r^{2}(1-\mu)} | \Psi_{w\th} |^{2} + \frac{1}{r^{2}\sin^{2}\th(1-\mu)} | \Psi_{w\phi} |^{2} ]  +  [ \frac{1}{r^{2}(1-\mu)} | \Psi_{v\th} |^{2} \\
&& + \frac{1}{r^{2}\sin^{2}\th(1-\mu)} | \Psi_{v\phi} |^{2} ]  +  [ \frac{1}{(1-\mu)^{2}} |\Psi_{vw}|^{2}   +  \frac{1}{4r^{4}\sin^{2}\th} | \Psi_{\phi\th}|^{2}]  ). (1-\mu) r^{2}   d\sigma^{2} dr^{*} (t_{i})  \label{boundingthesup}
\eea

We showed that,

\beaa
&&\int_{r^{*}= r_{1}^{*}   }^{r^{*} = r_{2}^{*}   } \int_{\S^{2}} ( [\frac{1}{r^{2}(1-\mu)} | \Psi_{w\th} |^{2} + \frac{1}{r^{2}\sin^{2}\th(1-\mu)} | \Psi_{w\phi} |^{2} ]  +  [ \frac{1}{r^{2}(1-\mu)} | \Psi_{v\th} |^{2} \\
&& + \frac{1}{r^{2}\sin^{2}\th(1-\mu)} | \Psi_{v\phi} |^{2} ]  +  [ \frac{1}{(1-\mu)^{2}} |\Psi_{vw}|^{2}   +  \frac{1}{4r^{4}\sin^{2}\th} | \Psi_{\phi\th}|^{2}]  ). (1-\mu) r^{2}   d\sigma^{2} dr^{*} (t) \\
& \lesssim & \frac{E_{\Psi}^{(K)}(t)}{\min_{w \in \{t\}\cap \{  r_{1}^{*} \leq r^{*} \leq r_{2}^{*} \}  } w^{2}}  + \frac{E_{\Psi}^{(K)}(t)}{\min_{v \in \{t\}\cap  \{  r_{1}^{*} \leq r^{*} \leq r_{2}^{*} \}  } v^{2}}
\eeaa

Thus, \eqref{boundingthesup} gives,
\bea
 \sup_{ v_{i} \leq v \leq v_{i+1} } F_{\Psi}^{(\frac{\pa}{\pa t} )} ( v  ) ( w_{i} \leq w \leq \infty ) ( r \geq r_{1} ) & \lesssim & \frac{E_{\Psi}^{(K)}(t_{i})}{t_{i}^{2} }  
\eea

\end{proof}

And we showed that,

\beaa
 -  I_{\Psi}^{(H)}  (  v_{i} \leq v \leq v_{i+1} ) ( w_{i} \leq w \leq \infty) ( r \leq r_{1} ) &\lesssim&  |E_{\Psi}^{  (\frac{\pa}{\pa t} )} | + E_{\Psi}^{ \# (\frac{\pa}{\pa t} )} ( t= t_{0} )\\ 
\eeaa

Thus, we obtain,

\bea
\notag
 \inf_{ v_{i} \leq v \leq v_{i+1} } F_{\Psi}^{(H)} ( v  ) ( w_{i} \leq w \leq \infty )  &\lesssim & \frac{1}{ (v_{i+1} - v_{i} ) } [ |E_{\Psi}^{  (\frac{\pa}{\pa t} )} | + E_{\Psi}^{ \# (\frac{\pa}{\pa t} )} ( t= t_{0} )  ]  + \frac{E_{\Psi}^{(K)}(t_{i})}{t_{i}^{2} } \\ \label{firstdecay} 
\eea

and thus, there exists a $ v_{i}^{\#} \in [ v_{i}, v_{i+1} ] $ where above inequality holds.\\

We have,

\bea
\notag
v_{i+1} - v_{i} &=& t_{i+1} + r_{1}^{*} -   (t_{i} + r_{1}^{*}) =  t_{i+1} - t_{i} = (1.1)^{i+1}t_{0} - (1.1)^{i}t_{0} = (1.1)^{i}t_{0} ( 1.1 - 1 ) \\
\notag
&=&  (0.1)  (1.1)^{i} t_{0} \\
&=& 0.1 t_{i}
\eea

Let,
\bea
w_{i}^{\#} = v_{i}^{\#}  - 2 r_{1}^{*}
\eea

Therefore,

\beaa
- F_{\Psi}^{(H)} ( v_{i}^{\#}  ) ( w_{i}^{\#} \leq w \leq \infty )  &\lesssim & - F_{\Psi}^{(H)} ( v_{i}^{\#}  ) ( w_{i} \leq w \leq \infty )  \\
&& \text{(due to the positivity of $- F_{\Psi}^{(H)} ( v_{i}^{\#}  ) ( w_{i} \leq w \leq \infty )  $)}\\
&\les& \frac{1}{ t_{i} } [ |E_{\Psi}^{  (\frac{\pa}{\pa t} )} | + E_{\Psi}^{ \# (\frac{\pa}{\pa t} )} ( t= t_{0} )  ]  + \frac{E_{\Psi}^{(K)}(t_{i})}{t_{i}^{2} } \\
&& \text{(by \eqref{firstdecay}).} \\
\eeaa

From \eqref{estimate3H1}, applied in the region $[w_{i}^{\#}, \infty].[ v_{i}^{\#}, v_{i+1}] $, we get due to the positivity of $ -I_{\Psi}^{(H)}  (  v_{i}^{\#} \leq v \leq v_{i+1} ) ( w_{i}^{\#} \leq w \leq \infty) ( r \leq r_{1} ) $,  and $- F_{\Psi}^{(H)} ( w = \infty ) ( v_{i}^{\#} \leq v \leq v_{i+1} )$, that,

\beaa
&&  - F_{\Psi}^{(H)} ( v = v_{i+1} ) ( w_{i}^{\#} \leq w \leq \infty )  \\
&\lesssim& F_{\Psi}^{(\frac{\pa}{\pa t} )} ( w = w_{i}^{\#} ) ( v_{i}^{\#} \leq v \leq v_{i+1} ) -   F_{\Psi}^{(H)} ( v = v_{i}^{\#} ) ( w_{i}^{\#} \leq w \leq \infty ) \\
&& + | E^{(\frac{\pa}{\pa t})}_{\Psi}  (  -(0.85)t_{i} \leq r^{*} \leq (0.85)t_{i}  ) (t= t_{i}) |  \\
\eeaa

\begin{lemma}

\beaa
F_{\Psi}^{(\frac{\pa}{\pa t} )} ( w = w_{i}^{\#} ) ( v_{i}^{\#} \leq v \leq v_{i+1} )  &\les& \frac{E_{\Psi}^{(K)}(t_{i})}{t_{i}^{2} } \\
\eeaa

\end{lemma}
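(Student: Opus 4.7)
The plan is to follow the same strategy used for the analogous bound on $v = \text{const}$ hypersurfaces in the preceding lemma. Specifically, I apply the divergence theorem to the current $J_\mu = T_{\mu\nu}(\Psi)(\frac{\pa}{\pa t})^{\nu}$ over a well-chosen spacetime region with boundary containing the null segment $\{w = w_i^{\#},\, v \in [v_i^{\#}, v_{i+1}]\}$. Since $\frac{\pa}{\pa t}$ is Killing its deformation tensor vanishes, and since $\Psi$ satisfies the Maxwell equations we have $\der^{\mu}T_{\mu\nu}(\Psi)=0$, so $\der^{\mu}J_{\mu}=0$ and there is no bulk contribution.

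Concretely, I would close off the null segment by a portion of the spacelike hypersurface $\{t = t_i\}$ together with a short piece of the outgoing null hypersurface $\{v = v_{i+1}\}$ meeting the segment at its upper endpoint. Because the flux of $J$ through any future-oriented null hypersurface is a positive sum of squares of components of $\Psi$, that outgoing null contribution has a favorable sign and may be dropped, yielding
\beaa
F_{\Psi}^{(\pa/\pa t)}(w = w_i^{\#})(v_i^{\#} \leq v \leq v_{i+1})
&\les& F_{\Psi}^{(\pa/\pa t)}(t = t_i)(r_1^{*} \leq r^{*} \leq R^{*})
\eeaa
for an upper limit $R^{*}$ determined by the $r^{*}$-range of the null segment. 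Since $w_i^{\#} = v_i^{\#} - 2r_1^{*}$ and $v_i^{\#}\in[v_i,v_{i+1}]$, on this segment $r^{*}$ runs from $r_1^{*}$ up to $\tfrac12(v_{i+1}-w_i^{\#}) = r_1^{*} + \tfrac12(v_{i+1}-v_i^{\#})$, and $v_{i+1}-v_i^{\#}\leq v_{i+1}-v_i = 0.1\, t_i$, so $R^{*}-r_1^{*} \lesssim t_i$.

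Next I would verify that on the spacelike slice $\{t=t_i\} \cap \{r_1^{*}\leq r^{*}\leq R^{*}\}$ both $v = t_i + r^{*}$ and $w = t_i - r^{*}$ satisfy $\min v^{2},\,\min w^{2} \gtrsim t_i^{2}$. For the $v$ minimum this is immediate since $r^{*}\geq r_1^{*}$ and $t_i$ is large, and for the $w$ minimum this follows from $R^{*}\leq r_1^{*} + 0.05\, t_i$ combined with $t_i$ large, so that $t_i - R^{*} \gtrsim t_i$. Then I would apply the already established inequality \eqref{EKoverminvsquaredandwsquared} to control the restricted standard-energy flux by
\beaa
\frac{E_{\Psi}^{(K)}(t_i)}{\min w^{2}} + \frac{E_{\Psi}^{(K)}(t_i)}{\min v^{2}} &\les& \frac{E_{\Psi}^{(K)}(t_i)}{t_i^{2}},
\eeaa
which gives the claimed decay.

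The main obstacle is the geometric bookkeeping: one must check that the region built from the null segment and the spacelike slice $\{t=t_i\}$ lies entirely in $\{r \geq r_1\}$, so that the integrand of $F_{\Psi}^{(\pa/\pa t)}$ is bounded by the standard energy density without any degeneration at the horizon, and that the $v$- and $w$-extents of this region both remain comparable to $t_i$ so that \eqref{EKoverminvsquaredandwsquared} actually yields the $t_i^{-2}$ rate rather than a slower decay.
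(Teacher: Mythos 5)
Your proposal is correct and follows essentially the same route as the paper: a divergence-theorem identity for the Killing current $T_{\mu\nu}(\Psi)(\frac{\pa}{\pa t})^{\nu}$ reduces the flux through the null segment to the standard energy on $\{t=t_i\}$ restricted to an $r^{*}$-interval of length $\lesssim t_i$ based near $r_1^{*}$, after which \eqref{EKoverminvsquaredandwsquared} together with $\min v^{2},\min w^{2}\gtrsim t_i^{2}$ on that interval gives the $t_i^{-2}$ rate. The only bookkeeping point is that your three boundary pieces do not quite close the region on the left (the segment's lower endpoint sits at $t=v_i^{\#}-r_1^{*}>t_i$); extending the ingoing null line $w=w_i^{\#}$ down to $\{t=t_i\}$ and discarding the extra nonnegative flux fixes this, merely widening the base interval to something still of length $O(t_i)$ on which $v$ and $w$ remain comparable to $t_i$.
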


\begin{proof}

By applying the divergence theorem in a well chosen region, we get,

\bea
\notag
F_{\Psi}^{(\frac{\pa}{\pa t} )} ( w = w_{i}^{\#} ) ( v_{i}^{\#} \leq v \leq v_{i+1} )  &\les& | E^{(\frac{\pa}{\pa t})}_{\Psi}  (  r_{1}^{*} \leq r^{*} \leq \frac{v_{i+1} - (2t_{i} - v_{i+1}) }{2}  ) (t= t_{i}) | \\
\notag
&\les& | E^{(\frac{\pa}{\pa t})}_{\Psi}  (  r_{1}^{*} \leq r^{*} \leq v_{i+1} - t_{i}   ) (t= t_{i}) | \\
\notag
&\les& | E^{(\frac{\pa}{\pa t})}_{\Psi}  (  r_{1}^{*} \leq r^{*} \leq t_{i+1} + r_{1}^{*} - t_{i}   ) (t= t_{i}) | \\
\notag
&\les& | E^{(\frac{\pa}{\pa t})}_{\Psi}  (  r_{1}^{*} \leq r^{*} \leq (1.1)t_{i} + r_{1}^{*} - t_{i}   ) (t= t_{i}) | \\
\notag
&\les& | E^{(\frac{\pa}{\pa t})}_{\Psi}  (  r_{1}^{*} \leq r^{*} \leq (0.1)t_{i} + r_{1}^{*}   ) (t= t_{i}) | \\
\eea

We proved that,

\bea
\notag
&& | E^{(\frac{\pa}{\pa t})}_{\Psi}  (  r_{1}^{*} \leq r^{*} \leq (0.1)t_{i} + r_{1}^{*}   ) (t= t_{i}) | \\
\notag
&=&  \int_{r^{*}= r_{1}^{*} }^{r^{*} = (0.1)t_{i} + r_{1}^{*}    }  \int_{\S^{2}} ( [\frac{1}{r^{2}(1-\mu)} | \Psi_{w\th} |^{2} + \frac{1}{r^{2}\sin^{2}\th(1-\mu)} | \Psi_{w\phi} |^{2} ]  +  [ \frac{1}{r^{2}(1-\mu)} | \Psi_{v\th} |^{2} \\
\notag
&&  + \frac{1}{r^{2}\sin^{2}\th(1-\mu)} | \Psi_{v\phi} |^{2} ]  +  [ \frac{1}{(1-\mu)^{2}} |\Psi_{vw}|^{2}   +  \frac{1}{4r^{4}\sin^{2}\th} | \Psi_{\phi\th}|^{2}]  ). (1-\mu) r^{2}   d\sigma^{2} dr^{*} (t) \\
\notag
& \lesssim & \frac{E_{\Psi}^{(K)}(t_{i})}{\min_{w \in \{t_{i}\}\cap\{ -  r_{1}^{*}   \leq r^{*} \leq (0.1)t_{i} + r_{1}^{*}   \} } w^{2}}  + \frac{E_{\Psi}^{(K)}(t_{i})}{\min_{v \in \{t_{i}\}\cap\{  r_{1}^{*}   \leq r^{*} \leq (0.1)t_{i} + r_{1}^{*}   \} } v^{2}} \\
\notag
& \lesssim & \frac{E_{\Psi}^{(K)}(t_{i})}{\min_{r^{*} \in \{   r_{1}^{*}   \leq r^{*} \leq (0.1)t_{i} + r_{1}^{*}   \} } |t_{i} - r^{*}|^{2} }  + \frac{E_{\Psi}^{(K)}(t_{i})}{\min_{r^{*} \in \{  r_{1}^{*}   \leq r^{*} \leq (0.1)t_{i} + r_{1}^{*}   \} } |t_{i} + r^{*}|^{2} } \\
\eea

For $r^{*} \in [   r_{1}^{*}, (0.1)t_{i} + r_{1}^{*}   ]$,

\beaa
t_{i} - r_{1}^{*} \geq t_{i} - r^{*} \geq  t_{i} - [ (0.1)t_{i} + r_{1}^{*} ] = (0.9)t_{i} - r_{1}^{*} 
\eeaa
We have,

\beaa
(0.9)t_{i} - r_{1}^{*}  \geq 0 
\eeaa
(for $t_{i}$ large enough)\\

Thus,

\beaa
|(0.9)t_{i} - r_{1}^{*} |^{2}  \leq  |t_{i} - r^{*}|^{2} \leq |t_{i} - r_{1}^{*}|^{2}
\eeaa

\beaa
\min_{r^{*} \in \{   r_{1}^{*}   \leq r^{*} \leq (0.1)t_{i} + r_{1}^{*}   \} } |t_{i} - r^{*}|^{2} \geq |(0.9)t_{i} - r_{1}^{*} |^{2} 
\eeaa

Therefore,

\beaa
\frac{E_{\Psi}^{(K)}(t_{i})}{\min_{r^{*} \in \{   r_{1}^{*}   \leq r^{*} \leq (0.1)t_{i} + r_{1}^{*}   \} } |t_{i} - r^{*}|^{2} } &\lesssim & \frac{E_{\Psi}^{(K)}(t_i)}{t_{i}^{2} }
\eeaa

and thus,

\beaa
 | E^{(\frac{\pa}{\pa t})}_{\Psi}  (  r_{1}^{*} \leq r^{*} \leq (0.1)t_{i} + r_{1}^{*}   ) (t= t_{i}) | & \lesssim&  \frac{E_{\Psi}^{(K)}(t_{i})}{t_{i}^{2} } \\
\eeaa

Therefore,

\beaa
F_{\Psi}^{(\frac{\pa}{\pa t} )} ( w = w_{i}^{\#} ) ( v_{i}^{\#} \leq v \leq v_{i+1} )  &\les& \frac{E_{\Psi}^{(K)}(t_{i})}{t_{i}^{2} } \\
\eeaa

\end{proof}

We also have,

\beaa
| E^{(\frac{\pa}{\pa t})}_{\Psi}  (  -(0.85)t_{i} \leq r^{*} \leq (0.85)t_{i}  ) (t= t_{i}) |   &\les& \frac{E_{\Psi}^{(K)}(t_{i})}{t_{i}^{2} } \\
\eeaa

Thus,

\beaa
  - F_{\Psi}^{(H)} ( v = v_{i+1} ) ( w_{i}^{\#} \leq w \leq \infty ) &\les& \frac{1}{ t_{i} } [ |E_{\Psi}^{  (\frac{\pa}{\pa t} )} | + E_{\Psi}^{ \# (\frac{\pa}{\pa t} )} ( t= t_{0} )  ]  + \frac{E_{\Psi}^{(K)}(t_{i})}{t_{i}^{2} } \\
&\les& \frac{ [ |E_{\Psi}^{  (\frac{\pa}{\pa t} )} | + E_{\Psi}^{ \# (\frac{\pa}{\pa t} )} ( t= t_{0} ) + E_{\Psi}^{(K)} (t_{i}) ] }{ t_{i} } \\
\eeaa

(from the above)\\

and thus,

\bea
\notag
  - F_{\Psi}^{(H)} ( v = v_{i+1} ) ( w_{i+1} \leq w \leq \infty ) &\les&   - F_{\Psi}^{(H)} ( v = v_{i+1} ) ( w_{i}^{\#} \leq w \leq \infty ) \\
\notag
&\les& \frac{ [ |E_{\Psi}^{  (\frac{\pa}{\pa t} )} | + E_{\Psi}^{ \# (\frac{\pa}{\pa t} )} ( t= t_{0} ) + E_{\Psi}^{(K)} (t_{i})  ] }{ t_{i} } \\
\eea

(due to the positivity of $- F_{\Psi}^{(H)} ( v = v_{i+1} ) ( w_{i}^{\#} \leq w \leq \infty ) $).\\

Repeating the same procedure, 

\begin{lemma}
\beaa
 -  I_{\Psi}^{(H)}  (  v_{i+1} \leq v \leq v_{i+2} ) ( w_{i+1} \leq w \leq \infty) ( r \leq r_{1} ) &\lesssim& \frac{ [ |E_{\Psi}^{  (\frac{\pa}{\pa t} )} | + E_{\Psi}^{ \# (\frac{\pa}{\pa t} )} ( t= t_{0} ) + E_{\Psi}^{M} ] }{ t_{i} } \\
\eeaa

\end{lemma}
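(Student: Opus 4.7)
The plan is to mirror the argument that produced the bound for $-F_{\Psi}^{(H)}(v = v_{i+1})(w_{i+1} \leq w \leq \infty)$, but now applied one slab later. Specifically, I would apply estimate \eqref{estimate3H1} in the rectangle $[w_{i+1}, \infty] \times [v_{i+1}, v_{i+2}]$ intersected with $r \leq r_1$, which gives
\beaa
&& - I_{\Psi}^{(H)}  (  v_{i+1} \leq v \leq v_{i+2} ) ( w_{i+1} \leq w \leq \infty) ( r \leq r_{1} )  \\
&& - F_{\Psi}^{(H)} ( v = v_{i+2} ) ( w_{i+1} \leq w \leq \infty ) - F_{\Psi}^{(H)}( w = \infty ) ( v_{i+1} \leq v \leq v_{i+2} )\\
&\lesssim& F_{\Psi}^{(\frac{\pa}{\pa t})} ( w = w_{i+1} ) ( v_{i+1} \leq v \leq v_{i+2} ) -   F_{\Psi}^{(H)} ( v = v_{i+1} ) ( w_{i+1} \leq w \leq \infty ) \\
&& +  | E^{(\frac{\pa}{\pa t})}_{\Psi}  (  -(0.85)t_{i+1} \leq r^{*} \leq (0.85)t_{i+1}  ) (t= t_{i+1}) |.
\eeaa
Because the construction of $h$ makes each of $-I_{\Psi}^{(H)}(r\leq r_1)$, $-F_{\Psi}^{(H)}(v = v_{i+2})$ and $-F_{\Psi}^{(H)}(w = \infty)$ nonnegative, all three sit on the same side and we can simply drop the two flux terms to isolate $-I_{\Psi}^{(H)}$.

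Next I would bound each of the three terms on the right-hand side separately. For $F_{\Psi}^{(\frac{\pa}{\pa t})}(w = w_{i+1})(v_{i+1} \leq v \leq v_{i+2})$, I would re-run the argument of the previous lemma: apply the divergence theorem to bound this flux by $|E_{\Psi}^{(\frac{\pa}{\pa t})}(r_1^* \leq r^* \leq (0.1)t_{i+1}+ r_1^*)(t=t_{i+1})|$, then feed this into \eqref{EKoverminvsquaredandwsquared} to obtain $\lesssim E_{\Psi}^{(K)}(t_{i+1})/t_{i+1}^{2}$. The energy slab term is handled the same way: $|E^{(\frac{\pa}{\pa t})}_{\Psi}(-(0.85)t_{i+1} \leq r^{*} \leq (0.85)t_{i+1})(t_{i+1})| \lesssim E_{\Psi}^{(K)}(t_{i+1})/t_{i+1}^{2}$, again by \eqref{EKoverminvsquaredandwsquared}.

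For the remaining term $-F_{\Psi}^{(H)}(v = v_{i+1})(w_{i+1} \leq w \leq \infty)$, I would use the bound established just above in the text, namely
\beaa
-F_{\Psi}^{(H)}(v = v_{i+1})(w_{i+1} \leq w \leq \infty) \les \frac{|E_{\Psi}^{(\frac{\pa}{\pa t})}|+ E_{\Psi}^{\#(\frac{\pa}{\pa t})}(t_{0}) + E_{\Psi}^{(K)}(t_{i})}{t_{i}}.
\eeaa
Since $t_{i+1} = (1.1)t_i$, and since $1/t_{i+1}^{2} \lesssim 1/t_{i}$, all three contributions are $\lesssim [\,|E_{\Psi}^{(\frac{\pa}{\pa t})}|+ E_{\Psi}^{\#(\frac{\pa}{\pa t})}(t_{0}) + E_{\Psi}^{(K)}(t_{i})\,]/t_{i}$. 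Finally I would invoke the uniform conformal energy bound \eqref{boundingEK}, $E_{\Psi}^{(K)}(t_{i}) \lesssim E_{\Psi}^{M}$, which absorbs $E_{\Psi}^{(K)}(t_i)$ into $E_{\Psi}^{M}$ and yields the stated estimate.

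The step I expect to be routine but bookkeeping-heavy is re-checking that the rectangle geometry $[w_{i+1},\infty]\times[v_{i+1},v_{i+2}]$ interacts correctly with the support condition $r \leq r_1$ and with the endpoint $w = w_{i+1}$; no new analytic idea is needed beyond what was used to get the $v_{i+1}$-flux bound, so there is no genuine obstacle - this lemma is exactly the inductive continuation of the previous two and its role is to prepare the decay statements \eqref{estimate6Hflux}--\eqref{estimate6Hwequalconstantflux} by giving the right power of $t_i$ on the slab $[v_{i+1},v_{i+2}]$.
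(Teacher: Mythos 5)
Your proposal is correct and follows essentially the same route as the paper: apply \eqref{estimate3H1} on the rectangle $[w_{i+1},\infty]\times[v_{i+1},v_{i+2}]$, drop the nonnegative flux terms, bound $F_{\Psi}^{(\frac{\pa}{\pa t})}(w=w_{i+1})$ and the slab energy by $E_{\Psi}^{(K)}(t_{i+1})/t_{i+1}^{2}$, reuse the previously derived bound on $-F_{\Psi}^{(H)}(v=v_{i+1})(w_{i+1}\leq w\leq\infty)$, and absorb $E_{\Psi}^{(K)}$ into $E_{\Psi}^{M}$ via \eqref{boundingEK}. The only cosmetic remark is that the absorption of the $1/t_{i+1}^{2}$ terms into $1/t_{i}$ implicitly uses $t_{i}\geq t_{0}>0$, which is exactly what the paper does as well.
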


\begin{proof}

We get from \eqref{estimate3H1},

\beaa
&& -  I_{\Psi}^{(H)}  (  v_{i+1} \leq v \leq v_{i+2} ) ( w_{i+1} \leq w \leq \infty)  ( r \leq r_{1} ) \\
&\lesssim& F_{\Psi}^{(\frac{\pa}{\pa t} )} ( w = w_{i+1} ) ( v_{i+1} \leq v \leq v_{i+2} ) -   F_{\Psi}^{(H)} ( v = v_{i+1} ) ( w_{i+1} \leq w \leq \infty ) \\
&& +  | E^{(\frac{\pa}{\pa t})}_{\Psi}  (  -(0.85)t_{i+1} \leq r^{*} \leq (0.85)t_{i+1}  ) (t= t_{i+1}) | \\
\eeaa

(due to the positivity of $ - F_{\Psi}^{(H)} ( w = \infty ) ( v_{i+1} \leq v \leq v_{i+2} )  $, $ - F_{\Psi}^{(H)} ( v = v_{i+2} ) ( w_{i+1} \leq w \leq \infty )  $ and $-  I_{\Psi}^{(H)}  (  v_{i+1} \leq v \leq v_{i+2} ) ( w_{i+1} \leq w \leq \infty)  ( r \leq r_{1} )$) \\

We have,

\beaa
| E^{(\frac{\pa}{\pa t})}_{\Psi}  (  -(0.85)t_{i+1} \leq r^{*} \leq (0.85)t_{i+1}  ) (t= t_{i+1}) |   &\les& \frac{E_{\Psi}^{(K)}(t_{i+1})}{t_{i+1}^{2} } \\
\eeaa

and,

\beaa
  - F_{\Psi}^{(H)} ( v = v_{i+1} ) ( w_{i+1} \leq w \leq \infty ) 
&\les& \frac{ [ |E_{\Psi}^{  (\frac{\pa}{\pa t} )} | + E_{\Psi}^{ \# (\frac{\pa}{\pa t} )} ( t= t_{0} ) + E_{\Psi}^{(K)}(t_{i}) ] }{ t_{i} } \\
\eeaa

(from above)\\

And,

\beaa
F_{\Psi}^{(\frac{\pa}{\pa t} )} ( w = w_{i+1} ) ( v_{i+1} \leq v \leq v_{i+2} )  &\les& | E^{(\frac{\pa}{\pa t})}_{\Psi}  (  r_{1}^{*} \leq r^{*} \leq \frac{v_{i+2} - (2t_{i+1} - v_{i+2}) }{2}  ) (t= t_{i+1}) | \\
&\les& | E^{(\frac{\pa}{\pa t})}_{\Psi}  (  r_{1}^{*} \leq r^{*} \leq (0.1)t_{i+1} + r_{1}^{*}   ) (t= t_{i+1}) | \\
\eeaa

We proved,

\beaa
 | E^{(\frac{\pa}{\pa t})}_{\Psi}  (  r_{1}^{*} \leq r^{*} \leq (0.1)t_{i+1} + r_{1}^{*}   ) (t= t_{i+1}) | & \lesssim&  \frac{E_{\Psi}^{(K)}(t_{i+1})}{t_{i+1}^{2} } \\
\eeaa

hence,

\beaa
F_{\Psi}^{(\frac{\pa}{\pa t} )} ( w = w_{i+1} ) ( v_{i+1} \leq v \leq v_{i+2} )  &\les& \frac{E_{\Psi}^{(K)}(t_{i+1})}{t_{i+1}^{2} } \\
\eeaa

Therefore,

\beaa
 -  I_{\Psi}^{(H)}  (  v_{i+1} \leq v \leq v_{i+2} ) ( w_{i+1} \leq w \leq \infty) ( r \leq r_{1} ) &\lesssim& \frac{ [ |E_{\Psi}^{  (\frac{\pa}{\pa t} )} | + E_{\Psi}^{ \# (\frac{\pa}{\pa t} )} ( t= t_{0} ) + E_{\Psi}^{M} ] }{ t_{i} } \\
\eeaa

\end{proof}

Using \eqref{estimate4H}, we get,

\beaa
 \inf_{ v_{i+1} \leq v \leq v_{i+2} } F_{\Psi}^{(H)} ( v  ) ( w_{i+1} \leq w \leq \infty )  &\lesssim & \frac{ [ |E_{\Psi}^{  (\frac{\pa}{\pa t} )} | + E_{\Psi}^{ \# (\frac{\pa}{\pa t} )} ( t= t_{0} ) + E_{\Psi}^{M} ] }{ t_{i+1} (v_{i+2} - v_{i+1} ) }  + \frac{E_{\Psi}^{M}}{t_{i+1}^{2} } \\
 &\lesssim & \frac{ [ |E_{\Psi}^{  (\frac{\pa}{\pa t} )} | + E_{\Psi}^{ \# (\frac{\pa}{\pa t} )} ( t= t_{0} ) + E_{\Psi}^{M} ] }{ t_{i+1}^{2}  }  \\
\eeaa

and thus, there exists a $ v_{i+1}^{\#} \in [ v_{i+1}, v_{i+2} ] $ where above inequality holds. We get,

\beaa
 F_{\Psi}^{(H)} ( v_{i+1}^{\#}  ) ( w_{i+1}^{\#} \leq w \leq \infty )  &\lesssim & F_{\Psi}^{(H)} ( v_{i+1}^{\#}  ) ( w_{i+1} \leq w \leq \infty )  \\
&\lesssim & \frac{ [ |E_{\Psi}^{  (\frac{\pa}{\pa t} )} | + E_{\Psi}^{ \# (\frac{\pa}{\pa t} )} ( t= t_{0} ) + E_{\Psi}^{M} ] }{ t_{i+1}^{2}  }  \\
\eeaa

As before, we let,
$$w_{i+1}^{\#} = v_{i+1}^{\#}  - 2 r_{1}^{*} $$

From \eqref{estimate3H1}, applied in the region $[w_{i+1}^{\#}, \infty].[ v_{i+1}^{\#}, v_{i+2}] $, we get due to the positivity of $-  I_{\Psi}^{(H)}  (  v_{i+1}^{\#} \leq v \leq v_{i+2} ) ( w_{i+1}^{\#} \leq w \leq \infty)  $, and $- F_{\Psi}^{(H)} ( w = \infty ) ( v_{i+1}^{\#} \leq v \leq v_{i+2} )$, that,

\beaa
&&  - F_{\Psi}^{(H)} ( v = v_{i+2} ) ( w_{i+1}^{\#} \leq w \leq \infty )  \\
&\lesssim& F_{\Psi}^{(\frac{\pa}{\pa t} )} ( w = w_{i+1}^{\#} ) ( v_{i+1}^{\#} \leq v \leq v_{i+2} ) -   F_{\Psi}^{(H)} ( v = v_{i+1}^{\#} ) ( w_{i+1}^{\#} \leq w \leq \infty ) \\
&& + | E^{(\frac{\pa}{\pa t})}_{\Psi}  (  -(0.85)t_{i+1} \leq r^{*} \leq (0.85)t_{i+1}  ) (t= t_{i+1}) |  \\
\eeaa

We proved that,

\beaa
F_{\Psi}^{(\frac{\pa}{\pa t} )} ( w = w_{i+1}^{\#} ) ( v_{i+1}^{\#} \leq v \leq v_{i+2} )  &\les& \frac{E_{\Psi}^{(K)}(t_{i+1})}{t_{i+1}^{2} } \\
\eeaa

We also have,

\beaa
| E^{(\frac{\pa}{\pa t})}_{\Psi}  (  -(0.85)t_{i+1} \leq r^{*} \leq (0.85)t_{i+1}  ) (t= t_{i+1}) |   &\les& \frac{E_{\Psi}^{(K)}(t_{i+1})}{t_{i+1}^{2} } \\
\eeaa

Therefore, 

\beaa
  - F_{\Psi}^{(H)} ( v = v_{i+2} ) ( w_{i+1}^{\#} \leq w \leq \infty )  &\lesssim & \frac{ [ |E_{\Psi}^{  (\frac{\pa}{\pa t} )} | + E_{\Psi}^{ \# (\frac{\pa}{\pa t} )} ( t= t_{0} ) + E_{\Psi}^{M} ] }{ t_{i+1}^{2}  }  \\
\eeaa

and finally,

\beaa
 - F_{\Psi}^{(H)} ( v = v_{i+2} ) ( w_{i+2} \leq w \leq \infty )  &\lesssim &  - F_{\Psi}^{(H)} ( v = v_{i+2} ) ( w_{i+1}^{\#} \leq w \leq \infty )  \\
 &\lesssim & \frac{ [ |E_{\Psi}^{  (\frac{\pa}{\pa t} )} | + E_{\Psi}^{ \# (\frac{\pa}{\pa t} )} ( t= t_{0} ) + E_{\Psi}^{M} ] }{ t_{i+1}^{2}  }  \\
 &\lesssim & \frac{ [ |E_{\Psi}^{  (\frac{\pa}{\pa t} )} | + E_{\Psi}^{ \# (\frac{\pa}{\pa t} )} ( t= t_{0} ) + E_{\Psi}^{M} ] }{  v_{+ (i+1)}^{2} }  \\
\eeaa

(due to the positivity of $- F_{\Psi}^{(H)} ( v = v_{i+2} ) ( w_{i+1}^{\#} \leq w \leq \infty ) $).\\

\end{proof}

\section{The Proof of Decay Near the Horizon}\

Let $v_{+}$ be as defined in \eqref{definitionvplus}, we will prove that for all $r \leq r_{1}$,

\beaa
 |F_{\hat{v} \hat{w}} (v, w, \om) | &\lesssim & \frac{ E_{1}  }{ v_{+}  }  \\
 |F_{e_{1} e_{2}} (v, w, \om) | &\lesssim & \frac{ E_{1}  }{ v_{+}  }  
\eeaa

where,

\beaa
E_{1} &= & [ \sum_{j = 0}^{6}    E_{   r^{j}(\rLie)^{j}   F }^{(\frac{\pa}{\pa t}) } (t=t_{0})  + \sum_{j = 0}^{5}    E_{   r^{j}(\rLie)^{j}   F }^{( K ) } (t=t_{0})  +  \sum_{j=1}^{3}   E_{r^{j}(\rLie)^{j} F }^{ \# (\frac{\pa}{\pa t} )} ( t= t_{0} ) ]^{\frac{1}{2}}  
\eeaa

and,

\beaa
 |F_{ \hat{v}e_{1} } (v, w, \om ) | &\les&  \frac{  E_{2} }{  v_{+} } \\
|F_{ \hat{v}e_{2} } (v, w, \om ) | &\les&  \frac{  E_{2} }{  v_{+} } \\
| \sqrt{1-\frac{2m}{r}}  F_{ \hat{w}e_{1} } (v, w, \om ) | &\lesssim &  \frac{ E_{2}  }{ v_{+}  }  \\ 
| \sqrt{1-\frac{2m}{r}} F_{ \hat{w}e_{2} } (v, w, \om ) | &\lesssim &  \frac{ E_{2}  }{ v_{+}  }  
\eeaa

where,

\beaa
E_{2} &=& [ E_{F}^{2} + \sum_{i=0}^{1}  \sum_{j=1}^{2}    E_{ r^{j}(\rLie)^{j} (\Lie_{t})^{i} F }^{ \# (\frac{\pa}{\pa t} )} ( t= t_{0} )   +    E_{r^{3}(\rLie)^{3} F }^{ \# (\frac{\pa}{\pa t} )} ( t= t_{0} )  ]^{\frac{1}{2}} \\
&= & [ \sum_{i=0}^{1} \sum_{j = 0}^{5}  E_{   r^{j}(\rLie)^{j}  (\Lie_{t})^{i}  \Psi }^{(\frac{\pa}{\pa t}) } (t=t_{0})    +  E_{  r^{6}(\rLie)^{6}   \Psi }^{(\frac{\pa}{\pa t}) } (t=t_{0}) \\
&& + \sum_{i=0}^{1}  \sum_{j = 0}^{4}   E_{   r^{j}(\rLie)^{j}  (\Lie_{t})^{i}  \Psi }^{( K ) } (t=t_{0})  +  E_{     r^{5}(\rLie)^{5} }^{( K ) } (t=t_{0}) \\
&& + \sum_{i=0}^{1} \sum_{j=1}^{2}      E_{ r^{j}(\rLie)^{j} (\Lie_{t})^{i} F }^{ \# (\frac{\pa}{\pa t} )} ( t= t_{0} )   +    E_{r^{3}(\rLie)^{3} F }^{ \# (\frac{\pa}{\pa t} )} ( t= t_{0} )  ]^{\frac{1}{2}}
\eeaa

\begin{proof}\

\subsection{Decay for $F_{\hat{v} \hat{w}}$  and $F_{e_{1} e_{2}}$ } \

We have the Sobolev inequality, 

\beaa
|F_{ \hat{v}\hat{w} } (v, w, \om ) |^{2}   
&\lesssim&   \int_{ \overline{v} = v - 1 }^{ \overline{v} = v } \int_{\S^{2}} ( | F_{ \hat{v}\hat{w} } |^{2} + |  \Lie_{v}   F_{ \hat{v}\hat{w} } |^{2}  + | \rLie F_{ \hat{v}\hat{w} } |^{2}  \\
&& +  | \rLie \Lie_{v} F_{ \hat{v}\hat{w} } |^{2}  +  | (\rLie)^{2}   F_{ \hat{v}\hat{w} } |^{2}  + | (\rLie)^{2}  \Lie_{v}  F_{ \hat{v}\hat{w} } |^{2}  ) d\sigma^{2} d\overline{v}
\eeaa

\beaa
\Lie_{\hat{v} } \Psi_{\hat{v} \hat{w}} &=& \der_{\hat{v} } \Psi_{\hat{v} \hat{w}} - \Psi ( \der_{\hat{v} } \hat{\frac{\pa}{\pa v}}, \hat{\frac{\pa}{\pa w}} )  -  \Psi (\hat{\frac{\pa}{\pa v}}, \der_{\hat{v} } \hat{\frac{\pa}{\pa w}} )  \\
&=& - \frac{1}{2} {\der}^{\hat{w} }  \Psi_{\hat{v} \hat{w}}  - \frac{\mu}{2 r } \Psi_{\hat{v} \hat{w}} + \frac{\mu}{2 r }  \Psi_{\hat{v} \hat{w}}   \\
&=& \frac{1}{2} [ {\der}^{\hat{e_{a}} }  \Psi_{\hat{v} \hat{e_{a}} }+ {\der}^{\hat{e_{b}} }  \Psi_{\hat{v} \hat{e_{b}} } ]\\
&=&   \frac{1}{2} [ \der_{\hat{e_{a}} } \Psi_ { \hat{v} \hat{e_{a}} } + \der_{\hat{e_{b}} } \Psi_{ \hat{v} \hat{e_{b}} } ] \\
&=&  \frac{1}{2} [ \Lie_{\hat{e_{a}} }  \Psi_ { \hat{v} \hat{e_{a}} } - \Psi ( \der_{\hat{e_{a}} } \hat{v}, \hat{e_{a}} ) -  \Psi ( \hat{v}, \der_{\hat{e_{a}} } \hat{e_{a}} ) + \Lie_{\hat{e_{b}} }  \Psi_{ \hat{v} \hat{e_{b}} } - \Psi ( \der_{\hat{e_{b}} } \hat{v}, \hat{e_{b}} ) -  \Psi ( \hat{v}, \der_{\hat{e_{b}} } \hat{e_{b}} ) ]\\
&=&  \frac{1}{2} [ \Lie_{\hat{e_{a}} }  \Psi_ { \hat{v} \hat{e_{a}} }  -  \Psi ( \hat{v}, \der_{\hat{e_{a}} } \hat{e_{a}} ) + \Lie_{\hat{e_{b}} }  \Psi_{ \hat{v} \hat{e_{b}} }  -  \Psi ( \hat{v}, \der_{\hat{e_{b}} } \hat{e_{b}} )  ]
\eeaa

(by using the field equations) \\

Thus,

\beaa
&& \int_{ \overline{v} = v - 1 }^{ \overline{v} = v } \int_{\S^{2}}  | \Lie_{v} \Psi_{\hat{v} \hat{w}} (v, w, \om) |^{2} d\sigma^{2} d\overline{v} \\
& \lesssim&    \int_{ \overline{v} = v - 1 }^{ \overline{v} = v } \int_{\S^{2}} | - \der_{\hat{e_{a}} }\Psi_ { \hat{v} \hat{e_{a}} } - \der_{\hat{e_{b}} } \Psi_{ \hat{v} \hat{e_{b}} } |^{2} d\sigma^{2} d\overline{v}  \\
&\lesssim&    \int_{ \overline{v} = v - 1 }^{ \overline{v} = v } \int_{\S^{2}} ( |   \der_{ \hat{e_{a}} } \Psi_{\hat{v} \hat{e_{a}}  } |^{2}  +  | \der_{\hat{e_{b}} }\Psi_{ \hat{v} \hat{e_{b}} } |^{2} ) d\sigma^{2} d\overline{v} \\
&\lesssim&   \int_{ \overline{v} = v - 1 }^{ \overline{v} = v } \int_{\S^{2}}  ( |   \rLie \Psi_{\hat{v} \hat{e_{a}} } |^{2}  +  | \rLie \Psi_{ \hat{v} \hat{e_{b}} } |^{2} +   |  \Psi_{ \hat{v} \hat{w} }  |^{2} +   | \Psi_{ \hat{v}\hat{e_{a}} }  |^{2} +   |  \Psi_{ \hat{v}\hat{e_{b}} }  |^{2} ) d\sigma^{2} d\overline{v}   \\
&\lesssim&   \int_{ \overline{v} = v - 1 }^{ \overline{v} = v } \int_{\S^{2}}  ( |  r \rLie \Psi_{\hat{v} \hat{e_{a}} } |^{2}  +  | r \rLie \Psi_{ \hat{v} \hat{e_{b}} } |^{2} +   |  \Psi_{ \hat{v} \hat{w} }  |^{2} +   | \Psi_{ \hat{v}\hat{e_{a}} }  |^{2} +   |  \Psi_{ \hat{v}\hat{e_{b}} }  |^{2} ) r^{2} d\sigma^{2} d\overline{v}   \\
\eeaa 
(for $r \geq 2m > 0$)\\

Recall that,
\beaa
\notag
&& - F_{\Psi}^{(H)} ( w  ) ( v-1 \leq  \overline{v} \leq v ) \\
&=& - F_{\Psi}^{(H)} ( w  ) ( v-1 \leq  \overline{v} \leq v )  - F_{\Psi}^{(H_{2})} ( w  ) ( v-1 \leq  \overline{v} \leq v )  \\
\notag
&=& \int_{ \overline{v} = v - 1 }^{ \overline{v} = v } \int_{\S^{2}}   2  h(r^{*}) [  \frac{1}{r^{2}} | \Psi_{v\th} |^{2} + \frac{1}{r^{2}\sin^{2}\th } | \Psi_{v\phi} |^{2}  \\
\notag
&& +   \frac{1}{(1-\mu)^{2}} | \Psi_{v w} |^{2} + \frac{1}{4 r^{4} \sin^{2}\th} | \Psi_{\th \phi} |^{2}  ] r^{2} d\sigma^{2} d\overline{v} \\
&=& \int_{ \overline{v} = v - 1 }^{ \overline{v} = v } \int_{\S^{2}}   2  h(r^{*}) [  | \Psi_{v e_{1}} |^{2} + | \Psi_{v e_{2}} |^{2}   +   \frac{1}{(1-\mu)^{2}} | \Psi_{v w} |^{2} + \frac{1}{4 } | \Psi_{e_{1} e_{2}} |^{2}  ] r^{2} d\sigma^{2} d\overline{v}
\eeaa
(by computing \eqref{firstexpressionforthefluxofHlonwequalconstant} using the orthonormal basis $e_{a}$, $a \in \{1, 2 \}$ instead of $\frac{\pa}{\pa \hat{ \th}}$ and $\frac{\pa}{\pa \hat{ \phi}}$ which are singular at $\th = 0, \pi$).

Consequently,

\beaa
 \int_{ \overline{v} = v - 1 }^{ \overline{v} = v } \int_{\S^{2}}  | \Lie_{v} \Psi_{\hat{v} \hat{w}} (v, w, \om) |^{2} d\sigma^{2} d\overline{v}  &\lesssim&     - F_{\Psi, r{\rLie} \Psi }^{(H)} ( w  ) ( v-1 \leq  \overline{v} \leq v )  \\
\eeaa

Therefore, we have,
\beaa
|F_{\hat{v} \hat{w}} (v, w, \om) | &\lesssim&    \int_{ \overline{v} = v - 1 }^{ \overline{v} = v } \int_{\S^{2}} ( | F_{ \hat{v}\hat{w} } |^{2} + |  \Lie_{v}   F_{ \hat{v}\hat{w} } |^{2}  + | \rLie F_{ \hat{v}\hat{w} } |^{2}  \\
&& +  | \rLie \Lie_{v} F_{ \hat{v}\hat{w} } |^{2}  +  | (\rLie)^{2}   F_{ \hat{v}\hat{w} } |^{2}  + | (\rLie)^{2}  \Lie_{v}  F_{ \hat{v}\hat{w} } |^{2}  ) d\sigma^{2} d\overline{v} \\
&\lesssim&   - F_{F, r{\rLie} F, r^{2}(\rLie)^{2}  F , r^{3}(\rLie)^{3}  F }^{(H)}  ( w  ) ( v-1 \leq  \overline{v} \leq v )   
\eeaa

From \eqref{estimate6Hwequalconstantflux}, we proved that,
\beaa
 - F_{\Psi}^{(H)}  ( w  ) ( v-1 \leq  \overline{v} \leq v )  &\lesssim &  \frac{ [ |E_{\Psi}^{  (\frac{\pa}{\pa t} )} | + E_{\Psi}^{ \# (\frac{\pa}{\pa t} )} ( t= t_{0} ) + E_{\Psi}^{M} ] }{ v_{+}^{2} }  
\eeaa

Thus,
\beaa
&& - F_{F, r{\rLie} F, r^{2}(\rLie)^{2}   F , r^{3}(\rLie)^{3}  F }^{(H)} ( w  ) ( v-1 \leq  \overline{v} \leq v ) \\
&\lesssim & \frac{ E_{F, r{\rLie} F, r^{2}(\rLie)^{2}  F , r^{3}(\rLie)^{3}  F }^{  (\frac{\pa}{\pa t} )} ( t= t_{0} )  + E_{F, r{\rLie} F, r^{2}(\rLie)^{2}  F , r^{3}(\rLie)^{3}  F }^{ \# (\frac{\pa}{\pa t} )} ( t= t_{0} )}{ v_{+}^{2}  }\\
&& + \frac{ E_{F, r{\rLie} F, r^{2}(\rLie)^{2}  F , r^{3}(\rLie)^{3}   F }^{M} ( t= t_{0} ) }{ v_{+}^{2}  }  \\
\eeaa

\beaa
&& E_{F, r{\rLie} F, r^{2}(\rLie)^{2}  F , r^{3}(\rLie)^{3}  F }^{  (\frac{\pa}{\pa t} )} ( t= t_{0} )  + E_{F, r{\rLie} F, r^{2}(\rLie)^{2}  F , r^{3}(\rLie)^{3}  F }^{ \# (\frac{\pa}{\pa t} )} ( t= t_{0} ) \\
&& + E_{F, r{\rLie} F, r^{2}(\rLie)^{2}  F , r^{3}(\rLie)^{3}   F }^{M} ( t= t_{0} ) \\
&=&  \sum_{j=0}^{3}  E_{r^{j}(\rLie)^{j} F }^{ (\frac{\pa}{\pa t} )} ( t= t_{0} ) + \sum_{j=0}^{3}  E_{r^{j}(\rLie)^{j} F }^{ \# (\frac{\pa}{\pa t} )} ( t= t_{0} ) +  \sum_{j=0}^{3}  E_{ r^{j}(\rLie)^{j} F }^{M}  \\
\eeaa

Recall that,

\beaa
E^{M}_{F} &=& \sum_{j=0}^{3} E_{ r^{j}(\rLie)^{j} F  }^{(\frac{\pa}{\pa t})} (t=t_{0})    + \sum_{j=0}^{2}  E_{ r^{j}(\rLie)^{j} F  }^{(K)} (t=t_{0}) \\
\eeaa

Thus,

\beaa
  \sum_{j=0}^{3}   E_{ r^{j}(\rLie)^{j} F }^{(M)}     &=&  E_{F, r{\rLie} F, r^{2}(\rLie)^{2} F , r^{3}(\rLie)^{3}  F, r^{4}(\rLie)^{4}  F, r^{5}(\rLie)^{5}  F }^{(\frac{\pa}{\pa t} + K)} (t=t_{0}) + E_{r^{6}(\rLie)^{6}  F }^{(\frac{\pa}{\pa t} )} (t=t_{0}) \\
&\les&  \sum_{j=1}^{6} E_{ r^{j}(\rLie)^{j} F  }^{(\frac{\pa}{\pa t} )} (t=t_{0}) +  \sum_{j=1}^{5} E_{ r^{j}(\rLie)^{j} F  }^{(K)} (t=t_{0}) \\
\eeaa

We get,

\beaa
&& |F_{\hat{v} \hat{w}} (v, w, \om) |^{2} \\
&\lesssim & \frac{  \sum_{j=0}^{3}   E_{r^{j}(\rLie)^{j} F }^{ \# (\frac{\pa}{\pa t} )} ( t= t_{0} )   + \sum_{j=0}^{6}  E_{ r^{j}(\rLie)^{j} F  }^{(\frac{\pa}{\pa t} )} (t=t_{0}) +  \sum_{j=0}^{5}  E_{ r^{j}(\rLie)^{j} F  }^{(K)} (t=t_{0})    }{ v_{+}  }  
\eeaa

Finally we get,

\beaa
 |F_{\hat{v} \hat{w}} (v, w, \om) | &\lesssim & \frac{ E_{1}  }{ v_{+}  }  
\eeaa

where $E_{1}$ is defined as follows,
\beaa
E_{1} &= & [ \sum_{j = 0}^{6}    E_{   r^{j}(\rLie)^{j}   F }^{(\frac{\pa}{\pa t}) } (t=t_{0})  + \sum_{j = 0}^{5}    E_{   r^{j}(\rLie)^{j}   F }^{( K ) } (t=t_{0})  +  \sum_{j=1}^{3}   E_{r^{j}(\rLie)^{j} F }^{ \# (\frac{\pa}{\pa t} )} ( t= t_{0} ) ]^{\frac{1}{2}}  \\
\eeaa

Concerning the component $F_{e_{1} e_{2} }$, similarly, we have the Sobolev inequality, 

\beaa
|F_{ e_{1} e_{2} } (v, w, \om ) |^{2}   
&\lesssim&   \int_{ \overline{v} = v - 1 }^{ \overline{v} = v } \int_{\S^{2}} ( | F_{ e_{1} e_{2} } |^{2} + |  \Lie_{v}   F_{ e_{1} e_{2} } |^{2}  + | \rLie F_{ e_{1} e_{2} } |^{2}  \\
&& +  | \rLie \Lie_{v} F_{ e_{1} e_{2} } |^{2}  +  | (\rLie)^{2}   F_{ e_{1} e_{2} } |^{2}  + | (\rLie)^{2}  \Lie_{v}  F_{ e_{1} e_{2} } |^{2}  ) d\sigma^{2} d\overline{v}
\eeaa

\beaa
\Lie_{\hat{v} } \Psi_{e_{1} e_{2} }  &=& \der_{\hat{v} } \Psi_{e_{1} e_{2} } - \Psi ( \der_{\hat{v} } e_{1}, e_{2} ) - \Psi ( e_{1}, \der_{\hat{v} } e_{2} ) \\
&=& - \der_{e_{1} } \Psi_{e_{2} \hat{v}} - \der_{e_{2} } \Psi_{ \hat{v} e_{1}  } \\
&=&  \der_{e_{1} } \Psi_ { \hat{v} e_{2} } - \der_{e_{2} } \Psi_{ \hat{v} e_{1} } \\
&=&  \Lie_{e_{1} } \Psi_ { \hat{v} e_{2} } - \Psi ( \der_{e_{1} } \hat{v}, e_{2} )  - \Psi ( \hat{v}, \der_{e_{1} } e_{2} ) - \Lie_{e_{2} } \Psi_{ \hat{v} e_{1} } +   \Psi ( \der_{e_{2} }  \hat{v}, e_{1} ) +  \Psi ( \hat{v},  \der_{e_{2} } e_{1} )  
\eeaa

Therefore,
\beaa
&& \int_{ \overline{v} = v - 1 }^{ \overline{v} = v } \int_{\S^{2}}  | \Lie_{v} \Psi_{e_{1} e_{2} } (v, w, \om) |^{2} d\sigma^{2} d\overline{v}\\ 
&\lesssim&   \int_{ \overline{v} = v - 1 }^{ \overline{v} = v }  \int_{\S^{2}} ( |   \Lie_{ e_{1} }  \Psi_{\hat{v} e_{2}  } |^{2}  +  | \Lie_{e_{2} } \Psi_{ \hat{v} e_{1} } |^{2} +  |  \Psi_{e_{1} e_{2}  } |^{2} +  |   \Psi_{\hat{v} e_{2}  } |^{2} ) d\sigma^{2} d\overline{v}   \\
&\lesssim&   \int_{ \overline{v} = v - 1 }^{ \overline{v} = v }  \int_{\S^{2}} ( |   \rLie \Psi_{\hat{v} e_{1} } |^{2}  +  | \rLie \Psi_{ \hat{v} e_{2} } |^{2} +   |  \Psi_{e_{1} e_{2}  } |^{2} +  |   \Psi_{\hat{v} e_{2}  } |^{2}  ) d\sigma^{2} d\overline{v}   \\
&\lesssim& - F_{\Psi, r{\rLie} \Psi  }^{(H)} ( w  ) ( v-1 \leq \overline{v} \leq v) 
\eeaa 

(using what we already proved, and the fact that $r$ is bounded in the region $0< 2m \leq r \leq R$).

Consequently, we have,

\beaa
|F_{e_{1} e_{2}} (v, w, \om) |^{2} &\lesssim&   - F_{F, r{\rLie} F, r^{2}(\rLie)^{2}  F , r^{3}(\rLie)^{3}  F }^{(H)} ( w  ) ( v-1 \leq \overline{v} \leq v)  \\
\eeaa

We proved that,
\beaa
&& - F_{F, r{\rLie} F, r^{2}(\rLie)^{2}  F , r^{3}(\rLie)^{3}  F }^{(H)} ( w  ) ( v-1 \leq \overline{v} \leq v) \\
  &\lesssim & \frac{ E_{F, r{\rLie} F, r^{2}(\rLie)^{2}  F , r^{3}(\rLie)^{3}  F }^{ \# (\frac{\pa}{\pa t} )} ( t= t_{0} ) + E_{F, r{\rLie} F, r^{2}(\rLie)^{2}  F , r^{3}(\rLie)^{3}  F }^{M}  }{ v_{+}^{2}  }  
\eeaa

Thus,
\beaa
 |F_{e_{1} e_{2}} (v, w_{0}, \om) | | &\lesssim & \frac{ E_{1} }{ v_{+} }  \\
\eeaa

\subsection{ Decay for $F_{\hat{v} e_{1} }$  and $F_{\hat{v} e_{2} }$ }\

We have the Sobolev inequality, 
\beaa
|F_{ \hat{v}\hat{\th} } (v, w, \om ) |^{2}   
&\lesssim&   \int_{ \overline{v} = v - 1 }^{ \overline{v} = v } \int_{\S^{2}} ( | F_{ \hat{v}\hat{\th} } |^{2} + |  \Lie_{v}   F_{ \hat{v}\hat{\th} } |^{2}  + | \rLie F_{ \hat{v}\hat{\th} } |^{2}  \\
&& +  | \rLie \Lie_{v} F_{ \hat{v}\hat{\th} } |^{2}  +  | (\rLie)^{2}   F_{ \hat{v}\hat{\th} } |^{2}  + | (\rLie)^{2}  \Lie_{v}  F_{ \hat{v}\hat{\th} } |^{2}  ) d\sigma^{2} d\overline{v}
\eeaa

On one hand, we can compute,
\beaa
\Lie_{\hat{v} } \Psi_{\hat{v} \hat{\th} } &=& \der_{\hat{v} } \Psi_{\hat{v} \hat{\th} }  +  \Psi ( \der_{\hat{v} } \hat{v}, \hat{\th} )  +  \Psi ( \hat{v}, \der_{\hat{v} } \hat{\th} )   \\
&=&  \der_{\hat{v} } \Psi_{\hat{v} \hat{\th} }  + \frac{\mu}{2 r   } \Psi_{\hat{v} \hat{\th} }     
\eeaa

\beaa
\der_{\hat{v} } \Psi_{\hat{v} \hat{\th} } &=&  \frac{1}{2}  \der_{\hat{v} } ( \Psi_{t \hat{\th} } +  \Psi_{r^{*} \hat{\th} }  ) = \frac{1}{2}  \der_{ ( \frac{1}{2} \frac{\pa }{\pa t} + \frac{1}{2} \frac{\pa }{\pa r^{*}} )  } ( \Psi_{t \hat{\th} } +  \Psi_{r^{*} \hat{ \th}}  ) \\
&=& \frac{1}{4}  \der_{t} ( \Psi_{t \hat{\th}} + \Psi_{r^{*} \hat{ \th}}  ) + \frac{1}{4}  \der_{r^{*}} ( \Psi_{t \hat{\th}} +  \Psi_{r^{*} \hat{ \th}}  ) \\
&=& \frac{1}{4}  \der_{t } ( \Psi_{t \hat{\th} } +  \Psi_{r^{*} \hat{\th}}  )  
+ \frac{1}{4}  ( - \der_{t }  \Psi_{ \hat{\th} r^{*} } - \der_{ \hat{\th} }  \Psi_{r^{*} t } + \der_{t } \Psi_{ t \hat{\th} }  - (1-\mu) \der_{\hat{\phi} } \Psi_{ \hat{\phi} \hat{\th} }   )  \\
&=& \frac{1}{2} \der_{t } ( \Psi_{t \hat{\th} } +  \Psi_{r^{*} \hat{\th}}  )  - \frac{1}{4}  (   \der_{ \hat{\th} }  \Psi_{r^{*} t }   + (1-\mu) \der_{\hat{\phi} } \Psi_{ \hat{\phi} \hat{\th} }   ) \\
&=&  \der_{t }  \Psi_{\hat{v} \hat{\th} }   - \frac{1}{4}  (1-\mu)  (   \der_{ \hat{\th} }  \Psi_{\hat{r^{*}} \hat{t} }   +   \der_{\hat{\phi} } \Psi_{ \hat{\phi} \hat{\th} }   ) 
\eeaa
(where we used the field equations and the Bianchi identities).

Thus,

\beaa
\der_{\hat{v} } \Psi_{\hat{v} \hat{\th} } &=&  \Lie_{t }  \Psi_{\hat{v} \hat{\th} } -  \Psi (\der_{t } \hat{v}, \hat{\th} ) -  \Psi (\hat{v}, \der_{t } \hat{\th} ) \\
&& + \frac{ (1-\mu)}{4} [ - \Lie_{ \hat{\th} }  \Psi_{\hat{r^{*}} \hat{t} }  + \Psi ( \der_{ \hat{\th} }  \hat{r^{*}}, \hat{t} ) +  \Psi (\hat{r^{*}},  \der_{ \hat{\th} } \hat{t} )  \\
&& -  \Lie_{\hat{\phi} } \Psi_{ \hat{\phi} \hat{\th} }  +     \Psi(\der_{\hat{\phi} } \hat{\phi}, \hat{\th} )  +  \Psi ( \hat{\phi}, \der_{\hat{\phi} } \hat{\th} )  ]  \\
&=&  \Lie_{t }  \Psi_{\hat{v} \hat{\th} } -  \frac{\mu  }{2 r  } \Psi_{\hat{v} \hat{\th} }  \\
&& + \frac{1}{4} (1-\mu) [ - \Lie_{ \hat{\th} }  \Psi_{\hat{r^{*}} \hat{t} }  +  \frac{\sqrt{1-\mu} }{r}  \Psi_{\hat{\th} \hat{t}}   -  \Lie_{\hat{\phi} } \Psi_{ \hat{\phi} \hat{\th} }  -   \frac{\sqrt{1-\mu} }{r}  \Psi_{\hat{r^{*}} \hat{\th}} ]\\
&=&  \Lie_{t }  \Psi_{\hat{v} \hat{\th} }  - \frac{1}{4} (1-\mu) \Lie_{ \hat{\th} }  \Psi_{\hat{r^{*}} \hat{t} } - \frac{1}{4} (1-\mu)  \Lie_{\hat{\phi} } \Psi_{ \hat{\phi} \hat{\th} }  \\
&& - \frac{1}{2}  ( \frac{\mu}{r  }    +  \frac{(1-\mu) }{r} )  \Psi_{\hat{v} \hat{\th}} \\
&=&  \Lie_{t }  \Psi_{\hat{v} \hat{\th} }   - \frac{(1-\mu)}{2}     \Lie_{ \hat{\th} }  \Psi_{\hat{v} \hat{w} }   - \frac{(1-\mu)}{4}  \Lie_{\hat{\phi} } \Psi_{ \hat{\phi} \hat{\th} }   \\
&& - \frac{1}{2}  ( \frac{\mu}{r  }    +  \frac{(1-\mu) }{r} )  \Psi_{\hat{v} \hat{\th}}
\eeaa

Hence,
\beaa
\Lie_{\hat{v} } \Psi_{\hat{v} \hat{\th} } &=&   \der_{\hat{v} } \Psi_{\hat{v} \hat{\th} }  + \frac{\mu}{2 r   } \Psi_{\hat{v} \hat{\th} }    \\
&=&  \Lie_{t }  \Psi_{\hat{v} \hat{\th} }   - \frac{(1-\mu)}{2}     \Lie_{ \hat{\th} }  \Psi_{\hat{v} \hat{w} }   - \frac{(1-\mu)}{4}  \Lie_{\hat{\phi} } \Psi_{ \hat{\phi} \hat{\th} }    - \frac{(1-\mu)}{2 r}   \Psi_{\hat{v} \hat{\th}}
\eeaa

and similarly, we obtain,

\beaa
\Lie_{\hat{v} } \Psi_{\hat{v} \hat{\phi} } 
&=&  \Lie_{t }  \Psi_{\hat{v} \hat{\phi} }   - \frac{(1-\mu)}{2}     \Lie_{ \hat{\phi} }  \Psi_{\hat{v} \hat{w} }   - \frac{(1-\mu)}{4}  \Lie_{\hat{\th} } \Psi_{ \hat{\th} \hat{\phi} }   - \frac{(1-\mu)}{2 r}   \Psi_{\hat{v} \hat{\phi}}
\eeaa

Thus, for $r \geq 2m > 0$,

\beaa
| \Lie_{v } \Psi_{\hat{v} \hat{\th} } |^{2} &=& | \Lie_{t }  \Psi_{\hat{v} \hat{\th} }   - \frac{(1-\mu)}{2}     \Lie_{ \hat{\th} }  \Psi_{\hat{v} \hat{w} }   - \frac{(1-\mu)}{4}  \Lie_{\hat{\phi} } \Psi_{ \hat{\phi} \hat{\th} }    - \frac{(1-\mu)}{2 r}   \Psi_{\hat{v} \hat{\th}} |^{2} \\
&\les&   | \Lie_{t }  \Psi_{\hat{v} \hat{\th} } |^{2}  + |  \Lie_{ \hat{\th} }  \Psi_{\hat{v} \hat{w} } |^{2}  + |   \Lie_{\hat{\phi} } \Psi_{ \hat{\phi} \hat{\th} }  |^{2} + |\Psi_{\hat{v} \hat{\th}} |^{2}
\eeaa
(by using $a.b \les a^{2} + b^{2}$)\\

We get,
\beaa
| \Lie_{v } \Psi_{\hat{v} \hat{\th} } |^{2} &\les&   | \Lie_{t }  \Psi_{\hat{v} \hat{\th} } |^{2}  + |  \rLie  \Psi_{\hat{v} \hat{w} } |^{2}  + |   \rLie  \Psi_{ \hat{\th} \hat{\phi} }  |^{2} + |   \Psi_{\hat{v} \hat{\th} } |^{2}\\
&\les&   | \Lie_{t }  \Psi_{\hat{v} \hat{\th} } |^{2}  + |  r\rLie  \Psi_{\hat{v} \hat{w} } |^{2}  + |   r\rLie  \Psi_{ \hat{\th} \hat{\phi} }  |^{2} + |   \Psi_{\hat{v} \hat{\th} } |^{2}\\
\eeaa
(in the region $0 < 2m \leq r \leq R$).\\

Therefore,

\beaa
&& |F_{ \hat{v}\hat{\th} } (v, w, \om ) |^{2} \\  
&\lesssim&   \int_{ \overline{v} = v - 1 }^{ \overline{v} = v } \int_{\S^{2}} ( | F_{ \hat{v}\hat{\th} } |^{2} + |  \Lie_{v}   F_{ \hat{v}\hat{\th} } |^{2}  + | \rLie F_{ \hat{v}\hat{\th} } |^{2}  \\
&& +  | \rLie \Lie_{v} F_{ \hat{v}\hat{\th} } |^{2}  +  | (\rLie)^{2}   F_{ \hat{v}\hat{\th} } |^{2}  + | (\rLie)^{2}  \Lie_{v}  F_{ \hat{v}\hat{\th} } |^{2}  ) d\sigma^{2} d\overline{v} \\
&\lesssim&   \int_{ \overline{v} = v - 1 }^{ \overline{v} = v } \int_{\S^{2}} ( | F_{ \hat{v}\hat{\th} } |^{2} + |  \Lie_{v}   F_{ \hat{v}\hat{\th} } |^{2}  + | r{\rLie} F_{ \hat{v}\hat{\th} } |^{2}  \\
&& +  | r{\rLie} \Lie_{v} F_{ \hat{v}\hat{\th} } |^{2}  +  | r^{2}(\rLie)^{2}   F_{ \hat{v}\hat{\th} } |^{2}  + | r^{2}(\rLie)^{2}  \Lie_{v}  F_{ \hat{v}\hat{\th} } |^{2}  ) d\sigma^{2} d\overline{v} \\
&\les& - F_{F, r{\rLie} F, r^{2}(\rLie)^{2} F, \Lie_{t } F, r{\rLie} \Lie_{t } F, r^{2}(\rLie)^{2} \Lie_{t } F, r^{3}(\rLie)^{3}   F }^{(H)} ( w  ) ( v-1 \leq \overline{v} \leq v ) \\
&\les& \sum_{j=0}^{2} \sum_{i=0}^{1} - F_{ r^{j}(\rLie)^{j} (\Lie_{t})^{i} F}^{(H)}  ( w  ) ( v-1 \leq \overline{v} \leq v )  - F_{ r^{3}(\rLie)^{3}  F }^{(H)} ( w  ) ( v-1 \leq \overline{v} \leq v ) \\
\eeaa

From estimate \eqref{estimate6Hwequalconstantflux},

\beaa
 - F_{\Psi}^{(H)}  ( w  ) ( v-1 \leq \overline{v} \leq v )  &\lesssim &  \frac{ [ |E_{\Psi}^{  (\frac{\pa}{\pa t} )} | + E_{\Psi}^{ \# (\frac{\pa}{\pa t} )} ( t= t_{0} ) + E_{\Psi}^{M} ] }{ v_{+}^{2}  }  \\
\eeaa

Recall that,

\beaa
E^{M}_{\Psi} &=& \sum_{j=0}^{3} E_{ r^{j}(\rLie)^{j} \Psi  }^{(\frac{\pa}{\pa t})} (t=t_{0})    + \sum_{j=0}^{2}  E_{ r^{j}(\rLie)^{j} \Psi  }^{(K)} (t=t_{0}) 
\eeaa

Therefore,

\beaa
| F_{ \hat{v}\hat{\th} } (v, w, \om ) | &\lesssim &  \frac{ E_{2}  }{  v_{+} }  
\eeaa

where,

\beaa
E_{2} &= & [ \sum_{j = 0}^{5}  \sum_{i = 0}^{1}   E_{   r^{j}(\rLie)^{j}  (\Lie_{t})^{i} F }^{(\frac{\pa}{\pa t}) } (t=t_{0})    +  E_{   r^{6}(\rLie)^{6}  F }^{(\frac{\pa}{\pa t}) } (t=t_{0}) \\
&& + \sum_{j = 0}^{4}  \sum_{i = 0}^{1}   E_{   r^{j}(\rLie)^{j}  (\Lie_{t})^{i}  F }^{( K ) } (t=t_{0})  +  E_{     r^{5}(\rLie)^{5}F }^{( K ) } (t=t_{0}) \\
&& + \sum_{j=1}^{2}  \sum_{i=0}^{1}    E_{ r^{j}(\rLie)^{j} (\Lie_{t})^{i} F }^{ \# (\frac{\pa}{\pa t} )} ( t= t_{0} )   +    E_{r^{3}(\rLie)^{3} F }^{ \# (\frac{\pa}{\pa t} )} ( t= t_{0} )  ]^{\frac{1}{2}} \\
&=& [ E_{F}^{2} + \sum_{j=1}^{2}  \sum_{i=0}^{1}    E_{ r^{j}(\rLie)^{j} (\Lie_{t})^{i} F }^{ \# (\frac{\pa}{\pa t} )} ( t= t_{0} )   +    E_{r^{3}(\rLie)^{3} F }^{ \# (\frac{\pa}{\pa t} )} ( t= t_{0} )  ]^{\frac{1}{2}} 
\eeaa

and similarly,

\beaa
| F_{ \hat{v}\hat{\phi} } (v, w, \om ) | &\lesssim &  \frac{ E_{2}  }{ v_{+}   }  \\
\eeaa

\subsection{ Decay for $\sqrt{1 - \frac{2m}{r}} F_{\hat{w} e_{1}}$ and $\sqrt{1 - \frac{2m}{r}}F_{\hat{w} e_{2} }$ }\

We have the Sobolev inequality, 

\beaa
&& | \sqrt{(1-\mu)} F_{ \hat{w}\hat{\th} } (v, w, \om ) |^{2}   \\
&\lesssim&   \int_{ \overline{w} = w_{0} }^{ \overline{w} = \infty } \int_{\S^{2}} ( | \sqrt{(1-\mu)} F_{ \hat{w}\hat{\th} } |^{2} + |  \Lie_{w} (  \sqrt{(1-\mu)} F_{ \hat{w}\hat{\th} } ) |^{2}  + | \rLie ( \sqrt{(1-\mu)} F_{ \hat{w}\hat{\th} } ) |^{2}  \\
&& +  | \rLie \Lie_{w} ( \sqrt{(1-\mu)} F_{ \hat{w}\hat{\th} } ) |^{2}  +  | (\rLie)^{2}  (\sqrt{(1-\mu)} F_{ \hat{w}\hat{\th} } ) |^{2} \\
&& + | (\rLie)^{2}  \Lie_{w}  (\sqrt{(1-\mu)} F_{ \hat{w} \hat{\th} } ) |^{2}  ) d\sigma^{2} d\overline{w}
\eeaa

We have,
\beaa
\Lie_{\hat{w} } \Psi_{\hat{w} \hat{\th} } &=& \der_{\hat{w} } \Psi_{\hat{w} \hat{\th} } + \Psi ( \der_{\hat{w} } \hat{w}, \hat{\th} ) + \Psi (\hat{w}, \der_{\hat{w} } \hat{\th} ) \\
&=& \der_{\hat{w} } \Psi_{\hat{w} \hat{\th} } \\
&=&  \frac{1}{(1-\mu)^{2}} \der_{w } \Psi_{w \hat{\th} }
\eeaa

Computing,

\beaa
\der_{w } \Psi_{w \hat{\th} } &=&  \frac{1}{2}  \der_{w } ( \Psi_{t \hat{\th} } -  \Psi_{r^{*} \hat{\th} }  ) = \frac{1}{2}  \der_{ ( \frac{1}{2} \frac{\pa }{\pa t} - \frac{1}{2} \frac{\pa }{\pa r^{*}} )  } ( \Psi_{t \hat{\th} } -  \Psi_{r^{*} \hat{ \th}}  ) \\
&=& \frac{1}{4}  \der_{t} ( \Psi_{t \hat{\th}} - \Psi_{r^{*} \hat{ \th}}  ) - \frac{1}{4}  \der_{r^{*}} ( \Psi_{t \hat{\th}} -  \Psi_{r^{*} \hat{ \th}}  ) \\
&=& \frac{1}{4}  \der_{t} ( \Psi_{t \hat{\th}} - \Psi_{r^{*} \hat{ \th}}  ) 
- \frac{1}{4}  ( - \der_{ t }  \Psi_{ \hat{\th} r^{*} } - \der_{ \hat{\th} }  \Psi_{r^{*} t } - \der_{t } \Psi_{ t \hat{\th} }  + (1-\mu) \der_{\hat{\phi} } \Psi_{ \hat{\phi} \hat{\th} }   )  \\
&=& \frac{1}{4}  \der_{t} ( \Psi_{t \hat{\th}} - \Psi_{r^{*} \hat{ \th}}  ) + \frac{1}{4}  (  - \der_{ t }  \Psi_{ r^{*} \hat{\th}  } + \der_{t } \Psi_{t \hat{\th} }  + \der_{ \hat{\th} }  \Psi_{r^{*} t }   -  (1-\mu) \der_{\hat{\phi} } \Psi_{ \hat{\phi} \hat{\th} }   ) \\
&=& \frac{1}{2} \der_{t } ( \Psi_{t \hat{\th} } -  \Psi_{r^{*} \hat{\th}}  )  + \frac{(1-\mu)}{4}  (  \der_{ \hat{\th} }  \Psi_{\hat{r^{*}} \hat{t} }   -   \der_{\hat{\phi} } \Psi_{ \hat{\phi} \hat{\th} }   ) 
\eeaa
(using the field equations and the Bianchi identities).

Thus,
\beaa
\der_{w } \Psi_{w \hat{\th} } &=& \frac{1}{2} ( \der_{t }  \Psi_{t \hat{\th} } -  \der_{t } \Psi_{r^{*} \hat{\th}}  )  + \frac{(1-\mu)}{4}  (  \der_{ \hat{\th} }  \Psi_{\hat{r^{*}} \hat{t} }   -  \der_{\hat{\phi} } \Psi_{ \hat{\phi} \hat{\th} }   ) \\
&=& \frac{1}{2} ( \Lie_{t }  \Psi_{t \hat{\th} } - \frac{\mu}{2 r} \Psi_{r^{*} \hat{\th} }   -  \Lie_{t } \Psi_{r^{*} \hat{\th}}    + \frac{\mu}{2 r  } \Psi_{t \hat{\th}}  ) \\
&&  + \frac{(1-\mu)}{4}  (  \Lie_{ \hat{\th} }  \Psi_{\hat{r^{*}} \hat{t} } - \frac{\sqrt{1-\mu} }{r} \Psi_{\hat{\th} \hat{t} }   -  \Lie_{\hat{\phi} } \Psi_{ \hat{\phi} \hat{\th} }  -  \frac{\sqrt{1-\mu}}{r} \Psi_{ \hat{r^{*}} \hat{\th} }   ) \\
&=&  \Lie_{t }  \Psi_{w \hat{\th} }   + \frac{(1-\mu)}{4}  (   \Lie_{ \hat{\th} }  \Psi_{\hat{r^{*}} \hat{t} }   -  \Lie_{\hat{\phi} } \Psi_{ \hat{\phi} \hat{\th} }   )  +    \frac{\mu}{ 2 r } \Psi_{\hat{w} \hat{\th}}  +   \frac{(1-\mu)}{2r} \Psi_{ w \hat{\th} }    \\
&=&  \Lie_{t }  \Psi_{w \hat{\th} }   + \frac{(1-\mu)}{2}     \Lie_{ \hat{\th} }  \Psi_{\hat{v} \hat{w} }   - \frac{(1-\mu)}{4}  \Lie_{\hat{\phi} } \Psi_{ \hat{\phi} \hat{\th} }   +   \Psi_{w \hat{\th}}    
\eeaa

and similarly, we obtain,

\beaa
\der_{w } \Psi_{w \hat{\phi} } 
&=&  \Lie_{t }  \Psi_{w \hat{\phi} }   + \frac{(1-\mu)}{2}     \Lie_{ \hat{\phi} }  \Psi_{\hat{v} \hat{w} }   - \frac{(1-\mu)}{4}  \Lie_{\hat{\th} } \Psi_{ \hat{\th} \hat{\phi} }  +   \Psi_{ w \hat{\phi} }   \\
\eeaa

Therefore,
\beaa
\Lie_{w } \Psi_{\hat{w} \hat{\th} } &=&  (1-\mu) \Lie_{\hat{w} } \Psi_{\hat{w} \hat{\th} } = \frac{1}{(1-\mu)} \der_{w } \Psi_{w \hat{\th} } \\
&=& \Lie_{t }  \Psi_{\hat{w} \hat{\th} }   + \frac{1}{2}     \Lie_{ \hat{\th} }  \Psi_{\hat{v} \hat{w} }   - \frac{1}{4}  \Lie_{\hat{\phi} } \Psi_{ \hat{\phi} \hat{\th} }   +   \Psi_{\hat{w} \hat{\th}}     
\eeaa

Hence,

\beaa
| \Lie_{w } \Psi_{\hat{w} \hat{\th} } |^{2} &\les&   | \Lie_{t }  \Psi_{\hat{w} \hat{\th} } |^{2}  + |  \Lie_{ \hat{\th} }  \Psi_{\hat{v} \hat{w} } |^{2}  + |   \Lie_{\hat{\phi} } \Psi_{ \hat{\phi} \hat{\th} }  |^{2} + | \Psi_{\hat{w} \hat{\th}} |^{2}  \\
\eeaa
(by using $a.b \les a^{2} + b^{2}$)\\

We get,
\beaa
| \Lie_{w } \Psi_{\hat{w} \hat{\th} } |^{2}
&\les&   | \Lie_{t }  \Psi_{\hat{w} \hat{\th} } |^{2}  + |  \rLie  \Psi_{\hat{v} \hat{w} } |^{2}  + |   \rLie  \Psi_{ \hat{\th} \hat{\phi} }  |^{2} + | \Psi_{\hat{w} \hat{\th}} |^{2}\\
&\les&   | \Lie_{t }  \Psi_{\hat{w} \hat{\th} } |^{2}  + |  r{\rLie}  \Psi_{\hat{v} \hat{w} } |^{2}  + |   r{\rLie}  \Psi_{ \hat{\th} \hat{\phi} }  |^{2} + | \Psi_{\hat{w} \hat{\th}} |^{2} \\
\eeaa
(in the region $0 < 2m \leq r \leq R$).\\

We also have,

\beaa
\frac{\pa}{\pa w} \sqrt{(1-\mu)} &=& ( \frac{1}{2} \frac{\pa }{\pa t} - \frac{1}{2} \frac{\pa }{\pa r^{*}} ) \sqrt{(1-\mu)} = - \frac{(1-\mu)}{2} \frac{\pa }{\pa r} \sqrt{(1-\mu)} =  - \frac{(1-\mu)}{2} \frac{\mu}{2 r \sqrt{(1-\mu)} } \\
&=&  - \frac{\mu  }{4 r  }  \sqrt{(1-\mu)}
\eeaa

Thus,

\beaa
&& | \sqrt{(1-\mu)} F_{ \hat{w}\hat{\th} } (v, w, \om ) |^{2}   \\
&\lesssim&   \int_{ \overline{w} = w_{0} }^{ \overline{w} = \infty } \int_{\S^{2}} ( | \sqrt{(1-\mu)} F_{ \hat{w}\hat{\th} } |^{2} + |  \Lie_{w} (  \sqrt{(1-\mu)} F_{ \hat{w}\hat{\th} } ) |^{2}  + | \rLie ( \sqrt{(1-\mu)} F_{ \hat{w}\hat{\th} } ) |^{2}  \\
&& +  | \rLie \Lie_{w} ( \sqrt{(1-\mu)} F_{ \hat{w}\hat{\th} } ) |^{2}  +  | (\rLie)^{2}  (\sqrt{(1-\mu)} F_{ \hat{w}\hat{\th} } ) |^{2} \\
&& + | (\rLie)^{2}  \Lie_{w}  (\sqrt{(1-\mu)} F_{ \hat{w} \hat{\th} } ) |^{2}  ) d\sigma^{2} d\overline{w}\\
&\lesssim&   \int_{ \overline{w} = w_{0} }^{ \overline{w} = \infty } \int_{\S^{2}} (1-\mu) ( | F_{ \hat{w}\hat{\th} } |^{2} + |  \Lie_{w}   F_{ \hat{w}\hat{\th} } |^{2}  + | r{\rLie} F_{ \hat{w}\hat{\th} } |^{2}  \\
&& +  | r{\rLie} \Lie_{w} F_{ \hat{w}\hat{\th} } |^{2}  +  | r^{2}(\rLie)^{2}  F_{ \hat{w}\hat{\th} } |^{2}  + | r^{2}(\rLie)^{2}  \Lie_{w}  F_{ \hat{w} \hat{\th} } |^{2}  ) d\sigma^{2} d\overline{w}\\
&\les& - F_{F, r{\rLie} F, r^{2}(\rLie)^{2} F, \Lie_{t } F, r{\rLie} \Lie_{t } F, r^{2}(\rLie)^{2} \Lie_{t } F, r^{3}(\rLie)^{3}   F }^{(H)} ( v  ) ( w_{0} \leq w \leq \infty ) \\
&\les& \sum_{j=0}^{2} \sum_{i=0}^{1} - F_{ r^{j}(\rLie)^{j} (\Lie_{t})^{i} F}^{(H)} ( v  ) ( w_{0} \leq w \leq \infty )  - F_{ r^{3}(\rLie)^{3}  F }^{(H)} ( v  ) ( w_{0} \leq w \leq \infty ) \\
\eeaa

From estimate 6,

\beaa
 - F_{\Psi}^{(H)} ( v  ) ( w_{0} \leq w \leq \infty )  &\lesssim &  \frac{ [ |E_{\Psi}^{  (\frac{\pa}{\pa t} )} | + E_{\Psi}^{ \# (\frac{\pa}{\pa t} )} ( t= t_{0} ) + E_{\Psi}^{M} ] }{ v_{+}^{2}  }  \\
\eeaa

As
\beaa
E^{M}_{\Psi} &=& \sum_{j=0}^{3} E_{ r^{j}(\rLie)^{j} \Psi  }^{(\frac{\pa}{\pa t})} (t=t_{0})    + \sum_{j=0}^{2}  E_{ r^{j}(\rLie)^{j} \Psi  }^{(K)} (t=t_{0}) 
\eeaa

we have,

\beaa
|\sqrt{(1-\mu)} F_{ \hat{w}\hat{\th} } (v, w, \om ) | &\lesssim &  \frac{ E_{2}  }{  v_{+} }  
\eeaa

and similarly,

\beaa
| \sqrt{(1-\mu)}  F_{ \hat{w}\hat{\phi} } (v, w, \om ) | &\lesssim &  \frac{ E_{2}  }{ v_{+}   }  \\
\eeaa

\end{proof}

\section{Appendix: The Schwarzschild Space-Time and Black Holes}\

General Relativity postulates that space-time is a 4-dimensional Lorentzian manifold $(M, \g)$ that satisfies the Einstein equations,

\[R_{\mu\nu} - \frac{1}{2} \g_{\mu\nu}R = 8\pi T_{\mu\nu}\]

where $T_{\mu\nu}$ is a symmetric 2-tensor on M that is the stress-energy-momentum tensor of matter. In vacuum $T_{\mu\nu} = 0$, thus the Einstein vacuum equations are $R_{\mu\nu} - \frac{1}{2} \g_{\mu\nu}R = 0$. In vacuum, this yields to $R_{\mu\nu} = \frac{1}{2} \g_{\mu\nu}R$ and since $R = {R^{i}}_{i}$ by definition, we get $R = \g^{ij} R_{ij} = \frac{1}{2} \g^{ij} \g_{ij}R = \frac{4}{2}R = 2 R$. This means that in vacuum $R = 0$ and therefore the Einstein vacuum equations can be written as the following:
\bea
R_{\mu\nu} = 0 \label{Einsteinvacuumequations}
\eea

The simplest solution of the Einstein vacuum equations is the 4-dimensional Minkowski metric, it represents a flat space-time. The first black hole solution of the Einstein vacuum equations was discovered by Karl Schwarzschild about a month after the publication of Einstein's theory of General Relativity. However, it took nearly 50 years from then for it to be fully understood as a black hole space-time. When it was first discovered, it was to represent the gravitational field outside a spherical, uncharged, non-rotating star with mass $m$ and was written under the form
\bea
ds^{2} = -(1 - \frac{2m}{r})dt^{2} + \frac{1}{(1 - \frac{2m}{r})}dr^{2} + r^{2}(d\th^{2} + sin^{2}\th d\phi^{2}) \label{eq:Schw}
\eea

It turned out that this solution could be extended, as solutions to the Einstein vacuum equations \eqref{Einsteinvacuumequations}, to describe the gravitational field inside the star, created by the mass of the star, and thought of as being there in vacuum, i.e. without the matter inside the star. The extended Schwarzschild solution is what became to be known as a black hole space-time. It is also good to note that according to Birkhoff's theorem, any spherically symmetric solution of the Einstein vacuum equations is locally isometric to the Schwarzschild solution. In this sense the Schwarzschild solution is unique although it can have a different form in a different system of coordinates.

\subsection{The extended Schwarzschild solution}\

To obtain the extended Schwarzschild solution, as explained in [HE], first let,
\bea
r^{*} = \int \frac{dr}{(1 - \frac{2m}{r})} = r + 2m\log(r - 2m)
\eea
We have,
\beaa
dr^{*} &=& dr + \frac{2m}{r - 2m}dr =  (1 + \frac{1}{\frac{r}{2m} - 1} ) dr =  (\frac{\frac{r}{2m}}{\frac{r}{2m} - 1}) dr\\
&=& (\frac{1}{1 - \frac{2m}{r}} ) dr
\eeaa

Thus, the Schwarzschild space-time in the exterior \eqref{eq:Schw} can be also written as:
\bea
ds^{2} = - (1 - \frac{2m}{r})dt^{2} + (1 - \frac{2m}{r})dr^{*2} + r^{2}d\sigma^{2} \label{extendedSchwarzschildwithrstar}
\eea
where $d\sigma^{2}$ is the usual metric on the unit sphere.\

Let,
\bea
v &=& t + r^{*} \label{v} \\
w &=& t - r^{*} \label{w}
\eea
We have
\beaa
t &=& \frac{v + w}{2} \\
dt &=& \frac{dv + dw}{2} \\
(dt)^{2} &=& \frac{dv^2}{4} + \frac{dw^2}{4} + \frac{dvdw}{2}  = \frac{dv^2}{4} + \frac{dw^2}{4} + \frac{dv \bigotimes dw}{4} + \frac{dw \bigotimes dv}{4} \\
\eeaa
Injecting in \eqref{extendedSchwarzschildwithrstar}, we get
\begin{eqnarray*}
ds^{2} &=& - (1 - \frac{2m}{r})[ \frac{dv^2}{4} + \frac{dw^2}{4} + \frac{dvdw}{2}] + (1 - \frac{2m}{r}) (dr^{*})^{2} + r^{2}d\sigma^{2}
\end{eqnarray*}
We also have
\beaa
 r^{*} &=& \frac{v - w}{2} \\
(dr^{*})^2 &=& (\frac{dv - dw}{2})^2 =  \frac{dv^2}{4} + \frac{dw^2}{4} - \frac{dvdw}{2} 
\eeaa

Therefore,
\beaa
ds^{2} &=&  - (1 - \frac{2m}{r})[ \frac{dv^2}{4} + \frac{d\om^2}{4} + \frac{dvdw}{2}] + (1 - \frac{2m}{r})^{-1}(1 - \frac{2m}{r})^{2} [\frac{dv^2}{4} + \frac{d\om^2}{4} - \frac{dvdw}{2}] + r^{2}d\sigma^{2}
\eeaa
Thus,
\bea
\notag
ds^{2} &=&  - (1 - \frac{2m}{r})dv dw + r^{2}d\sigma^{2} \\
&=&  - \frac{(1 - \frac{2m}{r})}{2} dv\bigotimes dw - \frac{(1 - \frac{2m}{r})}{2} dw \bigotimes dv + r^{2}d\sigma^{2}
\eea

\subsubsection{Kruskal coordinates}\

Let $v^{'} = v^{'}(v)$,   $w^{'} = w^{'}(w)$ where $v^{'}, w^{'}$ are arbitrary $C^{1}$ functions.\
$$dv = \frac{dv}{dv^{'}}dv^{'}$$ $$d\om = \frac{d\om}{d\om^{'}}d\om^{'}$$
\bea
ds^{2} =  - (1 - \frac{2m}{r})  \frac{dv}{dv^{'}} \frac{d\om}{d\om^{'}} dv^{'} d\om^{'} + r^{2}d\sigma^{2} \label{newformofthemetricwithdifferentiableprimefunctions}
\eea
Define,
\bea
x^{'} &=& \frac{v^{'} - w^{'}}{2} \\
t^{'} &=& \frac{v^{'} + w^{'}}{2} \label{Schwarzschildtime}
\eea
We get $$v^{'} = t^{'} + x^{'}$$ $$w^{'} = t^{'} - x^{'}$$
and $$dv^{'} = dt^{'} + dx^{'}$$
$$d\om^{'} = dt^{'} - dx^{'}$$

Thus,
\beaa
dv^{'}d\om^{'} &=& (dt^{'} + dx^{'})(dt^{'} - dx^{'}) = (dt^{'})^2 - dt^{'}dx^{'} + dx^{'}dt^{'} - (dx^{'})^{2} = (dt^{'})^{2} - (dx^{'})^{2} \\
&=& - ( - (dt^{'})^{2} + (dx^{'})^{2})
\eeaa
Let,
$$F^{2} = - ( - (1-\frac{2m}{r})\frac{dv}{dv^{'}}\frac{dw}{dw^{'}}) = (1-\frac{2m}{r}) \frac{dv}{dv^{'}} \frac{dw}{dw^{'}}$$
From \eqref{newformofthemetricwithdifferentiableprimefunctions}, we have
\beaa
(ds^{'})^{2} = F^{2}(t^{'}, x^{'}) ( - (dt^{'})^{2} + (dx^{'})^{2}) + r^{2}d\sigma^{2}
\eeaa

Kruskal's choice is:
\bea
v^{'} &=& \mbox{exp} (\frac{v}{4m}) \\
w^{'} &=& - \mbox{exp}(-\frac{w}{4m})
\eea
\beaa
t^{'} &=& \frac{1}{2}(v^{'} + w^{'}) \\
(t^{'})^{2} &=& \frac{1}{4}[(v^{'})^{2} + (w^{'})^{2} + 2v^{'}w^{'}] = \frac{1}{4}( \e(\frac{v}{2m}) + \e(- \frac{w}{2m}) + -2\e(\frac{v-w}{4m})) \\
x^{'} &=& \frac{1}{2}(v^{'} - w^{'}) \\
(x^{'})^{2} &=& \frac{1}{4}( \e(\frac{v}{2m}) + \e(- \frac{w}{2m}) + 2\e(\frac{v-w}{4m}))
\eeaa
Computing,
$$(t^{'})^{2} - (x^{'})^{2} = - \frac{2}{4}( \e(\frac{v-w}{2m}) + \e(\frac{v-w}{2m}) ) = - \frac{2}{2} \e(\frac{v-w}{4m}) = - \e(\frac{v-w}{4m}) $$

$$v - w = 2r^{*} = 2r + 4m\log(r - 2m)$$
Thus,
\bea
(t^{'})^{2} - (x^{'})^{2} = - \e(\frac{r}{2m}).(r- 2m)  \label{relationx't'andr}
\eea

Computing,
\beaa
\frac{dv^{'}}{dv} &=& \frac{1}{4m}\e(\frac{v}{4m}), \frac{dw^{'}}{dw} = \frac{1}{4m} \e(- \frac{w}{4m}) \\
\frac{dv}{dv^{'}} &=& 4m.\e(- \frac{v}{4m}), \frac{dw}{dw^{'}} = 4m.\e(\frac{w}{4m}) \\
\frac{dv}{dv^{'}} \frac{dw}{dw^{'}} &=& 16.m^{2} \e(\frac{w - v}{4m})
\eeaa
Therefore,
\begin{eqnarray*}
F^{2} &=& (1 - \frac{2m}{r}) 16.m^{2} \e(-\frac{2r^{*}}{4m})\\
&=& \frac{(r - 2m)}{r} 16.m^{2} \e(-\frac{2}{4m} (r + 2m\log(r - 2m)))\\
&=& \frac{(r - 2m)}{r} 16.m^{2} \e(- \frac{2r}{4m}) \e(- \log(r - 2m)) \\
& =& \frac{1}{r}.16.m^{2} \e(- \frac{r}{2m}) \\
&=& \frac{16.m^{2}}{r}. \e(-\frac{r}{2m}) 
\end{eqnarray*}

Finally, we obtain,
\bea
ds^{2} = \frac{16m^{2}}{r} \e(-\frac{r}{2m}) ( - (dt^{'})^{2} + (dx^{'})^{2} ) + r^{2}(t^{'}, x^{'}) d\sigma^{2}
\eea

\subsection{The Penrose diagram}\

The Penrose diagram, see [HE], is constructed by taking,
\bea
v^{''} &=& \arctan(\frac{v^{'}}{2m}) \\
w^{''} &=& \arctan(\frac{w^{'}}{2m}) \\
 -\pi &< & v^{''} + w^{''} < \pi\\
 -\frac{\pi}{2} &<& v^{''} < \frac{\pi}{2}\\
-\frac{\pi}{2} & <& w^{''} < \frac{\pi}{2}
\eea

\subsection{The compatible symmetric connection}   \label{The compatible symmetric connection} \

Computing the Christoffel symbols for the Schwarzschild metric, we have
\beaa
\Ga_{k l}^{i} = \frac{1}{2} \g^{im} (\frac{\pa \g_{mk}}{\pa x^{l}} + \frac{\pa \g_{ml}}{\pa x^{k}} - \frac{\pa \g_{kl}}{\pa x^{m}} )
\eeaa
Hence,
\beaa
\Ga_{\th \th}^{i} &=& \frac{1}{2} \g^{im} (\frac{\pa \g_{m\th}}{\pa x^{\th}} + \frac{\pa \g_{m\th}}{\pa x^{\th}} - \frac{\pa \g_{\th\th}}{\pa x^{m}} ) 
\eeaa
We get
\beaa
\Ga_{\th \th}^{\th} &=& \frac{1}{2} \g^{\th\th} ( 2 \frac{\pa \g_{\th\th}}{\pa x^{\th}} - \frac{\pa \g_{\th\th}}{\pa x^{\th}} ) \\
&=& 0 \\
\Ga_{\th \th}^{r} &=& \frac{1}{2} \g^{rr} ( - \frac{\pa \g_{\th\th}}{\pa x^{r}} ) \\
&=& \frac{1}{2} (1-\mu)  ( - \frac{\pa r^{2} }{\pa r } ) \\
&=& \frac{1}{2} (1-\mu)  ( - 2 r  ) \\
&=& - (1-\mu) r  \\
\Ga_{\th \th}^{t}&=& \Ga_{\th \th}^{\phi}     = 0
\eeaa
Consequently,
\bea
\notag
\der_{\th} \frac{\pa}{\pa \th} &=& - (1-\mu) r \frac{\pa}{\pa r}  \\
&=& - r \frac{\pa}{\pa r^{*}}  
\eea
We have,
\beaa
\Ga_{\phi \phi}^{i} = \frac{1}{2} \g^{im} ( 2 \frac{\pa \g_{m\phi}}{\pa x^{\phi}} - \frac{\pa \g_{\phi\phi}}{\pa x^{m}} )
\eeaa
therefore
\beaa
\Ga_{\phi \phi}^{r} &=& \frac{1}{2} \g^{rr} ( 2 \frac{\pa \g_{m\phi}}{\pa x^{\phi}} - \frac{\pa \g_{\phi\phi}}{\pa x^{r}} )  \\
&=&  \frac{1}{2} \g^{rr} ( - \frac{\pa \g_{\phi\phi}}{\pa x^{r}} ) \\
&=&  \frac{1}{2} (1-\mu) ( - 2 r \sin^{2}(\th) ) \\
&=& - r (1-\mu) \sin^{2}(\th) \\
\Ga_{\phi \phi}^{\th} &=& \frac{1}{2} \g^{\th\th} ( - \frac{\pa \g_{\phi\phi}}{\pa x^{\th}} ) \\
&=&  \frac{1}{2 r^{2}} \g^{im} ( - r^{2} \frac{\pa \sin^{2}(\th) }{\pa \th } ) \\
&=&  \frac{1}{2} (-2 \sin(\th) \cos(\th)) \\
&=& - \sin(\th) \cos(\th)
\eeaa
We obtain
\bea
\notag
\der_{\phi} \frac{\pa}{\pa \phi} &=& - (1-\mu) r \sin^{2}(\th) \frac{\pa}{\pa r} - \sin(\th) \cos(\th) \frac{\pa}{\pa \th}  \\
&=&  - r \sin^{2}(\th) \frac{\pa}{\pa r^{*}} - \sin(\th) \cos(\th) \frac{\pa}{\pa \th}
\eea
We also have
\beaa
\Ga_{\th \phi}^{i} &=& \frac{1}{2} \g^{im} (\frac{\pa \g_{m \th}}{\pa x^{\phi}} + \frac{\pa \g_{m \phi}}{\pa x^{\th}} - \frac{\pa \g_{\th \phi}}{\pa x^{m}} ) \\
&=& \frac{1}{2} \g^{im} (  \frac{\pa \g_{m \phi}}{\pa x^{\th}}  ) \\
\eeaa
hence
\beaa
\Ga_{\th \phi}^{\phi} &=& \frac{1}{2} \g^{\phi\phi} (  \frac{\pa \g_{\phi \phi}}{\pa x^{\th}}  ) \\
&=&  \frac{1}{2 r^{2} \sin^{2}(\th) } (  \frac{\pa r^{2} \sin^{2}(\th) }{\pa \th}  )  \\
&=&  \frac{1}{2 \sin^{2}(\th) } (  2 \sin(\th) cos(\th)  ) \\
&=& \frac{\cos(\th)}{\sin(\th)}  
\eeaa
We get
\bea
\der_{\th} \frac{\pa}{\pa \phi} &=&   \frac{\cos(\th)}{\sin(\th)} \frac{\pa}{\pa \phi}
\eea
We also compute
\beaa
\Ga_{\th t}^{i} &=& \frac{1}{2} \g^{im} (\frac{\pa \g_{m \th}}{\pa x^{t}} + \frac{\pa \g_{m t}}{\pa x^{\th}} - \frac{\pa \g_{\th t}}{\pa x^{m}} ) \\
&=& 0
\eeaa
from which we derive
\bea
\der_{\th} \frac{\pa}{\pa t} = 0
\eea

Similarly,
\beaa
\Ga_{\phi t}^{i} &=& 0 
\eeaa
gives
\bea
\der_{\phi} \frac{\pa}{\pa t} = 0
\eea
Computing
\beaa
\Ga_{tt}^{i} &=& \frac{1}{2} \g^{im} (\frac{\pa \g_{mt}}{\pa x^{t}} + \frac{\pa \g_{mt}}{\pa x^{t}} - \frac{\pa \g_{tt}}{\pa x^{m}} ) \\
&=&  \frac{1}{2} \g^{im} (  - \frac{\pa \g_{tt}}{\pa x^{m}} ) 
\eeaa
\beaa
\Ga_{tt}^{r} &=& \frac{1}{2} \g^{rr} (  - \frac{\pa \g_{tt}}{\pa x^{r}} )  \\
&=&    \frac{1}{2} (1-\mu) (   \frac{\pa (1-\mu) }{\pa r } ) \\
&=&    \frac{\mu (1-\mu) }{2r } 
\eeaa
we derive
\bea
\notag
\der_{t} \frac{\pa}{\pa t} &=& \frac{\mu (1-\mu) }{2r } \frac{\pa}{\pa r} \\
&=& \frac{\mu  }{2r } \frac{\pa}{\pa r^{*}} 
\eea
Computing
\beaa
\Ga_{r^{*} r^{*}}^{i} &=& \frac{1}{2} \g^{im} (\frac{\pa \g_{mr^{*}}}{\pa x^{r^{*}}} + \frac{\pa \g_{mr^{*}}}{\pa x^{r^{*}}} - \frac{\pa \g_{r^{*}r^{*}}}{\pa x^{m}} ) \\
\Ga_{r^{*} r^{*}}^{r^{*}} &=& \frac{1}{2} \g^{r^{*}r^{*}} ( 2 \frac{\pa \g_{r^{*}r^{*}}}{\pa x^{r^{*}}}  - \frac{\pa \g_{r^{*}r^{*}}}{\pa x^{r^{*}}} )  \\
&=& \frac{1}{2 (1-\mu) } ( 2 \frac{\pa (1-\mu) }{\pa r^{*} }  - \frac{\pa (1-\mu) }{\pa r^{*} } ) \\
&=& \frac{1}{ 2(1-\mu) } (  \frac{\pa (1-\mu) }{\pa r^{*} }   ) \\
&=& \frac{1}{ 2 } (  \frac{\pa (1-\mu) }{\pa r }   ) \\
&=& \frac{\mu}{ 2 r}  
\eeaa
we obtain
\bea
\der_{r^{*}} \frac{\pa  }{\pa r^{*} } &=& \frac{\mu}{ 2 r}   \frac{\pa  }{\pa r^{*} }   
\eea
Computing
\beaa
\Ga_{t r^{*}}^{i} &=& \frac{1}{2} \g^{im} (\frac{\pa \g_{mt}}{\pa x^{r^{*}}} + \frac{\pa \g_{mr^{*}} }{\pa x^{t}} - \frac{\pa \g_{tr^{*}}}{\pa x^{m}} ) \\
&=&  \frac{1}{2} \g^{it} (\frac{\pa \g_{tt}}{\pa x^{r^{*}}} )  \\
\Ga_{t r^{*}}^{t}  &=&  \frac{1}{2} \g^{tt} (\frac{\pa \g_{tt}}{\pa x^{r^{*}}} )  \\
&=& \frac{1}{2(1-\mu)}  (\frac{\pa (1-\mu) }{\pa r^{*} } )  \\
&=& \frac{1}{2 }  (\frac{\pa (1-\mu) }{\pa r } )  \\
&=& \frac{\mu}{2 r}  
\eeaa
we get
\bea
\der_{t} \frac{\pa  }{\pa r^{*} } &=& \frac{\mu}{ 2 r}   \frac{\pa  }{\pa t }   
\eea
We have
\beaa
\Ga_{\th r^{*}}^{i} &=& \frac{1}{2} \g^{im} (\frac{\pa \g_{m\th}}{\pa x^{r^{*}}} + \frac{\pa \g_{mr^{*}}}{\pa x^{\th}} - \frac{\pa \g_{\th r^{*}}}{\pa x^{m}} ) \\
&=& \frac{1}{2} \g^{im} (\frac{\pa \g_{m\th}}{\pa r^{*}} ) \\
\Ga_{\th r^{*}}^{\th} &=& \frac{1}{2} \g^{\th\th} (\frac{\pa \g_{\th\th}}{\pa r^{*}} ) \\
&=&  \frac{1}{2 r^{2}} (\frac{\pa r^{2} }{\pa r^{*}} ) \\
&=&  \frac{(1-\mu)}{2 r^{2}} (\frac{\pa r^{2} }{\pa r} ) \\
&=& \frac{(1-\mu)}{2 r^{2}} (2 r ) \\
&=& \frac{(1-\mu)}{ r }  
\eeaa
therefore,
\bea
\der_{\th} \frac{\pa}{\pa r^{*} }  &=& \frac{(1-\mu)}{ r }   \frac{\pa}{\pa \th }
\eea
Computing
\beaa
\Ga_{\phi r^{*}}^{i} &=& \frac{1}{2} \g^{im} (\frac{\pa \g_{m\phi}}{\pa x^{r^{*}}} + \frac{\pa \g_{mr^{*}}}{\pa x^{\phi}} - \frac{\pa \g_{\phi r^{*}}}{\pa x^{m}} ) \\
&=& \frac{1}{2} \g^{im} (\frac{\pa \g_{m\phi}}{\pa r^{*}} ) \\
\Ga_{\phi r^{*}}^{\phi} &=& \frac{1}{2} \g^{\phi\phi} (\frac{\pa \g_{\phi\phi}}{\pa r^{*}} ) \\
&=&  \frac{1}{2 r^{2} \sin^{2}(\th) } (\frac{\pa r^{2} \sin^{2}(\th)  }{\pa r^{*}} ) \\
&=&  \frac{(1-\mu)}{2 r^{2}} (\frac{\pa r^{2} }{\pa r} ) \\
&=& \frac{(1-\mu)}{2 r^{2}} (2 r ) \\
&=& \frac{(1-\mu)}{ r }  
\eeaa

we obtain

\bea
\der_{\phi} \frac{\pa}{\pa r^{*} }  &=& \frac{(1-\mu)}{ r }   \frac{\pa}{\pa \phi }
\eea

By letting,
\bea
\notag
\hat{\frac{\pa}{\pa t }}  &=&  \frac{1}{\sqrt{(1-\mu)}} \frac{\pa}{\pa t}\\
\hat{\frac{\pa}{\pa r^{*}}}  &=&  \frac{1}{\sqrt{(1-\mu)}}  \frac{\pa}{\pa r^{*}}
\eea

\bea
\notag
\hat{\frac{\pa}{\pa \th}}  &=&  \frac{1}{r} \frac{\pa}{\pa \th} \\
\hat{\frac{\pa}{\pa \phi}}  &=& \frac{1}{r \sin \th }  \frac{\pa}{\pa \phi}
\eea

we can compute,

\bea
\notag
\der_{\hat{\th}} \hat{\frac{\pa}{\pa \th}} &=& \frac{1}{r} \der_{\th} \hat{\frac{\pa}{\pa \th}} = \frac{1}{r^{2}} \der_{\th} \frac{\pa}{\pa \th} = - \frac{1}{ r} \frac{\pa}{\pa r^{*}} \\
&=&   - \frac{\sqrt{(1-\mu)}}{ r } \hat{\frac{\pa}{\pa r^{*}}}
\eea

\bea
\notag
\der_{\hat{\phi}} \hat{\frac{\pa}{\pa \phi}} &=& \frac{1}{r^{2} \sin^{2}(\th) } \der_{\phi} \frac{\pa}{\pa \phi} \\
&=&  - \frac{ \sqrt{(1-\mu)} }{r    }  \hat{\frac{\pa}{\pa r^{*} }} - \frac{\cos(\th) }{r \sin(\th) }\hat{\frac{\pa}{\pa \th }}
\eea

\bea
\notag
\der_{\hat{\th}}\hat{\frac{\pa}{\pa \phi }} &=& \frac{1}{r } \der_{\th}\hat{\frac{\pa}{\pa \phi }} = \frac{1}{r^{2} } \der_{\th} (\frac{1}{\sin(\th) } \frac{\pa}{\pa \phi }  ) \\
\notag
&=& - \frac{\cos(\th) }{r^{2} \sin^{2}(\th) }   \frac{\pa}{\pa \phi }  +  \frac{1}{r^{2} \sin(\th) } \der_{\th}  \frac{\pa}{\pa \phi }  \\
\notag
&=& - \frac{\cos(\th) }{r^{2} \sin^{2}(\th) }   \frac{\pa}{\pa \phi }   + \frac{\cos(\th)}{r^{2}\sin^{2}(\th)} \frac{\pa}{\pa \phi} \\
&=& 0
\eea

\bea
\notag
\der_{\hat{\phi}}\hat{\frac{\pa}{\pa \th }} &=& \frac{1}{r \sin(\th)  } \der_{\phi}\hat{\frac{\pa}{\pa \th }} = \frac{1}{r^{2} \sin(\th)  } \der_{\phi}  \frac{\pa}{\pa \th }   \\
\notag
&=& \frac{1}{r^{2} \sin(\th)  }  \frac{\cos(\th)}{\sin(\th)} \frac{\pa}{\pa \phi}  \\
&=& \frac{1}{r   }  \frac{\cos(\th)}{\sin(\th)} \hat{\frac{\pa}{\pa \phi}}
\eea

\bea
\notag
\der_{\hat{\th}}\hat{\frac{\pa}{\pa t }} &=& \frac{1}{r\sqrt{(1-\mu)} }  \der_{\th} \frac{\pa}{\pa t }  \\
&=& 0
\eea

\bea
\notag
\der_{\hat{\phi} }\hat{\frac{\pa}{\pa t }} &=& \frac{1}{r \sin(\th) \sqrt{(1-\mu)} }  \der_{\phi } \frac{\pa}{\pa t } \\
&=& 0
\eea

\bea
\notag
\der_{\hat{t}} \hat{\frac{\pa}{\pa t }} &=& \frac{1 }{(1-\mu) } \der_{t} \frac{\pa}{\pa t} \\
&=& \frac{\mu  }{2r \sqrt{(1-\mu)}  } \hat{\frac{\pa}{\pa r^{*} }} 
\eea

\bea
\notag
\der_{\hat{r^{*}}} \hat{\frac{\pa}{\pa r^{*}}}  &=&  \frac{1}{\sqrt{(1-\mu)} } \der_{r^{*}} \hat{\frac{\pa}{\pa r^{*}}}  = \frac{1}{\sqrt{(1-\mu)} } \der_{r^{*}} (\frac{1}{\sqrt{(1-\mu)} }   \frac{\pa }{\pa r^{*} } ) \\
\notag
&=& \sqrt{(1-\mu)}  \der_{r} (\frac{1}{\sqrt{(1-\mu)} })   \frac{\pa }{\pa r^{*} } + \frac{1}{(1-\mu) } \der_{r^{*}}  \frac{\pa }{\pa r^{*} }   \\  
\notag
&=& \sqrt{(1-\mu)}  \frac{(-\mu)}{2r(1-\mu)^{\frac{3}{2} }}   \frac{\pa }{\pa r^{*} } + \frac{1}{(1-\mu) } \der_{r^{*}}  \frac{\pa }{\pa r^{*} }   \\ 
\notag
&=&  \frac{-\mu}{2r(1-\mu)}   \frac{\pa }{\pa r^{*} } + \frac{1}{(1-\mu) } \frac{\mu}{ 2 r}   \frac{\pa  }{\pa r^{*} }   \\ 
&=&  0
\eea

\bea
\notag
\der_{\hat{t}} \hat{\frac{\pa}{\pa r^{*}}}  &=& \frac{1}{(1-\mu)} \der_{t} \frac{\pa  }{\pa r^{*} } \\
&=& \frac{\mu}{ 2 r \sqrt{(1-\mu)} }   \hat{\frac{\pa  }{\pa t}}   
\eea

\bea
\notag
\der_{\hat{r^{*}}} \hat{\frac{\pa  }{\pa t}}  &=& \frac{1}{\sqrt{(1-\mu)}}   \der_{r^{*}} ( \frac{1}{\sqrt{(1-\mu)}} \frac{\pa  }{\pa t } )\\
\notag
&=&   \frac{1}{\sqrt{(1-\mu)}}   \der_{r^{*}} ( \frac{1}{\sqrt{(1-\mu)}} ) \frac{\pa  }{\pa t }  + \frac{1}{(1-\mu)} \der_{t} \frac{\pa  }{\pa r^{*} }  \\
\notag
&=&   \sqrt{(1-\mu)}   \frac{(-\mu)}{2r(1-\mu)^{\frac{3}{2} }} \frac{\pa  }{\pa t }  + \frac{1}{(1-\mu)} \frac{\mu}{ 2 r}   \frac{\pa  }{\pa t }  \\
\notag
&=&  \frac{-\mu}{2r(1-\mu)}   \frac{\pa }{\pa t } + \frac{1}{(1-\mu) } \frac{\mu}{ 2 r}   \frac{\pa  }{\pa t }   \\ 
&=&  0
\eea

\bea
\notag
\der_{\hat{\th}} \hat{\frac{\pa}{\pa r^{*} }}  &=& \frac{1}{r \sqrt{(1-\mu)} } \der_{\th} \frac{\pa}{\pa r^{*} } \\
&=& \frac{\sqrt{(1-\mu)}}{ r  }  \hat{\frac{\pa}{\pa \th }}
\eea

\bea
\notag
\der_{\hat{r^{*}}}\hat{\frac{\pa}{\pa \th }}  &=& \frac{1}{ \sqrt{(1-\mu)} } \der_{r^{*}} (\frac{1}{r} \frac{\pa}{\pa \th } ) \\
\notag
&=& \frac{1}{ \sqrt{(1-\mu)} } \der_{r^{*}} (\frac{1}{r} ) \frac{\pa}{\pa \th }  +  \frac{1}{r \sqrt{(1-\mu)} } \der_{\th} \frac{\pa}{\pa r^{*} } \\
\notag
&=& \sqrt{(1-\mu)}  \der_{r} (\frac{1}{r} ) \frac{\pa}{\pa \th }  + \frac{\sqrt{(1-\mu)}}{ r  }  \hat{\frac{\pa}{\pa \th }} \\
\notag
&=& \frac{-\sqrt{(1-\mu)} }{r^{2}}  \frac{\pa}{\pa \th }  + \frac{\sqrt{(1-\mu)}}{ r^{2}  }   \frac{\pa}{\pa \th } \\
&=& 0
\eea

\bea
\notag
\der_{\hat{\phi}} \hat{\frac{\pa}{\pa r^{*} }}   &=& \frac{1}{r \sin(\th) \sqrt{(1-\mu)}  } \der_{\phi} \frac{\pa}{\pa r^{*} }  \\
&=&  \frac{\sqrt{(1-\mu)}}{ r  }  \hat{\frac{\pa}{\pa \phi }}
\eea

\bea
\notag
\der_{\hat{r^{*}}}\hat{\frac{\pa}{\pa \phi }}   &=& \frac{1}{ \sqrt{(1-\mu)}  } \der_{r^{*}} ( \frac{1}{r\sin(\th)} \frac{\pa}{\pa \phi } )  \\
\notag
&=&  \sqrt{(1-\mu)}   \der_{r} ( \frac{1}{r\sin(\th)}) \frac{\pa}{\pa \phi }    + \frac{1}{ r\sin(\th) \sqrt{(1-\mu)}  } \der_{\phi}  \frac{\pa}{\pa r^{*} } \\
\notag
&=& \frac{ -\sqrt{(1-\mu)} }{ r^{2}\sin(\th)}  \frac{\pa}{\pa \phi }    + \frac{1}{ r\sin(\th) \sqrt{(1-\mu)}  } \der_{\phi}  \frac{\pa}{\pa r^{*} } \\
\notag
&=&  \frac{ -\sqrt{(1-\mu)} }{ r^{2}\sin(\th)}  \frac{\pa}{\pa \phi }   +   \frac{\sqrt{(1-\mu)}}{ r^{2} \sin(\th)  }   \frac{\pa}{\pa \phi } \\
&=& 0
\eea

\subsection{The deformation tensor}\

We start by evaluating
$$\der_{\frac{\partial}{\partial v}} \frac{\partial}{\partial w} = \Ga_{v w}^{i} e_{i}$$
We have,
$$\Ga_{k l}^{i} = \frac{1}{2} \g^{im} (\frac{\pa \g_{mk}}{\pa x^{l}} + \frac{\pa \g_{ml}}{\pa x^{k}} - \frac{\pa \g_{kl}}{\pa x^{m}} )$$
Computing,
$$\Ga_{v w}^{i} = \frac{1}{2} \g^{im} (\frac{\pa \g_{m v}}{\pa w} + \frac{\pa \g_{m w}}{\pa v} - \frac{\pa \g_{v w}}{\pa x^{m}} )$$
we have,
\begin{eqnarray*}
\Ga_{v w}^{v} &=& \frac{1}{2} \g^{vm} (\frac{\pa \g_{w v}}{\pa w} + 0 - \frac{\pa \g_{v w}}{\pa w} ) \\
&=& 0 \\
\Ga_{v w}^{w} &=& \frac{1}{2} \g^{w v} (\frac{\pa \g_{v v}}{\pa w} + \frac{\pa \g_{v w}}{\pa v} - \frac{\pa \g_{v w}}{\pa v} ) \\
&=& \frac{1}{2} \g^{w v}  (0 + \frac{\pa \g_{v w}}{\pa v} - \frac{\pa \g_{v w}}{\pa v} )\\
&=& 0 \\
\Ga_{v w}^{\th} &=&  0 \\
\Ga_{v w}^{\phi} &=& 0
\eeaa
Thus,
$$ \der_{v} \frac{\pa}{\pa w} = 0 $$
also means,
$$ \der^{w} \frac{\pa}{\pa w} = 0 $$
Now, we want to comupte
\begin{eqnarray*}
\Ga_{w w}^{i} &=& \frac{1}{2} \g^{im} (\frac{\pa \g_{m w}}{\pa w} + \frac{\pa \g_{m w}}{\pa w} - \frac{\pa \g_{w w}}{\pa x^{m}} ) \\
&=& \g^{im} \frac{\pa \g_{m w}}{\pa w}\\
\Ga_{v v}^{i} &=& \g^{im} \frac{\pa \g_{m v}}{\pa v}
\end{eqnarray*}
We have,
\beaa
\Ga_{w w}^{v} &=& \g^{v w} \frac{\pa \g_{w w}}{\pa w}\\
&=& 0 \\
\eeaa
and
\beaa
\Ga_{w w}^{w} &=& \g^{v w} \frac{\pa \g_{v w}}{\pa w}
\eeaa
We have,
$$\g_{v w} = - \frac{(1 - \frac{2m}{r} )}{2}$$
and
$$r^{*} = \frac{v - w}{2} $$
Computing,
\beaa
\frac{\pa r}{\pa w} &=& \frac{\pa r}{\pa r^{*}} \frac{\pa r^{*}}{\pa w} = \frac{-1}{2} (1 - \mu)
\eeaa
where we define $\mu = \frac{2m}{r} .$
Computing,
\beaa
\frac{\pa r}{\pa v} &=& \frac{\pa r}{\pa r^{*}} \frac{\pa r^{*}}{\pa v} = \frac{1}{2} (1 - \mu)
\eeaa

\beaa
\frac{\pa \g_{v w} }{\pa r} &=& - \frac{1}{2}\frac{\pa (1 - \frac{2m}{r} )}{\pa r} \\
&=& - \frac{m}{r^{2}}
\eeaa

Thus,
\beaa
\Ga_{w w}^{w} &=& \g^{v w} \frac{\pa \g_{v w}}{\pa w} =  - \frac{2}{1 - \mu} \frac{\pa \g_{v w}}{\pa r} \frac{\pa r}{\pa w}\\
&=& - \frac{1}{1 - \mu} \frac{-2m}{r^{2}} (-\frac{1}{2}(1-\mu)) = - \frac{m(1-\mu)}{r^{2}(1-\mu)}\\
&=& - \frac{m}{r^{2}} \\
\eeaa
We also have
\beaa
\Ga_{w w}^{\th} &=& \Ga_{w w}^{\phi}  = \Ga_{w w}^{v}  = 0 
\eeaa
hence,
\beaa
\der_{w} \frac{\pa}{\pa w} &=& - \frac{m}{r^{2}} \frac{\pa}{\pa w}
\eeaa
On the other hand,
\beaa
\Ga_{v v}^{v} &=& \g^{w v} (\frac{\pa \g_{w v}}{\pa v}) = - \frac{2}{1 - \mu} \frac{\pa \g_{v w}}{\pa r} \frac{\pa r}{\pa v}\\
&=& \frac{2m}{r^{2}(1-\mu)} \frac{\pa r}{\pa v} = \frac{m ( 1 - \mu)}{r^{2}(1-\mu)} \\
&=& \frac{m}{r^{2}}
\eeaa
We also have,
$$\Ga_{v v}^{w} = \g^{w v} (\frac{\pa \g_{vv}}{\pa v}) = 0$$
and,
\beaa
\Ga_{v v}^{\th} &=& \Ga_{v v}^{\phi}    = 0
\eeaa
from which 
\beaa
\der_{v} \frac{\pa}{\pa v} &=&  \frac{m}{r^{2}} \frac{\pa}{\pa v}
\eeaa
Computing,
\beaa
\Ga_{\th v}^{i} &=& \frac{1}{2} \g^{im} (\frac{\pa \g_{m \th}}{\pa x^{v}} + \frac{\pa \g_{m v}}{\pa \th} - \frac{\pa \g_{\th v}}{\pa x^{m}} )\\
&=& \frac{1}{2} \g^{im} \frac{\pa \g_{m \th}}{\pa x^{v}} \\
\Ga_{\th v}^{\th} &=&  \frac{1}{2} \g^{\th\th} \frac{\pa \g_{\th \th}}{\pa v} = \frac{1}{2r^{2}} \frac{\pa (r^{2} ) }{\pa r} \frac{\pa r }{\pa v} = \frac{r}{2r^{2}} (1-\mu)\\
 &=& \frac{(1-\mu)}{2r}
\eeaa
thus,
$$\der_{\th} \frac{\pa}{\pa v} = \frac{(1 - \mu)}{2r} \frac{\pa}{\pa \th} $$
And,
\beaa
\Ga_{\phi v}^{i} &=& \frac{1}{2} \g^{im} (\frac{\pa \g_{m \phi}}{\pa x^{v}} + \frac{\pa \g_{m v}}{\pa \phi} - \frac{\pa \g_{\phi v}}{\pa x^{m}} )\\
&=& \frac{1}{2} \g^{im} \frac{\pa \g_{m \phi}}{\pa v} \\
\Ga_{\phi v}^{\phi} &=&  \frac{1}{2} \g^{\phi\phi} \frac{\pa \g_{\phi \phi}}{\pa v} = \frac{1}{2r^{2} \sin^{2} \th} \frac{\pa (r^{2} \sin^{2} \th ) }{\pa r} \frac{\pa r }{\pa v} = \frac{2r}{2r^{2}} \frac{1}{2}(1-\mu)\\
 &=& \frac{(1-\mu)}{2r}
\eeaa

thus,
$$\der_{\phi} \frac{\pa}{\pa v} = \frac{(1 - \mu)}{2r} \frac{\pa}{\pa \phi} $$

Also,
\beaa
\Ga_{\phi w}^{i} &=& \frac{1}{2} \g^{im} (\frac{\pa \g_{m \phi}}{\pa x^{w}} + \frac{\pa \g_{m w}}{\pa \phi} - \frac{\pa \g_{\phi w}}{\pa x^{m}} )\\
&=& \frac{1}{2} \g^{im} \frac{\pa \g_{m \phi}}{\pa w} \\
\Ga_{\phi w}^{\phi} &=&  \frac{1}{2} \g^{\phi\phi} \frac{\pa \g_{\phi \phi}}{\pa w} = \frac{1}{2r^{2} \sin^{2} \th} \frac{\pa (r^{2} \sin^{2} \th ) }{\pa r} \frac{\pa r }{\pa w} = - \frac{r}{2r^{2}} (1-\mu)\\
 &=& - \frac{(1-\mu)}{2r}
\eeaa

thus,
$$\der_{\phi} \frac{\pa}{\pa w} = - \frac{(1 - \mu)}{2r} \frac{\pa}{\pa \phi} $$
We also have,
$$\der_{\th} \frac{\pa}{\pa w} = - \frac{(1 - \mu)}{2r} \frac{\pa}{\pa \th} $$

Therefore, in conclusion, we have:

\bea
\der_{w} \frac{\pa}{\pa w} &=& - \frac{m}{r^{2}} \frac{\pa}{\pa w}\\
\der_{v} \frac{\pa}{\pa v} &=&  \frac{m}{r^{2}} \frac{\pa}{\pa v}\\
\der_{v} \frac{\pa}{\pa w} &=& \der_{w} \frac{\pa}{\pa v} = 0\\
\der_{\th} \frac{\pa}{\pa v} &=& \frac{(1 - \mu)}{2r} \frac{\pa}{\pa \th} \\
\der_{\th} \frac{\pa}{\pa w} &=&  \frac{- (1 - \mu)}{2r} \frac{\pa}{\pa \th} \\
\der_{\phi} \frac{\pa}{\pa v} &=& \frac{(1 - \mu)}{2r} \frac{\pa}{\pa \phi} \\
\der_{\phi} \frac{\pa}{\pa w} &=&  \frac{ - (1 - \mu)}{2r} \frac{\pa}{\pa \phi} 
\eea
Now, let

\bea
V = V^{w}(v, w) \frac{\pa}{\pa w} + V^{v} (v, w) \frac{\pa}{\pa v}
\eea

We use the notation $V^{w} (v, w) = V^{w}$, and $ V^{v}(v, w) = V^{v}$.

Computing,
\beaa
\der_{v} V &=& ( \pa_{v} V^{w} ) \frac{\pa}{\pa w} + V^{w} ( \der_{v} \frac{\pa}{\pa w} ) + ( \pa_{v} V^{v} ) \frac{\pa}{\pa v} + V^{v} ( \der_{v} \frac{\pa}{\pa v} ) \\
&=& ( \pa_{v} V^{w} ) \frac{\pa}{\pa w} + ( \pa_{v} V^{v} ) \frac{\pa}{\pa v} + V^{v}  \frac{m}{r^{2}} \frac{\pa}{\pa v}\\
\der_{w} V &=& ( \pa_{w} V^{w} ) \frac{\pa}{\pa w} + V^{w} ( \der_{w} \frac{\pa}{\pa w} ) + ( \pa_{w} V^{v} ) \frac{\pa}{\pa v} + V^{v} ( \der_{w} \frac{\pa}{\pa v} ) \\
&=& ( \pa_{w} V^{w} ) \frac{\pa}{\pa w} - V^{w}  \frac{m}{r^{2}} \frac{\pa}{\pa w} + ( \pa_{w} V^{v} ) \frac{\pa}{\pa v}\\ 
\der_{\th} V &=& ( \pa_{\th} V^{w} ) \frac{\pa}{\pa w} + V^{w} ( \der_{\th} \frac{\pa}{\pa w} ) + ( \pa_{\th} V^{v} ) \frac{\pa}{\pa v} + V^{v} ( \der_{\th} \frac{\pa}{\pa v} ) \\
&=& V^{v} \frac{(1-\mu)}{2r}  \frac{\pa}{\pa \th} - V^{w} \frac{(1-\mu)}{2r}  \frac{\pa}{\pa \th}\\
&=& \frac{(1-\mu)}{2r} ( V^{v} - V^{w} )  \frac{\pa}{\pa \th} \\
\der_{\phi} V &=&  V^{w} ( \der_{\phi} \frac{\pa}{\pa w} ) + V^{v} ( \der_{\phi} \frac{\pa}{\pa v} ) \\
&=& V^{v} \frac{(1-\mu)}{2r}  \frac{\pa}{\pa \phi} - V^{w} \frac{(1-\mu)}{2r}  \frac{\pa}{\pa \phi}\\
&=& \frac{(1-\mu)}{2r} ( V^{v} - V^{w} )  \frac{\pa}{\pa \phi} 
\eeaa

Therefore,
\bea
\der_{v} V &=& ( \pa_{v} V^{w} ) \frac{\pa}{\pa w} + ( \pa_{v} V^{v}  +  \frac{m}{r^{2}} V^{v} ) \frac{\pa}{\pa v}\\
\der_{w} V &=& ( \pa_{w} V^{v} ) \frac{\pa}{\pa v} + ( \pa_{w} V^{w} -  \frac{m}{r^{2}} V^{w} ) \frac{\pa}{\pa w}\\ 
\der_{\th} V &=& \frac{(1-\mu)}{2r} ( V^{v} - V^{w} )  \frac{\pa}{\pa \th}\\
\der_{\phi} V&=& \frac{(1-\mu)}{2r} ( V^{v} - V^{w} )  \frac{\pa}{\pa \phi}
\eea

The deformation tensor is, $$\pi^{\a\b} (V) = \frac{1}{2} ( \der^{\a}V^{\b} + \der^{\b}V^{\a})$$
Computing,
\bea
\notag
\pi^{w w} (V) &=& \der^{w}V^{w} = \g^{v\om}\der_{v}V^{w}\\
&=& \frac{-2}{(1 - \mu)} \pa_{v}V^{w}\\
\notag
\pi^{vv} (V) &=& \der^{v}V^{v} = \g^{v\om}\der_{w}V^{v}\\
&=& \frac{-2}{(1 - \mu)} \pa_{w}V^{v} \\
\notag
\pi^{v w} (V) &=& \frac{1}{2} ( \der^{v}V^{w} + \der^{w}V^{v})\\
\notag
&=& \frac{1}{2} ( \g^{v\om} \der_{w}V^{w} + \g^{w v}\der_{v}V^{v} )\\
&=& \frac{-1}{(1-\mu)} [    \pa_{v} V^{v}  + \pa_{w} V^{w}   +      \frac{m}{r^{2}} ( V^{v}  -   V^{w} )   ] \\
&=& \pi^{w v}(V) \\
\notag
\pi^{\th\th} (V) &=& \der^{\th} V^{\th} = \g^{\th\th}\der_{\th}V^{\th}\\
&=& \frac{(1 - \mu)}{2r^{3}}(V^{v} - V^{w}) \\
\notag
\pi^{\phi\phi} (V)&=& \der^{\phi} V^{\phi} = \g^{\phi\phi}\der_{\phi}V^{\phi}\\
&=& \frac{(1 - \mu)}{2r^{3}\sin^{2} \th}(V^{v} - V^{w}) \\
\notag
\pi^{\th\phi} (V) &=& \frac{1}{2} ( \der^{\th}V^{\phi} + \der^{\phi}V^{\th})\\
&=& 0 = \pi^{\phi\th} \\
\notag
\pi^{\th v} (V) &=& \frac{1}{2} ( \der^{\th}V^{v} + \der^{v}V^{\th})\\
&=& 0 = \pi^{v\th} (V) \\
\pi^{\phi v} (V) &=& 0 = \pi^{v\phi} (V) \\
\pi^{\th w} (V)  &=& 0 = \pi^{w\th} (V) \\
\pi^{\phi w} (V) &=& 0 = \pi^{w\phi} (V)
\eea


\end{document}